\def\h{{\bf h}}
\def\u{{\bf u}}
\def\s{{\bf s}}
\def\t{{\bf t}}
\def\w{{\bf w}}
\def\v{{\bf v}}
\def\r{{\bf r}}
\def\n{{\bf n}}
\def\f{{\bf f}}
\def\q{{\bf q}}
\def\x{{\bf x}}
\newcommand{\btau} { \mbox{\boldmath $\tau$} }
\def\bbeta{\mbox{{\boldmath$\eta$}}}
\def\bs{\mbox{{\boldmath$\sigma$}}}
\def\bS{\mbox{{\boldmath$\Sigma$}}}
\def\bt{\mbox{{\boldmath$\tau$}}}
\def\bxi{\mbox{{\boldmath$\xi$}}}
\def\U{{\bf U}}
\def\z{{\bf z}}
\def\V{{\bf V}}
\def\Q{{\bf Q}}
\def\D{{\bf D}}
\def\G{{\bf G}}
\def\X{{\bf X}}
\def\bI{{\bf I}}
\def\d{\partial}
\def\Ec{{{\mathcal{E}}}}
\def\Ac{{{\mathcal{A}}}}
\def\Bc{{{\mathcal{B}}}}
\def\Oc{{{\mathcal{O}}}}
\def\lam{\lambda}
\def\grad{\nabla}
\def\div{\grad\cdot}
\def\O{\Omega}
\def\Gam{\Gamma}
\def\<{\langle}
\def\>{\rangle}
\def\CO2{{CO$_{2}$}}
\newcommand\norm[1]{\left\lVert#1\right\rVert}
\newtheorem{lemma}{Lemma}[section]
\newtheorem{theorem}{Theorem}[section]
\newtheorem{corollary}{Corollary}[section]
\newtheorem{remark}{Remark}[section]
\begin{document}

\title{A nonlinear Stokes-Biot model for the interaction of a non-Newtonian fluid with poroelastic media}

\author{Ilona Ambartsumyan\thanks{Department of Mathematics, University of
Pittsburgh, Pittsburgh, PA 15260, USA;~{\tt \{ila6@pitt.edu, tqn4@pitt.edu, yotov@math.pitt.edu\}}; partially supported by NSF grants DMS 1418947 and DMS 1818775 and DOE grant DE-FG02-04ER25618.} \thanks{Institute for Computational Engineering and Sciences, The University of Texas at Austin, Austin, TX 78712, USA;
{\tt \{ailona@austin.utexas.edu\}}.}
\and
	Vincent J. Ervin\footnotemark[3]\thanks{Department of Mathematical Sciences, Clemson University, Clemson, SC 29634-0975, USA;~{\tt \{vjervin@clemson.edu\}}.}~\and
	Truong Nguyen\footnotemark[1]~\and
	Ivan Yotov\footnotemark[1]  }

\date{\today}

\maketitle

\begin{abstract}
We develop and analyze a model for the interaction of a
quasi-Newtonian free fluid with a poroelastic medium. The flow in the
fluid region is described by the nonlinear Stokes equations and in the
poroelastic medium by the nonlinear quasi-static Biot
model. Equilibrium and kinematic conditions are imposed on the
interface. We establish existence and uniqueness of a solution to the
weak formulation and its semidiscrete continuous-in-time finite
element approximation. We present error analysis, complemented by
numerical experiments.
\end{abstract}

\section{Introduction}
The interaction of a free fluid with a deformable porous medium is a
challenging multiphysics problem that has a wide range of
applications, including processes arising in gas and oil extraction
from naturally or hydraulically fractured reservoirs, designing
industrial filters, and blood-vessel interactions. The free fluid
region can be modeled by the Stokes or the Navier-Stokes equations,
while the flow through the deformable porous medium is modeled by the
quasi-static Biot system of poroelasticity \cite{Biot}. The two
regions are coupled via dynamic and kinematic interface conditions,
including balance of forces, continuity of normal velocity, and a no
slip or slip with friction tangential velocity condition. These
multiphysics models exhibit features of coupled Stokes-Darcy flows and
fluid-structure interaction (FSI). There is extensive literature on
modeling these separate couplings, see e.g. \cite{DMQ,LSY,RivYot} for
Stokes-Darcy flows and
\cite{FSI-Radon,formaggia2010cardiovascular,galdi2010fundamental}
for FSI. More recently there has been growing interest in modeling
Stokes-Biot couplings, which can be referred to as fluid-poroelastic
structure interaction (FPSI).  The well-posedness of the mathematical
model is studied in \cite{Showalter-Biot-Stokes}.  A variational
multiscale stabilized finite element method for the Navier-Stokes-Biot
problem is developed in \cite{Badia-Quaini-Quart}. In \cite{BYZ} a
non-iterative operator-splitting method is developed for the
Navier-Stokes-Biot model with pressure Darcy formulation. The well
posedness of a related model is studied in \cite{Cesmelioglu}. The
Stokes-Biot problem with a mixed Darcy formulation is studied in
\cite{Bukac-Nitsche} and \cite{AKYZ-LM} using Nitsche's method and a
Lagrange multiplier, respectively, to impose the continuity of normal
velocity on the interface. An optimization-based iterative algorithm
with Neumann control is proposed in \cite{Cesmelioglu-etal}. A
reduced-dimension fracture model coupling Biot and an averaged
Brinkman equation is developed in
\cite{Buk-Yot-Zun-fracture}. Alternative fracture models using the
Reynolds lubrication equation coupled with Biot have also been
studied, see e.g.  \cite{Girault-2015}.

All of the above mentioned works are based on Newtonian fluids. In
this paper we develop FPSI with non-Newtonian fluids, which, to the
best of our knowledge, has not been studied in the literature. In many
applications the fluid exhibits properties that cannot be captured by
a Newtonian fluid assumption. For instance, during water flooding in
oil extraction, polymeric solutions are often added to the aqueous
phase to increase its viscosity, resulting in a more stable
displacement of oil by the injected water
\cite{lopez2003predictive}. In hydraulic fracturing, proppant
particles are mixed with polymers to maintain high permeability of the
fractured media \cite{Lee-proppant}. In blood flow simulations of
small vessels or for patients with a cardiovascular disease, where the
arterial geometry has been altered to include regions of
re-circulation, one needs to consider models that can capture the
sheer-thinning property of the blood \cite{janela20103d}.

In this work we use nonlinear Stokes equations to model the free fluid
in the flow region and a nonlinear Biot model for the fluid in the
poroelastic region. Our model is built on the nonlinear Stokes-Darcy
model presented in \cite{ervin2009coupled} and the linear Stokes-Biot
model considered in \cite{AKYZ-LM}. Our Biot model is based on a
linear stress-strain constitutive relationship and a nonlinear Darcy
flow. The coupling conditions between the two subdomains include mass
conservation, conservation of momentum and the Beavers-Joseph-Saffman
slip with friction condition. We focus on fluids that possess the
sheer thinning property, i.e., the viscosity decreases under shear
strain, which is typical for polymer solutions and blood. Viscosity
models for such non-Newtonian fluids include the Power law, the Cross
model and the Carreau model
\cite{bird1977dynamics,chow2003numerical,owens2002computational,lopez2003predictive,pearson2002models
}. The Power law model is popular because it only contains two
parameters, and it is possible to derive analytical solutions in
various flow conditions \cite{bird1977dynamics}. On the other hand, it
implies that in the flow region the viscosity goes to infinity if the
deformation goes to zero, which may not be representative in certain
applications. The Cross and Carreau models have been deduced
empirically as alternatives of the Power law model. They have three
parameters, and in some parameter regimes, the viscosity is strictly
greater than zero and bounded. We assume that the viscosity in each
subdomain satisfies one such model, with dependence on the magnitude
of the deformation tensor and the magnitude of Darcy velocity in the
fluid and poroelastic regions, respectively. We further assume that
along the interface the fluid viscosity is a function of the fluid and
structure interface velocities. We consider both unbounded and bounded
parameter regimes. In the former case, the analysis is done in an
appropriate Sobolev space setting, using spaces such as $W^{1,r}$,
where $1 < r < 2$ is the viscosity shear thinning parameter. In the
latter case, the analysis reduces to the Hilbert space
setting. Nonlinear Stokes-Darcy models with bounded viscosity have
been studied in
\cite{ervin2011coupling,Gatica-etal-SD-nonlin,Gatica-etal-NSE-Darcy-nonlin},
while the unbounded case is considered in \cite{ervin2009coupled}.

Following the approach in \cite{AKYZ-LM}, we enforce the continuity of
normal velocity on the interface through the use of a Lagrange
multiplier. The resulting weak formulation is a nonlinear
time-dependent system, which is difficult to analyze, due to to the
presence of the time derivative of the displacement in some
non-coercive terms. We consider an alternative mixed elasticity
formulation with the structure velocity and elastic stress as primary
variables, see also \cite{Showalter-Biot-Stokes}. In this case we
obtain a system with a degenerate evolution in time operator and a
nonlinear saddle-point type spatial operator. The structure of the
problem is similar to the one analyzed in \cite{Showalter-SIAMMA}, see
also \cite{Boffi-Gastaldi} in the linear case. However, the analysis
in \cite{Showalter-SIAMMA} is restricted to the Hilbert space setting
and needs to be extended to the Sobolev space setting. Furthermore,
the analysis in \cite{Showalter-SIAMMA} is for monotone operators, see
\cite{showalter2013monotone}, and as a result requires certain right
hand side terms to be zero, while in typical applications these terms
may not be zero. Here we explore the coercivity of the operators to
reformulate the problem as a parabolic-type system for the pressure
and stress in the poroelastic region. We show well posedness for this
system for general source terms and that the solution satisfies the
original formulation. We also prove that the solution to the original
formulation is unique and provide a stability bound. We then consider
a semidiscrete finite element approximation of the system and carry
out stability and error analysis. For this purpose we establish a
discrete inf-sup condition, which involves a non-conforming Lagrange
multiplier discretization that allows for non-matching grids across
the Stokes-Biot interface.

The rest of the paper is organized as follows. In Section~2 we
introduce the governing equations. Section~3 is devoted to the weak
formulation, upon which we base the numerical method, and an
alternative formulation, which is needed for the purpose of the
analysis. In Section~4 we prove the well-posedness of the alternative
and original formulations. The semidiscrete approximation and its
well-posedness analysis are developed in Section~5. The error analysis
is carried out in Section~6. Numerical experiments are presented in 
Section~7.

\section{Problem set-up}

Let $\O \subset \mathbb{R}^d,\,d=2,3$ be a Lipschitz domain, which is
subdivided into two non-overlapping and possibly non-connected
regions: fluid region $\O_f$ and poroelastic region $\O_p$.  Let
$\partial\O_f \cap \partial \O_p = \Gam_{fp}$ denote the (nonempty)
interface between these regions and let $\Gam_f = \partial \O_f
\setminus \Gam_{fp}$ and $\Gam_p = \partial \O_p \setminus \Gam_{fp}$
denote the external parts of the boundary $\partial \O$. We denote by
$\n_f$ and $\n_p$ he unit normal vectors which point outward from
$\partial \O_f$ and $\partial \O_p$, respectively, noting that $\n_f=
-\n_p$ on $\Gam_{fp}$. Let $(\u_\star,p_\star)$ be the
velocity-pressure pairs in $\O_\star$, $\star = f$, $p$, and let
$\bbeta_p$ be the displacement in $\O_p$.

We assume that the flow in $\O_f$ is governed by the nonlinear generalized Stokes equations with homogeneous boundary conditions on $\Gam_f$: 
\begin{equation}\label{stokes}
	-\div \bs_f(\u_f,p_f)  = \f_f,
	\quad \div \u_f =  q_f   \quad \mbox{in } \O_f \times (0,T], 
\quad \u_f = {\bf 0}  \quad \mbox{on } \Gam_f \times (0,T],
\end{equation}
where $\D(\u_f)$ and $\bs_f(\u_f,p_f)$ denote the deformation and the stress tensors, respectively:
$$
\D(\u_f) = \frac 12 (\grad \u_f + \grad\u_f^T), \quad
\bs_f(\u_f,p_f) = -p_f \bI + 2\nu(\D(\u_f))\D(\u_f),
$$
where $\bI$ stands for the identity operator. We consider a
generalized Newtonian fluid with the viscosity $\nu$ dependent on the
magnitude of the deformation tensor, in particular shear-thinning
fluids with $\nu$ a decreasing function of $|\D(\u_f)| $. We consider the
following models \cite{chow2003numerical, owens2002computational},
where $1 < r < 2$, $0 \le \nu_\infty < \nu_0$, and $K_f > 0$ are constants:

{\em Carreau model:}
\begin{equation} \label{stokes-vis1}
	\nu(\D(\u_f)) = \nu_ \infty + (\nu_0 - \nu_\infty)/ 
( 1 + K_f|\D(\u_f)|^2)^{(2-r)/2},
\end{equation}

{\em Cross model:}
\begin{equation} \label{stokes-vis2}
	\nu(\D(\u_f)) = \nu_ \infty + (\nu_0 - \nu_\infty)/ ( 1 + K_f|\D(\u_f)|^{2-r}),
\end{equation}

{\em Power law model:}
\begin{equation} \label{stokes-vis3}
	\nu(\D(\u_f)) = K_f|\D(\u_f)|^{r-2}.
\end{equation}

In turn, in $\O_p$ we consider the quasi-static Biot system \cite{Biot}
\begin{align}
& - \div \bs_p(\bbeta_p,p_p) = \f_p \quad \mbox{in } \O_p \times (0,T] , 
\label{biot1}\\ 
& \nu_{eff}(\u_p) \kappa^{-1}\u_p + \grad p_p = 0,  \quad 
\frac{\d}{\d t} (s_0 p_p + \alpha_p \div \bbeta_p) + \div \u_p = q_p 
\quad \mbox{in } \O_p \times (0,T] , \label{biot2} \\
& \u_p \cdot \n_p = 0 \, \mbox{ on } \, \Gam_p^N\times (0,T] ,  
\quad p_p =0 \, \mbox{ on }  \Gam_p^D\times (0,T], \quad \bbeta_p = {\bf 0} 
\mbox{ on } \Gam_p
\times (0,T], \label{biot3}
\end{align}
where $\bs_e(\bbeta_p)$ and $\bs_p(\bbeta_p,p_p)$ are the elasticity and 
poroelasticity stress tensors, respectively,
\begin{equation}\label{stress-defn}
	\bs_e(\bbeta_p) = \lambda_p (\div \bbeta_p) \bI + 2\mu_p \D(\bbeta_p), 
\quad
	\bs_p(\bbeta_p,p_p) = \bs_e(\bbeta_p) - \alpha_p  p_p \bI,
\end{equation}
$\alpha_p$ is the Biot-Willis constant, $\lambda_p$, $\mu_p$ are
the Lam\`{e} coefficients, $s_0 > 0$ is a storage coefficient, $\kappa$ is a scalar
uniformly positive and bounded permeability function, and $\Gam_p = \Gam_p^N\cup\Gam_p^D$. 
To avoid the issue with restricting the mean value of the pressure, we assume 
that $|\Gamma_p^D| > 0$. We further assume that 
$\mbox{dist}(\Gamma_p^D,\Gamma_{fp}) \ge s > 0$.
We note that even though the analysis of our formulation is valid for
a symmetric and positive definite permeability tensor, we restrict it 
to $\kappa \bI$, due to assumptions made in the derivations of some of the
viscosity functions suitable for modeling non-Newtonian flow in porous
media. In particular, we consider the following two models for the
effective viscosity $\nu_{eff}$ in $\Omega_p$ 
\cite{lopez2003predictive, pearson2002models},
where $1 < r < 2$, $0 \le \nu_\infty < \nu_0$, and $K_p > 0$ are constants:

{\em Cross model:}
\begin{equation} \label{darcy-vis1}
	\nu_{eff}(\u_p) = \nu_ \infty + (\nu_0 - \nu_\infty)/ ( 1 + K_p|\u_p|^{2-r}),
\end{equation}

{\em Power law model:}
\begin{equation} \label{darcy-vis2}
	\nu_{eff}(\u_p) = K_p(|\u_p|/(\sqrt{\kappa}m_c))^{r-2},
\end{equation}
where $m_c$ is a constant that depends on the internal structure of the 
porous media. 

Following \cite{Showalter-Biot-Stokes,Badia-Quaini-Quart}, the {\it
  interface conditions} on the fluid-poroelasticity interface
$\Gamma_{fp}$, are {\it mass conservation}, {\it balance of normal
  stress}, the Beavers-Joseph-Saffman (BJS) {\it slip with friction}
condition \cite{BJ,Saffman}, and {\it conservation of momentum}: 
\begin{align}
& \u_f\cdot\n_f + \left(\frac{\d \bbeta_p}{\d t} + \u_p\right)\cdot\n_p = 0 
\quad \mbox{on } \Gam_{fp}, 
\label{eq:mass-conservation} \\
& - (\bs_f \n_f)\cdot\n_f =  p_p \quad \mbox{on } \Gam_{fp}, \label{Gamma-fp-01} \\ 
& - (\bs_f\n_f)\cdot\t_{f,j} = \nu_I \, \alpha_{BJS} \sqrt{\kappa^{-1}}
\left(\u_f - \frac{\d \bbeta_p}{\d t}\right)\cdot\t_{f,j} \quad \mbox{on } 
\Gam_{fp}, \label{Gamma-fp-1} \\
& \bs_f\n_f= - \bs_p\n_p \quad \mbox{on } \Gam_{fp}, \label{Gamma-fp-2}
\end{align}
where $\t_{f,j}$, $1 \le j \le d-1$, is an orthogonal system of unit
tangent vectors on $\Gamma_{fp}$ and $\alpha_{BJS} \ge 0$ is an
experimentally determined friction coefficient.  We note that the
continuity of flux takes into account the normal velocity of the solid
skeleton, while the BJS condition accounts for its tangential
velocity. We assume that along the interface the fluid viscosity
$\nu_I$ is a function of the magnitude of the tangential component 
of the slip velocity 
$\left| \sum_{j=1}^{d-1}((\u_f - \d_t \bbeta_p)\cdot \t_{f,j})\t_{f,j}\right|$
given by the Cross model \eqref{darcy-vis1} or the Power law model 
\eqref{darcy-vis2}, where $\d_t \phi: = \partial \phi/\partial t$.
For the rest of the paper we will write $\nu$, $\nu_{eff}$ or $\nu_I$ 
keeping in mind that these are nonlinear functions as defined above.

The above system of equations is complemented by a set of initial conditions:
$$
p_p(0,\mathbf{x}) = p_{p,0}(\mathbf{x}), \,\, 
\bbeta_p(0,\mathbf{x}) = \bbeta_{p,0}(\mathbf{x}) \mbox{ in } \O_p.
$$

In the following, we make use of the usual notation for Lebesgue
spaces $L^p(\Omega)$, Sobolev spaces $W^{k, p}(\Omega)$ and Hilbert
spaces $H^k(\Omega)$. For a set $\Oc \subset \mathbb{R}^d$, the $L^2(\Oc)$
inner product is denoted by $(\cdot,\cdot)_{\Oc}$ for scalar, vector and
tensor valued functions. For a section of a subdomain boundary $S$ we
write $\langle \cdot, \cdot \rangle_{S}$ for the $L^2(S)$ inner
product (or duality pairing).  We also denote by $C$ a generic
positive constant independent of the discretization parameters.

Adopting the approach from \cite{ervin2009coupled, ervin2011coupling},
we assume that the viscosity functions satisfy one of the two sets of
assumptions \eqref{A1}--\eqref{A2} or \eqref{B1}--\eqref{B2}
below. Let $g(\x):\mathbb{R}^d \rightarrow\mathbb{R}^+ \cup \{0\}$ and
let $\G(\x):\mathbb{R}^d\rightarrow\mathbb{R}^d$ be given by $\G(\x)=
g(\x)\x$. For $\x,\h \in \mathbb{R}^d$, let $\G(\x)$ satisfy,
for constants $C_1, \ldots, C_4 >0$ and $c\geq 0$,
\begin{align}
 \left( \G(\x + \h)-\G(\x) \right) \cdot  \h &\geq C_1 |\h|^2, 
\tag{A1} \label{A1}\\
|\G(\x + \h) - \G(\x)| &\leq C_2 |\h|,  \tag{A2} \label{A2} 
\end{align}
or
\begin{align}
(\G(\x+\h)-\G(\x))\cdot \h & \geq C_3 \frac{|\h|^2}{c+|\x|^{2-r}+|\x+\h|^{2-r}}, 
\tag{B1} \label{B1}\\
|\G(\x+\h) -\G(\x)| & \leq C_4 \frac{|\h|}{c+|\x|^{2-r}+|\x+\h|^{2-r}}, \tag{B2} \label{B2}
\end{align}
with the convention that $\G(\x)={\bf 0}$ if $\x ={\bf 0}$, and
$|\h|/(c+|\x|+|\h|) =0$ if $c=0$ and $\x=\h={\bf 0}$. From
\eqref{B1}--\eqref{B2} it follows that there exist constants $C_5, C_6
> 0$ such that for $\s,\t,\w \in (L^r(\Oc))^d$ \cite{Sandri}
\begin{gather}
(\G(\s)-\G(\t),\s -\t)_{\Oc} \geq C_5
\left((|\G(\s)-\G(\t)|,|\s-\t|)_{\Oc} +
\frac{\|\s-\t\|^2_{L^r(\Oc)}}{c+\|\s\|^{2-r}_{L^r(\Oc)}+\|\t\|^{2-r}_{L^r(\Oc)}}
\right), \label{monotonicity}\\
(\G(\s)-\G(\t),\w)_{\Oc} \leq C_6 
\left\|\frac{|\s-\t|}{c+|\s|+|\t|} \right\|^{\frac{2-r}{r}}_{L^{\infty}(\Oc)} 
(|\G(\s)-\G(\t)|, |\s-\t|)_{\Oc}^{1/r'}\|\w\|_{L^r(\Oc)}. 
\label{continuity}
\end{gather}
\begin{remark}
It is shown in \cite{Gatica-etal-SD-nonlin} that conditions
\eqref{A1}--\eqref{A2} are satisfied for $g(\D(\u_f))=\nu(\D(\u_f))$
given in the Carreau model \eqref{stokes-vis1} with $\nu_{\infty}>0$, in
which case $\nu_\infty \leq g(\x) \leq \nu_0$.  A similar argument can
be applied to show that \eqref{A1}--\eqref{A2} hold for the Cross
model, with $g(\D(\u_f))=\nu(\D(\u_f))$ given in \eqref{stokes-vis2} for
Stokes and $g(\u_p)=\nu_{eff}(\u_p)$ given in \eqref{darcy-vis1} for Darcy,
in the case of $\nu_{\infty}>0$.  Furthermore, it is shown in
\cite{Sandri} that conditions \eqref{B1}--\eqref{B2} with $c > 0$ hold
in the case of the Carreau model \eqref{stokes-vis1} with
$\nu_{\infty} = 0$, and that conditions \eqref{B1}--\eqref{B2} with $c
= 0$ hold for the Power law model \eqref{stokes-vis3} and
\eqref{darcy-vis2}.
\end{remark}

\section{Variational formulation}\label{sec:var}

We will consider two cases when defining the functional spaces,
depending on which set of assumptions holds. In the case
\eqref{B1}--\eqref{B2}, we consider Sobolev spaces. For a given $r >
1$ let $r'$ be its conjugate, satisfying $r^{-1}+(r')^{-1}=1$. Let
\begin{align}
	\V_{f} &= \{ \v_f \in (W^{1,r}(\O_{f}))^d : \v_f = {\bf 0} \text{ on } \Gam_{f} \}, && W_{f} = L^{r'}(\O_{f}), \label{spaces1}
\end{align}
with the corresponding norms
\begin{align*}
\|\v_f\|_{\V_f} = \|\v_f\|_{(W^{1,r}(\O_f))^d},
\quad \|w_f\|_{W_f} = \|w_f\|_{L^{r'}(\O_f)}. && 
\end{align*}
With $L^r(\text{div};\O_p) = \{\v_p \in (L^r(\O_p))^d : \nabla \cdot \v_p \in L^r(\O_p)\}$,
let 
\begin{align}
\V_{p} &= \{ \v_p \in L^r(\text{div}; \O_{p}) : \v_p \cdot \n_p = 0 \text{ on }\Gam^N_{p}  \},&& W_{p} = L^{r'}(\O_{p}), \nonumber\\
\X_{p} &= \{ \bxi_p \in (H^{1}(\O_{p}))^d : \bxi_p= {\bf 0} \text{ on } \Gam_{p} \}. \label{spaces2}
\end{align}
with norms
\begin{align*}
	\|\v_p\|^r_{\V_p} & = \|\v_p\|^r_{(L^{r}(\O_p))^d} 
+ \|\nabla \cdot \v_p\|^r_{L^{r}(\O_p)}, && \|w_p\|_{W_p} = \|w_p\|_{L^{r'}(\O_p)}, &&  \\
	\|\bbeta_p\|_{\X_p} & = \|\bbeta_p\|_{(H^{1}(\O_p))^d}. 
\end{align*}
In the case of \eqref{A1}--\eqref{A2}, we consider Hilbert spaces, with 
the above definitions replaced by
\begin{align}
\V_{f} &= \{ \v_f \in (H^{1}(\O_{f}))^d : \v_f = {\bf 0} \text{ on } \Gam_{f} \}, && W_{f} = L^{2}(\O_{f}), \\
\V_{p} &= \{ \v_p \in H(\text{div}; \O_{p}) : \v_p \cdot \n_p = 0 \text{ on }\Gam^N_{p}  \},&& W_{p} = L^{2}(\O_{p}).
\end{align}
The global spaces are products of the subdomain spaces. For simplicity
we assume that each region consists of a single subdomain.

\begin{remark}
For simplicity of the presentation, for the rest of the paper we focus on the case 
\eqref{B1}--\eqref{B2}, which is the technically more challenging case. The arguments
apply directly to the case \eqref{A1}--\eqref{A2}. 
\end{remark}

\subsection{Lagrange multiplier formulation}
To derive the weak formulation, we multiply
\eqref{stokes}, \eqref{biot1}--\eqref{biot2} by appropriate test functions and
integrate each equation over the corresponding region, utilizing
the boundary and interface conditions
\eqref{eq:mass-conservation}--\eqref{Gamma-fp-2}. 
Integration by parts in the first equation in \eqref{stokes},
\eqref{biot1}, and the first equation in \eqref{biot2} leads to 
the Stokes, Darcy and the elasticity functionals
\begin{align*}
	a_f(\cdot , \cdot) \, : \, \V_{f} \times \V_{f} \, \longrightarrow  \, \mathbb{R} , \quad \quad a_f(\u_f,\v_f) &:= (2\nu \D(\u_f),\D(\v_f))_{\O_f}, \\
	a^d_p(\cdot , \cdot) \, : \, \V_{p} \times \V_{p} \, \longrightarrow  \, \mathbb{R} , \quad \quad a^d_p(\u_p,\v_p) &:= (\nu_{eff}\kappa^{-1}\u_p,\v_p)_{\O_p}, \\
	a^e_p(\cdot , \cdot) \, : \, \X_{p} \times \X_{p} \, \longrightarrow  \, \mathbb{R} , \quad \quad a^e_p(\bbeta_p,\bxi_p)&:= (2\mu_p\D(\bbeta_p),\D(\bxi_p))_{\O_p} + (\lam_p \nabla \cdot \bbeta_p,\nabla \cdot \bxi_p)_{\O_p},
\end{align*}
the bilinear forms
\begin{align*}
b_{\star}(\cdot , \cdot) \, : \, \V_{\star} \times W_{\star} \, \longrightarrow  \, \mathbb{R} , 
\quad \quad b_{\star}(\v,w) := -(\nabla \cdot \v, w)_{\O_{\star}}, \quad \star = f,p,
\end{align*}
and the interface term
$$
I_{\Gamma_{fp}} = - \<\bs_f\n_f,\v_f\>_{\Gamma_{fp}} - \<\bs_p\n_p,\bxi_p\>_{\Gamma_{fp}}
+ \<p_p,\v_p\cdot\n_p\>_{\Gamma_{fp}}.
$$
This term is incorporated into the weak formulation by introducing a Lagrange multiplier which has a meaning of normal stress/Darcy pressure on the interface:
\begin{align*}
	\lam = -(\bs_f\n_f)\cdot \n_f = p_p, \qquad \mbox{on } \Gam_{fp}.
\end{align*}
With $\lam$ introduced, we have, using \eqref{Gamma-fp-01},
\eqref{Gamma-fp-1} and \eqref{Gamma-fp-2},
\begin{align*}
	I_{\Gamma_{fp}} = a_{BJS}(\u_{f},\d_t\bbeta_{p};\v_{f},\bxi_{p}) + 
	b_{\Gamma}(\v_{f},\v_{p},\bxi_{p};\lam),
\end{align*}
where
\begin{align*}
	a_{BJS}(\u_f,\bbeta_p;\v_f,\bxi_p) &= 
	\sum_{j=1}^{d-1} 
	\big\<\nu_{I} \, \alpha_{BJS}\sqrt{\kappa^{-1}}(\u_f - \bbeta_p)
	\cdot\t_{f,j},(\v_f - \bxi_p)\cdot\t_{f,j} \big\>_{\Gam_{fp}}, \\
	b_{\Gamma}(\v_f,\v_p,\bxi_p;\mu) &= \<\v_f\cdot\n_f 
	+ (\bxi_p + \v_p)\cdot\n_p,\mu\>_{\Gam_{fp}}.
\end{align*}
For the term $b_{\Gamma}(\v_f,\v_p,\bxi_p; \lam)$ to be well-defined,
we choose the Lagrange multiplier space as $\Lambda =
W^{1/r,r'}(\Gam_{fp})$. It is shown in \cite{ervin2009coupled} that in
the case $\mbox{dist}(\Gamma_p^D,\Gamma_{fp}) \ge s > 0$, if 
$\v_p \in L^r(\text{div}; \O_p)$, then $\v_p\cdot\n_p|_{\Gamma_{fp}}$ can be 
identified with a functional in $W^{-1/r,r}(\Gamma_{fp})$. Furthermore,
for  $\v_f \in W^{1,r}(\O_f)$, $\v_f\cdot\n_f \in W^{1/r',r}(\d\O_f)$,
and for $\bxi_p \in H^1(\O_p)\subset W^{1,r}(\O_p)$, 
$\bxi_p\cdot\n_p \in W^{1/r',r}(\d\O_p)$. Therefore, with 
$\mu \in W^{1/r,r'}(\Gamma_{fp})$, the integrals in 
$b_{\Gamma}(\v_f,\v_p,\bxi_p; \lam)$ are well-defined.
 
The variational formulation reads: \textit{given $\f_{f} \in
  W^{1,1}(0, T; \V_{f}')$, $\f_{p} \in W^{1,1}(0, T; \X_{p}')$, $q_f
  \in W^{1,1}(0, T; W_{f}')$, $q_p \in W^{1,1}(0, T; W_{p}')$, and
  $p_p(0) = p_{p,0} \in W_p$, $\bbeta_p(0) = \bbeta_{p,0} \in \X_p$,
  find, for $t \in (0,T]$, $(\u_f(t), p_f(t),\u_p(t),p_p(t),$
$\bbeta_p(t),\lam(t))\in L^{\infty}(0,T;\V_f)\times
L^{\infty}(0,T;W_f)\times L^{\infty}(0,T;\V_p) \times
W^{1,\infty}(0,T;W_p) \times W^{1,\infty}(0,T;\X_p) $ $\times
L^{\infty}(0,T;\Lambda)$, such that for all $\v_f \in \V_f$, $w_f \in
W_f$, $\v_p \in \V_p$, $w_p \in W_p$, $\bxi_p \in \X_p$, and $\mu \in
\Lambda$,}
\begin{align}
	&  
	a_{f}(\u_{f},\v_{f}) + a^d_{p}(\u_{p},\v_{p}) + a^e_{p}(\bbeta_{p},\bxi_{p})
	+ a_{BJS}(\u_{f},\d_t \bbeta_{p};\v_{f},\bxi_{p}) + b_f(\v_{f},p_{f})+ b_p(\v_{p},p_{p})\nonumber
	\\
	& \qquad\quad   + 
	\alpha_p b_p(\bxi_{p},p_{p}) + b_{\Gamma}(\v_{f},\v_{p},\bxi_{p};\lam)  = (\f_{f},\v_{f})_{\O_f} + (\f_{p},\bxi_{p})_{\O_p}, \label{h-cts-1}\\
	& \left( s_0 \d_t p_{p},w_{p}\right)_{\O_p} 
	- \alpha_p b_p\left(\d_t\bbeta_{p},w_{p}\right) - b_p(\u_{p},w_{p}) - b_f(\u_{f},w_{f}) 
	= (q_{f},w_{f})_{\O_f} + (q_{p},w_{p})_{\O_p}, \label{h-cts-2} \\
	& b_{\Gamma}\left(\u_{f},\u_{p},\d_t\bbeta_{p};\mu\right) = 0. \label{h-cts-gamma}
\end{align}
Note that $(s_0 \d_t p_p,w_p)_{\O_p}$ is well-defined, since for
$r<2$, we have that $r'>2$ and $L^{r'}(\O_p) \subset L^2(\O_p)$.

Although related models have been analyzed previously, e.g. the
non-Newtonian Stokes-Darcy model was investigated in
\cite{ervin2009coupled} and the Newtonian dynamic Stokes-Biot model
was studied in \cite{Showalter-Biot-Stokes}, the well posedness of
\eqref{h-cts-1}--\eqref{h-cts-gamma} has not been established in the
literature. Analyzing this formulation directly is difficult, due to
the presence of $\d_t\bbeta_p$ in several non-coercive terms.
Instead, we analyze an alternative formulation and show that the two
formulations are equivalent.

\subsection{Alternative formulation}
Our goal is to obtain a system of evolutionary saddle point type,
which fits the general framework studied in
\cite{Showalter-SIAMMA}. Following the approach from
\cite{Showalter-Biot-Stokes}, we do this by considering a mixed
elasticity formulation with the structure velocity and elastic stress
as primary variables. Recall that the elasticity stress tensor $\bs_e$
is connected to the displacement $\bbeta_p$ through the
relation \cite{brezzi2008mixed}:
\begin{align}\label{elasticity-compliance}
	A\bs_e = \D(\bbeta_p),
\end{align}
where $A$ is a symmetric and positive definite compliance tensor. In the 
isotropic case $A$ has the form
\begin{align}
	A\bs_e = \frac{1}{2\mu_p}\left(\bs_e - \frac{\lambda_p}{2\mu_p+d\lambda_p}\text{tr}(\bs_e)\bI\right) , 
	\ \ \mbox{ with } \ 
	A^{-1} \bs_e  = 2\mu_p  \, \bs_e \, + \,  \lambda_p \text{tr}(\bs_e)\bI .
	\label{defAten}
\end{align}
The space for the elastic stress is $\bS_{e} = (L^{2}_{sym}(\O_p))^{d\times d}$ with the norm
$
	\|\bs_e\|^{2}_{\bS_e} :=  \sum_{i,j=1}^d\|(\bs_e)_{i,j}\|^{2}_{L^{2}(\O_p)}
$.

The derivation of the alternative variational formulation differs from
the original one in the way the equilibrium equation \eqref{biot1} is handled. As before,
we multiply it by a test function $\v_s \in \X_p$ and integrate by parts. However, instead
of using the constitutive relation of the first equation in \eqref{stress-defn}, we use only
the second equation in \eqref{stress-defn}, resulting in 
\begin{align*}
	\int_{\O_p}\left(\bs_e:\D(\v_s) - \alpha_p p_p \div \v_s\right) \, d\x
	- \int_{\Gamma_{fp}} \bs_p \n_p\cdot \v_s \, ds = \int_{\O_p}\f_p \cdot \v_s\, d\x. 
\end{align*}
We eliminate the displacement $\bbeta_p$ from the system by differentiating 
\eqref{elasticity-compliance} in time and introducing a new variable 
$\u_s := \d_t \bbeta_p \in \X_p$, which has a meaning of structure velocity. Multiplication
by a test function $\btau_e \in \bS_e$ gives
\begin{align*}
\int_{\O_p}\left( A\d_t\bs_e:\btau_e -\D(\u_s):\btau_e \right)\, d\x = 0. 
\end{align*}
The rest of the equations are handled in the same way as in the original
weak formulation, resulting in the same Stokes and Darcy functionals,
$a_f(\u_f,\v_f)$ and $a^d_p(\u_p,\v_p)$, respectively, and the same
interface term $I_{\Gamma_{fp}}$. Defining the bilinear forms
$b_{s}(\cdot ,\cdot )  :  \X_p \times \bS_e  \longrightarrow
\mathbb{R}$ and $a^s_p(\cdot ,\cdot )  :  \bS_e \times \bS_e
 \longrightarrow \mathbb{R}$,
\begin{align*}
	b_{s}(\v_s,\bt_e)  :=  (\D(\v_s), \bt_e)_{\O_p}, 
\qquad a^s_p(\bs_e,\bt_e)  :=   (A\bs_e,\bt_e)_{\O_p},
\end{align*}
we obtain the following weak formulation: \textit{given 
$\f_{f} \in
  W^{1,1}(0, T; \V_{f}')$, $\f_{p} \in W^{1,1}(0, T; \X_{p}')$, $q_f
  \in W^{1,1}(0, T; W_{f}')$, $q_p \in W^{1,1}(0, T; W_{p}')$, and
  $p_p(0) = p_{p,0} \in W_p$, 
$\bs_e(0) = A^{-1}\D(\bbeta_{p,0}) \in \bS_e$, for $t
  \in (0,T]$, find $(\u_f(t), p_f(t),\u_p(t),p_p(t),\u_s(t),
\bs_e(t),\lam(t))\, \in$ $ L^{\infty}(0,T;\V_f)\times
L^{\infty}(0,T;W_f)\times L^{\infty}(0,T;\V_p) \times
W^{1,\infty}(0,T;W_p) \times L^{\infty}(0,T;\X_p) \times
W^{1,\infty}(0,T;\bS_e) \times L^{\infty}(0,T;\Lambda)$, such that
for all $\v_f \in \V_f$, $w_f \in W_f$, $\v_p \in \V_p$, $w_p \in
W_p$, $\v_s \in \X_p$, $\btau_e \in \bS_e$, 
$\mu \in \Lambda$,}
\begin{align}
	&  
	a_{f}(\u_{f},\v_{f}) + a^d_{p}(\u_{p},\v_{p}) 
	+ a_{BJS}(\u_{f}, \u_{s};\v_{f},\v_{s}) + b_f(\v_{f},p_{f})+ b_p(\v_{p},p_{p})\nonumber
	\\
	& \qquad\quad   + 
	\alpha_p b_p(\v_{s},p_{p}) +b_{s}(\v_s,\bs_e)+ b_{\Gamma}(\v_{f},\v_{p},\v_{s};\lam)  = (\f_{f},\v_{f})_{\O_f} + (\f_{p},\v_{s})_{\O_p}, \label{h-cts-alt-1}\\
	& \left( s_0 \d_t p_{p},w_{p}\right)_{\O_p} + a^s_{p}(\d_t\bs_{e},\bt_{e})
	- \alpha_p b_p\left(\u_{s},w_{p}\right) - b_p(\u_{p},w_{p}) -b_{s}(\u_s,\bt_e)- b_f(\u_{f},w_{f}) \nonumber
	\\
	& \qquad\quad 
	= (q_{f},w_{f})_{\O_f} + (q_{p},w_{p})_{\O_p}, \label{h-cts-alt-2} \\
	& b_{\Gamma}\left(\u_{f},\u_{p},\u_{s};\mu\right) = 0. \label{h-cts-alt-gamma}
\end{align}
We can write \eqref{h-cts-alt-1}--\eqref{h-cts-alt-gamma} in an operator notation
as a degenerate evolution problem in a mixed form:
\begin{align}
	\frac{\d}{\d t} \Ec_1 \q(t) +\mathcal{A}\q(t)+ \Bc's(t) &= \f(t) && \text{in } \Q', \label{eq:1}\\
	\frac{\d}{\d t} \Ec_2 s(t) - \Bc\q(t) +\mathcal{C}s(t) &= g(t) && \text{in } S', \label{eq:2}
\end{align}
where we define $\Q$, the space of generalized displacement variables, as  
\begin{align*}
	\Q =\Big\{ \q=(\v_p,\v_s,\v_f) \, \in\,  \V_p \times \X_p \times \V_f \Big\},
\end{align*}
and, similarly, the space $S$, consisting of generalized stress variables, as
\begin{align*}
	S =\left\{ s=(w_p,\btau_e,w_f,\mu)\in W_p\times\bS_e \times W_f \times \Lambda \right \}.
\end{align*}
The spaces $\Q$ and $S$ are equipped with norms:
\begin{align*}
	\|\q\|_{\Q} &= \|\v_p\|_{\V_p} + \|\v_s\|_{\X_p} + \|\v_f\|_{\V_f}, \\
	\|s\|_{S} &= \|w_p\|_{W_p} +\|\btau_e\|_{\bS_e} + \|w_f\|_{W_f} 
+ \|\mu\|_{\Lambda}.
\end{align*}
The operators $ \Ac:\Q \rightarrow \Q',\,  \Bc: \Q \rightarrow S',\, \mathcal{C}: S \rightarrow S'$, and the functionals $\f \in \Q'$, $g \in S'$ are defined as follows:
$$
\Ac = 
\begin{pmatrix}
\nu_{eff}\kappa^{-1}  & 0  & 0 \\
0 & \alpha_{BJS} \, \gamma_t' \, \nu_I \, \sqrt{\kappa^{-1}} \, \gamma_t & -\alpha_{BJS} \, \gamma_t' \, \nu_I \, \sqrt{\kappa^{-1}} \, \gamma_t \\
0 & -\alpha_{BJS} \, \gamma_t' \, \nu_I \, \sqrt{\kappa^{-1}} \, \gamma_t  & 2\nu\D:\D+ \alpha_{BJS} \, \gamma_t' \, \nu_I \, \sqrt{\kappa^{-1}} \, \gamma_t 
\end{pmatrix},  
$$
$$
\Bc = 
\begin{pmatrix}
\nabla \cdot & \alpha_p\nabla \cdot  & 0  \\
0& -\D & 0 \\
0 & 0 &\nabla \cdot \\
\gamma_n & \gamma_n  & \gamma_n 
\end{pmatrix},  \quad 
\mathcal{C} = 
\begin{pmatrix}
0 & 0  & 0 & 0  \\
0 & 0 & 0 & 0 \\
0 & 0 & 0 & 0 \\
0 & 0 & 0 & 0 
\end{pmatrix},
\quad
\f = \begin{pmatrix}
{\bf 0} \\ \f_p \\ \f_f
\end{pmatrix},
\quad
g = \begin{pmatrix}
q_p \\ 0 \\ q_f \\ 0
\end{pmatrix},
$$
where $\gamma_t$ and $\gamma_n$ denote the tangential and normal trace operators, 
respectively, and $\gamma_t'$ is the adjoint operator of $\gamma_t$.
The operators $\Ec_1: \Q \rightarrow \Q',\,\Ec_2: S \rightarrow S' $ are given by:
$$
\Ec_1 = 
\begin{pmatrix}
0 & 0  & 0 \\
0 & 0 & 0 \\
0 & 0 & 0 
\end{pmatrix},  \quad 
\Ec_2 = 
\begin{pmatrix}
s_0 & 0  & 0 & 0 \\
0 & A & 0 & 0 \\
0 & 0 & 0 & 0 \\
0 & 0 & 0 & 0
\end{pmatrix}.
$$
%
%
\section{Well-posedness of the model}
In this section we establish the solvability of
\eqref{h-cts-1}-\eqref{h-cts-gamma}. We start with the analysis of the
alternative formulation \eqref{h-cts-alt-1}--\eqref{h-cts-alt-gamma}.

\subsection{Existence and uniqueness of a solution of the alternative formulation}
We first explore important properties of the operators introduced at the end of 
Section~\ref{sec:var}.  
\begin{lemma} \label{lmapropAE}
The operator $\Bc$ and its adjoint $\Bc'$ are bounded and continuous. 
Moreover, there exist constants $\beta_1,\, \beta_2>0$ such that
	\begin{align}
	\inf_{{\bf 0}\neq({\bf 0},\v_s, {\bf 0})\in \Q}\sup_{(0,\bt_e,0,0)\in S}\frac{b_s(\v_{s}, \bt_e)}{\|({\bf 0},\v_s, {\bf 0}) \|_{\Q}\|(0,\bt_e,0,0)\|_{S}} \geq \beta_1, \label{inf-sup-elasticity}\\
	\inf_{0\neq(w_p,{\bf 0},w_f,\mu)\in S}\sup_{(\v_p,{\bf 0},\v_f)\in \Q}\frac{b_f(\v_f,w_f)+b_p(\v_p,w_p)+ b_{\Gamma}(\v_f,\v_p,{\bf 0};\mu)}{\|(\v_p,{\bf 0},\v_f)\|_{\Q}\|(w_p,{\bf 0},w_f,\mu)\|_S } \geq \beta_2 \label{inf-sup-stokes-darcy}.
	\end{align}
\end{lemma}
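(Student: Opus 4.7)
I would check each of the component bilinear forms of $\Bc$ separately. The forms $b_f(\v_f,w_f)=-(\div \v_f,w_f)_{\O_f}$ and $b_p(\v_p,w_p)=-(\div \v_p,w_p)_{\O_p}$ are bounded on $\V_f\times W_f$ and $\V_p\times W_p$ respectively by Hölder's inequality (with exponents $r,r'$), using the definitions of $\|\v_f\|_{\V_f}$ and $\|\v_p\|_{\V_p}$ that include the divergence in $L^r$. The form $b_s(\v_s,\btau_e)=(\D(\v_s),\btau_e)_{\O_p}$ is bounded on $\X_p\times\bS_e$ by Cauchy--Schwarz. For the interface form $b_\Gamma(\v_f,\v_p,\v_s;\mu)$, I would use the trace theorem: $\v_f\cdot\n_f\in W^{1/r',r}(\d\O_f)$ and $\v_s\cdot\n_p\in W^{1/r',r}(\d\O_p)$ (using $\X_p\subset (W^{1,r}(\O_p))^d$), while $\v_p\cdot\n_p\in W^{-1/r,r}(\Gam_{fp})$ by the argument recalled from \cite{ervin2009coupled}; each of these pairs with $\mu\in\Lambda=W^{1/r,r'}(\Gam_{fp})$ by the standard $W^{1/r,r'}/W^{-1/r,r}$ duality. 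Continuity of $\Bc$ and $\Bc'$ then follows from linearity and boundedness.

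\textbf{Inf-sup \eqref{inf-sup-elasticity}.} This one is short: given $\v_s\in\X_p$, take the test tensor $\btau_e=\D(\v_s)\in\bS_e$. Then $b_s(\v_s,\btau_e)=\|\D(\v_s)\|_{L^2(\O_p)}^{2}$ and $\|\btau_e\|_{\bS_e}=\|\D(\v_s)\|_{L^2(\O_p)}$, so the ratio equals $\|\D(\v_s)\|_{L^2(\O_p)}$. Korn's inequality (applicable because $\v_s$ vanishes on $\Gam_p$ with $|\Gam_p|>0$) gives $\|\D(\v_s)\|_{L^2(\O_p)}\geq C_K\|\v_s\|_{H^1(\O_p)}=C_K\|\v_s\|_{\X_p}$, and we take $\beta_1=C_K$.

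\textbf{Inf-sup \eqref{inf-sup-stokes-darcy}.} This is the Stokes--Darcy--Lagrange-multiplier inf-sup in the Sobolev setting. My plan is to construct, for a given $(w_p,{\bf 0},w_f,\mu)\in S$, a test triple $(\v_p,{\bf 0},\v_f)\in\Q$ that activates all three contributions in a controlled way. I would proceed in three steps: (a) use the surjectivity of the normal-trace map $\V_p\to W^{-1/r,r}(\Gam_{fp})$ (which, under the assumption $\mbox{dist}(\Gam_p^D,\Gam_{fp})\geq s>0$, is the key tool proven in \cite{ervin2009coupled}) to pick $\v_p^{(1)}\in\V_p$ whose normal trace on $\Gam_{fp}$ realizes the $\Lambda$-norm of $\mu$ via the duality pairing, with $\|\v_p^{(1)}\|_{\V_p}\lesssim\|\mu\|_\Lambda^{r'-1}$; (b) use a Bogovskii-type operator in $W^{1,r}(\O_f)$ to produce $\v_f^{(2)}\in\V_f$ with $\v_f^{(2)}=\mathbf{0}$ on all of $\d\O_f$ (including $\Gam_{fp}$) and $\div\v_f^{(2)}$ proportional to $|w_f|^{r'-2}w_f$, so that $b_f(\v_f^{(2)},w_f)\gtrsim\|w_f\|_{W_f}^{r'}$ without disturbing the interface pairing; (c) solve an analogous Darcy problem on $\O_p$ for $\v_p^{(2)}\in\V_p$ with $\v_p^{(2)}\cdot\n_p=0$ on $\d\O_p$ and $\div\v_p^{(2)}$ proportional to $|w_p|^{r'-2}w_p$, controlling $\|w_p\|_{W_p}^{r'}$ while preserving the interface pairing already captured in step~(a). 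The test triple is $(\v_p^{(1)}+\v_p^{(2)},{\bf 0},\v_f^{(2)})$; by a standard rescaling/combination argument the sum of the three contributions dominates a multiple of $\|(w_p,{\bf 0},w_f,\mu)\|_S$ times the norm of the test triple.

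\textbf{Main obstacle.} The only delicate point is step~(a): constructing a $\V_p$-velocity with prescribed normal trace on $\Gam_{fp}$ and controlled $L^r(\div;\O_p)$-norm, in the Sobolev (rather than Hilbert) setting. This is where the geometric assumption $\mbox{dist}(\Gam_p^D,\Gam_{fp})\geq s>0$ is essential, because it guarantees that the trace of an $L^r(\div)$ function on $\Gam_{fp}$ is a well-defined functional in $W^{-1/r,r}(\Gam_{fp})$ and the corresponding lift operator is bounded; I would invoke (and, if necessary, lightly adapt) the lifting result from \cite{ervin2009coupled}. Coordinating steps (a)--(c) so that the three contributions add rather than cancel is then a straightforward Fortin-type combination.
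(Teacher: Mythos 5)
Your treatment of the boundedness of $\Bc$ and of the inf-sup condition \eqref{inf-sup-elasticity} matches the paper essentially verbatim: the paper bounds each component bilinear form of $\Bc$ by H\"older/Cauchy--Schwarz plus the trace characterization of $\v_p\cdot\n_p|_{\Gam_{fp}}$ from \cite{ervin2009coupled}, and for \eqref{inf-sup-elasticity} it makes exactly the choice $\btau_e = \D(\v_s)$ followed by Korn's inequality. No issues there.

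For \eqref{inf-sup-stokes-darcy} the paper does not give a constructive proof; it simply cites \cite{ervin2009coupled}, where this Stokes--Darcy--Lagrange-multiplier inf-sup is established under a mean-zero restriction on $W_f\times W_p$, and then remarks that the mean-zero restriction can be dropped because $|\Gam_p^D|>0$. Your constructive sketch has a genuine gap precisely at the point that remark is addressing. In step (b) you ask for $\v_f^{(2)}\in\V_f$ that vanishes on \emph{all} of $\d\O_f$ (including $\Gam_{fp}$) and whose divergence is a positive multiple of $|w_f|^{r'-2}w_f$; but then $\int_{\O_f}\div\v_f^{(2)} = \int_{\d\O_f}\v_f^{(2)}\cdot\n_f = 0$, so such a $\v_f^{(2)}$ exists only if $\int_{\O_f}|w_f|^{r'-2}w_f\,d\x=0$, which fails for generic $w_f\in L^{r'}(\O_f)$ (e.g., $w_f\equiv 1$). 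The same obstruction arises in step (c): asking for $\v_p^{(2)}\cdot\n_p=0$ on \emph{all} of $\d\O_p$ again forces $\int_{\O_p}\div\v_p^{(2)}=0$, which is incompatible with $\div\v_p^{(2)}\propto |w_p|^{r'-2}w_p$ unless $w_p$ happens to satisfy the corresponding orthogonality. This is not a matter of Fortin-style coordination between (a)--(c); it is a structural compatibility condition baked into the way you chose the boundary traces.

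The correct fix is exactly the one the paper alludes to: one must allow nonzero flux somewhere. For the Darcy part, since $\V_p$ only demands $\v_p\cdot\n_p=0$ on $\Gam_p^N$ and $|\Gam_p^D|>0$, you can let $\v_p^{(2)}$ carry flux through $\Gam_p^D$ to absorb the mean of $|w_p|^{r'-2}w_p$ without touching $\Gam_{fp}$. For the Stokes part the situation is more delicate: $\V_f$ does not allow flux through $\Gam_f$, so the only outlet is $\Gam_{fp}$ itself; hence $\v_f^{(2)}$ must in general carry a compensating normal trace on $\Gam_{fp}$, and this contribution to $b_\Gamma$ has to be cancelled (or estimated) by a corresponding adjustment of the Darcy-side test velocity (using its $\Gam_p^D$ freedom). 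This interplay, rather than the trace lift alone, is where the assumption $|\Gam_p^D|>0$ earns its keep; your ``main obstacle'' paragraph misidentifies the hard point. Either supply this balancing construction explicitly, or do what the paper does and invoke the \cite{ervin2009coupled} inf-sup together with a short argument that adding the $\Gam_p^D$ degree of freedom removes the mean-zero restriction.
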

\begin{proof}
The operator $\Bc$ is linear and satisfies for all 
$\q=(\v_p,\v_s,\v_f)\in \Q$ and $s=(w_p,\bt_e,w_f,\mu)\in S$,
	\begin{align*}
	\Bc(\q)(s) &= b_f(\v_{f},w_{f})+ b_p(\v_{p},w_{p}) + \alpha_p b_p(\v_{s},w_{p}) 
+ b_{s}(\v_s,\bt_e)+ b_{\Gamma}(\v_{f},\v_{p},\v_{s};\mu) \\
	& \leq \|\nabla \cdot \v_f\|_{L^r(\O_f)}\|w_f\|_{L^{r'}(\O_f)} +  \|\nabla \cdot \v_p\|_{L^r(\O_p)}\|w_p\|_{L^{r'}(\O_p)} +\alpha_p \|\nabla \cdot \v_s\|_{L^r(\O_p)}\|w_p\|_{L^{r'}(\O_p)}\\
	& \quad + \|\D(\v_s)\|_{L^2(\O_p)}\|\bt_e\|_{L^2(\O_p)}  
+ \| \v_f\cdot \n_f +(\v_p + \v_s)\cdot \n_p\|_{W^{-1/r,r}(\Gam_{fp})}\|\mu\|_{W^{1/r,r'}(\Gam_{fp})} \\
	&\leq C\Big(\|\v_f\|_{W^{1,r}(\O_f)}\|w_f\|_{L^{r'}(\O_f)} +\| \v_p\|_{L^r(\mbox{div};\O_p)}\|w_p\|_{L^{r'}(\O_p)} + \| \v_s\|_{H^1(\O_p)}\|w_p\|_{L^{r'}(\O_p)} \\
	& \quad
+ \| \v_s\|_{H^1(\O_p)}\|\bt_e\|_{L^{2}(\O_p)} 
+ \|\v_f\|_{W^{1,r}(\O_f)}\|\mu\|_{W^{1/r,r'}(\Gam_{fp})} + \| \v_p\|_{L^r(\mbox{div};\O_p)}\|\mu\|_{W^{1/r,r'}(\Gam_{fp})} \\
	& \quad 
+ \| \v_s\|_{H^1(\O_p)}\|\mu\|_{W^{1/r,r'}(\Gam_{fp})} \Big)  \leq C \|\q\|_{\Q}\|s\|_S,
	\end{align*}
	which implies that $\Bc$ and $\Bc'$ are bounded and continuous.
	
Next, let ${\bf 0}\neq({\bf 0},\v_s,{\bf 0})\in \Q$ be given. We
choose $\btau_e=\D(\v_s)$ and, using Korn's inequality, 
$ \| \D(\w) \|_{L^2(\O_p)} \ge C_{K,p} \|\w\|_{H^1(\O_p)}$, for $\w \in \X_p$,
we obtain 
	\begin{align*}
	\frac{b_s(\v_s,\bt_e) }{\|\btau_e\|_{L^2(\O_p)}} = \frac{\|\D(\v_s)\|^2_{L^2(\O_p)} }{\|\D(\v_s)\|_{L^2(\O_p)}} = \|\D(\v_s)\|_{L^2(\O_p)} \geq C_{K,p}\|\v_s\|_{H^1(\O_p)}.
	\end{align*} 
	Therefore, \eqref{inf-sup-elasticity} holds.
	
Finally, we note that \eqref{inf-sup-stokes-darcy} was proven in
\cite{ervin2009coupled} in the case of velocity boundary conditions with
restricted mean value of $W_f\times W_p$. However, it can be
shown that the result holds with no restriction on $W_f\times W_p$ since $|\Gam_D|>0$.
\end{proof}

Let us define, for $\v_f\in \V_f$ and $\v_s\in \X_p$,
\begin{align*}
|\v_f-\v_s|_{BJS} = \sum_{j=1}^{d-1}\alpha_{BJS}\|(\v_f-\v_s)\cdot \t_{f,j}\|_{L^r(\Gam_{fp})}.
\end{align*}

\begin{lemma} \label{lmapropBBa}
The operators $\Ac$ and $\Ec_2$ are bounded, continuous, and monotone.
In addition, the following continuity and coercivity estimates hold
with constants $c_f, \, \bar c_f, \, C_f, \, c_p, \, \bar c_p, \, C_p,
\, c_I, \, \bar c_I, \, C_I > 0$ for all $\u_f,\v_f\in \V_f,\,
\u_p,\v_p\in \V_p$ and $\u_s,\v_s\in \X_p$,
\begin{align}
& c_f\|\v_f\|^{r}_{W^{1,r}(\O_f)} - c*\bar c_f \leq a_f(\v_f,\v_f), \quad
a_f(\u_f,\v_f) \leq C_f \|\u_f\|^{r/r'}_{W^{1,r}(\O_f)}\|\v_f\|_{W^{1,r}(\O_f)}, 
\label{a-f-bounds}\\
& c_p\|\v_p\|^{r}_{L^r(\O_p)} - c*\bar c_p \leq a_p^d(\v_p,\v_p), \quad
a_p^d(\u_p,\v_p) \leq C_p \|\u_p\|^{r/r'}_{L^r(\O_p)}\|\v_p\|_{L^r(\O_p)}, 
\label{a-p-bounds}\\
& c_I|\v_f-\v_s|^{r}_{BJS} - c*\bar c_I \leq a_{BJS}(\v_f,\v_s;\v_f,\v_s), \,
a_{BJS}(\u_f,\u_s;\v_f,\v_s)\leq C_I |\u_f-\u_s|^{r/r'}_{BJS}\|\v_f-\v_s\|_{L^{r}(\Gam_{fp})},
\label{a-bjs-bounds}
\end{align}
where $c$ is the constant from \eqref{B1}--\eqref{B2}.
\end{lemma}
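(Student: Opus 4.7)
The plan is to treat the three nonlinear forms $a_f$, $a_p^d$, and $a_{BJS}$ one at a time, observing that each fits the common template $(\G(\x),\y)_{\Oc}$ with $\G(\x) = g(\x)\x$: for $a_f$ take $\G(\D) = 2\nu(\D)\D$ on $\Oc = \O_f$; for $a_p^d$ take $\G(\u_p) = \nu_{eff}(\u_p)\kappa^{-1}\u_p$ on $\Oc = \O_p$; and for $a_{BJS}$ take $\G(\w_t) = \alpha_{BJS}\,\nu_I(\w_t)\sqrt{\kappa^{-1}}\,\w_t$ on $\Oc = \Gam_{fp}$, where $\w_t = \sum_{j}(\w\cdot\t_{f,j})\t_{f,j}$ is the tangential component of $\w = \u_f - \u_s$. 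With this reformulation, all six estimates reduce to applications of the pointwise assumptions \eqref{B1}--\eqref{B2} and their integral consequences \eqref{monotonicity}--\eqref{continuity}, specialized to $\t = {\bf 0}$.

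For the continuity upper bounds I H\"older-split, for instance, $a_f(\u_f,\v_f) \le \|\G(\D(\u_f))\|_{L^{r'}(\O_f)}\,\|\D(\v_f)\|_{L^r(\O_f)}$. Assumption \eqref{B2} with $\x = {\bf 0}$ gives $|\G(\h)| \le C_4|\h|/(c+|\h|^{2-r})$; bounding $|\h|^{2-r}/(c+|\h|^{2-r}) \le 1$ yields the pointwise estimate $|\G(\h)| \le C|\h|^{r-1}$, and hence $\|\G(\D(\u_f))\|_{L^{r'}} \le C\|\D(\u_f)\|_{L^r}^{r-1} = C\|\D(\u_f)\|_{L^r}^{r/r'}$ since $(r-1)r' = r$. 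The identical computation on $\O_p$ for $a_p^d$, and on $\Gam_{fp}$ for $a_{BJS}$ (using that $|\v_f-\v_s|_{BJS}$ is equivalent up to constants to the $L^r(\Gam_{fp})$-norm of the tangential trace), delivers all three continuity bounds.

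For the coercivity lower bounds I apply \eqref{monotonicity} with $\s$ equal to the argument of $\G$ and $\t = {\bf 0}$, obtaining, for example,
\begin{equation*}
a_f(\v_f,\v_f) \;\ge\; C_5\,\frac{\|\D(\v_f)\|_{L^r(\O_f)}^2}{c + \|\D(\v_f)\|_{L^r(\O_f)}^{2-r}}.
\end{equation*}
The crux is then a one-variable case split: setting $x := \|\D(\v_f)\|_{L^r(\O_f)}$, if $x^{2-r} \ge c$ then $x^2/(c+x^{2-r}) \ge \tfrac12 x^r$, while if $x^{2-r} < c$ then $x^r \le c^{r/(2-r)}$ and the ratio is nonnegative, so in both cases $x^2/(c+x^{2-r}) \ge \tfrac12 x^r - \tfrac12 c^{r/(2-r)}$. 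This gives $a_f(\v_f,\v_f) \ge (C_5/2)\|\D(\v_f)\|_{L^r(\O_f)}^r - c\,\bar c_f$, after absorbing the $c^{r/(2-r)-1}$ factor into $\bar c_f$, and Korn's inequality on $W^{1,r}(\O_f)$ (valid since $\v_f$ vanishes on $\Gam_f$) then replaces $\|\D(\v_f)\|_{L^r}$ by $\|\v_f\|_{W^{1,r}(\O_f)}$. The analogue for $a_p^d$ needs no Korn step, and the analogue for $a_{BJS}$ is already phrased in the tangential-trace norm, so the bound emerges directly in terms of $|\v_f-\v_s|_{BJS}$.

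The stated boundedness, continuity, and monotonicity of $\Ac$ then follow from its block structure together with the three estimates just established (monotonicity of each nonlinear block being precisely \eqref{monotonicity}), while the corresponding properties of $\Ec_2$ are immediate from the boundedness of $s_0$ and the uniform positive-definiteness of the compliance tensor $A$ in \eqref{defAten}. The only genuine obstacle I anticipate is the passage from the ratio bound produced by \eqref{monotonicity} to a clean $\|\cdot\|^r$ lower bound when $c > 0$; the one-dimensional case split above handles it, with the penalty taking exactly the subtracted form $c\,\bar c$ already present in the statement.
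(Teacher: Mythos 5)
Your proposal is correct and, for the coercivity lower bounds — the delicate part of the lemma — it is precisely the paper's argument: apply \eqref{monotonicity} with $\t = \mathbf{0}$, then split on whether $\|\D(\v_f)\|_{L^r(\O_f)}^{2-r}$ exceeds $c$, absorb the small-argument case into the subtracted constant $c\,\bar c_f = C_5 c^{r/(2-r)}$ (writing $c^{r/(2-r)} = c\cdot c^{(2r-2)/(2-r)}$), and finish with Korn's inequality; likewise for $a_p^d$ (no Korn step) and $a_{BJS}$ (phrased directly in $|\cdot|_{BJS}$). The one point of genuine divergence is your continuity argument: you bound $a_f(\u_f,\v_f)$ by a direct H\"older split $\|\G(\D(\u_f))\|_{L^{r'}(\O_f)}\,\|\D(\v_f)\|_{L^r(\O_f)}$ and then use the pointwise estimate $|\G(\h)|\le C|\h|^{r-1}$ from \eqref{B2} with $\x=\mathbf{0}$ to get $\|\G(\D(\u_f))\|_{L^{r'}} \le C\|\D(\u_f)\|_{L^r}^{r/r'}$, whereas the paper first invokes the integral continuity estimate \eqref{continuity} (with $\s=\D(\u_f)$, $\t=\mathbf{0}$, $\w=\D(\v_f)$) and only then applies the same pointwise estimate to simplify the resulting inner-product factor. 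Your route is strictly more elementary — it bypasses the Sandri-type inequality \eqref{continuity} entirely — at the cost of not being the argument one would reuse verbatim in the error analysis, where \eqref{continuity} with both $\s$ and $\t$ nonzero is genuinely needed; the paper's choice keeps the two applications of \eqref{continuity} uniform. Both are valid, and the rest of your reasoning (block-structure conclusion for $\Ac$, boundedness of $s_0$ and positive-definiteness of $A$ from \eqref{defAten} for $\Ec_2$) matches the paper.
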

\begin{proof}
The operator $\Ec_2$ is linear and, using \eqref{defAten}, it satisfies
	\begin{align*}
	\Ec_{2}(s)(t) &= (s_0 p_p,w_p)_{\O_p} +(A\bs_e,\bt_e)_{\O_p} \leq C \left(\|p_p\|_{L^2(\O_p)}\|w_p\|_{L^2(\O_p)} + \|\bs_e\|_{L^2(\O_p)} \|\bt_e\|_{L^2(\O_p)}\right),  \\
	\Ec_{2}(s)(s) &= (s_0 p_p,p_p)_{\O_p} +(A\bs_e,\bs_e)_{\O_p} \geq C\left(\|p_p\|^2_{L^2(\O_p)} + \|\bs_e\|^2_{L^2(\O_p)} \right), \quad \forall s,t \in S,  
	\end{align*}
which imply that $\Ec_2$ is bounded, continuous and monotone.
The continuity and monotonicity of the operator $\Ac$
follow from \eqref{B1}--\eqref{B2}, see \cite{ervin2009coupled} and
\cite[Example 5.a, p.59]{showalter2013monotone}.

For the continuity of $a_f(\cdot,\cdot)$, we apply 
\eqref{continuity} with $\G(\x) = \nu(\x)\x$,
$\s=\D(\u_f),\, \t={\bf 0}$ and $\w =\D(\v_f)$:
\begin{align*}
a_f(\u_f,\v_f) \leq 2 \,C_6\left\|\frac{|\D(\u_f)|}{c+|\D(\u_f)|} \right\|^{\frac{2-r}{r}}_{L^{\infty}(\O_f)} 
(|\nu(\D(\u_f))\D(\u_f)|,|\D(\u_f)|)_{\O_f}^{1/r'}\|\D(\v_f)\|_{L^r(\O_f)}.
\end{align*}
Using \eqref{B2} with $\x = {\bf 0}$, $\h = \D(\u_f)$, we also have
	\begin{align*}
	|\nu(\D(\u_f))\D(\u_f)| \leq C_4 \frac{|\D(\u_f)|}{c+ |\D(\u_f)|^{2-r}} \leq C_4 \frac{|\D(\u_f)|^{r-1}}{c|\D(\u_f)|^{r-2}+ 1} \leq C_4 |\D(\u_f)|^{r-1}.
	\end{align*}
Combining the above two estimates, we obtain
	\begin{align*}
	a_f(\u_f,\v_f) \leq C \|\D(\u_f)\|_{L^r(\O_f)}^{r/r'}\|\D(\v_f)\|_{L^r(\O_f)} \leq C_f\|\u_f\|^{r/r'}_{W^{1,r}(\O_f)}\|\v_f\|_{W^{1,r}(\O_f)}.
	\end{align*}
To establish the coercivity bound for $a_f( \cdot , \cdot)$ given in
\eqref{a-f-bounds} we consider three cases.

(i) $c = 0$. From \eqref{monotonicity} we have
\begin{align}
  a_f(\v_f,\v_f) 
  \ge  2 \,C_{5}  \frac{\| \D(\v_f) \|_{L^r(\O_f)}^{2}}{\| \D(\v_f) \|_{L^r(\O_f)}^{2-r}}
   =  2\,C_{5} \| \D(\v_f) \|_{L^r(\O_f)}^{r} 
  \ge 2\,C_{5} C_{K,f}^{r} \, \|\v_f\|_{W^{1,r}(\O_f)}^{r},
  \label{eqrrt1}
\end{align}
where $C_{K,f}$ is the constant arising in Korn's inequality, 
$  \| \D(\w) \|_{L^r(\O_f)} \ge C_{K,f} \|\w\|_{W^{1,r}(\O_f)}$, for $\w \in \V_f$. 

(ii) $c \ne 0$ and $\v_f \in \V_f$ with  $\| \D(\v_f) \|_{L^r(\O_f)}^{2-r} \ge c$. 
Then from \eqref{monotonicity}  we have
\begin{align}
  a_f(\v_f,\v_f) 
  \ge  2\,C_{5}  \frac{\| \D(\v_f) \|_{L^r(\O_f)}^{2}}{c +  \| \D(\v_f) \|_{L^r(\O_f)}^{2-r}}
  \ge  C_5 \| \D(\v_f) \|_{L^r(\O_f)}^{r}
   \ge  C_5 C_{K}^{r} \, \|\v_f\|_{W^{1,r}(\O_f)}^{r}.
  \label{eqrrt2}
\end{align}

(iii) $c \ne 0$ and $\v_f \in \V_f$ with  $\| \D(\v_f) \|_{L^r(\O_f)}^{2-r} < c$. Then
$C_{K}^{r} \| \v_f \|_{W^{1,r}(\O_f)}^{r} \le \| \D(\v_f) \|_{L^r(\O_f)}^{r}  \le c^{r / (2 - r)}$.
Denote the coercivity constant from \eqref{eqrrt2} as
$c_{f} = C_5 C_{K}^{r}$ and let $\bar{c}_{f} = C_5 c^{(2r - 2) / (2 - r)}$. Now,
\[
    c_{f}  \| \v_f \|_{W^{1,r}(\O_f)}^{r}  \le C_5 \| \D(\v_f) \|_{L^r(\O_f)}^{r} 
    \le C_5  c^{r / (2 - r)}  = c \bar{c}_{f},
\]
hence
\begin{equation}
c_{f}  \| \v_f \|_{W^{1,r}(\O_f)}^{r}  - c \bar{c}_{f} \le 0 \le a_f(\v_f,\v_f).
  \label{eqrrt3}
\end{equation}
Combining \eqref{eqrrt1}-\eqref{eqrrt3} yields the coercivity estimate given in 
\eqref{a-f-bounds}. The reader is also
referred to \cite{necas1996weak}, where a similar result is proven
under slightly different assumptions, which are satisfied by the Carreau model 
with $\nu_{\infty} = 0$. 

The continuity and coercivity bounds \eqref{a-p-bounds} and \eqref{a-bjs-bounds} 
follow in the same way.
\end{proof}

\begin{remark}
The system \eqref{eq:1}--\eqref{eq:2} is a degenerate evolution
problem in a mixed form, which fits the structure of the problems
studied in \cite{Showalter-SIAMMA}. However, the analysis in
\cite{Showalter-SIAMMA} is restricted to the Hilbert space setting and
needs to be extended to the Sobolev space setting. Furthermore, the
analysis in \cite{Showalter-SIAMMA} is for monotone
operators, see \cite{showalter2013monotone}, and it is restricted to
$\f \in \Q_1'$ and $g \in S_2'$, where $\Q_1'$ and $S_2'$ are the spaces
$\Q$ and $S$ with semi-scalar products arising from $\Ec_1$ and $\Ec_2$,
respectively. In our case this translates to $\f_p = \f_f = {\bf 0}$ and
$q_f = 0$. To avoid this restriction, we take a different approach, based on
reformulating the problem as a parabolic problem for $p_p$ and $\bs_e$. The well 
posedness of the resulting problem is established using the coercivity of
the functionals established in Lemma~\ref{lmapropBBa}.
\end{remark}

Denote by $W_{p,2}$ and $\bS_{e,2}$ the closure of the spaces $W_p$ and $\bS_e$ with 
respect to the norms
\begin{align*}
\|w_p\|^2_{W_{p,2}} \, := \, (s_0w_p,w_p)_{L^2(\O_p)},\quad \|\bt_e\|^2_{\bS_{e,2}} \, := \, (A\bt_e,\bt_e)_{L^2(\O_p)} \, .
\end{align*}
Note that $W_{p,2} = L^{2}(\Omega_{p})$, and $\bS_{e,2} = \bS_{e}$. 
Let $S_2=W_{p,2}\times \bS_{e,2}$. We introduce the
inner product $(\cdot , \cdot)_{S_2}$ defined by $\left( (w_1 , \bt_1) , (w_2 , \bt_2) \right)_{S_2} := 
(s_0w_1 , w_2)_{L^2(\O_p)} \, + \, (A\bt_1 , \bt_2)_{L^2(\O_p)}$.

Define the domain 
\begin{align}
D \, := \, &
\left\{ (p_p,\bs_e)\in W_p\times \bS_e:\, \mbox{ for given } 
(\f_f,\f_p,q_f)\in \V_f'\times \X_p'\times W_f'   \right. \nonumber\\
& \quad \quad  \quad \exists \, ( (\u_{p},\u_{s},\u_{f}), p_f,\lam)\in \Q\times W_f\times \Lambda \mbox{ such that } 
\forall ( (\v_{p},\v_{s},\v_{f}), (w_p, \btau_e, w_f, \mu)) \in \Q\times S \mbox{:} \nonumber\\
&  
a_{f}(\u_{f},\v_{f}) + a^d_{p}(\u_{p},\v_{p}) 
+ a_{BJS}(\u_{f}, \u_{s};\v_{f},\v_{s}) + b_f(\v_{f},p_{f})+ b_p(\v_{p},p_{p})\nonumber
\\
& \qquad\quad   + 
\alpha_p b_p(\v_{s},p_{p}) +b_{s}(\v_s,\bs_e)+ b_{\Gamma}(\v_{f},\v_{p},\v_{s};\lam)  = (\f_{f},\v_{f})_{\O_f} + (\f_{p},\v_{s})_{\O_p}, \label{h-cts-alt-dom-1}\\
& \left( s_0 p_{p},w_{p}\right)_{\O_p} + a^s_{p}(\bs_{e},\bt_{e})
- \alpha_p b_p\left(\u_{s},w_{p}\right) - b_p(\u_{p},w_{p}) -b_{s}(\u_s,\bt_e)- b_f(\u_{f},w_{f}) \nonumber
\\
& \qquad\quad 
= (q_{f},w_{f})_{\O_f} + (s_0\bar{g}_{p},w_{p})_{\O_p}+ (A\bar{g}_{e},\bt_{e})_{\O_p}, \label{h-cts-alt-dom-2} \\
& b_{\Gamma}\left(\u_{f},\u_{p},\u_{s};\mu\right) = 0 , \label{h-cts-alt-dom-gamma} \\
& \left. \quad \quad  \quad \mbox{for some } (\bar{g}_p,\bar{g}_e)\in W_{p,2}^{\prime}\times \bS_{e,2}^{\prime} \,  \right\} 
\subset W_{p,2}\times \bS_{e,2}  \, . \label{deffD}
\end{align}
We note that \eqref{h-cts-alt-dom-1}--\eqref{h-cts-alt-dom-gamma} can
be written in an operator form as
\begin{align*}
\Ac \q + \Bc's & = \f  \quad \text{in } \Q', \\
- \Bc\q + \Ec_2 s & = \bar g \quad \text{in } S',
\end{align*}
where $\bar g \in S'$ is the functional on the right hand side of \eqref{h-cts-alt-dom-2}.

Note that there may be more than one 
$(\bar{g}_p,\bar{g}_e)\in W_{p,2}^{\prime}\times \bS_{e,2}^{\prime}$ that generate the same $ (p_p,\bs_e) \in D$.
In view of this, we introduce the multivalued operator $\mathcal{M}(\cdot)$ with domain $D$ defined by
\begin{equation}
\mathcal{M}( (p_p,\bs_e) ) \ := \ 
\left\{ (\bar{g}_p - p_p \, , \, \bar{g}_e - \bs_e) \, : \, (p_p,\bs_e)
\mbox{ satisfies (4.9)--(4.11) for }
 (\bar{g}_p,\bar{g}_e)\in W_{p,2}^{\prime}\times \bS_{e,2}^{\prime} \right\} \, .
\label{defmcM}
\end{equation}
Associated with  $\mathcal{M}(\cdot)$ we have the \textit{relation} $\mathcal{M} \subset \left( W_p\times \bS_e \right) \times 
 \left( W_{p,2} \times \bS_{e,2} \right)^{\prime}$ with domain $D$, where $[\v , \f] \in \mathcal{M}$
if $\v \in D$ and $\f \in \mathcal{M}( \v )$. \\

Consider the following problem: Given $h_p \in W^{1,1}(0, T; W_{p,2}')$ and
$h_e \in W^{1,1}(0, T; \bS_{e,2}')$, find $(p_p,\bs_e) \in D$ satisfying
\begin{align}
\frac{d}{dt}\begin{pmatrix}
p_p(t) \\
\bs_e(t)
\end{pmatrix} + \mathcal{M} \begin{pmatrix}
p_p(t) \\
\bs_e(t)
\end{pmatrix} \ni \begin{pmatrix}
h_p(t) \\
h_e(t)
\end{pmatrix}. \label{ode-problem}
\end{align}

A key result that we use to establish the existence of a solution to
\eqref{h-cts-alt-1}--\eqref{h-cts-alt-gamma} is the following theorem; for
details see \cite[Theorem 6.1(b)]{showalter2013monotone}.
\begin{theorem}  \label{thmsho61b}
	Let the linear, symmetric and monotone operator $\mathcal{N}$ be given for the real vector space $E$ to its algebraic
	dual $E^{*}$, and let $E_{b}'$ be the Hilbert space which is the dual of $E$ with the seminorm
	\[            | x |_{b} \ = \ (\mathcal{N} x \, (x))^{1/2}  \, , \quad x \in E \, .  \]
	Let $\mathcal{M} \subset E \times E_{b}'$ be a relation with domain 
$D = \{ x \in E \, : \, \mathcal{M}(x) \neq \emptyset \}$. \\
	Assume $\mathcal{M}$ is monotone and $Rg( \mathcal{N} + \mathcal{M}) \, = \, E_{b}'$. Then, for each $u_{0} \in D$ and
	for each $f \in W^{1 , 1}(0 , T ; E_{b}')$, there is a solution $u$ of
	\[     \frac{d}{dt}\left( \mathcal{N} u(t) \right) \ + \ \mathcal{M}\left( u(t) \right) \ni f(t) \, , \quad 0 < t < T \, , \]
	with
	\[ \mathcal{N} u \in W^{1 , \infty}(0 , T; E_{b}') \, , \ \ \ u(t) \in D \, , \ \ \ \mbox{\textit{for all} } 0 \le t \le T \, , 
	\ \ \mbox{ \textit{ and } } \mathcal{N}u(0) \, = \, \mathcal{N}u_{0} \, .   \]
\end{theorem}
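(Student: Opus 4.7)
The plan is to prove Theorem \ref{thmsho61b} by an implicit time discretization (Rothe method), deriving a priori estimates from the monotonicity of $\mathcal{N}$ and $\mathcal{M}$, and then passing to the limit using a Minty-type argument. Specifically, for each $N\in\mathbb{N}$, set $h = T/N$, $t_n = nh$, and $f_n = \frac{1}{h}\int_{t_{n-1}}^{t_n} f(s)\,ds$, and define iteratively $u_n^N \in D$ solving
\[
  \mathcal{N}u_n^N + h\,\mathcal{M}(u_n^N) \ni \mathcal{N}u_{n-1}^N + h f_n, \qquad u_0^N = u_0.
\]

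The first step is to establish solvability of each stage. The hypothesis $Rg(\mathcal{N}+\mathcal{M}) = E_b'$, together with the monotonicity of both operators, implies a maximality property for $\mathcal{M}$ relative to the pairing induced by $E_b'$. I would then upgrade this to $Rg(\mathcal{N} + h\mathcal{M}) = E_b'$ for every $h>0$, either by a direct rescaling (replacing $\mathcal{N}$ by $h^{-1}\mathcal{N}$, whose induced seminorm differs from $|\cdot|_b$ only by a constant) or by invoking a Browder--Minty surjectivity theorem for the perturbation of a maximal monotone relation by a linear monotone map.

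Second, I would derive two layers of a priori estimates. Pairing the discrete equation with $u_n^N$ and exploiting the symmetry of $\mathcal{N}$ yields the telescoping identity $\tfrac{1}{2}(|u_n^N|_b^2 - |u_{n-1}^N|_b^2) \le \tfrac{1}{2}|u_n^N - u_{n-1}^N|_b^2$, which combined with monotonicity of $\mathcal{M}$ (tested against an arbitrary fixed $[v,w] \in \mathcal{M}$) and discrete Gr\"onwall delivers a uniform bound on $|u_n^N|_b$. A second round, obtained by subtracting consecutive discrete equations and testing with $u_n^N - u_{n-1}^N$, exploits the monotonicity of $\mathcal{M}$ to control the backward differences $(\mathcal{N}u_n^N - \mathcal{N}u_{n-1}^N)/h$ in $E_b'$ by $\|f'\|_{L^1(0,T;E_b')}$ and the initial data — this is where the $W^{1,1}$ regularity of $f$ is used and ultimately delivers the $W^{1,\infty}(0,T;E_b')$ regularity of $\mathcal{N}u$.

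Third, I would introduce piecewise-constant and piecewise-linear interpolants $\overline{U}^N$ and $U^N$ in time and pass to $N\to\infty$. The uniform bounds furnish, by Banach--Alaoglu, a subsequence along which $\mathcal{N}U^N \rightharpoonup^\ast \mathcal{N}u$ in $W^{1,\infty}(0,T;E_b')$, with the pointwise initial condition $\mathcal{N}u(0) = \mathcal{N}u_0$ inherited from the $C([0,T];E_b')$ embedding. The main obstacle is identifying $\mathcal{M}(u)$ as the weak limit of the discrete multivalued selections $g_n^N \in \mathcal{M}(u_n^N)$, since $\mathcal{M}$ is merely monotone and typically not weak-to-weak closed. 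I would overcome this in the standard way: for any $[v,w]\in\mathcal{M}$, the monotonicity inequality $\langle g_n^N - w,\, u_n^N - v\rangle \ge 0$ can be integrated in time, the limit taken using the extracted convergences together with $\limsup \int \langle g_n^N, u_n^N\rangle \le \int \langle f - (\mathcal{N}u)', u\rangle$, and finally the arbitrariness of $[v,w]$ combined with maximality yields $f - (d/dt)\mathcal{N}u \in \mathcal{M}(u)$, i.e.\ the desired inclusion, with $u(t)\in D$ a.e.
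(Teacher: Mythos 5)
This theorem is not proved in the paper: it is quoted from \cite[Theorem~6.1(b)]{showalter2013monotone} and used as a black box, so your outline reconstructs a result the authors deliberately externalize. Showalter's route passes to the Hilbert space $W_b$ obtained by completing $E$ modulo $\ker|\cdot|_b$, observes that $Rg(\mathcal{N}+\mathcal{M})=E_b'$ is precisely the assertion that the induced relation on $E_b'$ is maximal monotone, and then applies a Brezis-type existence theorem for maximal monotone operators in Hilbert space with $W^{1,1}$ forcing and $u_0\in D$. Your Rothe discretization is a genuine alternative with comparable reach, and its skeleton --- stage solvability, energy estimates, a difference estimate for $W^{1,\infty}$ regularity, interpolants, Minty identification --- is the standard one for degenerate parabolic inclusions.

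Two steps as written do not hold, though. The ``direct rescaling'' justification of $Rg(\mathcal{N}+h\mathcal{M})=E_b'$ fails: replacing $\mathcal{N}$ by $h^{-1}\mathcal{N}$ turns the hypothesis $Rg(\mathcal{N}+\mathcal{M})=E_b'$ into the different, unassumed statement $Rg(h^{-1}\mathcal{N}+\mathcal{M})=E_b'$; multiplying the seminorm by a constant does not transfer a range condition stated for the original $\mathcal{N}$. What is actually needed is your second alternative made precise: after passing to $E_b'$, the range hypothesis encodes maximal monotonicity of the induced relation, and Minty's resolvent theorem then yields $Rg(I+h\widetilde{\mathcal M})=E_b'$ for every $h>0$. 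Drop the rescaling and keep only that argument. Secondly, the first a priori estimate is misstated. Pairing the discrete equation with $u_n^N$ and using the symmetry of $\mathcal{N}$ gives
\[
\bigl\langle \mathcal{N}u_n^N - \mathcal{N}u_{n-1}^N,\, u_n^N\bigr\rangle
= \tfrac12\bigl(|u_n^N|_b^2 - |u_{n-1}^N|_b^2\bigr) + \tfrac12\,|u_n^N-u_{n-1}^N|_b^2
\ \ge\ \tfrac12\bigl(|u_n^N|_b^2 - |u_{n-1}^N|_b^2\bigr),
\]
so the telescoping difference is controlled by the increment from \emph{below}, not above, and $\langle w_n^N,u_n^N\rangle$ has no sign. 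To close the estimate one pairs instead with $u_n^N-v$ for a fixed $[v,w]\in\mathcal{M}$, so that monotonicity gives $\langle w_n^N-w,\,u_n^N-v\rangle\ge 0$ and a discrete Gronwall applies. Finally, your Minty passage delivers $u(t)\in D$ only for a.e.\ $t$, while the theorem asserts it for all $t\in[0,T]$; that gap is closed by the $E_b'$-continuity coming from the established $W^{1,\infty}$ bound. These are real gaps in the argument as written, but each is repairable.
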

Using Theorem \ref{thmsho61b}, we can show that the problem
\eqref{h-cts-alt-1}--\eqref{h-cts-alt-gamma} is well-posed.
\begin{theorem}\label{well-pos-alt}
For each $\f_{f} \in
  W^{1,1}(0, T; \V_{f}')$, $\f_{p} \in W^{1,1}(0, T; \X_{p}')$, $q_f
  \in W^{1,1}(0, T; W_{f}')$, \linebreak $q_p \in W^{1,1}(0, T; W_{p}')$, and
  $p_p(0) = p_{p,0} \in W_p$, $\bs_e(0) = A^{-1}\D(\bbeta_{p,0}) \in \bS_e$,
there exists a solution of
\eqref{h-cts-alt-1}--\eqref{h-cts-alt-gamma} with 
$(\u_f$,
$p_f$, $\u_p$, $p_p$, $\u_s$, $\bs_e$, $\lam) \in
L^{\infty}(0,T;\V_f)\times L^{\infty}(0,T;W_f)\times
L^{\infty}(0,T;\V_p) \times W^{1,\infty}(0,T;W_p) \times
L^{\infty}(0,T;\X_p) \times W^{1,\infty}(0,T;\bS_e) \times
L^{\infty}(0,T;\Lambda)$.
\end{theorem}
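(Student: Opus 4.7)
The plan is to follow the strategy laid out in the preceding remark: recast \eqref{h-cts-alt-1}--\eqref{h-cts-alt-gamma} as the implicit evolution problem \eqref{ode-problem} for the variables $(p_p,\bs_e)$ through the relation $\mathcal{M}$ defined in \eqref{defmcM}, and then invoke Theorem~\ref{thmsho61b} on $E = W_{p,2}\times \bS_{e,2}$ with $\mathcal{N}$ taken as the Riesz identification induced by $(\cdot,\cdot)_{S_2}$ (so $E_b' = W_{p,2}'\times \bS_{e,2}'$). Once a solution $(p_p,\bs_e)$ of \eqref{ode-problem} is produced with $h_p = s_0 \d_t p_{p,0}$-type right-hand side shifted by the data $q_p,q_f,\f_f,\f_p$, the auxiliary unknowns $(\u_f,p_f,\u_p,\u_s,\lambda)$ are recovered as the companion solution of the stationary system \eqref{h-cts-alt-dom-1}--\eqref{h-cts-alt-dom-gamma} at each time, and \eqref{h-cts-alt-2} is obtained by differentiating the $(s_0 p_p, A\bs_e)$ pairing in time.

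To apply Theorem~\ref{thmsho61b} I must check two things: (i) monotonicity of $\mathcal{M}$, and (ii) the range condition $\mathrm{Rg}(\mathcal{N} + \mathcal{M}) = E_b'$. For (i), given two pairs $[(p_p^i,\bs_e^i),(\bar g_p^i - p_p^i,\bar g_e^i-\bs_e^i)]\in \mathcal{M}$ with corresponding $(\u_p^i,\u_s^i,\u_f^i,p_f^i,\lambda^i)$, I would subtract the two copies of \eqref{h-cts-alt-dom-1}--\eqref{h-cts-alt-dom-gamma}, test with the difference of the solutions, and cancel the skew terms $b_f$, $b_p$, $\alpha_p b_p$, $b_s$, $b_\Gamma$. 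What survives is the monotone contribution from $a_f$, $a_p^d$, $a_{BJS}$ (all monotone by \eqref{B1} and Lemma~\ref{lmapropBBa}) plus the non-negative $\Ec_2$ part, which yields exactly $(\bar g_p^1-\bar g_p^2,p_p^1-p_p^2)_{W_{p,2}} + (\bar g_e^1-\bar g_e^2,\bs_e^1-\bs_e^2)_{\bS_{e,2}}\ge 0$, i.e.\ monotonicity of $\mathcal{M}$.

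The main obstacle is the range condition (ii), which amounts to proving solvability of the nonlinear stationary saddle-point problem \eqref{h-cts-alt-dom-1}--\eqref{h-cts-alt-dom-gamma} for arbitrary data $(\f_f,\f_p,q_f)$ and $(\bar g_p,\bar g_e)$. My approach is to eliminate the stress/pressure variables in favor of the velocity-displacement triple $\q=(\u_p,\u_s,\u_f)$: the second equation, tested against $w_p$ alone and $\btau_e$ alone, forces $s_0 p_p = s_0 \bar g_p + \alpha_p \nabla\!\cdot\!\u_s + \nabla\!\cdot\!\u_p$ and $A\bs_e = A\bar g_e + \D(\u_s)$ in the respective $L^2$-closures, while the $w_f$ component imposes $\nabla\!\cdot\!\u_f = -q_f - (\text{something from } \bar g)$ weakly. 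Substituting these back into \eqref{h-cts-alt-dom-1} produces a reduced nonlinear problem of the form $\widetilde{\Ac}(\q) + \widetilde{\Bc}' (p_f,\lambda) = \widetilde{\f}$, $\widetilde{\Bc}\q = \widetilde{g}$, where $\widetilde{\Ac}$ inherits coercivity and monotonicity from $\Ac$ augmented by the quadratic terms $\tfrac{1}{s_0}\|\alpha_p\nabla\!\cdot\!\u_s + \nabla\!\cdot\!\u_p\|^2 + \|A^{-1/2}\D(\u_s)\|^2$ (which in particular controls $\|\u_s\|_{\X_p}$ via Korn), while $\widetilde{\Bc}$ still satisfies the inf-sup condition \eqref{inf-sup-stokes-darcy} for the pair $(p_f,\lambda)$. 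Existence then follows from a standard Browder--Minty / nonlinear Babu\v{s}ka--Brezzi argument applied in the Banach space setting of $\V_p\times\X_p\times\V_f$, as in \cite{ervin2009coupled}: one works on the kernel of $\widetilde{\Bc}$ (after lifting a particular solution of the divergence/trace constraints using the inf-sup condition), applies the Browder--Minty theorem to the monotone, coercive, continuous, bounded operator $\widetilde{\Ac}$ restricted to the kernel, and then recovers $(p_f,\lambda)$ from the inf-sup condition.

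Finally, to conclude Theorem~\ref{well-pos-alt}, I take initial data $(p_{p,0},\bs_e(0))$, verify $(p_{p,0},\bs_e(0))\in D$ by solving one stationary problem at $t=0$, and apply Theorem~\ref{thmsho61b} with $f(t) = (h_p(t),h_e(t))$ chosen so that the evolution \eqref{ode-problem} matches \eqref{h-cts-alt-2} after the pressure/stress substitutions. This yields $(p_p,\bs_e)\in W^{1,\infty}(0,T;W_{p,2}'\times \bS_{e,2}')$ with values in $D$, and the companion variables $(\u_f,p_f,\u_p,\u_s,\lambda)$ obtained pointwise in time belong to $L^{\infty}$ in time into their respective spaces by the uniform stationary bounds; differentiating the $(s_0 p_p,w_p) + (A\bs_e,\btau_e)$ term then recovers \eqref{h-cts-alt-2} from \eqref{h-cts-alt-dom-2} applied at $t$ with $(\bar g_p,\bar g_e) = (h_p - p_p,h_e-\bs_e)$.
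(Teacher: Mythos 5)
Your overall framing is correct and matches the paper's structure: reduce to the evolution relation $\mathcal{M}$ on $W_{p,2}\times\bS_{e,2}$, verify monotonicity and the range condition, invoke Theorem~\ref{thmsho61b}, then reconstruct the companion unknowns and show the evolution problem produces a solution of \eqref{h-cts-alt-1}--\eqref{h-cts-alt-gamma}. Your monotonicity argument and your recovery of \eqref{h-cts-alt-2} from \eqref{h-cts-alt-dom-2} are essentially the paper's Lemmas~\ref{lmaLmte} and \ref{lem:equiv}.

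However, your treatment of the range condition (that $D$ is nonempty) contains a genuine gap, and it is precisely the place where the Banach-space setting bites. You propose to eliminate $(p_p,\bs_e)$ via $s_0 p_p = s_0\bar g_p - \alpha_p\nabla\cdot\u_s - \nabla\cdot\u_p$ and $A\bs_e = A\bar g_e + \D(\u_s)$ (note your sign on the divergence terms is also reversed) and then run a Browder--Minty argument on the reduced operator $\widetilde{\Ac}$ over $\Q = \V_p\times\X_p\times\V_f$. But the resulting quadratic term $\tfrac{1}{s_0}\|\alpha_p\nabla\cdot\u_s + \nabla\cdot\u_p\|_{L^2(\O_p)}^2$ is not finite on $\Q$: the space $\V_p = L^r(\mathrm{div};\O_p)$ gives only $\nabla\cdot\u_p \in L^r(\O_p)$ with $r<2$, and $L^r(\O_p)\not\subset L^2(\O_p)$. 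Equivalently, the Schur complement requires applying $\Ec_2^{-1}$ (with $W_{p,2}=L^2$) to $\Bc\q$, whose $W_p$-component $\nabla\cdot\v_p + \alpha_p\nabla\cdot\v_s$ need not lie in $L^2$; so $\widetilde{\Ac}$ is simply not well-defined on the chosen function space, and Browder--Minty cannot be invoked. Similarly, the eliminated $p_p$ lands a priori only in $L^r(\O_p)$, not in $W_p = L^{r'}(\O_p)$, so the required membership $(p_p,\bs_e)\in W_p\times\bS_e$ does not follow from the formula alone.

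The paper sidesteps exactly this obstruction by regularizing rather than eliminating: it augments the operators by $\epsilon$ times the $r$-natural duality maps $R_s$, $R_p$, $L_f$, $L_p$, $L_\Gamma$ defined in \eqref{defRs}--\eqref{defLp} and \eqref{defLg} (e.g.\ $R_p(\u_p)(\v_p)=(|\nabla\cdot\u_p|^{r-2}\nabla\cdot\u_p,\nabla\cdot\v_p)$), which make the global operator coercive and well-defined on $\Q\times S$ without any $L^2$-divergence assumption, applies Browder--Minty to the regularized system \eqref{regularized-problem-1}--\eqref{regularized-problem-2}, then establishes $\epsilon$-uniform bounds through the inf-sup conditions \eqref{inf-sup-elasticity} and \eqref{inf-sup-stokes-darcy} (this is where control on $\|\u_{s,\epsilon}\|_{H^1}$, $\|p_{p,\epsilon}\|_{L^{r'}}$, $\|p_{f,\epsilon}\|_{L^{r'}}$, $\|\lam_\epsilon\|$, and $\|\nabla\cdot\u_{p,\epsilon}\|_{L^r}$ comes from), and finally passes to the weak limit $\epsilon\to 0$ using the monotonicity-plus-continuity argument to identify $\Ac\q=\zeta$. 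If you want to repair your proof, you should replace the substitution/Schur-complement step with this regularization-and-limit argument; your uses of the inf-sup conditions and of the monotonicity of $a_f$, $a_p^d$, $a_{BJS}$ would then reappear in the $\epsilon$-uniform estimates rather than in a reduced operator.
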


To prove Theorem \ref{well-pos-alt} we proceed in the following manner. \\
\textbf{Step 1}. (Section \ref{sssec_one}) Establish that the domain $D$ 
given by \eqref{deffD} is nonempty. \\
	\textbf{Step 2}. (Section \ref{sssec_two}) Show solvability of the parabolic problem \eqref{ode-problem}. \\
	\textbf{Step 3}. (Section \ref{sssec_three}) Show that the original problem 
	\eqref{h-cts-alt-1}--\eqref{h-cts-alt-gamma} is a special case of \eqref{ode-problem}.	
	
	Each of the steps will be covered in details in the corresponding subsection.

\subsubsection{Step 1: The Domain $D$ is nonempty}
\label{sssec_one}
We begin with a number of preliminary results used in the proof.
We first introduce operators that will be used to regularize the problem. Let
$R_{s} \, : \, X_{p} \longrightarrow X_{p}'$,
$R_{p} \, : \, V_{p} \longrightarrow V_{p}'$, $L_{f} \, : \, W_{f}
\longrightarrow W_{f}'$, $L_{p} \, : \, W_{p} \longrightarrow W_{p}'$ be defined by
\begin{align}
R_{s}(\u_s)(\v_s) &:= r_s(\u_s,\v_s) = (\D(\u_s), \D(\v_s))_{\O_p}, \label{defRs} \\
R_{p}(\u_p)(\v_p) &:= r_p(\u_p,\v_p) = (|\nabla \cdot \u_p|^{r-2}\nabla \cdot \u_p, \nabla \cdot \v_p)_{\O_p} \, ,
\label{defRp}  \\
L_{f}(p_f)(w_f) &:=   l_f(p_f,w_f) = (|p_f|^{r'-2}p_f,w_f)_{\O_f}, \label{defLf} \\
L_{p}(p_p)(w_p) &:= l_p(p_p,w_p) = (|p_p|^{r'-2}p_p,w_p)_{\O_p} \, .  \label{defLp}
\end{align}

\begin{lemma} \label{lmapropRL}
The operators $R_{s}$, $R_{p}$, $L_{f}$, and $L_{p}$ are bounded,
continuous, coercive, and monotone.
\end{lemma}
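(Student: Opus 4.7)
The plan is to handle $R_s$ separately, as a symmetric linear bilinear form, and to treat $R_p$, $L_f$, $L_p$ as a single family of nonlinear operators of $s$-Laplacian type, i.e.\ of the form $(u,v) \mapsto \int \phi_s(Fu)\, Fv$ where $\phi_s(t) := |t|^{s-2}t$, $s \in \{r, r'\}$, and $F$ is either the divergence (for $R_p$, with $s=r$) or the identity (for $L_f$ and $L_p$, with $s=r'$).

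For $R_s$ all four properties are essentially immediate. Boundedness and continuity follow from Cauchy--Schwarz, $|R_s(\u_s)(\v_s)| \le \|\D(\u_s)\|_{L^2(\O_p)} \|\D(\v_s)\|_{L^2(\O_p)} \le \|\u_s\|_{\X_p}\|\v_s\|_{\X_p}$. Monotonicity is equivalent, by linearity, to nonnegativity of the quadratic form $R_s(\u_s)(\u_s) = \|\D(\u_s)\|^2_{L^2(\O_p)}$. Coercivity is Korn's inequality on $\X_p$, already invoked in the proof of Lemma~\ref{lmapropAE}: $R_s(\u_s)(\u_s) \ge C_{K,p}^2 \|\u_s\|^2_{\X_p}$.

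For the nonlinear trio I would first record three elementary facts about $\phi_s$ for $s > 1$: the growth bound $|\phi_s(t)| = |t|^{s-1}$, pointwise continuity on $\mathbb{R}$, and pointwise monotonicity $(\phi_s(a)-\phi_s(b))(a-b) \ge 0$. From these I deduce (i) boundedness by H\"older's inequality with exponents $s$ and $s'$, giving $|R(u)(v)| \le \|Fu\|^{s-1}_{L^s(\Oc)}\|Fv\|_{L^s(\Oc)}$; (ii) monotonicity by integrating the pointwise inequality, $(R(u)-R(v))(u-v) = \int (\phi_s(Fu)-\phi_s(Fv))(Fu-Fv) \ge 0$; (iii) continuity, from the continuity of the Nemytskii map $\phi_s : L^s(\Oc) \to L^{s'}(\Oc)$ composed with the bounded linear operator $F$; and (iv) coercivity from $R(u)(u) = \|Fu\|^{s}_{L^s(\Oc)}$. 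For $L_f$ and $L_p$ this last identity yields coercivity with respect to the full $W_f$ and $W_p$ norms (with exponent $r'$); for $R_p$ it controls only the divergence seminorm of $\V_p$, which is the appropriate notion here since $R_p$ will ultimately be paired with the $L^r(\O_p)$-coercive Darcy term $a^d_p$ from Lemma~\ref{lmapropBBa}.

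No step presents a real obstacle; the only item requiring a little care is the Nemytskii continuity in (iii), which is the standard consequence of the growth bound $|\phi_s(t)| \le |t|^{s-1}$, the pointwise continuity of $\phi_s$, and the dominated convergence theorem applied along convergent subsequences in $L^s(\Oc)$.
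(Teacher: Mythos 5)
Your proof is correct and establishes everything the lemma claims; it differs from the paper's own argument in two respects worth noting. The paper's proof is terse: it lists the continuity and coercivity bounds (the same Hölder and Korn bounds you derive, noting $r/r' = r-1$), then states that these imply boundedness, continuity, and coercivity, and for monotonicity defers to a quantitative inequality of the type \eqref{monotonicity} ``established in a way similar to the Power law model,'' citing Sandri. Your argument is more elementary on the monotonicity side — you only need the pointwise inequality $(\phi_s(a)-\phi_s(b))(a-b)\ge 0$ for $\phi_s(t)=|t|^{s-2}t$, integrated over the domain, rather than a Sandri-type lower bound — and this weaker conclusion suffices, since the lemma claims only monotonicity and the downstream applications (building the monotone operator $\mathcal{O}$ for Browder–Minty) need nothing stronger. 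Your argument is also more explicit on the continuity side: the paper's duality bounds give boundedness of the maps but do not by themselves show that $u\mapsto R_p(u)$, etc.\ are continuous; you correctly supply this via continuity of the Nemytskii operator $\phi_s:L^s\to L^{s'}$ (Krasnoselskii) composed with the bounded linear $F$, which fills a small gap the paper passes over. Your observation that the coercivity of $R_p$ controls only the divergence seminorm is also a correct and useful clarification of what ``coercive'' means here; this is indeed all that is used in \eqref{eqwws1}, where $r_p$ is paired with $a_p^d$ to control the full $\V_p$ norm.
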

\begin{proof}
The operators satisfy the following continuity and coercivity bounds:
\begin{align*} \allowdisplaybreaks
& R_s(\u_s)(\v_s) \leq \|\u_s\|_{H^1(\O_p)}\|\v_s\|_{H^1(\O_p)}, 
&& R_s(\u_s)(\u_s) \geq C_{K,p}\|\u_s\|^2_{H^1(\O_p)}, && \forall \u_s,\v_s \in \X_p,\\
& R_p(\u_p)(\v_p) \leq \|\nabla \cdot \u_p\|^{r/r'}_{L^r(\O_p)}\|\nabla \cdot \v_p\|_{L^r(\O_p)}, 
&& R_p(\u_p)(\u_p) \geq \|\nabla \cdot \u_p\|^r_{L^r(\O_p)}, && \forall \u_p,\v_p \in \V_p, \\
& L_f(p_f)(w_f) \leq \|p_f\|^{r'/r}_{L^{r'}(\O_f)}\|w_f\|_{L^{r'}(\O_f)}, 
&& L_f(p_f) (p_f)\geq \|p_f\|^{r'}_{L^{r'}(\O_f)}, && \forall p_f,w_f \in W_f, \\
& L_p(p_p)(w_p) \leq \|p_p\|^{r'/r}_{L^{r'}(\O_p)}\|w_p\|_{L^{r'}(\O_p)}, 
&& L_p(p_p)(p_p) \geq \|p_p\|^{r'}_{L^{r'}(\O_p)}, && \forall p_p,w_p \in W_p.
\end{align*}
The coercivity bounds follow directly from the definitions, using
Korn's inequality for $R_s$. The continuity bounds follow from the
Cauchy-Schwarz or H\"{o}lder's inequalities. The above bounds imply
that the operators are bounded, continuous, and coercive. Monotonicity
follows from bounds similar to \eqref{monotonicity}, which can be
established in a way similar to the Power law model \cite{Sandri}.
\end{proof} 
It was shown in \cite{ervin2009coupled} that there exists a bounded
extension of $\lam$ from $W^{1/r,r'}(\Gam_{fp})$ to
$W^{1/r,r'}(\partial \O_p)$, defined as $E_{\Gam}\lam
= \gamma \phi(\lam)$, where $\gamma$ is the trace operator from
$W^{1,r'}(\O_p)$ to $W^{1/r,r'}(\partial \O_p)$ and $\phi(\lam) \in
W^{1,r'}(\O_p)$ is the weak solution of
\begin{align}
-\nabla \cdot |\nabla \phi(\lam)|^{r'-2}\nabla \phi(\lam)  & = 0 , \quad\mbox{in } \O_p, \label{aux-prob-1}\\
\phi(\lam)& = 
\lam , \quad \mbox{on } \Gam_{fp} , \label{aux-prob-2} \\
|\nabla \phi(\lam)|^{r'-2}\nabla \phi(\lam)\cdot \n & = 0 , \quad \mbox{on } \partial \O_p \setminus\Gam_{fp}  
\label{aux-prob-3} \, .
\end{align}
We have the following equivalence of norms statement.
\begin{lemma} \label{lmaequiv1}
	For $\lam$ $\in$ $W^{1/r,r'}(\Gam_{fp})$ and $\phi(\lam)$ 
defined by \eqref{aux-prob-1}--\eqref{aux-prob-3}, there exists $c_1$, $c_2 > 0$ such that
	\begin{equation}
c_1 \| \phi(\lam)\|_{W^{1,r'}(\O_p)} \le
	\|\lam\|_{W^{1/r,r'}(\Gam_{fp})} \le c_2 \| \phi(\lam)\|_{W^{1,r'}(\O_p)}.
	\label{eqvlman}
	\end{equation}
\end{lemma}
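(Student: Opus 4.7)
\medskip
\noindent\textbf{Proof proposal.} The plan is to prove the two inequalities in \eqref{eqvlman} separately, treating them as standard consequences of the trace theory in $W^{1,r'}(\O_p)$ together with the variational characterization of the $r'$-Laplacian boundary value problem \eqref{aux-prob-1}--\eqref{aux-prob-3}.

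For the upper bound $\|\lam\|_{W^{1/r,r'}(\Gam_{fp})} \le c_2 \| \phi(\lam)\|_{W^{1,r'}(\O_p)}$, I would invoke the classical trace theorem: since $\partial\O_p$ is Lipschitz, the trace operator $\gamma:W^{1,r'}(\O_p) \to W^{1-1/r',\,r'}(\partial\O_p) = W^{1/r,r'}(\partial\O_p)$ is continuous. Applied to $\phi(\lam)$, combined with the fact that the restriction $W^{1/r,r'}(\partial\O_p) \to W^{1/r,r'}(\Gam_{fp})$ is continuous and that $\gamma\phi(\lam)|_{\Gam_{fp}} = \lam$ by \eqref{aux-prob-2}, the upper bound follows immediately.

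For the lower bound $c_1\|\phi(\lam)\|_{W^{1,r'}(\O_p)} \le \|\lam\|_{W^{1/r,r'}(\Gam_{fp})}$, I would use the variational characterization of \eqref{aux-prob-1}--\eqref{aux-prob-3} as a minimization problem. The weak formulation is equivalent to: find $\phi(\lam) \in W^{1,r'}(\O_p)$ with $\phi(\lam)|_{\Gam_{fp}} = \lam$ minimizing $J(\phi) := \frac{1}{r'}\int_{\O_p} |\nabla\phi|^{r'} dx$ over this affine set (the equivalence is standard since $J$ is strictly convex and its Euler--Lagrange equation is precisely \eqref{aux-prob-1}--\eqref{aux-prob-3}). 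Next, invoke a bounded right inverse of the trace, i.e.\ an extension operator $E: W^{1/r,r'}(\Gam_{fp}) \to W^{1,r'}(\O_p)$ with $E\lam|_{\Gam_{fp}} = \lam$ and $\|E\lam\|_{W^{1,r'}(\O_p)} \le C\|\lam\|_{W^{1/r,r'}(\Gam_{fp})}$. Since $E\lam$ is admissible in the minimization, $\|\nabla\phi(\lam)\|_{L^{r'}(\O_p)} \le \|\nabla E\lam\|_{L^{r'}(\O_p)} \le C\|\lam\|_{W^{1/r,r'}(\Gam_{fp})}$. To control $\|\phi(\lam)\|_{L^{r'}(\O_p)}$, observe that $\phi(\lam) - E\lam \in W^{1,r'}(\O_p)$ has zero trace on $\Gam_{fp}$, so the Poincar\'e--Friedrichs inequality (valid since $|\Gam_{fp}|>0$) yields $\|\phi(\lam) - E\lam\|_{L^{r'}(\O_p)} \le C\|\nabla(\phi(\lam) - E\lam)\|_{L^{r'}(\O_p)}$. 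Triangle inequality then gives $\|\phi(\lam)\|_{W^{1,r'}(\O_p)} \le C\|\lam\|_{W^{1/r,r'}(\Gam_{fp})}$, completing the lower bound.

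The step I expect to require the most care is the construction of the bounded extension $E$ from $W^{1/r,r'}(\Gam_{fp})$ into $W^{1,r'}(\O_p)$ when $\Gam_{fp}$ is only a proper (open) subset of $\partial\O_p$. The space $W^{1/r,r'}(\Gam_{fp})$ does not impose vanishing on $\partial\Gam_{fp}$, so one cannot simply extend by zero to $\partial\O_p$ and lift; instead I would argue locally via a finite cover of $\overline{\Gam_{fp}}$ by Lipschitz charts that flatten the boundary, use a partition of unity, perform standard half-space extensions in each chart, and paste together to land in $W^{1,r'}(\O_p)$. Once $E$ is in hand, the remaining ingredients (Poincar\'e on $W^{1,r'}$ functions with zero trace on a positive-measure subset of $\partial\O_p$ and equivalence of weak solution and minimizer for the $r'$-Laplacian) are standard, which closes the argument.
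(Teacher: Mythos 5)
Your proof is correct but takes a genuinely different route from the paper's for the lower bound $c_1\|\phi(\lam)\|_{W^{1,r'}(\O_p)}\le\|\lam\|_{W^{1/r,r'}(\Gam_{fp})}$. The paper argues by \emph{energy and duality}: multiply \eqref{aux-prob-1} by $\phi(\lam)$, integrate by parts to express $\|\nabla\phi\|_{L^{r'}(\O_p)}^{r'}$ as a pairing $\langle|\nabla\phi|^{r'-2}\nabla\phi\cdot\n,\gamma\phi\rangle_{\partial\O_p}$, then bound the conormal flux in $W^{-1/r,r}(\partial\O_p)$ by $\|\nabla\phi\|_{L^{r'}}^{r'/r}$ via a second integration by parts against an arbitrary $\psi\in W^{1,r'}(\O_p)$. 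Dividing gives $\|\nabla\phi\|_{L^{r'}}\le C\|\lam\|$; this bootstrapping requires the nontrivial fact (cited from \cite{ervin2009coupled}) that $\|\gamma\phi(\lam)\|_{W^{1/r,r'}(\partial\O_p)}\le C\|\lam\|_{W^{1/r,r'}(\Gam_{fp})}$. You instead invoke the \emph{Dirichlet principle}: $\phi(\lam)$ minimizes the $r'$-Dirichlet energy over the affine set $\{\phi:\phi|_{\Gam_{fp}}=\lam\}$, so comparing with the competitor $E\lam$ gives $\|\nabla\phi\|_{L^{r'}}\le\|\nabla E\lam\|_{L^{r'}}\le C\|\lam\|$, and a Poincar\'e inequality for $\phi(\lam)-E\lam$ (which vanishes on $\Gam_{fp}$) controls the $L^{r'}$ piece. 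Your argument is cleaner in that it avoids the exponent bookkeeping and the dual-norm estimate of the flux; the price is the need for a bounded right inverse $E:W^{1/r,r'}(\Gam_{fp})\to W^{1,r'}(\O_p)$, which you rightly flag as the delicate ingredient. That requirement is essentially the same nontrivial extension result that the paper defers to \cite{ervin2009coupled} (boundary extension from $\Gam_{fp}$ to $\partial\O_p$, composed with a trace right inverse), so neither route is free of that input — but your variational comparison makes the rest of the argument more transparent. The upper bound via the trace theorem is the same in both.
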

\begin{proof}
For $\phi \in W^{1,r'}(\O)$, $ |\nabla \phi(\lam)|^{r'-2}\nabla
\phi(\lam) \in L^{r'}(\text{div};\O)$ and, therefore, from
\eqref{aux-prob-1}--\eqref{aux-prob-3}, we have
	\begin{align}
	(|\nabla \phi(\lam)|^{r'-2}\nabla \phi(\lam), \nabla \phi(\lam))_{\O_p} 
	&= \langle |\nabla \phi(\lam)|^{r'-2}\nabla \phi(\lam)\cdot \n,  E_{\Gam}\lam \rangle_{\partial \O_p} \nonumber \\
	&\leq \||\nabla \phi(\lam)|^{r'-2}\nabla \phi(\lam)\cdot \n\|_{W^{-1/r,r}(\partial \O_p)} 
	\|E_{\Gam}\lam\|_{W^{1/r,r'}(\partial \O_p)} \nonumber \\
	&\leq  C \, \||\nabla \phi(\lam)|^{r'-2}\nabla \phi(\lam)\cdot \n\|_{W^{-1/r,r}(\partial \O_p)} \|\lam\|_{W^{1/r,r'}(\Gam_{fp})}. \label{energy-bound}
	\end{align}
	Now, for $\psi \in W^{1,r'}(\O_p)$,
	\begin{align}
	\int_{\partial \O_p} |\nabla \phi(\lam)|^{r'-2}\nabla \phi(\lam)\cdot \n \, \psi \, ds 
	&= \int_{\O_p} \nabla \cdot | \nabla \phi(\lam)|^{r'-2}\nabla \phi(\lam) \, \psi \, d\x 
	\ + \ \int_{\O_p} |\nabla \phi(\lam)|^{r'-2}\nabla \phi(\lam) \cdot \nabla \psi \, d\x    \nonumber \\
	&\leq
	\| |\nabla \phi(\lam)|^{r'-2}\nabla \phi(\lam) \|_{L^{r}(\O_p)} \, \| \psi \|_{W^{1,r'}(\O_p)} \quad \quad
	\mbox{(using \eqref{aux-prob-1})}  \nonumber \\
	&= \| \nabla \phi \|_{L^{r'}(\O_p)}^{r' / r} \,    \| \psi \|_{W^{1,r'}(\O_p)} \, .  \label{rrrty1}
	\end{align}
	Using the fact the trace operator, $\gamma(\cdot)$, is a bounded, linear, bijective 
operator for the quotient space
	$W^{1,q}(\O_p) / W_{0}^{1,q}(\O_p)$ onto $W^{1 - \frac{1}{q} \, , \, q}(\partial \O_p)$ \cite{galdi2011introduction},
	we have
	\begin{align}
	\| |\nabla \phi(\lam)|^{r'-2}\nabla \phi(\lam)\cdot \n \|_{W^{-1/r,r}(\partial \O_p)}
	&= \sup_{\xi \in W^{1/r , r'}(\partial \O_p)}
	\frac{ \langle |\nabla \phi(\lam)|^{r'-2} \nabla \phi(\lam)\cdot \n \, , \xi 
		\rangle_{W^{-1/r,r}(\partial \O_p) \, , W^{1/r , r'}(\partial \O_p)}}{ \| \xi \|_{W^{1/r , r'}(\partial \O_p)}}  \nonumber \\
	&\le C \,   \sup_{\psi  \in W^{1 , r'}(\O_p)}
	\frac{ \int_{\partial \O_p} |\nabla \phi(\lam)|^{r'-2}\nabla \phi(\lam)\cdot \n \, \gamma(\psi) \, ds}%
	{ \| \psi\|_{W^{1 , r'}(\O_p)}}  \nonumber \\
	& \le C \, \| \nabla \phi \|_{L^{r'}(\O_p)}^{r' / r} \, , \mbox{ (using \eqref{rrrty1})}.  \label{kjhte}
	\end{align}
Combining  \eqref{energy-bound} and \eqref{kjhte} with the Poincare inequality
implies that
	\begin{align}
	\|\phi(\lam)\|_{W^{1,r'}(\O)}\leq C \|\lam\|_{W^{1/r,r'}(\Gam_{fp})}. \label{ijn1}
	\end{align}
	On the other hand, due to \eqref{aux-prob-2} and the trace inequality, we have
	\begin{align}
	\|\lam\|_{W^{1/r,r'}(\Gam_{fp})} \leq C \|\phi(\lam)\|_{W^{1,r'}(\O)}.
	\label{ijn2}
	\end{align}
	Combining \eqref{ijn1} and \eqref{ijn2}, we obtain \eqref{eqvlman}.
\end{proof}

Introduce $L_{\Gamma} :  \Lambda \longrightarrow \Lambda'$ defined by
\begin{equation}
L_{\Gamma}(\lambda)(\mu) \, := \, l_{\Gamma}(\lambda , \mu) \ = \
\left( | \nabla \phi(\lambda) |^{r - 2} \, \nabla \phi(\lambda) , \nabla \phi(\mu) \right)_{\Omega_{p}}.
\label{defLg}
\end{equation}

\begin{lemma} \label{lmapropLg}
	The operator $L_{\Gamma}$ is bounded, continuous, coercive, and monotone.
\end{lemma}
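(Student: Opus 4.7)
The plan is to verify each of the four properties of $L_{\Gamma}$ in turn, exploiting the structure of the auxiliary nonlinear boundary value problem \eqref{aux-prob-1}--\eqref{aux-prob-3} defining $\phi(\lambda)$ together with the norm equivalence of Lemma~\ref{lmaequiv1}.

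For boundedness, I would apply H\"older's inequality to the defining integral \eqref{defLg} with conjugate exponents $r$ and $r'$ to obtain an estimate of the form $|L_{\Gamma}(\lambda)(\mu)| \le \| \nabla \phi(\lambda) \|_{L^r(\Omega_p)}^{r-1} \, \| \nabla \phi(\mu) \|_{L^r(\Omega_p)}$. Since $\Omega_p$ is bounded and $1 < r < 2 < r'$, the embedding $L^{r'}(\Omega_p) \hookrightarrow L^r(\Omega_p)$ together with Lemma~\ref{lmaequiv1} gives $\| \nabla \phi(\lambda) \|_{L^r(\Omega_p)} \le C \| \nabla \phi(\lambda) \|_{L^{r'}(\Omega_p)} \le C \|\lambda\|_{\Lambda}$, producing a bound of the form $|L_{\Gamma}(\lambda)(\mu)| \le C \|\lambda\|_{\Lambda}^{r-1} \|\mu\|_{\Lambda}$.

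For continuity, I would proceed in two stages. First, I would verify that the solution map $\lambda \mapsto \phi(\lambda)$ of \eqref{aux-prob-1}--\eqref{aux-prob-3} is continuous from $\Lambda$ to $W^{1,r'}(\Omega_p)$; this is standard for the strictly monotone $r'$-Laplacian, using Lemma~\ref{lmaequiv1} as the stability estimate and the uniform monotonicity inequality to upgrade weak convergence to strong. Second, the Nemytskii-type operator $\mathbf{v} \mapsto |\mathbf{v}|^{r-2}\mathbf{v}$ acts continuously on the appropriate Lebesgue spaces, and composition with $\phi$ then yields continuity of $L_{\Gamma}$ in the operator norm.

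For coercivity I would take $\mu = \lambda$ in \eqref{defLg} to get $L_{\Gamma}(\lambda)(\lambda) = \| \nabla \phi(\lambda) \|_{L^r(\Omega_p)}^{r}$, which is nonnegative and, combined with Lemma~\ref{lmaequiv1} and Poincar\'e's inequality, produces a lower bound scaled appropriately in $\|\lambda\|_{\Lambda}$.

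The main obstacle is monotonicity, since $\phi$ is a nonlinear function of its boundary datum so one cannot simply apply the pointwise monotonicity of $r$-Laplace-type vector fields. The crucial observation is that $\phi(\lambda) - \phi(\mu)$ and $\phi(\lambda - \mu)$ share the same Dirichlet trace $\lambda - \mu$ on $\Gamma_{fp}$ and both satisfy the natural Neumann condition on $\partial\Omega_p \setminus \Gamma_{fp}$, so the difference $\zeta := \phi(\lambda) - \phi(\mu) - \phi(\lambda - \mu)$ lies in the zero-trace subspace of $W^{1,r'}(\Omega_p)$ and is an admissible test function in the weak formulation of \eqref{aux-prob-1}--\eqref{aux-prob-3}. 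Testing the weak formulations for $\phi(\lambda)$ and $\phi(\mu)$ against $\zeta$ and subtracting, the $\zeta$ terms cancel and the combination $(L_{\Gamma}(\lambda) - L_{\Gamma}(\mu))(\lambda - \mu)$ is rewritten as an integral over $\Omega_p$ of the pointwise form $\bigl(|\mathbf{x}|^{r-2}\mathbf{x} - |\mathbf{y}|^{r-2}\mathbf{y}\bigr)\cdot(\mathbf{x} - \mathbf{y})$ with $\mathbf{x} = \nabla\phi(\lambda)$, $\mathbf{y} = \nabla\phi(\mu)$, which is nonnegative by the classical vector monotonicity inequality for $1<r<2$. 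Establishing the right test-function identity and correctly matching the exponents between the weak formulation and the definition \eqref{defLg} is the delicate technical point.
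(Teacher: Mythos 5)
Your $\zeta$-cancellation argument for monotonicity is exactly the right idea, and it is considerably more explicit than what the paper provides (the paper's proof of this lemma is a one-line appeal to Lemma~\ref{lmapropRL} and Lemma~\ref{lmaequiv1}). However, there is a gap caused by the mismatch between the exponent $r-2$ appearing in \eqref{defLg} and the exponent $r'-2$ in the $r'$-Laplacian auxiliary problem \eqref{aux-prob-1}--\eqref{aux-prob-3} defining $\phi$, which you have taken at face value. This is almost certainly a typo in \eqref{defLg} (the exponent should be $r'-2$), but since your sketch proceeds with $r-2$, two of your steps fail. For coercivity, you correctly compute $L_{\Gamma}(\lambda)(\lambda)=\|\nabla\phi(\lambda)\|_{L^r(\O_p)}^r$, but this cannot be bounded below by any positive power of $\|\lambda\|_{\Lambda}$ via Lemma~\ref{lmaequiv1} and Poincar\'e, because Lemma~\ref{lmaequiv1} controls the $W^{1,r'}(\O_p)$-norm and the inclusion $\|\nabla\phi\|_{L^r(\O_p)}\le C\|\nabla\phi\|_{L^{r'}(\O_p)}$ (valid since $\O_p$ is bounded and $r<r'$) goes in the \emph{wrong} direction for a lower bound. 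You can also check that the stated bounds \eqref{L-Gamma-bounds} --- exponent $r'/r$ in the continuity bound and $r'$ in the coercivity bound --- are exactly what falls out of H\"older and Lemma~\ref{lmaequiv1} when the exponent is $r'-2$, and do \emph{not} match the exponents $r-1$ and $r$ produced by your calculation.

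The monotonicity step has the same exponent mismatch. Your decomposition $\nabla\phi(\lambda-\mu)=\nabla\phi(\lambda)-\nabla\phi(\mu)-\nabla\zeta$, with $\zeta$ of zero trace on $\Gamma_{fp}$ an admissible test function, is the key observation, but testing the weak formulations of \eqref{aux-prob-1}--\eqref{aux-prob-3} against $\zeta$ and subtracting yields $(|\nabla\phi(\lambda)|^{r'-2}\nabla\phi(\lambda)-|\nabla\phi(\mu)|^{r'-2}\nabla\phi(\mu),\nabla\zeta)_{\O_p}=0$ with exponent $r'-2$, and this cancels the $\zeta$ contribution in $(L_{\Gamma}(\lambda)-L_{\Gamma}(\mu))(\lambda-\mu)$ only if $L_{\Gamma}$ is built from the \emph{same} vector field. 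With $L_{\Gamma}$ as written using $r-2$, the residual term $-\,(|\nabla\phi(\lambda)|^{r-2}\nabla\phi(\lambda)-|\nabla\phi(\mu)|^{r-2}\nabla\phi(\mu),\nabla\zeta)_{\O_p}$ neither vanishes nor has a sign. Once the exponent in \eqref{defLg} is corrected to $r'-2$, your argument goes through cleanly and yields $(L_{\Gamma}(\lambda)-L_{\Gamma}(\mu))(\lambda-\mu)=(|\nabla\phi(\lambda)|^{r'-2}\nabla\phi(\lambda)-|\nabla\phi(\mu)|^{r'-2}\nabla\phi(\mu),\nabla\phi(\lambda)-\nabla\phi(\mu))_{\O_p}\ge 0$, the coercivity becomes $\|\nabla\phi(\lambda)\|^{r'}_{L^{r'}(\O_p)}\ge c\|\lambda\|_{\Lambda}^{r'}$, and the continuity exponent becomes $r'/r$, all matching \eqref{L-Gamma-bounds}.
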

\begin{proof} 
The result can be obtained in a similar manner to the proof of Lemma
\ref{lmapropRL}, using the equivalence of norms proved in Lemma
\ref{lmaequiv1}. In particular, it holds that
\begin{equation}\label{L-Gamma-bounds}
L_{\Gamma}(\lambda)(\mu) 
\le C_{\Gamma}\|\lam\|_{W^{1/r,r'}(\Gam_{fp})}^{r'/r}\|\mu\|_{W^{1/r,r'}(\Gam_{fp})}, \qquad
L_{\Gamma}(\lambda)(\lambda)  \ge c_{\Gamma} \|\lam\|_{W^{1/r,r'}(\Gam_{fp})}^{r'}.
\end{equation}
\end{proof}    

To establish that the domain $D$ is nonempty we first show that
there exists a solution to a regularization of
\eqref{h-cts-alt-dom-1}--\eqref{h-cts-alt-dom-gamma}. Then a solution
to \eqref{h-cts-alt-dom-1}--\eqref{h-cts-alt-dom-gamma} is established
by analyzing the regularized solutions as the regularization parameter
goes to zero.
\begin{lemma} \label{thmDdef}
	The domain $D$ specified by \eqref{deffD} is nonempty.
\end{lemma}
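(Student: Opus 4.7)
The plan is to establish solvability of the stationary system \eqref{h-cts-alt-dom-1}--\eqref{h-cts-alt-dom-gamma} for at least one admissible right-hand side via a regularization argument, which immediately yields an element of $D$. Fix arbitrary data $\f_f \in \V_f'$, $\f_p \in \X_p'$, $q_f \in W_f'$, $\bar g_p \in W_{p,2}'$, $\bar g_e \in \bS_{e,2}'$. The strategy is (A) solve an $\epsilon$-regularized version using a Browder--Minty surjectivity argument, (B) derive $\epsilon$-uniform a priori bounds, and (C) pass to the limit via a Minty--Browder trick to identify the weak limits of the nonlinear terms.

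\textbf{Step A: Regularized problem.} For $\epsilon > 0$ I would augment the system by adding the forms from \eqref{defRs}--\eqref{defLp} and \eqref{defLg}: namely $\epsilon r_p(\u_p,\v_p)+\epsilon r_s(\u_s,\v_s)$ to \eqref{h-cts-alt-dom-1}, $\epsilon l_p(p_p,w_p)+\epsilon l_f(p_f,w_f)$ to \eqref{h-cts-alt-dom-2}, and $\epsilon l_\Gamma(\lam,\mu)$ to \eqref{h-cts-alt-dom-gamma}. Writing the regularized system as a single operator equation $T_\epsilon(x_\epsilon)=F$ on the reflexive Banach space $X:=\V_p\times\X_p\times\V_f\times W_p\times\bS_e\times W_f\times\Lambda$, the saddle-point cross terms generated by $b_s, b_f, b_p, \alpha_p b_p, b_\Gamma$ cancel when $T_\epsilon$ is tested against $x_\epsilon$, so monotonicity of $T_\epsilon$ follows from Lemma~\ref{lmapropBBa} (for $\Ac$ and $\Ec_2$) together with Lemmas~\ref{lmapropRL} and \ref{lmapropLg} (for the regularizations). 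Coercivity on $X$ then follows by combining the coercivity bounds of $a_f, a_p^d, a_{BJS}$ in Lemma~\ref{lmapropBBa} (which control $\u_f\in\V_f$, $\u_p\in L^r(\O_p)^d$, and $\u_f-\u_s$), the positivity of $a_p^s$ (which controls $\bs_e$ in $L^2$), and the $\epsilon$-regularization coercivities (which control $\u_s$, $\nabla\cdot\u_p$, $p_f$, $p_p\in L^{r'}$, and $\lam\in\Lambda$). The Browder--Minty surjectivity theorem for bounded, continuous, monotone, coercive operators on reflexive Banach spaces then produces a solution $x_\epsilon=(\u_p^\epsilon,\u_s^\epsilon,\u_f^\epsilon,p_p^\epsilon,\bs_e^\epsilon,p_f^\epsilon,\lam^\epsilon)\in X$.

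\textbf{Step B: Uniform a priori bounds.} Testing the regularized system against $x_\epsilon$ yields $\epsilon$-independent bounds on $\|\u_f^\epsilon\|_{\V_f}$, $\|\u_p^\epsilon\|_{L^r(\O_p)}$, $|\u_f^\epsilon-\u_s^\epsilon|_{BJS}$, $\|p_p^\epsilon\|_{L^2(\O_p)}$, and $\|\bs_e^\epsilon\|_{\bS_e}$. To control the remaining components $p_f^\epsilon$ and $p_p^\epsilon$ in $L^{r'}$, $\lam^\epsilon$ in $\Lambda$, $\u_s^\epsilon$ in $\X_p$, and $\nabla\cdot\u_p^\epsilon$ in $L^r$ uniformly in $\epsilon$, I would invoke the two inf-sup conditions \eqref{inf-sup-elasticity}--\eqref{inf-sup-stokes-darcy} with appropriate test functions in the first regularized equation, combined with the continuity bounds of $\Ac$ from Lemma~\ref{lmapropBBa}. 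By reflexivity, a subsequence of $\{x_\epsilon\}$ converges weakly in $X$ to some $x=(\u_p,\u_s,\u_f,p_p,\bs_e,p_f,\lam)$. Moreover each regularization contribution $\epsilon R_p(\u_p^\epsilon),\epsilon R_s(\u_s^\epsilon),\epsilon L_p(p_p^\epsilon),\epsilon L_f(p_f^\epsilon),\epsilon L_\Gamma(\lam^\epsilon)$ tends to zero strongly in the corresponding dual norm because the underlying unregularized value is uniformly bounded there.

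\textbf{Step C: Passage to the limit (the main obstacle).} The linear contributions and the saddle-point pairings pass to the limit immediately by weak convergence. The real difficulty is identifying the weak limits of the nonlinear terms $2\nu(\D(\u_f^\epsilon))\D(\u_f^\epsilon)$, $\nu_{eff}(\u_p^\epsilon)\kappa^{-1}\u_p^\epsilon$, and the BJS nonlinearity, which together constitute $\Ac(\q^\epsilon)$ with $\q^\epsilon=(\u_p^\epsilon,\u_s^\epsilon,\u_f^\epsilon)$. I would apply a Minty--Browder argument: from the monotonicity of $\Ac$,
$$
(\Ac(\q^\epsilon)-\Ac(y))(\q^\epsilon-y)\,\geq\,0\qquad\forall\,y\in\Q,
$$
rewrite $\Ac(\q^\epsilon)(\q^\epsilon)$ using the regularized first equation in terms of data and saddle-point pairings that converge weakly, and let $\epsilon\to 0$ to obtain $(\chi-\Ac(y))(\q-y)\geq 0$, where $\chi\in\Q'$ is the weak limit of $\Ac(\q^\epsilon)$. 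Substituting $y=\q+tz$ for arbitrary $z\in\Q$ and sending $t\to 0^{\pm}$ identifies $\chi=\Ac(\q)$. Hence the limit $x$ satisfies \eqref{h-cts-alt-dom-1}--\eqref{h-cts-alt-dom-gamma} with the prescribed right-hand side, which proves $(p_p,\bs_e)\in D$ and therefore $D\neq\emptyset$.
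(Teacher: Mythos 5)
Your plan follows the same regularize--estimate--pass-to-the-limit strategy as the paper's proof, using the same regularizing operators $R_s,R_p,L_f,L_p,L_\Gamma$, Browder--Minty for the $\epsilon$-problem, the inf-sup conditions for the $\epsilon$-uniform bounds, and a Minty-type argument for the limit. Two points in your sketch need tightening before the argument is airtight.

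First, in Step B you say you would get the uniform bounds on $\|\u_s^\epsilon\|_{\X_p}$ and $\|\nabla\cdot\u_p^\epsilon\|_{L^r}$ by ``invok[ing] the two inf-sup conditions\ldots with appropriate test functions in the first regularized equation.'' That is not how those two bounds actually fall out. The inf-sup \eqref{inf-sup-stokes-darcy} is indeed used against the \emph{first} equation to control $(p_f^\epsilon,p_p^\epsilon,\lam^\epsilon)$, but the inf-sup \eqref{inf-sup-elasticity} has to be applied to the \emph{second} equation (which supplies the relation $a_p^s(\bs_{e,\epsilon},\bt_e)-b_s(\u_{s,\epsilon},\bt_e)=(A\bar g_e,\bt_e)$, yielding $\|\u_s^\epsilon\|_{H^1}\le C(\|\bs_{e,\epsilon}\|+\|\bar g_e\|)$), and the $\nabla\cdot\u_p^\epsilon$ bound also comes directly from the second equation, using $\nabla\cdot\V_p=(W_p)'$. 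The energy identity from Step A gives only $\epsilon\|\u_s^\epsilon\|^2$ and $\epsilon\|\nabla\cdot\u_p^\epsilon\|^r$, which do not pass to $\epsilon=0$, so the second equation is indispensable here.

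Second, in Step C you say you would ``rewrite $\Ac(\q^\epsilon)(\q^\epsilon)$ using the regularized first equation in terms of data and saddle-point pairings that converge weakly.'' The cross term $\Bc'(s_\epsilon)(\q_\epsilon)$ is a pairing of two \emph{weakly} convergent sequences and does not converge on its own. The paper avoids this by adding the two regularized equations, so that $\Bc'(s_\epsilon)(\q_\epsilon)$ and $\Bc(\q_\epsilon)(s_\epsilon)$ cancel, leaving $\lim\sup_{\epsilon}\big(\Ac(\q_\epsilon)(\q_\epsilon)+\Ec_2(s_\epsilon)(s_\epsilon)\big)\le \f(\q)+\bar g(s)=\zeta(\q)+\Ec_2(s)(s)$, and then invokes the type-M property of the monotone, continuous operator $\Ac+\Ec_2$ (equivalently, the Minty trick you describe, applied to $\Ac+\Ec_2$ rather than $\Ac$ alone, which also spares you the separate weak lower semicontinuity argument for $\Ec_2$). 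If you insist on applying Minty only to $\Ac$, you must first substitute the second equation to eliminate $\Bc'(s_\epsilon)(\q_\epsilon)$, and you need $\liminf\Ec_2(s_\epsilon)(s_\epsilon)\ge\Ec_2(s)(s)$ from weak lower semicontinuity of the convex quadratic. Either way, both equations enter the limit passage.
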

\begin{proof}
We will focus on the case \eqref{B1}--\eqref{B2} with $c = 0$, which
holds for the Power law model. The argument for the case $c > 0$
is similar, with an extra constant term on the right-hand side of the energy bound
\eqref{regularized-estimate-1}, due to coercivity estimates 
\eqref{a-f-bounds}--\eqref{a-bjs-bounds}.
	
For $\q^{(i)} \, = \, (\v_{p , i} , \v_{s , i} , \v_{f , i}) \in \Q$, $s^{(i)} \, = \ (w_{p , i} , \btau_{e , i} , w_{f , i} , \mu_{i}) \in S$,
	$ i = 1, 2$, 
	define the operators $\mathcal{R}:\Q \rightarrow \Q'$ and $\mathcal{L}: S \rightarrow S'$ as
	\begin{align*}
	\mathcal{R}( \q^{(1)})(\q^{(2)}) &:= R_{s}(\v_{s , 1}) (\v_{s , 2}) \ + \ R_{p}(\v_{p , 1}) (\v_{p , 2}) \ = \ 
	r_s (\v_{s,1},\v_{s,2}) + r_p (\v_{p,1},\v_{p,2}),\\
	\mbox{ and } \ 
	\mathcal{L}(s^{(1)})(s^{(2)}) &:=
	L_{f}(w_{f,1}) (w_{f,2}) \ + \  L_{p}(w_{p,1}) (w_{p,2}) \ + \  L_{\Gamma}(\mu_{1}) (\mu_{2}) \\
	&= \ 
	l_f(w_{f,1},w_{f,2}) + l_p(w_{p,1},w_{p,2})+ l_{\Gamma}(\mu_1,\mu_2).
	\end{align*}
	%
For $\epsilon>0$, consider a regularization of
\eqref{h-cts-alt-dom-1}--\eqref{h-cts-alt-dom-gamma} defined by:
\textit{Given $\f \in \Q'$, $\bar g \in S'$, determine $\q_{\epsilon} \in
  \Q,\, s_{\epsilon}\in S$ satisfying}
\begin{align}
(\epsilon \mathcal{R} + \Ac) \q_{\epsilon} + \Bc's_{\epsilon} &=\f \quad \mbox{in } \Q', 
\label{regularized-problem-1}\\
- \Bc\q_{\epsilon} + (\epsilon\mathcal{L} + \Ec_2) s_{\epsilon} &= \bar g \quad \mbox{in } S'. 
\label{regularized-problem-2}
\end{align}
	Introduce the operator $\mathcal{O}: \Q\times S \rightarrow (\Q\times S)'$ defined as
	\begin{align*}
	\mathcal{O}  \left( \begin{array}{c}  \q \\ s \end{array} \right)                           
	=\begin{pmatrix}
	\epsilon \mathcal{R} +\Ac   & \Bc' \\
	-\Bc  & \epsilon \mathcal{L} + \Ec_2
	\end{pmatrix}  \left[ \begin{array}{c}  \q \\ s \end{array} \right] \, .
	\end{align*}
	Note that
	\begin{equation}
	\mathcal{O} \left( \begin{array}{c}  \q^{(1)} \\ s^{(1)} \end{array} \right)  
	\left( \left( \begin{array}{c}  \q^{(2)} \\ s^{(2)} \end{array} \right) \right) 
	\ = \ 
	(\epsilon \mathcal{R}+\Ac)(\q^{(1)}) (\q^{(2)}) \ + \ \Bc'(s^{(1)}) (\q^{(2)}) 
	\ - \ \Bc (\q^{(1)}) (s^{(2)}) \ + \ (\epsilon \mathcal{L} + \Ec_2) (s^{(1)}) (s^{(2)}) \, ,
	\label{eqwwr}
	\end{equation}
	and
	\begin{align*}
	&\left(\mathcal{O}\begin{pmatrix}
	\q^{(1)}\\
	s^{(1)}
	\end{pmatrix}-\mathcal{O}\begin{pmatrix}
	\q^{(2)}\\
	s^{(2)}
	\end{pmatrix}\right)\left(\begin{pmatrix}
	\q^{(1)}\\
	s^{(1)}
	\end{pmatrix}-\begin{pmatrix}
	\q^{(2)}\\
	s^{(2)}
	\end{pmatrix}\right) \\
	& \qquad =( (\epsilon \mathcal{R}+\Ac)\q^{(1)} - (\epsilon \mathcal{R}+\Ac)\q^{(2)})(\q^{(1)} -\q^{(2)}) 
	+ ( (\epsilon \mathcal{L} + \Ec_2)s^{(1)} - ( \epsilon \mathcal{L} + \Ec_2 )s^{(2)})(s^{(1)} -s^{(2)}).
	\end{align*}
From Lemmas \ref{lmapropAE}, \ref{lmapropBBa}, \ref{lmapropRL}, and
\ref{lmapropLg} we have that $\mathcal{O}$ is a bounded, continuous,
and monotone operator.  Moreover, using the coercivity bounds from 
\eqref{a-f-bounds}--\eqref{a-bjs-bounds} and \eqref{L-Gamma-bounds}, we also have
\begin{align}
\mathcal{O}\begin{pmatrix} \q\\ s \end{pmatrix}
\left(\begin{pmatrix} \q\\ s \end{pmatrix}\right) 
& = (\epsilon \mathcal{R}+\Ac)\q (\q) + (\Ec_2 +\epsilon \mathcal{L})s(s)  \nonumber \\ 
& =\epsilon r_s (\v_{s},\v_{s}) + \epsilon r_p (\v_{p},\v_{p}) + a_f(\v_f,\v_f)
+ a^d_p(\v_p,\v_p) +a_{BJS}(\v_f,\v_s;\v_f,\v_s) 
\nonumber  \\
& \qquad + (s_0 w_p,w_p)_{\O_p} + a^s_p(\bt_e,\bt_e) 
+ \epsilon l_f(w_{f},w_{f}) +\epsilon l_p(w_{p},w_{p})
+ \epsilon l_{\Gamma}(\mu,\mu)  \nonumber \\
& \geq C\Big(\epsilon\|\D(\v_s)\|^2_{L^2(\O_p)} 
+\epsilon\|\nabla \cdot \v_p\|^{r}_{L^r(\O_p)} + \|\D(\v_f)\|^r_{L^r(\O_f)} 
+\|\v_p\|^r_{L^r(\O_p)} + |\v_f-\v_s|^r_{BJS}  \nonumber \\
& \qquad + s_0\|w_p\|^2_{L^2(\O_p)} +\|\bt_e\|^2_{L^2(\O_p)} 
+\epsilon\|w_f\|^{r'}_{L^{r'}(\O_f)} +\epsilon \|w_p\|^{r'}_{L^{r'}(\O_p)} 
+\epsilon\|\mu\|^{r'}_{W^{1/r,r'}(\Gam_{fp})} \Big).
\label{eqwws1}
\end{align}
In the case of \eqref{B1}--\eqref{B2} with $c > 0$, we have an extra
term $- c( \bar c_f + \bar c_p + \bar c_I)$ on the right-hand side of
\eqref{eqwws1} due to the coercivity estimates from
\eqref{a-f-bounds}--\eqref{a-bjs-bounds}. The argument in this case doesn't change
and we omit this term for simplicity.
It follows from \eqref{eqwws1} that $\mathcal{O}$ is coercive.  Thus, an
application of the Browder-Minty theorem
\cite{renardy2006introduction} establishes the existence of a solution
$(\q_{\epsilon},s_{\epsilon}) \in \Q\times S$ of
\eqref{regularized-problem-1}--\eqref{regularized-problem-2}, where
$\q_{\epsilon} =(\u_{p,\epsilon},\u_{s,\epsilon}, \u_{f,\epsilon})$
and $s_{\epsilon} = (p_{p,\epsilon}, \bs_{e,\epsilon}, p_{f,\epsilon},
\lam_{\epsilon})$.
	
Now, from \eqref{eqwws1} and
\eqref{regularized-problem-1}--\eqref{regularized-problem-2}, we have
\begin{align}
& \epsilon\|\u_{s,\epsilon}\|^2_{H^1(\O_p)} 
+ \epsilon\|\nabla \cdot \u_{p,\epsilon}\|^{r}_{L^r(\O_p)} 
+ \|\u_{f,\epsilon}\|^r_{W^{1,r}(\O_f)} +\|\u_{p,\epsilon}\|^r_{L^r(\O_p)} 
+ |\u_{f,\epsilon}-\u_{s,\epsilon}|^r_{BJS}\nonumber \\
& \qquad + s_0\|p_{p,\epsilon}\|^2_{L^2(\O_p)} +\|\bs_{e,\epsilon}\|^2_{L^2(\O_p)} +\epsilon\|p_{f,\epsilon}\|^{r'}_{L^{r'}(\O_f)} +\epsilon \|p_{p,\epsilon}\|^{r'}_{L^{r'}(\O_p)} +\epsilon\|\lam_{\epsilon}\|^{r'}_{W^{1/r,r'}(\Gam_{fp})} \nonumber \\
& \quad \leq C\Big( \|\f_p\|_{H^{-1}(\O_p)}\|\u_{s,\epsilon}\|_{H^1(\O_p)} + \|\f_f\|_{W^{-1,r'}(\O_f)}\|\u_{f,\epsilon}\|_{W^{1,r}(\O_f)} \nonumber\\
& \qquad + \|q_f\|_{L^r(\O_f)}\|p_{f,\epsilon}\|_{L^{r'}(\O_f)}+ \|\bar{g}_p\|_{L^r(\O_p)}\|p_{p,\epsilon}\|_{L^{r'}(\O_p)}+ \|\bar{g}_e\|_{L^2(\O_p)}\|\bs_{e,\epsilon}\|_{L^{2}(\O_p)}\Big).
\label{regularized-estimate-1}
\end{align}
From \eqref{h-cts-alt-dom-2},  $\bs_{e,\epsilon}$ and $\u_{s,\epsilon}$ satisfy
\begin{align*}
a^{s}_{p}(\bs_{e,\epsilon},\bt_e) -b_{s}(\u_{s,\epsilon},\bt_e) = (A \bar{g}_e,\bt_e)_{\O_p}, 
\quad \forall \bt_e \in \bS_e.
\end{align*}
Therefore, applying the inf-sup condition \eqref{inf-sup-elasticity}, we obtain:
	\begin{align}
	\|\u_{s,\epsilon}\|_{H^1(\O_p)} &\leq C \sup_{(0,\bt_e,0,0)\in S}\frac{ b_s(\u_{s,\epsilon}, \bt_e)}{\|(0,\bt_e,0,0)\|_{S}} 
= C \sup_{(0,\bt_e,0,0)\in S}\frac{ a^{s}_{p}(\bs_{e,\epsilon},\bt_e) - 
(A \bar{g}_e,\bt_e)_{\O_p}}{\|(0,\bt_e,0,0)\|_{S}} 
	\nonumber\\
	&\leq  C\left(\|\bs_{e,\epsilon}\|_{L^2(\O_p)}+\|\bar{g}_{e}\|_{L^2(\O_p)}\right). \label{displacement-bound}
	\end{align}
Combining \eqref{displacement-bound} and \eqref{regularized-estimate-1}, and
using Young's inequality, for $a,b \ge 0$, $\frac1p + \frac1q = 1$, and $\delta> 0$,
\begin{equation}\label{young}
a b \le \frac{\delta^p a^p}{p} + \frac{b^q}{\delta^q q},
\end{equation}
we obtain
\begin{align}
&\|\u_{s,\epsilon}\|^2_{H^1(\O_p)} +\epsilon\|\nabla \cdot \u_{p,\epsilon}\|^{r}_{L^r(\O_p)} + \|\u_{f,\epsilon}\|^r_{W^{1,r}(\O_f)} +\|\u_{p,\epsilon}\|^r_{L^r(\O_p)} 
+ |\u_{f,\epsilon}-\u_{s,\epsilon} |^r_{BJS} +\epsilon\|\u_{s,\epsilon}\|^2_{H^1(\O_p)}
\nonumber \\
& \qquad + s_0\|p_{p,\epsilon}\|^2_{L^2(\O_p)} +\|\bs_{e,\epsilon}\|^2_{L^2(\O_p)} +\epsilon\|p_{f,\epsilon}\|^{r'}_{L^{r'}(\O_f)} +\epsilon \|p_{p,\epsilon}\|^{r'}_{L^{r'}(\O_p)} +\epsilon\|\lam_{\epsilon}\|^{r'}_{W^{1/r,r'}(\Gam_{fp})} \nonumber \\
& \quad 
\leq C\Big(\|q_f\|_{L^r(\O_f)}\|p_{f,\epsilon}\|_{L^{r'}(\O_f)} + \|\bar{g}_p\|_{L^r(\O_p)}\|p_{p,\epsilon}\|_{L^{r'}(\O_p)} +\|\f_p\|^2_{H^{-1}(\O_p)} \nonumber \\
& \qquad + \|\f_f\|^{r'}_{W^{-1,r'}(\O_f)}+\|\bar{g}_e\|^2_{L^2(\O_p)} \Big) 
+ \frac{1}{2}\left( \|\u_{s,\epsilon}\|^2_{H^1(\O_p)}  
+\|\u_{f,\epsilon}\|^r_{W^{1,r}(\O_f)} + \|\bs_{e,\epsilon}\|^2_{L^2(\O_p)}
\right),\label{regularized-estimate-2}
\end{align}
from which it follows that 
\begin{align}
\|\u_{s,\epsilon}\|^2_{H^1(\O_p)} +  \epsilon\|\nabla \cdot \u_{p,\epsilon}\|^{r}_{L^r(\O_p)} +
\|\u_{f,\epsilon}\|^r_{W^{1,r}(\O_f)} +\|\u_{p,\epsilon}\|^r_{L^r(\O_p)}  +\|\bs_{e,\epsilon}\|^2_{L^2(\O_p)}+|\u_{f,\epsilon}-\u_{s,\epsilon}|^r_{BJS} \nonumber \\
\leq C\left( \|\f_p\|^2_{H^{-1}(\O_p)} + \|\f_f\|^{r'}_{W^{-1,r'}(\O_f)}  +\|q_f\|_{L^r(\O_f)}\|p_{f,\epsilon}\|_{L^{r'}(\O_f)}+ \|\bar{g}_e\|^2_{L^2(\O_p)}+ \|\bar{g}_p\|_{L^r(\O_p)}\|p_{p,\epsilon}\|_{L^{r'}(\O_p)}\right) .\label{regularized-estimate-3}
\end{align}
To obtain bounds for $p_{p,\epsilon}$, $p_{f,\epsilon}$, and $\lam_{\epsilon}$ we use 
\eqref{inf-sup-stokes-darcy}. With 
$s=(p_{p,\epsilon}, \mathbf{0}, p_{f,\epsilon}, \lam_{\epsilon}) \in S$, we have
\begin{align}
& \|p_{f,\epsilon}\|_{L^{r'}(\O_f)} + \|p_{p,\epsilon}\|_{L^{r'}(\O_p)} 
+ \|\lam_{\epsilon}\|_{W^{1/r,r'}(\Gam_{fp})} \nonumber \\
& \qquad
\leq C \sup_{(\v_p,{\bf 0},\v_f)\in \Q}\frac{b_f(\v_f,p_{f,\epsilon})+b_p(\v_p,p_{p,\epsilon})+ b_{\Gamma}(\v_f,\v_p,{\bf 0};\lam_{\epsilon})}{\|(\v_p,{\bf 0},\v_f)\|_{\Q}} 
\nonumber \\
& \qquad
\leq C\sup_{(\v_p,{\bf 0},\v_f) \in \Q}\frac{- \epsilon \, r_{p}(\u_{p,\epsilon} , \v_{p}) - a_{f}(\u_{f,\epsilon},\v_{f}) - a^d_{p}(\u_{p,\epsilon},\v_{p}) 
		- a_{BJS}(\u_{f,\epsilon}, \u_{s,\epsilon};\v_{f},{\bf 0}) +  (\f_{f},\v_{f})_{\O_f} }{\|(\v_p,{\bf 0},\v_f)\|_{\Q}}  \nonumber \\
&\qquad
\leq C\left( \epsilon\|\nabla \cdot \u_{p,\epsilon}\|^{r/r'}_{L^r(\O_p)} +
	\|\u_{f,\epsilon}\|^{r/r'}_{W^{1,r}(\O_f)} + \|\u_{p,\epsilon}\|^{r/r'}_{L^{r}(\O_p)} + |\u_{f,\epsilon}-\u_{s,\epsilon}|^{r/r'}_{BJS} +\|\f_f\|_{W^{-1,r'}(\O_f)}\right). \label{nhgt1}
\end{align}
Using \eqref{regularized-estimate-3}, \eqref{young}, and \eqref{nhgt1},
we obtain
\begin{align}
\|\u_{s,\epsilon}\|^2_{H^1(\O_p)} & + \epsilon\|\nabla \cdot \u_{p,\epsilon}\|^{r}_{L^r(\O_p)} + \|\u_{f,\epsilon}\|^r_{W^{1,r}(\O_f)} +\|\u_{p,\epsilon}\|^r_{L^r(\O_p)}  +\|\bs_{e,\epsilon}\|^2_{L^2(\O_p)}+|\u_{f,\epsilon}-\u_{s,\epsilon}|^r_{BJS} \nonumber \\
	&+\|p_{f,\epsilon}\|^{r'}_{L^{r'}(\O_f)}+\|p_{p,\epsilon}\|^{r'}_{L^{r'}(\O_p)}+\|\lam_{\epsilon}\|^{r'}_{W^{1/r,r'}(\Gam_{fp})}  \nonumber \\
	&\leq C\left( \|\f_p\|^2_{H^{-1}(\O_p)} + \|\f_f\|^{r'}_{W^{-1,r'}(\O_f)}  + \|\bar{g}_p\|^r_{L^r(\O_p)} + \|\bar{g}_e\|^2_{L^2(\O_p)}+\|q_f\|^r_{L^r(\O_f)}\right) ,\label{regularized-estimate-4}
\end{align}
which implies that $\|\u_{s,\epsilon}\|_{H^1(\O_p)},\, \|\u_{f,\epsilon}\|_{W^{1,r}(\O_f)},\, \|\bs_{e,\epsilon}\|_{L^2(\O_p)},\,\|p_{f,\epsilon}\|_{L^{r'}(\O_f)},\,\|p_{p,\epsilon}\|_{L^{r'}(\O_p)}$ and 
	$\|\lam_{\epsilon}\|_{W^{1/r,r'}(\Gam_{fp})}$ are bounded independently of $\epsilon$.
	
	Also, as $\nabla \cdot \V_p =(W_p)'$, we have from \eqref{regularized-problem-2},
\eqref{h-cts-alt-dom-2}, and the continuity of $L_p$ stated in Lemma~\ref{lmapropRL}:
\begin{align*}
\|\nabla \cdot \u_{p,\epsilon}\|_{L^r(\O_p)} &
\leq s_0\|\bar{g}_p\|_{L^{r}(\O_p)} + s_0\|p_{p,\epsilon}\|_{L^{r}(\O_p)} +
\alpha_p\|\nabla \cdot \u_{s,\epsilon}\|_{L^r(\O_p)} 
+ \epsilon\|p_{p,\epsilon}\|_{L^{r'}(\O_p)}\\
& \leq s_0\|\bar{g}_p\|_{L^{r}(\O_p)} + s_0\|p_{p,\epsilon}\|_{L^{r'}(\O_p)} 
+\alpha_p\|\u_{s,\epsilon}\|_{H^1(\O_p)} + \epsilon\|p_{p,\epsilon}\|_{L^{r'}(\O_p)}  .
\end{align*}
Therefore $\|\u_{p,\epsilon}\|_{L^r(\text{div};\O_p)}$ is also bounded independently of $\epsilon$.
	
Since $\Q$ and $S$ are reflexive Banach spaces, as $\epsilon
\rightarrow 0$ we can extract weakly convergent subsequences $\{
\q_{\epsilon , n} \}_{n = 1}^{\infty}$, $\{ s_{\epsilon , n} \}_{n =
  1}^{\infty}$, and $\{ \Ac\q_{\epsilon , n} \}_{n = 1}^{\infty}$,
such that $\q_{\epsilon , n} \rightharpoonup \q \mbox{ in } \Q$, 
$s_{\epsilon , n} \rightharpoonup s \mbox{ in } S$, 
$\Ac\q_{\epsilon , n} \rightharpoonup  \zeta \mbox{ in } \Q'$, and
\begin{align*}
	\zeta &+ \Bc's=\f   \quad \mbox{in } \Q', \\
	\Ec_2 s&- \Bc\q = \bar g \quad \mbox{in } S'. 
\end{align*}
Moreover, from \eqref{regularized-problem-1}--\eqref{regularized-problem-2} we have
\begin{align*}
\lim\sup\limits_{\epsilon\rightarrow 0}\left( \mathcal{A}(\q_{\epsilon})(\q_{\epsilon}) +\Ec_2(s_{\epsilon})(s_{\epsilon})\right) &=  \lim\sup\limits_{\epsilon\rightarrow 0}(-\epsilon \mathcal{R}(\q_{\epsilon}) (\q_{\epsilon}) - \epsilon \mathcal{L}(s_{\epsilon})(s_{\epsilon})+\f(\q_{\epsilon})+ \bar g(s_{\epsilon})) \\
& \leq \f(\q) + \bar g(s) = \zeta(\q) + \Ec_2(s)(s).
\end{align*}
Since $\Ac+\Ec_2$ is monotone and continuous, it follows, see 
\cite[p. 38]{showalter2013monotone}, that $\Ac \q = \zeta$. Hence, $\q$ and
$s$ solve \eqref{h-cts-alt-dom-1}--\eqref{h-cts-alt-dom-gamma}, which
establishes that $D$ is nonempty.
\end{proof}

\begin{corollary}  \label{corONTO}
For $\mathcal{M}$ defined by \eqref{defmcM} we have that $Rg(I + \mathcal{M}) \, = \, W_{p , 2}' \times \bS_{e , 2}'$.
\end{corollary}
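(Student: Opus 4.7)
The plan is to show that Corollary \ref{corONTO} is an immediate consequence of (the proof of) Lemma \ref{thmDdef}, once one unpacks what membership in $Rg(I + \mathcal{M})$ means. First I would observe that, using the Riesz identification of the Hilbert space $W_{p,2} \times \bS_{e,2}$ with its dual via the inner product $(\cdot,\cdot)_{S_2}$, the identity $I$ is exactly the operator that sends $(p_p,\bs_e)$ to the functional $(w_p,\btau_e) \mapsto (s_0 p_p, w_p)_{\O_p} + (A\bs_e, \btau_e)_{\O_p}$. Under this identification, $(\bar g_p, \bar g_e) \in (I + \mathcal{M})(p_p, \bs_e)$ if and only if $(\bar g_p - p_p, \bar g_e - \bs_e) \in \mathcal{M}(p_p, \bs_e)$. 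By the definition \eqref{defmcM}, this occurs precisely when there exist $(\u_p,\u_s,\u_f) \in \Q$, $p_f \in W_f$, and $\lam \in \Lambda$ such that $(p_p, \bs_e)$ together with these auxiliary variables satisfies \eqref{h-cts-alt-dom-1}--\eqref{h-cts-alt-dom-gamma} with the specified $(\bar g_p, \bar g_e)$ on the right-hand side of \eqref{h-cts-alt-dom-2}.

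Next I would invoke Lemma \ref{thmDdef}. Its proof does not merely show non-emptiness; it actually constructs a solution for arbitrary data $(\f_f, \f_p, q_f, \bar g_p, \bar g_e) \in \V_f' \times \X_p' \times W_f' \times W_{p,2}' \times \bS_{e,2}'$. The regularized problem \eqref{regularized-problem-1}--\eqref{regularized-problem-2} is solved by the Browder-Minty theorem using the coercivity \eqref{eqwws1}, the $\epsilon$-uniform a priori bounds \eqref{regularized-estimate-4} rely only on the norms of the data, and the passage to the limit $\epsilon \to 0$ based on monotonicity is independent of the particular choice of right-hand side. Hence, holding the fixed data $(\f_f, \f_p, q_f)$ appearing in the definition of $\mathcal{M}$ and applying that argument with the prescribed $(\bar g_p, \bar g_e)$ yields a tuple satisfying \eqref{h-cts-alt-dom-1}--\eqref{h-cts-alt-dom-gamma}, which by \eqref{deffD} places $(p_p, \bs_e)$ in $D$ and exhibits $(\bar g_p, \bar g_e) \in (I + \mathcal{M})(p_p, \bs_e)$.

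Since $(\bar g_p, \bar g_e) \in W_{p,2}' \times \bS_{e,2}'$ was arbitrary, this establishes $Rg(I + \mathcal{M}) = W_{p,2}' \times \bS_{e,2}'$. There is no substantive obstacle: the entire technical content is already absorbed into Lemma \ref{thmDdef}. The only points worth stating carefully in the write-up are the Riesz identification converting $I$ into the correct dual-valued map (so that the conclusion matches the hypothesis $Rg(\mathcal{N}+\mathcal{M}) = E_b'$ needed later in Theorem \ref{thmsho61b}), and the explicit remark that the regularization-and-limit construction in the proof of Lemma \ref{thmDdef} is insensitive to the choice of $(\bar g_p, \bar g_e)$ and therefore delivers surjectivity rather than mere non-emptiness.
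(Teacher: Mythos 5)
Your proposal is correct and follows essentially the same route as the paper: both reduce the corollary to Lemma~\ref{thmDdef} by observing that $(\bar g_p,\bar g_e)\in (I+\mathcal{M})(p_p,\bs_e)$ is equivalent to $(\bar g_p-p_p,\bar g_e-\bs_e)\in\mathcal{M}(p_p,\bs_e)$, i.e.\ to solvability of \eqref{h-cts-alt-dom-1}--\eqref{h-cts-alt-dom-gamma} with the prescribed $(\bar g_p,\bar g_e)$. You are also right that what is really being used is the \emph{proof} of Lemma~\ref{thmDdef}, which constructs a solution for arbitrary $(\bar g_p,\bar g_e)\in W_{p,2}'\times\bS_{e,2}'$, rather than the bare statement of non-emptiness of $D$ — a subtlety the paper leaves implicit.
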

\begin{proof}
To show $Rg(I + \mathcal{M}) \, = \, W_{p , 2}' \times \bS_{e , 2}'$ we need to show that for 
$\f \in W_{p , 2}' \times \bS_{e , 2}'$ there is a $\v \in D$ such that $\f \in (I + \mathcal{M})(\v)$. 

Let $(\bar{g}_p,\bar{g}_e) \in W_{p,2}^{\prime}\times \bS_{e,2}^{\prime}$ be given. Lemma~\ref{thmDdef} 
establishes that there exists $(\tilde p_p , \tilde \bs_e) \in D$ such that 
\eqref{h-cts-alt-dom-1}--\eqref{h-cts-alt-dom-gamma} are satisfied. Hence
$(\bar{g}_p - \tilde p_p \, , \, \bar{g}_e -  \tilde \bs_e) \in
\mathcal{M}(\tilde p_p , \tilde \bs_e)$
and therefore it immediately follows that
$(\bar{g}_p \, , \, \bar{g}_e) \in  (I + \mathcal{M}) (\tilde p_p , \tilde \bs_e)$. 
\end{proof}

\subsubsection{Step 2: Solvability of the parabolic problem \eqref{ode-problem}}
\label{sssec_two}
In this section we establish the existence of a solution to
\eqref{ode-problem}. We begin by showing that $\mathcal{M}$ defined by
\eqref{defmcM} is a monotone operator.

\begin{lemma} \label{lmaLmte}
The operator $\mathcal{M}$ defined by \eqref{ode-problem} is monotone.
\end{lemma}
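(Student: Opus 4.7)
The plan is to fix two elements $[(p_p^{(i)}, \bs_e^{(i)}), (\bar g_p^{(i)} - p_p^{(i)}, \bar g_e^{(i)} - \bs_e^{(i)})] \in \mathcal{M}$, $i = 1, 2$, and prove the associated duality pairing is nonnegative. The relevant pairing on $E_b' = (W_{p,2} \times \bS_{e,2})'$ is determined by the choice $\mathcal{N} = I$ implicit in Theorem~\ref{thmsho61b} (see Corollary~\ref{corONTO}), so $|\cdot|_b = \|\cdot\|_{S_2}$ and the pairing acts via the $S_2$ inner product. Writing $\hat p_p = p_p^{(1)} - p_p^{(2)}$ and similarly for every other quantity, monotonicity reduces to showing
$$(s_0(\hat{\bar g}_p - \hat p_p), \hat p_p)_{\O_p} + (A(\hat{\bar g}_e - \hat\bs_e), \hat\bs_e)_{\O_p} \ge 0.$$

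For each $i$, pick an associated $(\u_p^{(i)}, \u_s^{(i)}, \u_f^{(i)}, p_f^{(i)}, \lam^{(i)})$ satisfying the defining system \eqref{h-cts-alt-dom-1}--\eqref{h-cts-alt-dom-gamma}. Since the data $(\f_f, \f_p, q_f)$ fixed in the definition of $D$ is the same for both indices, those source terms cancel upon subtracting the $i=1$ and $i=2$ systems, yielding identities purely for the differences. The central computation is to test the subtracted momentum equation with $(\v_f, \v_p, \v_s) = (\hat\u_f, \hat\u_p, \hat\u_s)$, the subtracted conservation equation with $(w_p, \btau_e, w_f) = (\hat p_p, \hat\bs_e, \hat p_f)$, the subtracted interface equation with $\mu = \hat\lam$, and add the three tested identities.

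All bilinear couplings between $\q$ and $s$ collapse: the terms $b_p(\hat\u_p, \hat p_p)$, $\alpha_p b_p(\hat\u_s, \hat p_p)$, $b_s(\hat\u_s, \hat\bs_e)$, and $b_f(\hat\u_f, \hat p_f)$ cancel between the first two tested identities with opposite signs, while $b_\Gamma(\hat\u_f, \hat\u_p, \hat\u_s; \hat\lam) = 0$ is immediate from the third. What remains is the energy identity
\begin{align*}
& [a_f(\u_f^{(1)}, \hat\u_f) - a_f(\u_f^{(2)}, \hat\u_f)] + [a^d_p(\u_p^{(1)}, \hat\u_p) - a^d_p(\u_p^{(2)}, \hat\u_p)] \\
& \quad + [a_{BJS}(\u_f^{(1)}, \u_s^{(1)}; \hat\u_f, \hat\u_s) - a_{BJS}(\u_f^{(2)}, \u_s^{(2)}; \hat\u_f, \hat\u_s)] + (s_0 \hat p_p, \hat p_p)_{\O_p} + (A\hat\bs_e, \hat\bs_e)_{\O_p} \\
& = (s_0 \hat{\bar g}_p, \hat p_p)_{\O_p} + (A\hat{\bar g}_e, \hat\bs_e)_{\O_p}.
\end{align*}

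The three bracketed nonlinear terms are nonnegative by the monotonicity of $\Ac$ established in Lemma~\ref{lmapropBBa} (which is a consequence of \eqref{monotonicity} applied to each of the three viscosity maps), while the two quadratic terms are nonnegative by the positivity of $s_0$ and $A$. Dropping the nonnegative terms on the left and rearranging produces exactly the desired inequality. The only delicate point is correctly identifying the monotonicity pairing, i.e., recognizing that the abstract framework forces the pairing to be taken in $S_2$ rather than in $L^2$; once this is clear the proof is a routine energy argument and the cancellation of the $b_\Gamma$ term through \eqref{h-cts-alt-dom-gamma} is exactly the mechanism that makes it work.
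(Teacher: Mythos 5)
Your proof is correct and follows essentially the same route as the paper's: both test the difference system with the difference of the solution variables, exploit the antisymmetric cancellation of the $b_f$, $b_p$, $b_s$, $b_\Gamma$ couplings (with $b_\Gamma$ eliminated via the interface identity \eqref{h-cts-alt-dom-gamma}), and reduce monotonicity of $\mathcal{M}$ to monotonicity of the nonlinear forms $a_f$, $a^d_p$, $a_{BJS}$ established in Lemma~\ref{lmapropBBa}. The only stylistic difference is that you subtract the two systems before testing, whereas the paper records four separately tested identities \eqref{dif-1}--\eqref{dif-4} and then combines them; the algebra is identical.
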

\begin{proof}
To show that $\mathcal{M}$ is monotone we need to show for $\f \in \mathcal{M}(\v)$,
$\tilde{\f} \in \mathcal{M}(\tilde{\v})$ that $( \f \, - \, \tilde{\f} \, , \, \v \, - \, \tilde{\v} )_{S_{2}} \ge 0$. 

For $(p_{p},\bs_e) \in D$, $(\bar{g}_p -  p_p \, , \, \bar{g}_e -  \bs_e) \in \mathcal{M}(p_{p},\bs_e)$
and $(w_{p},\bt_e)\in S_2$, 
 we have from \eqref{h-cts-alt-dom-2}
	\begin{align}
	\left( (\bar{g}_p - p_p \, , \, \bar{g}_e -  \bs_e) \ , \ 
	(w_{p},\bt_e) \right)_{S_2}
& = (s_0  \bar{g}_p, w_p)+(A \bar{g}_e, \bt_e)-  (s_0 p_p,w_p)- a^s_p(\bs_e,\bt_e)  \nonumber \\
& =- \alpha_p b_p\left(\u_{s},w_{p}\right) - b_p(\u_{p},w_{p}) -b_{s}(\u_s,\bt_e).  \label{ippp1}
	\end{align}
Also, from \eqref{h-cts-alt-dom-1}--\eqref{h-cts-alt-dom-gamma},
	the corresponding $(\u_f,p_f,\u_p,\u_s,\lam)$ satisfy
\begin{align}
	&  
	a_{f}(\u_{f},\v_{f}) + a^d_{p}(\u_{p},\v_{p}) 
	+ a_{BJS}(\u_{f}, \u_{s};\v_{f},\v_{s}) + b_f(\v_{f},p_{f})+ b_p(\v_{p},p_{p})\nonumber
	\\
	& \qquad\quad   + 
	\alpha_p b_p(\v_{s},p_{p}) +b_{s}(\v_s,\bs_e)+ b_{\Gamma}(\v_{f},\v_{p},\v_{s};\lam)  = (\f_{f},\v_{f})_{\O_f} + (\f_{p},\v_{s})_{\O_p}, \label{p-1}\\
	& \left( s_0 p_{p},w_{p}\right)_{\O_p} + a^s_{p}(\bs_{e},\bt_{e})
	- \alpha_p b_p\left(\u_{s},w_{p}\right) - b_p(\u_{p},w_{p}) -b_{s}(\u_s,\bt_e)- b_f(\u_{f},w_{f}) \nonumber
	\\
	& \qquad\quad 
	=  (s_0\bar{g}_{p},w_{p})_{\O_p}+ (A\bar{g}_{e},\bt_{e})_{\O_p} +(q_f,w_f)_{\O_f}, \label{p-2} \\
	& b_{\Gamma}\left(\u_{f},\u_{p},\u_{s};\mu\right) = 0 , \label{p-gamma} 
\end{align}	
Next, for $(\tilde{g}_p -  \tilde{p}_p \, , \, \tilde{g}_e -  \tilde{\bs}_e) \in \mathcal{M}(\tilde{p}_{p} , \tilde{\bs}_e)$ 
the corresponding $(\tilde{\u}_f,\tilde{p}_f,\tilde{\u}_p, \tilde{\u}_s,\tilde{\lam})$	satisfy
\begin{align}
	&  
	a_{f}(\tilde{\u}_{f},\v_{f}) + a^d_{p}(\tilde{\u}_{p},\v_{p}) 
	+ a_{BJS}(\tilde{\u}_{f}, \tilde{\u}_{s};\v_{f},\v_{s}) + b_f(\v_{f},\tilde{p}_{f})+ b_p(\v_{p},\tilde{p}_{p})\nonumber
	\\
	& \qquad\quad   + 
	\alpha_p b_p(\v_{s},\tilde{p}_{p}) +b_{s}(\v_s,\tilde{\bs}_e)+ b_{\Gamma}(\v_{f},\v_{p},\v_{s};\tilde{\lam})  = (\f_{f},\v_{f})_{\O_f} + (\f_{p},\v_{s})_{\O_p}, \label{p-tilde-1}\\
	& \left( s_0 \tilde{p}_{p},w_{p}\right)_{\O_p} +  a^s_{p}(\tilde{\bs}_{e},\bt_{e})
	- \alpha_p b_p\left(\tilde{\u}_{s},w_{p}\right) - b_p(\tilde{\u}_{p},w_{p}) -b_{s}(\tilde{\u}_s,\bt_e)- b_f(\tilde{\u}_{f},w_{f}) \nonumber
	\\
	& \qquad\quad 
	=  (s_0\tilde{g}_{p},w_{p})_{\O_p}+ (A\tilde{g}_{e},\bt_{e})_{\O_p} +(q_f,w_f)_{\O_f}, \label{p-tilde-2} \\
	& b_{\Gamma}\left(\tilde{\u}_{f},\tilde{\u}_{p},\tilde{\u}_{s};\mu\right) = 0. \label{p-tilde-gamma} 
\end{align}
With the association $\v = (p_{p},\bs_e)$, $\tilde{\v} = (\tilde{p}_{p} , \tilde{\bs}_e)$, $\f = (\bar{g}_p -  p_p \, , \, \bar{g}_e -  \bs_e)$,
$\tilde{\f} = (\tilde{g}_p -  \tilde{p}_p \, , \, \tilde{g}_e -  \tilde{\bs}_e)$, using \eqref{ippp1}
\begin{align*}
( \f \, - \, \tilde{\f} \, , \, \v \, - \, \tilde{\v} )_{S_{2}}  = 
& - \alpha_p b_p\left(\u_{s},p_{p}-\tilde{p}_p\right) - b_p(\u_{p},p_{p}-\tilde{p}_p) -b_{s}(\u_s,\bs_e-\tilde{\bs}_e)\\
	& + \alpha_p b_p\left(\tilde{\u}_{s},p_{p}-\tilde{p}_p\right) +b_p(\tilde{\u}_{p},p_{p}-\tilde{p}_p) +b_{s}(\tilde{\u}_s,\bs_e-\tilde{\bs}_e).
\end{align*}	
	Testing equation \eqref{p-1} with $(\v_f,\v_p,\v_s) =(\u_f,\u_p,\u_s)$, we obtain
	\begin{align*}
	&  
	a_{f}(\u_{f},\u_{f}) + a^d_{p}(\u_{p},\u_{p}) 
	+ a_{BJS}(\u_{f}, \u_{s};\u_{f},\u_{s}) + b_f(\u_{f},p_{f})+ b_p(\u_{p},p_{p})\nonumber
	\\
	& \qquad\quad   + 
	\alpha_p b_p(\u_{s},p_{p}) +b_{s}(\u_s,\bs_e)+ b_{\Gamma}(\u_{f},\u_{p},\u_{s};\lam)  = (\f_{f},\u_{f})_{\O_f} + (\f_{p},\u_{s})_{\O_p}.
	\end{align*}
	On the other hand, choosing $w_f=p_f$ and $\mu=\lam$ in \eqref{p-2} and \eqref{p-gamma}, we get
	\begin{align*}
	-b_f(\u_{f},p_{f})-b_{\Gamma}(\u_{f},\u_{p},\u_{s};\lam)=(q_f,p_f)_{\O_f}.
	\end{align*}
	Hence,
	\begin{align} 
	& a_{f}(\u_{f},\u_{f})+ a^d_{p}(\u_{p},\u_{p}) 
	+ a_{BJS}(\u_{f}, \u_{s};\u_{f},\u_{s}) + b_p(\u_{p},p_{p}) + 
	\alpha_p b_p(\u_{s},p_{p})\nonumber\\
	& \qquad\quad 
+b_{s}(\u_s,\bs_e)=  (\f_{f},\u_{f})_{\O_f} + (\f_{p},\u_{s})_{\O_p}+(q_f,p_f)_{\O_f}. \label{dif-1}
	\end{align}
	Repeating the same argument for problem \eqref{p-tilde-1}--\eqref{p-tilde-gamma}, we obtain
	\begin{align}
	& a_{f}(\tilde{\u}_{f},\tilde{\u}_{f}) + a^d_{p}(\tilde{\u}_{p},\tilde{\u}_{p}) 
	+ a_{BJS}(\tilde{\u}_{f}, \tilde{\u}_{s};\tilde{\u}_{f},\tilde{\u}_{s}) + b_p(\tilde{\u}_{p},\tilde{p}_{p}) + 
	\alpha_p b_p(\tilde{\u}_{s},\tilde{p}_{p}) \nonumber \\
	& \qquad\quad +b_{s}(\tilde{\u}_s,\tilde{\bs}_e)=  (\f_{f},\tilde{\u}_{f})_{\O_f} + (\f_{p},\tilde{\u}_{s})_{\O_p}+(q_f,\tilde{p}_f)_{\O_f}. \label{dif-2}
	\end{align}
	Next, we test \eqref{p-1} with $(\v_f,\v_p,\v_s) =(\tilde{\u}_f,\tilde{\u}_p,\tilde{\u}_s)$:
	\begin{align*}
	&  
	a_{f}(\u_{f},\tilde{\u}_{f}) + a^d_{p}(\u_{p},\tilde{\u}_{p}) 
	+ a_{BJS}(\u_{f}, \u_{s};\tilde{\u}_{f},\tilde{\u}_{s}) + b_f(\tilde{\u}_{f},p_{f})+ b_p(\tilde{\u}_{p},p_{p})\nonumber
	\\
	& \qquad\quad   + 
	\alpha_p b_p(\tilde{\u}_{s},p_{p}) +b_{s}(\tilde{\u}_s,\bs_e)+ b_{\Gamma}(\tilde{\u}_{f},\tilde{\u}_{p},\tilde{\u}_{s};\lam)  = (\f_{f},\tilde{\u}_{f})_{\O_f} + (\f_{p},\tilde{\u}_{s})_{\O_p}.
	\end{align*}
	Choosing $w_f=p_f$ and $\mu=\lam$ in \eqref{p-tilde-2}--\eqref{p-tilde-gamma}, we conclude that
	\begin{align*}
	-b_f(\tilde{\u}_f,p_f)-b_{\Gamma}(\tilde{\u}_{f},\tilde{\u}_{p},\tilde{\u}_{s};\lam) =(q_f,p_f)_{\O_f},
	\end{align*}
	which implies that
	\begin{align}
	& a_{f}(\u_{f},\tilde{\u}_{f}) + a^d_{p}(\u_{p},\tilde{\u}_{p}) 
	+ a_{BJS}(\u_{f}, \u_{s};\tilde{\u}_{f},\tilde{\u}_{s})+ b_p(\tilde{\u}_{p},p_{p}) + 
	\alpha_p b_p(\tilde{\u}_{s},p_{p}) \nonumber \\
	&\qquad\quad +b_{s}(\tilde{\u}_s,\bs_e)=  (\f_{f},\tilde{\u}_{f})_{\O_f} + (\f_{p},\tilde{\u}_{s})_{\O_p}+(q_f,p_f)_{\O_f}. \label{dif-3}
	\end{align}
	Similarly,
	\begin{align} 
	& a_{f}(\tilde{\u}_{f},\u_{f}) + a^d_{p}(\tilde{\u}_{p},\u_{p}) 
	+ a_{BJS}(\tilde{\u}_{f}, \tilde{\u}_{s};\u_{f},\u_{s})+ b_p(\u_{p},\tilde{p}_{p}) + 
	\alpha_p b_p(\u_{s},\tilde{p}_{p}) \nonumber \\
	&\qquad\quad +b_{s}(\u_s,\tilde{\bs}_e)=  (\f_{f},\u_{f})_{\O_f} + (\f_{p},\u_{s})_{\O_p}+(q_f,\tilde{p}_f)_{\O_f}. \label{dif-4}
	\end{align}
	Manipulating \eqref{dif-1}--\eqref{dif-4}, we finally obtain
	\begin{align*}
	\left(  \f \, - \, \tilde{\f} \, , \, \v \, - \, \tilde{\v} \right)_{S_2}  &=  a_{f}(\u_{f},\u_{f}) + a^d_{p}(\u_{p},\u_{p}) 
	+ a_{BJS}(\u_{f}, \u_{s};\u_{f},\u_{s}) \\
	&-a_{f}(\tilde{\u}_{f},\u_{f}) - a^d_{p}(\tilde{\u}_{p},\u_{p}) 
	- a_{BJS}(\tilde{\u}_{f}, \tilde{\u}_{s};\u_{f},\u_{s}) \\
	&- a_{f}(\u_{f},\tilde{\u}_{f}) - a^d_{p}(\u_{p},\tilde{\u}_{p}) 
	- a_{BJS}(\u_{f}, \u_{s};\tilde{\u}_{f},\tilde{\u}_{s}) \\
	& +a_{f}(\tilde{\u}_{f},\tilde{\u}_{f}) + a^d_{p}(\tilde{\u}_{p},\tilde{\u}_{p}) 
	+ a_{BJS}(\tilde{\u}_{f}, \tilde{\u}_{s};\tilde{\u}_{f},\tilde{\u}_{s}) \\
	= a_{f}(\u_{f},\u_{f}-\tilde{\u}_f) &+ a^d_{p}(\u_{p},\u_{p}-\tilde{\u}_p) 
	+ a_{BJS}(\u_{f}, \u_{s};\u_{f}-\tilde{\u}_f,\u_{s}-\tilde{\u}_s) \\
	-a_{f}(\tilde{\u}_{f},\u_{f}-\tilde{\u}_f) & -a^d_{p}(\tilde{\u}_{p},\u_{p}-\tilde{\u}_p) 
	- a_{BJS}(\tilde{\u}_{f}, \tilde{\u}_{s};\u_{f}-\tilde{\u}_f,\u_{s}-\tilde{\u}_s) \geq 0,
	\end{align*}
	due to the monotonicity of $a_f(\cdot,\cdot),\, a_p^d(\cdot,\cdot)$ and $a_{BJS}(\cdot,\cdot;\cdot,\cdot)$.
	
\end{proof}

\begin{lemma}  \label{lmaExode}
For each $h_p \in W^{1,1}(0, T; W_{p,2}')$, $h_e \in W^{1,1}(0, T; \bS_{e,2}')$, 
and $p_p(0) \in W_p$, $\bs_e(0) \in \bS_e$,
there exists a solution to \eqref{ode-problem} with $p_p \in W^{1,\infty}(0,T;W_p)$ and
$\bs_e \in W^{1,\infty}(0,T;\bS_e)$.
\end{lemma}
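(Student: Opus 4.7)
The plan is to recast \eqref{ode-problem} into the abstract framework of Theorem \ref{thmsho61b}. I take $E = S_2 = W_{p,2} \times \bS_{e,2}$ and let $\mathcal{N}$ be the Riesz map associated with the inner product $(\cdot,\cdot)_{S_2}$. Then $\mathcal{N}$ is linear, symmetric, and monotone from $E$ into its dual, the induced seminorm is $|x|_b = \|x\|_{S_2}$, and $E_b'$ coincides with $S_2' \cong W_{p,2}' \times \bS_{e,2}'$. Under this identification, the hypothesis $f \in W^{1,1}(0,T;E_b')$ in Theorem \ref{thmsho61b} is exactly the assumption $(h_p,h_e) \in W^{1,1}(0,T;W_{p,2}' \times \bS_{e,2}')$ of the lemma, and the equation $\frac{d}{dt}(\mathcal{N} u) + \mathcal{M}(u) \ni f$ becomes \eqref{ode-problem}.

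The two remaining hypotheses of Theorem \ref{thmsho61b} are the monotonicity of $\mathcal{M}$ and the range condition $\mathrm{Rg}(\mathcal{N}+\mathcal{M}) = E_b'$. Monotonicity is Lemma \ref{lmaLmte}. For the range condition, after the Riesz identification $\mathcal{N}+\mathcal{M}$ acts as $I+\mathcal{M}$, so this is exactly Corollary \ref{corONTO}. For the initial data, I would either assume $(p_{p,0},\bs_{e,0}) \in D$, or, if only $(p_{p,0},\bs_{e,0}) \in W_p \times \bS_e$ is given, approximate it by a sequence in $D$ (which is nonempty by Lemma \ref{thmDdef}) and pass to the limit using the uniform energy bounds that appear in the regularization step of that proof.

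Invoking Theorem \ref{thmsho61b} produces $u=(p_p,\bs_e)$ with $\mathcal{N}u \in W^{1,\infty}(0,T;E_b')$ and $u(t) \in D$ for every $t \in [0,T]$. After the Riesz identification this gives $p_p \in W^{1,\infty}(0,T;W_{p,2})$ and $\bs_e \in W^{1,\infty}(0,T;\bS_{e,2})=W^{1,\infty}(0,T;\bS_e)$, so the elastic stress regularity is already as claimed. To upgrade $p_p$ from $W_{p,2}=L^2$ to the required $W_p=L^{r'}$, I would exploit that for each $t$ the pair $(p_p(t),\bs_e(t)) \in D$ satisfies the stationary system \eqref{h-cts-alt-dom-1}--\eqref{h-cts-alt-dom-gamma}, and then use the inf-sup condition \eqref{inf-sup-stokes-darcy} together with the coercivity bounds of Lemma \ref{lmapropBBa}, as in the proof of Lemma \ref{thmDdef}, to extract a pointwise-in-$t$ bound on $\|p_p(t)\|_{W_p}$ in terms of the data. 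The corresponding bound for $\partial_t p_p$ would come from applying the same inf-sup argument to the time difference quotient of the stationary system and then letting the time step go to zero. I expect this last step to be the main obstacle, since the nonlinearity of $\Ac$ makes the difference-quotient estimate delicate in the Sobolev setting and requires careful handling of the passage to the limit in the nonlinear Stokes and Darcy terms.
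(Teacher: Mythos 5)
Your proposal follows exactly the same route as the paper: the paper's entire proof is the one-line invocation of Theorem~\ref{thmsho61b} with $E = W_{p,2}\times\bS_{e,2}$, $\mathcal{N}=I$ (i.e., the $S_2$-Riesz map, as you say), monotonicity supplied by Lemma~\ref{lmaLmte}, and the range condition $Rg(I+\mathcal{M}) = W_{p,2}'\times\bS_{e,2}'$ supplied by Corollary~\ref{corONTO}. The two caveats you flag are genuine, and neither is addressed in the paper's proof. First, Theorem~\ref{thmsho61b} requires $u_0\in D$, whereas the lemma assumes only $p_p(0)\in W_p$, $\bs_e(0)\in\bS_e$; your suggestion to approximate from $D$ and pass to the limit is one way to close this, though the paper does not do it. Second, the theorem yields $\mathcal{N}u\in W^{1,\infty}(0,T;E_b')$ and $u(t)\in D$ pointwise, i.e.\ $p_p\in L^\infty(0,T;W_p)\cap W^{1,\infty}(0,T;W_{p,2})$; since $W_p = L^{r'}(\O_p)\subsetneq L^2(\O_p) = W_{p,2}$ for $r'>2$, the claimed $W^{1,\infty}(0,T;W_p)$ regularity for $p_p$ is strictly stronger than what drops out of the theorem. (Your observation that no upgrade is needed for $\bs_e$, because $\bS_{e,2}=\bS_e$, is correct.) Your sketch of a remedy via the stationary inf-sup bound for $\|p_p(t)\|_{W_p}$ and a difference-quotient argument for $\d_t p_p$ is the natural fix, and you are right that the nonlinearity of $\Ac$ makes the difference-quotient step the delicate part; the paper's terse proof simply passes over it.
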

\begin{proof}
Applying Theorem \ref{thmsho61b} with $\mathcal{N} = I$, 
$\mathcal{M} = \mathcal{M}$, 
$E = W_{p , 2} \times \bS_{e , 2}$, $E_{b}' = W'_{p , 2} \times
\bS'_{e , 2}$, and using Lemma~\ref{lmaLmte} and Corollary
\ref{corONTO}, we obtain existence of a solution to
\eqref{ode-problem}.
\end{proof}
%

\subsubsection{Step 3: The original problem 
\eqref{h-cts-alt-1}--\eqref{h-cts-alt-gamma} is a special case of \eqref{ode-problem}}
\label{sssec_three}

Finally, we establish the existence of a solution to \eqref{h-cts-alt-1}--\eqref{h-cts-alt-gamma}
as a corollary of Lemma \ref{lmaExode}.

\begin{lemma}\label{lem:equiv}
If $(p_p(t),\bs_e(t))\in D$ solves \eqref{ode-problem} 
for $h_{p} = s_0^{-1} q_p$ and $h_{e} = 0$, then it
also solves \eqref{h-cts-alt-1}--\eqref{h-cts-alt-gamma}.
\end{lemma}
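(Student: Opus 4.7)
The plan is to unwind the definitions of the multivalued operator $\mathcal{M}$ and the domain $D$, identify the associated primal variables pointwise in time, and verify that the alternative weak formulation is recovered by a direct substitution.

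First, suppose $(p_p(t),\bs_e(t))\in D$ satisfies \eqref{ode-problem} with the specified data. By definition of $\mathcal{M}$ in \eqref{defmcM}, at a.e.\ $t \in (0,T)$ there exist $(\bar g_p(t),\bar g_e(t))\in W_{p,2}'\times \bS_{e,2}'$ and corresponding variables $(\u_p(t),\u_s(t),\u_f(t))\in \Q$, $p_f(t)\in W_f$, $\lam(t)\in \Lambda$ such that \eqref{h-cts-alt-dom-1}--\eqref{h-cts-alt-dom-gamma} hold with these $(\bar g_p,\bar g_e)$, and
\[
\bigl(\bar g_p - p_p,\ \bar g_e - \bs_e\bigr) \in \mathcal{M}\bigl((p_p,\bs_e)\bigr).
\]
The parabolic inclusion \eqref{ode-problem} with $h_p=s_0^{-1}q_p$ and $h_e=0$ then forces
\[
\bar g_p = p_p + s_0^{-1} q_p - \d_t p_p, \qquad \bar g_e = \bs_e - \d_t \bs_e.
\]

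Next, I would substitute these identities for $\bar g_p$ and $\bar g_e$ into the right-hand side of \eqref{h-cts-alt-dom-2}. The terms $(s_0 p_p,w_p)_{\O_p}$ and $a_p^s(\bs_e,\bt_e)$ on the left exactly cancel the matching terms coming from $(s_0\bar g_p,w_p)_{\O_p}+(A\bar g_e,\bt_e)_{\O_p}$ on the right, leaving
\[
(s_0 \d_t p_p,w_p)_{\O_p} + a_p^s(\d_t \bs_e,\bt_e) - \alpha_p b_p(\u_s,w_p) - b_p(\u_p,w_p) - b_s(\u_s,\bt_e) - b_f(\u_f,w_f) = (q_f,w_f)_{\O_f} + (q_p,w_p)_{\O_p},
\]
which is precisely \eqref{h-cts-alt-2}. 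Equations \eqref{h-cts-alt-dom-1} and \eqref{h-cts-alt-dom-gamma} do not involve $\bar g_p$ or $\bar g_e$ and coincide verbatim with \eqref{h-cts-alt-1} and \eqref{h-cts-alt-gamma}, respectively. The initial conditions for $p_p$ and $\bs_e$ are inherited directly from the hypothesis on the initial data in Lemma~\ref{lmaExode}.

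The step that requires care is the time-regularity bookkeeping: the statement requires the primal variables $(\u_f,p_f,\u_p,\u_s,\lam)$ to lie in the prescribed Bochner spaces listed in Theorem~\ref{well-pos-alt}, whereas the domain $D$ only guarantees their existence pointwise in $t$. Here I would invoke the energy-type bounds of the form \eqref{regularized-estimate-4} (applied to the nonregularized problem \eqref{h-cts-alt-dom-1}--\eqref{h-cts-alt-dom-gamma}) combined with Lemma~\ref{lmapropAE} and Lemma~\ref{lmapropBBa}: since $\bar g_p,\bar g_e$ have the same regularity in time as $\d_t p_p, \d_t \bs_e \in L^{\infty}(0,T;\cdot)$ provided by Lemma~\ref{lmaExode}, the associated selections $(\u_f,\u_p,\u_s,p_f,\lam)$ inherit uniform-in-$t$ bounds, placing them in the required $L^{\infty}$-in-time spaces. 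The main subtlety is to confirm that, modulo a measurable selection argument for the multivalued map, the primal variables can be chosen to depend measurably on $t$; once this is in place the remainder of the proof is the algebraic cancellation described above.
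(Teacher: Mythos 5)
Your proof is correct and follows essentially the same route as the paper: unwind the parabolic inclusion to identify $\bar g_p = p_p + s_0^{-1}q_p - \d_t p_p$ and $\bar g_e = \bs_e - \d_t\bs_e$, substitute into the second equation defining the domain $D$, and cancel; the paper packages the same algebra via the identity \eqref{ippp1} and a split into the $w_f$ and $w_f=0$ cases, but the content is identical. One remark: the Bochner-space/measurable-selection concern you raise at the end is not part of Lemma~\ref{lem:equiv} as stated, and the paper handles it separately in the proof of Theorem~\ref{well-pos-alt}, where the $L^\infty$-in-time membership of $(\u_f,p_f,\u_p,\u_s,\lam)$ is deduced a posteriori from the $W^{1,\infty}$-regularity of $(p_p,\bs_e)$ together with the inf-sup conditions, so you need not resolve it within this lemma.
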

\begin{proof}
  Let $(p_p(t),\bs_e(t))\in D$ solve \eqref{ode-problem} for $h_{p} = s_0^{-1} q_p$ and
  $h_{e} = 0$.
Note that
\eqref{h-cts-alt-dom-1} and \eqref{h-cts-alt-dom-gamma} from the
definition of the domain $D$ directly imply \eqref{h-cts-alt-1}
and \eqref{h-cts-alt-gamma}. Also, \eqref{h-cts-alt-dom-2} and
\eqref{h-cts-alt-2} are the same when tested only with $w_f$. Thus it
remains to show \eqref{h-cts-alt-2} with $w_f = 0$.  

Since $(p_p(t),\bs_e(t))$ solve \eqref{ode-problem} for $h_{p} = s_0^{-1} q_p$ and
$h_{e} = 0$, there exist $(\bar{g}_p,\bar{g}_e) \in W_{p,2}^{\prime}\times \bS_{e,2}^{\prime}$
such that
$(\bar{g}_p -  p_p \, , \, \bar{g}_e -  \bs_e) \in \mathcal{M}(p_{p},\bs_e)$ satisfy
\[ 
	\frac{d}{dt} \begin{pmatrix}
	p_p\\
	\bs_e
	\end{pmatrix} \  + \ \begin{pmatrix}
	\bar{g}_p -  p_p \\
	\bar{g}_e -  \bs_e
	\end{pmatrix}  =  \begin{pmatrix}
	s_0^{-1} q_p \\
	0
	\end{pmatrix}  \, .
\]
Then,
\begin{equation}
	\left(\frac{d}{dt} \begin{pmatrix}
	p_p\\
	\bs_e
	\end{pmatrix}, \begin{pmatrix}
	w_p\\
	\bt_e
	\end{pmatrix}\right)_{S_2} +\left(\begin{pmatrix}
	\bar{g}_p -  p_p \\
	\bar{g}_e -  \bs_e
	\end{pmatrix}, \begin{pmatrix}
	w_p\\
	\bt_e
	\end{pmatrix}\right)_{S_2} = \left(\begin{pmatrix}
	s_0^{-1} q_p \\
	0
	\end{pmatrix}, \begin{pmatrix}
	w_p\\
	\bt_e
	\end{pmatrix}\right)_{S_2}
		= (q_p,w_p) ,
\label{prte11}
\end{equation}
and,  using \eqref{ippp1}, \eqref{prte11} becomes
\begin{align*}
	(s_0\d_t p_p,w_p) +a^s_p(\d_t \bs_e,\bt_e) - \alpha_p b_p\left(\u_{s},w_{p}\right) - b_p(\u_{p},w_{p}) -b_{s}(\u_s,\bt_e) = (q_p,w_p),
\end{align*}
which is \eqref{h-cts-alt-2} with $w_f = 0$. 
\end{proof}

\begin{proof}[Proof of Theorem~\ref{well-pos-alt}]
Existence of a solution of \eqref{h-cts-alt-1}--\eqref{h-cts-alt-gamma}
follows from Lemma~\ref{lmaExode} and Lemma~\ref{lem:equiv}. From Lemma~\ref{lmaExode}
we have that $p_p \in W^{1,\infty}(0,T;W_p)$ and $\bs_e \in W^{1,\infty}(0,T;\bS_e)$.
By taking \linebreak $(\v_{f}, w_{f},\v_{p},w_{p}, \v_s, \btau_e, \mu) 
= (\u_{f}, p_{f},\u_{p}, p_{p}, \u_s, \bs_e, \lambda)$ in
\eqref{h-cts-alt-1}--\eqref{h-cts-alt-gamma}, we obtain that $\u_f \in L^{\infty}(0,T;\V_f)$
and $\u_p \in L^{\infty}(0,T;\V_p)$. The inf-sup condition \eqref{inf-sup-elasticity} and
\eqref{h-cts-alt-2} imply that
$\u_s \in L^{\infty}(0,T;\X_p)$, while the inf-sup condition \eqref{inf-sup-stokes-darcy}
and \eqref{h-cts-alt-1}
imply that $p_f \in L^{\infty}(0,T;W_f)$ and $\lambda \in L^{\infty}(0,T;\Lambda)$.
\end{proof}

\subsection{Existence and uniqueness of solution of the original formulation}
In this section we discuss how the well-posedness of the original
formulation \eqref{h-cts-1}--\eqref{h-cts-gamma} follows from the
existence of a solution of the alternative formulation
\eqref{h-cts-alt-1}--\eqref{h-cts-alt-gamma}. Recall that $\u_s$ is
the structure velocity, so the displacement solution can be recovered from
\begin{align}
\bbeta_p(t) = \bbeta_{p,0} +\int_0^t \u_s(s)\, ds, && \forall t\in (0,T]. \label{disp-solution}
\end{align} 
Since $\u_s(t) \in L^{\infty}(0,T;\X_p)$, then $\bbeta_p(t) \in
W^{1,\infty}(0,T;\X_p)$ for any $\bbeta_{p,0} \in \X_p$. By construction, 
$\u_s = \d_t \bbeta_p$ and $\bbeta_p(0) = \bbeta_{p,0}$.

\begin{theorem}\label{th-exist-uniq-weak-2}
For each $\f_{f} \in
  W^{1,1}(0, T; \V_{f}')$, $\f_{p} \in W^{1,1}(0, T; \X_{p}')$, $q_f
  \in W^{1,1}(0, T; W_{f}')$, \linebreak $q_p \in W^{1,1}(0, T; W_{p}')$, and
  $p_p(0) = p_{p,0} \in W_p$, $\bbeta_p(0) = \bbeta_{p,0} \in \X_p$,
there exists a
unique solution $(\u_f(t), p_f(t),\u_p(t),p_p(t),\bbeta_p(t),\lam(t))$
$\in L^{\infty}(0,T;\V_f)\times L^{\infty}(0,T;W_f)\times
L^{\infty}(0,T;\V_p) \times W^{1,\infty}(0,T;W_p) \times
W^{1,\infty}(0,T;\X_p) \times L^{\infty}(0,T;\Lambda)$ of
\eqref{h-cts-1}--\eqref{h-cts-gamma}.
\end{theorem}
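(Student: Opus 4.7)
The plan is to deduce existence from Theorem~\ref{well-pos-alt} and to prove uniqueness directly by an energy argument that exploits the monotonicity of $a_f$, $a_p^d$, $a_{BJS}$ and the coercivity of $a_p^e$ and of $(s_0\cdot,\cdot)_{\O_p}$.

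For existence, let $(\u_f,p_f,\u_p,p_p,\u_s,\bs_e,\lam)$ be the solution of \eqref{h-cts-alt-1}--\eqref{h-cts-alt-gamma} provided by Theorem~\ref{well-pos-alt}, and define $\bbeta_p$ through \eqref{disp-solution}. Since $\u_s\in L^{\infty}(0,T;\X_p)$, we have $\bbeta_p\in W^{1,\infty}(0,T;\X_p)$ with $\d_t\bbeta_p=\u_s$ and $\bbeta_p(0)=\bbeta_{p,0}$. The key observation is that $\bs_e$ coincides with the elasticity stress tensor $\bs_e(\bbeta_p)=2\mu_p\D(\bbeta_p)+\lambda_p(\div\bbeta_p)\bI$ of \eqref{stress-defn}. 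Indeed, testing \eqref{h-cts-alt-2} with $w_f=0$, $w_p=0$ and arbitrary $\bt_e\in\bS_e$ gives $(A\d_t\bs_e,\bt_e)_{\O_p}=(\D(\u_s),\bt_e)_{\O_p}=(\d_t\D(\bbeta_p),\bt_e)_{\O_p}$; integrating in time and using the matching initial condition $A\bs_e(0)=\D(\bbeta_{p,0})$ yields $A\bs_e(t)=\D(\bbeta_p(t))$, and \eqref{defAten} then identifies $\bs_e$ as claimed. With this identification, $b_s(\v_s,\bs_e)=a_p^e(\bbeta_p,\v_s)$ for every $\v_s\in\X_p$, so \eqref{h-cts-alt-1} becomes \eqref{h-cts-1} (after renaming $\v_s$ as $\bxi_p$), taking $\bt_e=0$ in \eqref{h-cts-alt-2} recovers \eqref{h-cts-2}, and \eqref{h-cts-alt-gamma} is exactly \eqref{h-cts-gamma}. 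The stated regularity of all variables is inherited from Theorem~\ref{well-pos-alt}.

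For uniqueness, consider two solutions and denote their difference by $(\tilde\u_f,\tilde p_f,\tilde\u_p,\tilde p_p,\tilde\bbeta_p,\tilde\lam)$. Test the difference of \eqref{h-cts-1} with $(\v_f,\v_p,\bxi_p)=(\tilde\u_f,\tilde\u_p,\d_t\tilde\bbeta_p)$ (admissible since $\tilde\bbeta_p\in W^{1,\infty}(0,T;\X_p)$), the difference of \eqref{h-cts-2} with $(w_f,w_p)=(\tilde p_f,\tilde p_p)$, and the difference of \eqref{h-cts-gamma} with $\mu=\tilde\lam$, and add. The couplings $b_f$, $b_p$, $\alpha_p b_p$, and $b_\Gamma$ cancel in pairs, and using $a_p^e(\tilde\bbeta_p,\d_t\tilde\bbeta_p)=\tfrac12\d_t a_p^e(\tilde\bbeta_p,\tilde\bbeta_p)$ together with $(s_0\d_t\tilde p_p,\tilde p_p)_{\O_p}=\tfrac{s_0}{2}\d_t\|\tilde p_p\|^2_{L^2(\O_p)}$, one obtains
\begin{align*}
& [a_f(\u_{f,1},\tilde\u_f)-a_f(\u_{f,2},\tilde\u_f)] + [a_p^d(\u_{p,1},\tilde\u_p)-a_p^d(\u_{p,2},\tilde\u_p)] \\
& \quad + [a_{BJS}(\u_{f,1},\d_t\bbeta_{p,1};\tilde\u_f,\d_t\tilde\bbeta_p) - a_{BJS}(\u_{f,2},\d_t\bbeta_{p,2};\tilde\u_f,\d_t\tilde\bbeta_p)] \\
& \quad + \tfrac12 \d_t\, a_p^e(\tilde\bbeta_p,\tilde\bbeta_p) + \tfrac{s_0}{2}\d_t\|\tilde p_p\|^2_{L^2(\O_p)} = 0.
\end{align*}
Each bracket is non-negative by \eqref{B1} (or \eqref{A1}) applied to the monotone map $\G(\x)=g(\x)\x$ with $g=\nu$, $\nu_{eff}\kappa^{-1}$, $\nu_I\alpha_{BJS}\sqrt{\kappa^{-1}}$ respectively. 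Integrating in $t$ with zero initial data forces all terms to vanish, which gives $\tilde p_p\equiv 0$, $\tilde\bbeta_p\equiv 0$ (hence $\d_t\tilde\bbeta_p\equiv 0$), and, via the strict monotonicity \eqref{monotonicity}, $\tilde\u_f\equiv 0$ and $\tilde\u_p\equiv 0$. Substituting these back into the difference of \eqref{h-cts-1} with $\bxi_p={\bf 0}$ leaves the linear identity $b_f(\v_f,\tilde p_f)+b_p(\v_p,\tilde p_p)+b_\Gamma(\v_f,\v_p,{\bf 0};\tilde\lam)=0$ for all $(\v_p,{\bf 0},\v_f)\in\Q$, and the inf-sup condition \eqref{inf-sup-stokes-darcy} yields $\tilde p_f\equiv 0$ and $\tilde\lam\equiv 0$.

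The main technical hurdle is the BJS term in the uniqueness step: it is nonlinear in $\u_f-\d_t\bbeta_p$, and the argument requires that the test choice $\bxi_p=\d_t\tilde\bbeta_p$ convert the BJS difference into a monotone quantity on the interface trace. This is legitimate because $\d_t\tilde\bbeta_p\in L^\infty(0,T;\X_p)$ is admissible, and \eqref{B1} applied componentwise to the tangential slip velocities $(\u_{f,i}-\d_t\bbeta_{p,i})\cdot\t_{f,j}$ delivers the required non-negativity.
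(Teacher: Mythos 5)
Your proof is correct and follows essentially the same route as the paper: existence by translating the alternative-formulation solution via $\bs_e = A^{-1}\D(\bbeta_p)$, and uniqueness by the energy--monotonicity argument followed by the inf-sup condition \eqref{inf-sup-stokes-darcy} for the pressures and Lagrange multiplier. The only detail you leave implicit is that the quantitative monotonicity bound \eqref{monotonicity} concludes $\tilde\u_f = \tilde\u_p = 0$ because the established $L^\infty(0,T)$ regularity of $\u_f^i,\u_p^i$ keeps the denominators $c+\|\D(\u_f^i)\|^{2-r}_{L^r}$, etc., finite, a point the paper makes explicit.
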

\begin{proof}
We begin by using the existence of a solution of the alternative
formulation \eqref{h-cts-alt-1}--\eqref{h-cts-alt-gamma} to establish
solvability of the original formulation
\eqref{h-cts-1}--\eqref{h-cts-gamma}. Let $(\u_f$, $p_f$, $\u_p$,
$p_p$, $\u_s$, $\bs_e$, $\lam)$ be a solution to
\eqref{h-cts-alt-1}--\eqref{h-cts-alt-gamma}. Let $\bbeta_p$ be
defined in \eqref{disp-solution}, so $\u_s = \d_t \bbeta_p$. Then
\eqref{h-cts-alt-2} with $\btau_e = {\bf 0}$ implies \eqref{h-cts-2}
and \eqref{h-cts-alt-gamma} implies \eqref{h-cts-gamma}. We further
note that \eqref{h-cts-1} and \eqref{h-cts-alt-1} differ only in their
respective terms $a^e_{p}(\bbeta_{p},\bxi_{p})$ and
$b_{s}(\v_s,\bs_e)$.  Testing \eqref{h-cts-alt-2} with $\btau_e \in
\bS_e$ gives $(\d_t(A\bs_e-\D(\bbeta_p)),\btau_e)_{\O_p} = 0$, which,
using that $\D(\X_p) \subset \bS_e$, implies that
$\d_t(A\bs_e-\D(\bbeta_p)) = {\bf 0}$.  Integrating from 0 to $t \in
(0,T]$ and using that $\bs_e(0) = A^{-1}\D(\bbeta_p(0))$ implies that
  $\bs_e(t) = A^{-1}\D(\bbeta_p(t))$. Therefore, with \eqref{defAten},
\begin{align*}
b_{s}(\v_s,\bs_e) = (\bs_e, \D(\v_s))_{\O_p} 
= (A^{-1}\D(\bbeta_p), \D(\v_s))_{\O_p} = a^e_p(\bbeta_p,\v_s).
\end{align*}
Therefore \eqref{h-cts-1} implies \eqref{h-cts-alt-1}, which establishes
that $(\u_f,p_f,\u_p,p_p,\bbeta_{p,0} +\int_0^t \u_s(s)\, ds,\lam)$ is a solution
of \eqref{h-cts-1}--\eqref{h-cts-gamma}. The stated regularity of the solution
follows from the established regularity in Theorem~\ref{well-pos-alt}.
	
Now, assume that the solution of \eqref{h-cts-1}--\eqref{h-cts-gamma}
is not unique. Let
$(\u^i_f,p^i_f,\u^i_p,p^i_p,\bbeta^i_p,\lam^i)$, $i=1,2$, be two solutions
corresponding to the same data. Using the monotonicity property
\eqref{monotonicity} with $G(\x)= \nu(\x)\x$, $\s = \D(\u^1_f)$ and
$\t = \D(\u^2_f)$, we have
\begin{align}
C\frac{\| \D (\u^1_f) - \D (\u^2_f) \|^2_{L^r(\O_f)}}
{c + \| \D ( \u^1_f ) \|^{2-r}_{L^r(\O_f)} + \| \D (\u^2_f) \|^{2-r}_{L^r(\O_f)}}
& \leq  \left( 2\nu (\D(\u^1_f)) \D(\u^1_f) - 2\nu (\D(\u^2_f)) \D(\u^2_f), \D(\u^1_f) - \D(\u^2_f)\right)_{\O_f} \nonumber \\
& =  
\left( a_{f}(\u^1_f , \u^1_f - \u^2_f) - a_{f}(\u^2_f , \u^1_f - \u^2_f) \right) 
=:  I_1. \label{stokes-dif-1}
\end{align}
Similarly, we use \eqref{monotonicity} with $G(\x)=\nu_{eff}(\x)\x$, $\s=\u^1_p$ and 
$\t=\u^2_p$, to obtain
\begin{align} 
C\frac{\| \u^1_p - \u^2_p \|^2_{L^r(\O_p)}}
{c + \| \u^1_p \|^{2-r}_{L^r(\O_p)} + \| \u^2_p \|^{2-r}_{L^r(\O_p)}} 
& \leq (\kappa^{-1}  (\nu_{eff} (\u^1_p ) \u^1_p - \nu_{eff} (\u^2_p ) \u^2_p )
,\u^1_p - \u^2_p )_{\O_p} \nonumber \\
& = \ a_{p}^{d}(\u^1_f ,  \u^1_f - \u^2_f) - a_{p}^{d}(\u^2_f ,  \u^1_f - \u^2_f) 
=: I_2. \label{darcy-dif-1}
\end{align}
We apply \eqref{monotonicity} one more time to bound the terms coming
from BJS condition. Set $G(\x) = \nu_{I}(\x)\x$,
$\s=((\u^1_f-\d_t\bbeta^1_p)\cdot \t_{f,j})\t_{f,j}$ and
$\t=((\u^2_f-\d_t\bbeta^2_p)\cdot \t_{f,j})\t_{f,j}$, then
\begin{align} 
& \alpha_{BJS}C \sum_{j=1}^{d-1}\frac{\| (\u^1_f - \d_t \bbeta^1_p)
\cdot \t_{f,j} - (\u^2_f - \d_t \bbeta^2_p)\cdot \t_{f,j} \|^2_{L^r(\Gamma_{fp})}}
{c + \| (\u^1_f - \d_t \bbeta^1_p)\cdot \t_{f,j} \|^{2-r}_{L^r(\Gamma_{fp}) } + \| (\u^2_f - \d_t \bbeta^2_p)\cdot \t_{f,j} \|_{\Gamma_{fp}}^{2-r}} \nonumber \\
& \qquad\quad 
\leq  a_{BJS} (\u^1_f, \d_t \bbeta^1_p; \u^1_f - \u^2_f, \d_t \bbeta^1_p -\d_t \bbeta^2_p) -  a_{BJS} (\u^2_f, \d_t \bbeta^2_p;\u^1_f - \u^2_f, \d_t \bbeta^1_p -\d_t \bbeta^2_p)  =:I_3.
\label{bjs-dif-1}
\end{align}
From \eqref{h-cts-1} we have
	\begin{align}
	I_1+I_2+I_3+a^e_p(\bbeta^1_p- \bbeta^2_p,\d_t\bbeta^1_p- \d_t\bbeta^2_p) = -b_f(\u^1_f-\u^2_f,p^1_f-p^2_f) -b_p(\u^1_p-\u^2_p,p^1_p-p^2_p) \nonumber\\
	-\alpha_p b_p(\d_t\bbeta^1_p- \d_t\bbeta^2_p,p^1_p-p^2_p)-b_{\Gam}(\u^1_f-\u^2_f,\u^1_p-\u^2_p,\d_t\bbeta^1_p- \d_t\bbeta^2_p;\lam^1-\lam^2). \label{eqdif1}
	\end{align}
	 On the other hand, it follows from \eqref{h-cts-2} and \eqref{h-cts-gamma}, with $w_f =p^1_f-p^2_f,\,w_p =p^1_p-p^2_p,\, \mu =\lam^1-\lam^2$, that
	 \begin{align}
	 ( s_0 \, \d_t\left( p^1_p-p^2_p\right),p^1_p-p^2_p) 
	 - \alpha_p b_p(\d_t \left(\bbeta^1_p-\bbeta^2_p\right),p^1_p-p^2_p) - b_p(\u^1_p-\u^2_p,p^1_p-p^2_p) \nonumber \\
	 - b_f(\u^1_f-\u^2_f,p^1_f-p^2_f) 
	 -	 b_{\Gamma}(\u^1_f-\u^2_f,\u^1_p-\u^2_p,\d_t \left(\bbeta^1_p-\bbeta^2_p\right);\lam^1-\lam^2) = 0 \, .\label{eqdif2}
	 \end{align}
 Combining \eqref{eqdif1} and \eqref{eqdif2}, we obtain
 \begin{align*}
 I_1+I_2+I_3 +a^e_p(\bbeta^1_p- \bbeta^2_p,\d_t\bbeta^1_p- \d_t\bbeta^2_p) 
 = -( s_0 \, \d_t\left( p^1_p-p^2_p\right),p^1_p-p^2_p),
\end{align*}
which implies
\begin{align*}
 \frac{1}{2}\d_t \left(  a^e_p(\bbeta^1_p-\bbeta^2_p,\bbeta^1_p-\bbeta^2_p) + s_0\|p^1_p-p^2_p\|^2_{L^2(\O_p)} \right) 
 &+ I_1+I_2+I_3  = 0. 
  \end{align*}
  Integrating in time from 0 to $t\in (0,T]$, and using $p_p^1(0)=p_p^2(0), \ \bbeta_p^1(0)=\bbeta^2_p(0)$, we obtain
  \begin{align*}
  \frac{1}{2} \left(  a^e_p(\bbeta^1_p(t)-\bbeta^2_p(t),\bbeta^1_p(t)-\bbeta^2_p(t)) + s_0\|p^1_p(t)-p^2_p(t)\|^2_{L^2(\O_p)} \right) +\int_0^t\left( I_1+I_2+I_3\right)\, ds   = 0. 
  \end{align*} 
  Hence, using \eqref{stokes-dif-1}--\eqref{bjs-dif-1}, we have
   \begin{align}
&   \frac{1}{2} \left(  a^e_p(\bbeta^1_p(t)-\bbeta^2_p(t),\bbeta^1_p(t)-\bbeta^2_p(t)) + s_0\|p^1_p(t)-p^2_p(t)\|^2_{L^2(\O_p)} \right) 
   \nonumber \\
& \qquad   + C \int_0^t \left(\frac{\| \D (\u^1_f) - \D (\u^2_f) \|^2_{L^2(\O_f)}}{c + \| \D ( \u^1_f ) \|^{2-r}_{L^r(\O_f)} + \| \D (\u^2_f) \|^{2-r}_{L^r(\O_f)}} +	\frac{\| \u^1_p - \u^2_p \|^2_{L^r(\O_p)}}{c + \| \u^1_p \|^{2-r}_{L^r(\O_p)} + \| \u^2_p \|^{2-r}_{L^r(\O_p)}} \right)\, ds   \leq 0.  \label{uniq-vel}
   \end{align}
We note that $a_p^e(\cdot,\cdot)$ satisfies the bounds, for some $c_e$, $C_e > 0$, for
all $\bbeta_p$, $\bxi_p \in \X_p$,
\begin{equation}\label{ae-bound}
c_e \|\bxi_p\|_{H^1(\O_p)}^2 \leq a_p^e(\bxi_p,\bxi_p), \quad 
a_p^e(\bbeta_p,\bxi_p) \leq C_e \|\bbeta_p\|_{H^1(\O_p)}\|\bxi_p\|_{H^1(\O_p)},
\end{equation}
where the coercivity bound follows from Korn's inequality. Therefore,
it follows from \eqref{uniq-vel}, together with
the established regularity $\u_f^i \in L^{\infty}(0,T;\V_f)$ and
$\u_p^i \in L^{\infty}(0,T;\V_p)$, that $\u^1_f(t) =\u^2_f(t),
\u^1_p(t)=\u^2_p, \bbeta^1(t) =\bbeta^2_p, \, \forall t\in (0,T]$.
  Finally, we use the inf-sup condition \eqref{inf-sup-stokes-darcy}
  for $p^1_f-p^2_f,p^1_p-p^2_p,\lam^1-\lam^2$ together with
  \eqref{h-cts-1} to obtain
  \begin{align*}
 & \|(p^1_f-p^2_f,p^1_p-p^2_p,\lam^1-\lam^2)\|_{W_f\times W_p\times \Lambda}\\
 & \qquad \leq C\sup_{(\v_f,\v_p)\in \V_f\times \V_p}\frac{b_f(\v_f,p^1_f-p^2_f) 
+b_p(\v_p,p^1_p-p^2) +
  b_{\Gamma}(\v_f,\v_p, \mathbf{0};\lam^1-\lam^2)}{\|(\v_f,\v_p)\|_{\V_f\times \V_p}} \\
& \qquad   = C\sup_{(\v_f,\v_p)\in \V_f\times \V_p} \Big( \frac{a_{f}(\u^2_{f},\v_{f})-a_{f}(\u^1_{f},\v_{f}) 
+a^d_{p}(\u^2_{p},\v_{p})- a^d_{p}(\u^1_{p},\v_{p}) }{\|(\v_f,\v_p)\|_{\V_f\times \V_p}}  \\
& \qquad\qquad\qquad\qquad + \frac{a_{BJS}(\u^2_{f},\d_t \bbeta^2_{p};\v_{f}, \mathbf{0})- a_{BJS}(\u^1_{f},\d_t \bbeta^1_{p};\v_{f}, \mathbf{0}) 
  	}{\|(\v_f,\v_p)\|_{\V_f\times \V_p}}\Big)=0.
  \end{align*}
 Therefore, for all $t\in (0,T]$, $p^1_f = p^2_f, \ p^1_p = p^2_p, \ \lam^1 = \lam^2$, and we can conclude
 that \eqref{h-cts-1}--\eqref{h-cts-gamma} has a unique solution.
\end{proof}

We conclude with a stability bound for the solution of \eqref{h-cts-1}--\eqref{h-cts-gamma}.
\begin{theorem} \label{stability}
For the solution of \eqref{h-cts-1}--\eqref{h-cts-gamma}, 
assuming sufficient regularity of the data,
there exists $C>0$ such that
\begin{align*}
& \| \u_{f} \|_{L^r(0,T; W^{1,r}(\O_f))}^r + \| \u_{p} \|_{L^r(0,T; L^{r}(\Omega_p))}^r 
+ | \u_{f} - \d_t \bbeta_{p} |^r_{L^r(0,T;BJS)} + \| p_{f} \|^{r'}_{L^{r'} (0,T; L^{r'}(\O_f))} \\
& \qquad +  \| p_{p} \|^{r'}_{L^{r'} (0,T; L^{r'}(\O_p))} 
+ \| \lam \|^{r'}_{L^{r'} (0,T; W^{1/r,r'}(\Gamma_{fp}))}
+ \| \bbeta_{p} \|^2_{L^\infty (0,T; H^1({\Omega_p}))} 
+ s_0\| p_{p} \|^2_{L^\infty (0,T; L^2(\O_p))}\\
& \quad \leq 
C \exp(T) \Big( \| \f_{p}\|^{2}_{L^\infty (0,T; H^{-1}(\Omega_p))}  +\| \bbeta_{p} (0)\|^2_{H^1(\Omega_p)} 
	+ s_0 \| p_{p}(0) \|_{L_2(\Omega_p)}^2	
+ \| \d_t \f_{p} \|_{L^2(0,T;H^{-1}(\Omega_p))}^2 
\\
& \qquad 
+ \| \f_{f} \|^{r'}_{L^{r'}(0,T;W^{-1,r'}(\Omega_f))} 
	+ \| q_{f} \|^{r}_{L^{r}(0,T;L^{r}(\Omega_f))}  + \| q_{p} \|^{r}_{L^{r}(0,T;L^{r}(\Omega_f))} 
+ c( \bar c_f + \bar c_p + \bar c_I)
\Big).
	\end{align*}
\end{theorem}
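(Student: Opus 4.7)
The plan is to derive a combined energy estimate by testing \eqref{h-cts-1}--\eqref{h-cts-gamma} with the solution itself, absorb the time-derivative terms via integration by parts and Young's inequality, apply Gr\"onwall's inequality, and then recover the pressure and Lagrange-multiplier bounds through the inf-sup condition \eqref{inf-sup-stokes-darcy}. I first take $(\v_f,\bxi_p,\v_p) = (\u_f,\d_t\bbeta_p,\u_p)$ in \eqref{h-cts-1}, $(w_f,w_p) = (p_f,p_p)$ in \eqref{h-cts-2}, and $\mu = \lam$ in \eqref{h-cts-gamma}, and add the three equations. The coupling terms $b_f(\u_f,p_f)$, $b_p(\u_p,p_p)$, $\alpha_p b_p(\d_t\bbeta_p,p_p)$ and the $b_{\Gamma}$ contributions cancel in pairs, leaving
\begin{align*}
\tfrac12\d_t\!\left(a^e_p(\bbeta_p,\bbeta_p) + s_0\|p_p\|^2_{L^2(\O_p)}\right)
&+ a_f(\u_f,\u_f) + a^d_p(\u_p,\u_p) + a_{BJS}(\u_f,\d_t\bbeta_p;\u_f,\d_t\bbeta_p) \\
&= (\f_f,\u_f)_{\O_f} + (\f_p,\d_t\bbeta_p)_{\O_p} + (q_f,p_f)_{\O_f} + (q_p,p_p)_{\O_p}.
\end{align*}
Integrating in $t$ and invoking the coercivity bounds \eqref{a-f-bounds}--\eqref{a-bjs-bounds} and \eqref{ae-bound} controls the $L^r(0,T;\cdot)$ norms of $\u_f$, $\u_p$, and $\u_f-\d_t\bbeta_p$, as well as the $L^\infty(0,T;\cdot)$ norms of $\bbeta_p$ and $s_0^{1/2}p_p$, at the cost of a $c(\bar c_f + \bar c_p + \bar c_I)T$ term on the right.

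The main obstacle is the term $(\f_p,\d_t\bbeta_p)_{\O_p}$, for which we have no useful isolated control of $\d_t\bbeta_p$. I resolve it by integration by parts in time,
\begin{align*}
\int_0^t (\f_p,\d_t\bbeta_p)_{\O_p}\,ds = \<\f_p(t),\bbeta_p(t)\>_{\O_p} - \<\f_p(0),\bbeta_p(0)\>_{\O_p} - \int_0^t \<\d_t\f_p(s),\bbeta_p(s)\>_{\O_p}\,ds,
\end{align*}
then bound the $t$-endpoint by Young's inequality so that $\tfrac14 c_e\|\bbeta_p(t)\|^2_{H^1(\O_p)}$ is absorbed on the left and $\|\f_p\|^2_{L^\infty(0,T;H^{-1})}$ is placed on the right. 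The $0$-endpoint contributes $\|\bbeta_p(0)\|^2_{H^1}$ and the integral residue yields $\|\d_t\f_p\|^2_{L^2(0,T;H^{-1})} + \int_0^t\|\bbeta_p(s)\|^2_{H^1}\,ds$. The remaining source contributions are handled by Young's inequality with conjugate exponents $(r,r')$, producing $\|\f_f\|^{r'}$, $\|q_f\|^r$ and $\|q_p\|^r$ on the right and fractions of $\|\u_f\|^r_{W^{1,r}}$, $\|\u_p\|^r_{L^r}$, $\|p_f\|^{r'}_{W_f}$ and $\|p_p\|^{r'}_{W_p}$ to be absorbed later. A Gr\"onwall argument applied to $\|\bbeta_p(t)\|^2_{H^1} + s_0\|p_p(t)\|^2_{L^2}$ eliminates the $\int_0^t\|\bbeta_p\|^2_{H^1}$ term and delivers the $\exp(T)$ factor.

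The last step is to bound $p_f$, $p_p$, $\lam$ in the required $L^{r'}$-in-time norms. Pointwise in $t$, I apply the inf-sup condition \eqref{inf-sup-stokes-darcy} to $(p_f,p_p,\mathbf{0},\lam)$, and use \eqref{h-cts-1} with $\bxi_p = {\bf 0}$ to rewrite the numerator as $-a_f(\u_f,\v_f)-a^d_p(\u_p,\v_p)-a_{BJS}(\u_f,\d_t\bbeta_p;\v_f,{\bf 0})+(\f_f,\v_f)$. The continuity bounds in \eqref{a-f-bounds}--\eqref{a-bjs-bounds} then give
\begin{align*}
\|p_f\|_{W_f} + \|p_p\|_{W_p} + \|\lam\|_\Lambda \le C\!\left(\|\u_f\|^{r/r'}_{W^{1,r}(\O_f)} + \|\u_p\|^{r/r'}_{L^r(\O_p)} + |\u_f-\d_t\bbeta_p|^{r/r'}_{BJS} + \|\f_f\|_{W^{-1,r'}(\O_f)}\right),
\end{align*}
and raising to the $r'$-th power (using $(r/r')\cdot r' = r$) and integrating in $t$ delivers the required $L^{r'}(0,T;\cdot)$ bounds, with the right side already controlled by the preceding energy estimate.
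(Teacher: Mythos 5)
Your proposal is correct and follows essentially the same path as the paper: the same choice of test functions yielding the energy identity, integration by parts in time to handle $(\f_p,\d_t\bbeta_p)_{\O_p}$, Young's inequality with exponents $(r,r')$ for the remaining source terms, the inf-sup condition \eqref{inf-sup-stokes-darcy} combined with the continuity bounds \eqref{a-f-bounds}--\eqref{a-bjs-bounds} to recover the pressure and Lagrange multiplier norms, and a final Gr\"onwall argument. The only detail you pass over lightly is the two-parameter absorption — the pressure fractions from Young's inequality and the velocity terms from the inf-sup bound must be absorbed by choosing the two small parameters in the right order — but that is a bookkeeping matter, not a gap in the argument.
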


\begin{proof}
We first note that the term $c( \bar c_f + \bar c_p + \bar c_I)$ appears due to the use of
the coercivity bounds in \eqref{a-f-bounds}--\eqref{a-bjs-bounds} in the general case $c > 0$. 
For simplicity, we present the proof for $c = 0$, noting that the extra term appears in 
\eqref{energy-ineq-1} and the last inequality in the proof. We choose 
$(\v_{f}, w_{f},\v_{p},w_{p}, \bxi_{p},\mu) 
= (\u_{f}, p_{f},\u_{p}, p_{p},\d_t \bbeta_{p}, \lambda)$ 
in \eqref{h-cts-1}--\eqref{h-cts-gamma} to get
\begin{align} 
& \frac{1}{2}\d_t\left[  (s_0  p_{p}, p_{p})_{\O_p} + a_p^e (\bbeta_{p},  \bbeta_{p}) \right] 
+ a_f(\u_{f}, \u_{f}) + a_p^d(\u_{p}, \u_{p}) 
+ a_{BJS}(\u_{f},  \d_t\bbeta_{p}; \u_{f}, \d_t \bbeta_{p}) \nonumber \\	
&\qquad\quad
= (\f_{f},\u_{f})_{\O_f} + (\f_{p},\d_t \bbeta_{p})_{\O_p} 
+ (q_{f},p_{f})_{\O_f} + (q_{p},p_{p})_{\O_p}.\label{energy-eq-1}
	\end{align}
Next, we integrate \eqref{energy-eq-1} from 0 to $t\in (0,T]$ and use the 
coercivity bounds in \eqref{a-f-bounds}--\eqref{a-bjs-bounds} and \eqref{ae-bound}:
\begin{align}
& 
s_0\| p_{p}(t) \|_{L^2(\Omega_p)}^2 + \| \bbeta_{p} (t)\|^2_{H^1(\O_p)} 
+ \int_0^t \left(\| \u_{f} \|_{W^{1,r}(\O_f)}^r + \| \u_{p} \|_{L^{r}(\Omega_p)}^r 
+ | \u_{f} - \d_t \bbeta_{p} |^r_{BJS}\right)\, ds 
\nonumber \\
& \qquad \leq C\Big( \int_0^t (\f_{f},\u_{f})_{\O_f}\, ds
+ (\f_{p}(t), \bbeta_{p}(t))_{\Omega_p} - (\f_{p}(0), \bbeta_{p}(0))_{\Omega_p}  -\int_0^t(\d_t\f_{p}, \bbeta_{p})_{\O_p}\, ds \nonumber \\ 
& \qquad\quad +  \int_0^t \left(  (q_{f},p_{f})_{\O_f} 
+ (q_{p},p_{p})_{\O_p}\right)\, ds 
+ s_0 \| p_{p}(0) \|_{L_2(\Omega_p)}^2 
+\| \bbeta_{p} (0)\|^2_{H^1(\O_p)}
\Big) \nonumber \\
& \qquad \leq  C\left( \| \f_{p}(0) \|_{H^{-1}(\O_p)}^{2} 
+ \| \bbeta_{p} (0)\|_{H^1(\O_p)}^2 
+ s_0 \| p_{p}(0) \|_{L_2(\Omega_p)}^2
+ \| \f_{p}(t) \|_{H^{-1}(\Omega_p)}^{2} \right) 
\nonumber \\
& \qquad\quad	 + C \int_0^t \left( \| \f_{f} \|_{W^{-1, r'}(\O_f)}^{r'} 
+\| \d_t\f_{p} \|_{H^{-1}(\O_p)}^{2}
+ \| \bbeta_{p} \|^2_{H^1(\O_p)}
+\| q_{f} \|^r_{L^{r}(\Omega_f)} 
+ \| q_{p} \|^r_{L^{r}(\Omega_p)} \right)\, ds \nonumber\\ 
& \qquad\quad 
+ \epsilon_1\| \bbeta_{p} (t)\|^2_{H^1(\O_p)} 
+ \epsilon_1\int_0^t \left( \| \u_{f} \|_{W^{1,r}(\O_f)}^r
+ \| p_{f} \|^{r'}_{L^{r'}(\O_f)} 
+ \| p_{p} \|^{r'}_{L^{r'}(\O_p)}\right)\, ds,
\label{energy-ineq-1}
	\end{align}
using Young's inequality \eqref{young} for the last inequality.  We
next apply the inf-sup condition \eqref{inf-sup-stokes-darcy} for
$(p_f, p_p, \lambda)$ to obtain
\begin{align}
& \|(p_f,p_p,\lam)\|_{W_f \times W_p \times \Lambda}
\leq C \sup_{(\v_f,\v_p)\in \V_f\times \V_p} 
\frac{b_f(\v_{f},p_{f}) + b_p(\v_{p},p_{p})  
+ b_{\Gamma}(\v_{f},\v_{p},{\bf 0};\lam)}
{\|(\v_f,\v_p)\|_{\V_f\times \V_p}} \nonumber \\
&\qquad
= C\sup_{(\v_f,\v_p)\in \V_f\times \V_p}
\frac{-a_{f}(\u_{f},\v_{f}) - a^d_{p}(\u_{p},\v_{p})
- a_{BJS}(\u_{f},\d_t \bbeta_{p};\v_{f},0)+(\f_{f},\v_{f})_{\O_f}}
{\|(\v_f,\v_p)\|_{\V_f\times \V_p}}. \label{pressure-stab-1}
\end{align}
Using the continuity bounds in \eqref{a-f-bounds}--\eqref{a-bjs-bounds}, we have from 
\eqref{pressure-stab-1},
	\begin{align*} 
\|(p_f,p_p,\lam)\|_{W_f \times W_p \times \Lambda}
\leq C  \left( \|\f_{f}\|_{W^{-1,r'}(\O_f)} +  \|\u_{f}\|^{r/r'}_{W^{1,r}(\O_f)}
+  \| \u_{p}\|^{r/r'}_{L^{r}(\O_p)}
	+ | \u_{f} - \d_t \bbeta_{p} |^{r/r'}_{BJS} \right),
	\end{align*}
implying
\begin{align}  
& \epsilon_2 \int_0^t \big( \|p_{f}\|^{r'}_{L^{r'}(\O_f)} + \|p_{p}\|^{r'}_{L^{r'}(\O_p)}  
+ 	\|\lam\|^{r'}_{W^{1/r,r'}(\Gamma_{fp})} \big) \, ds \nonumber \\
& \qquad\quad \leq  C \epsilon_2 \int_0^t \left(  \|\f_{f}\|^{r'}_{W^{-1,r'}(\O_f)} 
+  \|\u_{f}\|^{r}_{W^{1,r}(\O_f)}
	+  \| \u_{p}\|^{r}_{L^{r}(\O_p)}
	+ | \u_{f} - \d_t \bbeta_{p}|^r_{BJS} \right) \, ds.\label{pressure-stab-2}
	\end{align}
Adding \eqref{energy-ineq-1} and \eqref{pressure-stab-2} and choosing $\epsilon_2$ 
small enough, 
and then $\epsilon_1$ small enough, implies
\begin{align*} 
& s_0\| p_{p}(t) \|_{L^2(\Omega_p)}^2 
+ \| \bbeta_{p} (t)\|^2_{H^1(\O_p)} 
+ \int_0^t \left(\| \u_{f} \|_{W^{1,r}(\O_f)}^r + \| \u_{p} \|_{L^{r}(\Omega_p)}^r 
+ | \u_{f} - \d_t \bbeta_{p} |^r_{BJS} \right)\, ds \\
&\qquad\qquad
+ \int_0^t \left(\|p_{f}\|^{r'}_{L^{r'}(\O_f)} + \|p_{p}\|^{r'}_{L^{r'}(\O_p)}  
+ \|\lam\|^{r'}_{W^{1/r,r'}(\Gamma_{fp})}\right)\, ds \\
&\qquad
\leq C\Big( \int_0^t\|\bbeta_{p}\|^2_{H^1(\O_p)}\, ds+ \| \f_{p}(t)\|^{2}_{H^{-1}(\O_p)} + \| \f_{p}(0) \|^{2}_{H^{-1}(\Omega_p)} +\| \bbeta_{p} (0)\|^2_{H^1(\O_p)}+ s_0 \| p_{p}(0) \|_{L_2(\Omega_p)}^2  \\ 
&\qquad\qquad
+ \int_0^t \left( \| \f_{f} \|_{W^{-1, r'}(\O_f)}^{r'} +\| \d_t\f_{p} \|_{H^{-1}(\O_p)}^{2}+\| q_{f} \|^r_{L^{r}(\Omega_f)} + \| q_{p} \|^r_{L^{r}(\Omega_p)} \right)\, ds \Big).
\end{align*}
The assertion of the theorem now follows from applying Gronwall's inequality.
\end{proof}

\section{Semidiscrete continuous-in-time approximation}
We assume that $\O_f$ and $\O_p$ are polytopal domains and that the Laplace 
problem in $\O_p$ has a solution with $W^{1+1/r,r}(\Omega_p)$ regularity. We refer
to \cite{Dauge,Grisvard} for sufficient conditions on $\O_p$.
Let $\mathcal{T}^f_h$ and $\mathcal{T}^p_h$
be shape-regular and quasi-uniform affine finite element partitions of
$\O_f$ and $\O_p$, respectively, not necessarily matching along the
interface $\Gamma_{fp}$. We consider the conforming finite element
spaces $\V_{f,h} \subset \V_f, \, W_{f,h} \subset W_f,\, \V_{p,h}
\subset \V_p,\,W_{p,h}\subset W_p$ and $\X_{p,h} \subset \X_p$. We
assume that $\V_{f,h},\, W_{f,h}$ is any inf-sup stable Stokes pair,
e.g., Taylor-Hood or the MINI elements. We choose $\V_{p,h},\,
W_{p,h}$ to be any of well-known inf-sup stable mixed finite element
Darcy spaces, e.g., the Raviart-Thomas or the Brezzi-Douglas-Marini
spaces \cite{BoffiBrezziFortin}. We employ a Lagrangian finite element
space $\X_{p,h} \subset \X_p$ to approximate the structure
displacement.  Note that the finite element spaces $\V_{f,h}$,
$\V_{p,h}$, and $\X_{p,h}$ satisfy the prescribed homogeneous boundary
conditions on the external boundaries $\Gam_f$ and $\Gam_p$. Finally,
following \cite{LSY,AKYZ-LM}, we choose a nonconforming approximation
for the Lagrange multiplier:
$$
\Lambda_h = \V_{p,h}\cdot \n_p|_{\Gam_{fp}}. 
$$ 
We equip $\Lambda_h$ with the norm 
$\|\cdot\|_{\Lambda_h} = \|\cdot\|_{L^{r'}(\Gamma_{fp})}$.

The semi-discrete continuous-in-time problem reads: for $t \in (0,T]$, find $(\u_{f,h}(t), p_{f,h}(t),\u_{p,h}(t),p_{p,h}(t),$ $\bbeta_{p,h}(t),\lam_h(t))$ $\in L^{\infty}(0,T;\V_{f,h})\times L^{\infty}(0,T;W_{f,h})\times  L^{\infty}(0,T;\V_{p,h}) \times  W^{1,\infty}(0,T;W_{p,h}) \times W^{1,\infty}(0,T;\X_{p,h})  \times L^{\infty}(0,T;\Lambda_h)$, such that $\forall$
$\v_{f,h} \in \V_{f,h}$, $w_{f,h} \in W_{f,h}$, $\v_{p,h} \in \V_{p,h}$, 
$w_{p,h} \in W_{p,h}$, 
$\bxi_{p,h} \in \X_{p,h}$, and $\mu_h \in \Lambda_h$,
\begin{align}
 & a_{f}(\u_{f,h},\v_{f,h}) + a^d_{p}(\u_{p,h},\v_{p,h})  
+ a^e_{p}(\bbeta_{p,h},\bxi_{p,h})
+ a_{BJS}(\u_{f,h},\d_t\bbeta_{p,h};\v_{f,h},\bxi_{p,h})
+ b_f(\v_{f,h},p_{f,h})\nonumber
\\
& \qquad\quad
   + b_p(\v_{p,h},p_{p,h}) + 
\alpha b_p(\bxi_{p,h},p_{p,h}) 
+ b_{\Gamma}(\v_{f,h},\v_{p,h},\bxi_{p,h};\lam_h) = (\f_{f},\v_{f,h})_{\O_f} 
+ (\f_{p},\bxi_{p,h})_{\O_p}, \label{h-weak-1} \\
&	( s_0 \d_t p_{p,h},w_{p,h})_{\O_p} 
- \alpha b_p(\d_t\bbeta_{p,h},w_{p,h}) 
- b_p(\u_{p,h},w_{p,h})  - b_f(\u_{f,h},w_{f,h}) \nonumber \\
& \qquad\quad
= (q_{f,h},w_{f,h})_{\O_f} + (q_{p,h},w_{p,h})_{\O_p}, \label{h-weak-2} \\
& b_{\Gamma}(\u_{f,h},\u_{p,h},\d_t\bbeta_{p,h};\mu_h) = 0. \label{h-b-gamma}
\end{align}
The initial conditions $p_{p,h}(0)$ and $\bbeta_{p,h}(0)$ 
are chosen as suitable approximations of 
$p_{p,0}$ and $\bbeta_{p,0}$.

In order to prove that the semi-discrete formulation
\eqref{h-weak-1}--\eqref{h-b-gamma} is well-posed, we will follow the
same strategy as in the fully continuous case. For the purpose of the
analysis only, we consider a discretization of the weak formulation
\eqref{h-cts-alt-1}--\eqref{h-cts-alt-gamma}. Let $\X_{p,h}$
consist of polynomials of degree at most $k_s$. We introduce the
stress finite element space $\bS_{e,h} \subset \bS_e$ as symmetric tensors
with elements that are discontinuous 
polynomials of degree at most $k_{s-1}$:
\begin{align*}
\bS_{e,h} =\{\bs_e \in \bS_e: \bs_e|_{T\in \mathcal{T}^p_h} 
\in \mathcal{P}^{\text{sym}}_{k_s-1}(T)^{d\times d} \}.
\end{align*}
 
Then the corresponding semi-discrete formulation is: for $t \in (0,T]$, find
$(\u_{f,h}(t), p_{f,h}(t),\u_{p,h}(t),p_{p,h}(t),$ $\u_{s,h}(t), \bs_{e,h}(t),\lam_h(t))$ $\in L^{\infty}(0,T;\V_{f,h})\times L^{\infty}(0,T;W_{f,h})\times  L^{\infty}(0,T;\V_{p,h}) \times  W^{1,\infty}(0,T;W_{p,h}) \times L^{\infty}(0,T;\X_{p,h})$ $\times W^{1,\infty}(0,T;\bS_{e,h}) \times L^{\infty}(0,T;\Lambda_h)$, such that for all
$\v_{f,h} \in \V_{f,h}$, $w_{f,h} \in W_{f,h}$, $\v_{p,h} \in \V_{p,h}$, $w_{p,h} \in W_{p,h}$, 
$\v_{s,h} \in \X_{p,h}$, $\btau_{e,h} \in \bS_{e,h}$, and $\mu_h \in \Lambda_h$,
\begin{align}
&  
a_{f}(\u_{f,h},\v_{f,h}) + a^d_{p}(\u_{p,h},\v_{p,h}) 
+ a_{BJS}(\u_{f,h}, \u_{s,h};\v_{f,h},\v_{s,h}) + b_f(\v_{f,h},p_{f,h})+ b_p(\v_{p,h},p_{p,h})\nonumber
\\
& \qquad\quad   + 
\alpha_p b_p(\v_{s,h},p_{p,h}) +b_{s}(\v_{s,h},\bs_{e,h})+ b_{\Gamma}(\v_{f,h},\v_{p,h},\v_{s,h};\lam_h)  = (\f_{f},\v_{f,h})_{\O_f} + (\f_{p},\v_{s,h})_{\O_p}, \label{h-alt-1}\\
& \left( s_0 \d_t p_{p,h},w_{p,h}\right)_{\O_p} + a^s_{p}(\d_t\bs_{e,h},\btau_{e,h})
- \alpha_p b_p\left(\u_{s,h},w_{p,h}\right) - b_p(\u_{p,h},w_{p,h}) -b_{s}(\u_{s,h},\btau_{e,h})- b_f(\u_{f,h},w_{f,h}) \nonumber
\\
& \qquad\quad 
= (q_{f},w_{f,h})_{\O_f} + (q_{p},w_{p,h})_{\O_p}, \label{h-alt-2} \\
& b_{\Gamma}\left(\u_{f,h},\u_{p,h},\u_{s,h};\mu_h\right) = 0. \label{h-alt-gamma}
\end{align}
The initial conditions $p_{p,h}(0)$ and $\bs_{e,h}(0)$ 
are suitable approximations of 
$p_{p,0}$ and $\bs_{e,0} = A^{-1}\D(\bbeta_{p,0})$.

We define the spaces of generalized velocities and pressures, $\Q_h=\V_{p,h}\times \X_{p,h}\times \V_{f,h}$ and $S_h=W_{p,h}\times \bS_{e,h}\times W_{f,h}\times \Lambda_h$, respectively, equipped with the corresponding norms, 
\begin{align*}
\|\q_h\|_{\Q_h} = \|\v_{p,h}\|_{\V_p} + \| \v_{s,h}\|_{\X_{p}} + \| \v_{f,h}\|_{\V_f},\qquad
\|s_h\|_{S_h} = \|w_{p,h}\|_{W_p} +\|\btau_{e,h}\|_{\bS_e} + \|w_{f,h}\|_{W_f} +\|\mu_h\|_{\Lambda_h}.
\end{align*}

\subsection{Discrete inf-sup conditions}
We first recall the inf-sup conditions for the individual Stokes and
Darcy problems \cite{ervin2009coupled}. Since $|\Gamma_p^D| > 0$,
it is sufficient to consider $\v_{p,h} \in \V_{p,h,\Gamma_{fp}}^0=
\{\v_{p,h} \in \V_{p,h}: \v_{p,h}\cdot \n_p\big|_{\Gamma_{fp}}=0\}$. 
There exist constant $C_{p,1} > 0$ and $C_{f,1} > 0$ 
independent of $h$ such that
\begin{align} 
\inf_{w_{p,h} \in W_{p,h}} \sup_{\v_{p,h} \in \V_{p,h,\Gamma_{fp}}^0} 
\frac{b_p(\v_{p,h},w_{p,h})}{\|\v_{p,h}\|_{\V_p}\|w_{p,h}\|_{W_p}} \ge C_{p,1},
\quad
\inf_{w_{f,h} \in W_{f,h}} \sup_{\v_{f,h} \in \V_{f,h}} 
\frac{b_f(\v_{f,h},w_{f,h})}{\|\v_{f,h}\|_{\V_f}\|w_{f,h}\|_{W_f}} \ge C_{f,1}.
\label{inf-sup-1-h}
\end{align}
We next prove inf-sup condition for $b_{\Gamma}(\cdot;\cdot)$. We
recall the mixed finite element interpolant $\Pi_{p,h}$ onto $\V_{p,h}$ \cite{BoffiBrezziFortin}, 
which satisfies for all $\v_p \in \V_{p}\cap (W^{s,r}(\Omega_p))^d$, $s>0$,
\begin{align}
(\nabla \cdot \Pi_{p,h}\v_p,w_{p,h})_{\O_p}&=(\nabla \cdot \v_p, w_{p,h})_{\O_p}, && \forall w_{p,h} \in W_{p,h}, \label{mfe-operator}
\\
\left\langle \Pi_{p,h} \v_p\cdot\n_p, \v_{p,h}\cdot\n_p \right\rangle_{\Gamma_{fp}} &= \left\langle \v_p\cdot\n_p, \v_{p,h}\cdot\n_p \right\rangle_{\Gamma_{fp}}, && \forall \v_{p,h} \in \V_{p,h},
\label{mfe-operator-bis}
\end{align}
as well as the continuity bound \cite{acosta2011error, duran1988error}
\begin{align}
\|\Pi_{p,h}\v_p\|_{L^r(\O_p)} \leq C\left(\|\v_p\|_{W^{s,r}(\O_p)} 
+ \|\div \v_p\|_{L^{r}(\O_p)} \right). \label{eq:mfe-approx-prop2}
\end{align}
Let
$\V_{p,h}^0 = \{\v_{p,h} \in \V_{p,h}: \div \v_{p,h} = 0\}$.

\begin{lemma}
There exists a constant $C_{2} >0$ independent of $h$ such that
	\begin{align} 
	\inf_{\mu_h\in \Lambda_h} \sup_{\v_{p,h} \in \V^0_{p,h}}
	\frac{ b_{\Gamma}(\v_{p,h},{\bf 0},{\bf 0};\mu_h)}
	{\|\v_{p,h}\|_{\V_p} \|\mu_h\|_{\Lambda_h}}
	\geq C_{2}. \label{inf-sup-2-h}
	\end{align}
\end{lemma}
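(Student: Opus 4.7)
The plan is to pick, for each $\mu_h \in \Lambda_h$, one distinguished test function $\v_{p,h} \in \V_{p,h}^0$ that realizes the supremum up to a constant. Concretely, I will construct $\v_{p,h}$ so that $\langle \v_{p,h}\cdot\n_p,\mu_h\rangle_{\Gamma_{fp}} \ge c_1\,\|\mu_h\|_{L^{r'}(\Gamma_{fp})}^{r'}$ and $\|\v_{p,h}\|_{\V_p}\le c_2\,\|\mu_h\|_{L^{r'}(\Gamma_{fp})}^{r'-1}$; dividing these and using $\|\mu_h\|_{\Lambda_h}=\|\mu_h\|_{L^{r'}(\Gamma_{fp})}$ gives \eqref{inf-sup-2-h} with $C_2=c_1/c_2$. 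The exponent $r'-1 = r'/r$ is exactly what the H\"older pairing forces and pinpoints what the lift must control in the $L^r$ norm.

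The construction has three steps. First, set the dual boundary datum $\xi_h := |\mu_h|^{r'-2}\mu_h$, so that $\xi_h \in L^r(\Gamma_{fp})$ with $\|\xi_h\|_{L^r(\Gamma_{fp})} = \|\mu_h\|_{L^{r'}(\Gamma_{fp})}^{r'-1}$ and $\int_{\Gamma_{fp}}\xi_h\,\mu_h\,ds = \|\mu_h\|_{L^{r'}(\Gamma_{fp})}^{r'}$, using $(r'-1)r = r'$. Second, lift $\xi_h$ to a continuous divergence-free field $\v := \nabla \phi$, where $\phi \in W^{1,r}(\O_p)$ solves $-\Delta\phi = 0$ in $\O_p$ with $\nabla\phi\cdot\n_p = \xi_h$ on $\Gamma_{fp}$, $\nabla\phi\cdot\n_p = 0$ on $\Gamma_p^N$, and $\phi = 0$ on $\Gamma_p^D$. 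The assumption $|\Gamma_p^D|>0$ together with $\mathrm{dist}(\Gamma_p^D,\Gamma_{fp})\ge s>0$ and the $W^{1+1/r,r}$-regularity hypothesis on the Laplace problem stated at the start of Section~5 supplies the bound $\|\v\|_{L^r(\O_p)}\le C\,\|\xi_h\|_{L^r(\Gamma_{fp})}$, while by construction $\v\cdot\n_p|_{\Gamma_{fp}} = \xi_h$, $\v\cdot\n_p|_{\Gamma_p^N}=0$, and $\div \v = 0$, so that $\v\in\V_p$. Third, set $\v_{p,h}:=\Pi_{p,h}\v$. Property \eqref{mfe-operator} with $\div\v = 0$ gives $\div\v_{p,h} = 0$, so $\v_{p,h}\in\V_{p,h}^0$, and the continuity bound \eqref{eq:mfe-approx-prop2} yields $\|\v_{p,h}\|_{\V_p}\le C\,\|\v\|_{L^r(\O_p)} \le C\,\|\mu_h\|_{L^{r'}(\Gamma_{fp})}^{r'-1}$. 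Finally, since $\mu_h\in\Lambda_h = \V_{p,h}\cdot\n_p|_{\Gamma_{fp}}$, there is some $\w_{p,h}\in\V_{p,h}$ with $\w_{p,h}\cdot\n_p = \mu_h$, and the commuting property \eqref{mfe-operator-bis} gives $\langle\v_{p,h}\cdot\n_p,\mu_h\rangle_{\Gamma_{fp}} = \langle\Pi_{p,h}\v\cdot\n_p,\w_{p,h}\cdot\n_p\rangle_{\Gamma_{fp}} = \langle\v\cdot\n_p,\w_{p,h}\cdot\n_p\rangle_{\Gamma_{fp}} = \int_{\Gamma_{fp}}\xi_h\,\mu_h\,ds = \|\mu_h\|_{L^{r'}(\Gamma_{fp})}^{r'}$, which is the desired lower bound.

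The principal obstacle is the $L^r$ lift estimate $\|\v\|_{L^r(\O_p)}\le C\,\|\xi_h\|_{L^r(\Gamma_{fp})}$ for $r<2$: in the Hilbert case $r=2$ this is classical, but in the Banach setting it rests on non-Hilbertian elliptic regularity for the mixed Neumann problem on the polytopal domain $\O_p$. This is exactly what the regularity hypothesis at the start of Section~5 is designed to cover, and the separation $\mathrm{dist}(\Gamma_p^D,\Gamma_{fp})\ge s>0$ prevents singular behavior at the transition between boundary conditions. A secondary concern is that $\xi_h$ must be admissible data for this theory, which is immediate since $\xi_h\in L^r(\Gamma_{fp})$ after extension by zero embeds into $W^{-1/r,r}(\partial\O_p)$ thanks to the separation condition.
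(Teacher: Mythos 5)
Your construction matches the paper's: the same auxiliary Neumann problem driven by the dual boundary datum (you correctly write $|\mu_h|^{r'-2}\mu_h$ where the paper abbreviates to $\mu_h^{r'-1}$), the same lift $\v=\nabla\phi$, the same $\Pi_{p,h}$ interpolant exploiting the commuting properties \eqref{mfe-operator}--\eqref{mfe-operator-bis}, and the same appeal to the $W^{1+1/r,r}$ elliptic regularity hypothesis. However, one link in your chain is wrong as written: \eqref{eq:mfe-approx-prop2} does \emph{not} give $\|\Pi_{p,h}\v\|_{L^r(\O_p)}\le C\|\v\|_{L^r(\O_p)}$. The mixed interpolant $\Pi_{p,h}$ is not $L^r$-stable — its construction requires well-defined normal traces on facets — and the stated continuity bound is $\|\Pi_{p,h}\v\|_{L^r}\le C(\|\v\|_{W^{s,r}}+\|\div\v\|_{L^r})$ with $s>0$. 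You must therefore carry the regularity estimate in the stronger form $\|\v\|_{W^{1/r,r}(\O_p)}\le C\|\xi_h\|_{L^r(\Gamma_{fp})}$, which the $W^{1+1/r,r}$ regularity of $\phi$ does supply since $\v=\nabla\phi$, and only then apply \eqref{eq:mfe-approx-prop2} with $s=1/r$ and $\div\v=0$. This also means your closing paragraph mis-identifies the principal obstacle: what is needed and what Section~5 is there to provide is $W^{1/r,r}$ control of the lift, not just its $L^r$ norm. With this correction the argument coincides with the paper's.
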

\begin{proof}
Let $\mu_h \in \Lambda_h$ be given. Consider the auxiliary problem
\begin{align}
\nabla \cdot \nabla \phi  & = 0 , &&\mbox{in } \O_p, \label{1aux-prob-1}\\
\phi & = 0 && \mbox{ on } \Gam_{p}^D,  \\
\nabla \phi\cdot \n_p& = \mu_h^{r'-1}, &&\mbox{on } \Gam_{fp} , \label{1aux-prob-2} \\
\nabla \phi\cdot \n_p & = 0 , && \mbox{on } \Gam^N_{p}.   \label{eq:1axil-problem}
\end{align}	
Let $\v = \grad \phi$. Elliptic regularity for \eqref{1aux-prob-1}--\eqref{eq:1axil-problem} \cite{Dauge,Grisvard} implies that 
\begin{equation}\label{ell-reg}
\|\v\|_{W^{1/r,r}(\O_p)} \leq C \|\mu_h^{r'-1}\|_{L^{r}(\Gamma_{fp})}.
\end{equation}
Let $\v_{p,h} = \Pi_{p,h} \v$. Note that, due to \eqref{mfe-operator}, 
$\v_{p,h} \in \V^0_{p,h}$. We have
$$
\frac{b_{\Gamma}(\v_{p,h},0,0;\mu_h)}{\|\v_{p,h}\|_{\V_p}}
= \frac{\<\Pi_{p,h} \v\cdot\n_p,\mu_h\>_{\Gamma_{fp}}}{\|\Pi_{p,h} \v\|_{\V_p}}
= \frac{\<\v\cdot\n_p,\mu_h\>_{\Gamma_{fp}}}{\|\Pi_{p,h} \v\|_{\V_p}}
= \frac{\|\mu_h\|_{L^{r'}(\Gamma_{fp})}^{r'}}{\|\Pi_{p,h}\v\|_{L^r(\O_p)}},
$$
and, using \eqref{eq:mfe-approx-prop2} with $s = 1/r$ and \eqref{ell-reg},
$$
\|\Pi_{p,h}\v\|_{L^r(\O_p)} \le C \|\v\|_{W^{1/r,r}(\O_p)} 
\le C \|\mu_h^{r'-1}\|_{L^r(\Gamma_{fp})} = C \|\mu_h\|_{L^{r'}(\Gamma_{fp})}^{r'-1}.
$$
The proof is completed by combining the above two inequalities.
\end{proof}
We next prove the inf-sup conditions for the formulation 
\eqref{h-alt-1}--\eqref{h-alt-gamma}.
\begin{theorem} \label{inf-sup}
	There exist constants $\beta_1, \, \beta_2 >0$ independent of $h$ such that
	\begin{align} 
	\inf_{(w_{p,h},{\bf 0},w_{f,h},\mu_h)\in S_h} \sup_{(\v_{p,h},{\bf 0},\v_{f,h}) \in \Q_h}
	\frac{b(\q_h;s_h) + b_{\Gamma}(\q_h;s_h)}
	{\|(\v_{p,h},{\bf 0},\v_{f,h})\|_{\Q_h} \|(w_{p,h},0,w_{f,h},\mu_h)\|_{S_h}}
	&\geq \beta_1, \label{inf-sup-stokes-darcy-h} \\
	\inf_{({\bf 0},\v_{s,h}, {\bf 0})\in \Q_h}\sup_{(0,\btau_{e,h},0,0)\in S_h}\frac{ b_s(\v_{s,h}, \btau_{e,h})}{\|({\bf 0},\v_{s,h}, {\bf 0}) \|_{\Q}\|(0,\btau_{e,h},0,0)\|_{S_h}} &\geq \beta_2, \label{inf-sup-elasticity-h}
	\end{align}
where
\begin{align*}
b(\q_h;s_h) = b_f(\v_{f,h},w_{f,h})+ b_p(\v_{p,h},w_{p,h}), \quad
b_{\Gamma}(\q_h;s_h) = b_{\Gamma}(\v_{p,h},{\bf 0},\v_{f,h};\mu_h).
\end{align*}
\end{theorem}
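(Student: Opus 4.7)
The two inf-sup inequalities decouple completely because they quantify over disjoint subspaces of $\Q_h\times S_h$, so I would prove each one independently and in the same spirit as the continuous Lemma~\ref{lmapropAE}.

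For \eqref{inf-sup-elasticity-h}, the idea is to replay the continuous argument verbatim by taking $\btau_{e,h} = \D(\v_{s,h})$. This choice is admissible because $\X_{p,h}$ consists of continuous piecewise polynomials of degree $k_s$, so $\D(\v_{s,h})$ is a symmetric tensor with piecewise polynomial entries of degree $k_s-1$, matching the definition of $\bS_{e,h}$ exactly. Plugging in gives $b_s(\v_{s,h},\D(\v_{s,h})) = \|\D(\v_{s,h})\|_{L^2(\Omega_p)}^2$ and $\|\D(\v_{s,h})\|_{\bS_e} = \|\D(\v_{s,h})\|_{L^2(\Omega_p)}$. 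Since $\v_{s,h}\in\X_p$ vanishes on $\Gamma_p$, Korn's inequality delivers the desired bound $\ge C_{K,p}\|\v_{s,h}\|_{H^1(\Omega_p)}$ with a constant independent of $h$.

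For \eqref{inf-sup-stokes-darcy-h}, the strategy is a Boland--Nicolaides style construction exploiting three structural decouplings among the prerequisite inf-sup conditions \eqref{inf-sup-1-h} and \eqref{inf-sup-2-h}. Given any $(w_{p,h},{\bf 0},w_{f,h},\mu_h)\in S_h$, I would assemble a test velocity $(\v_{p,h},{\bf 0},\v_{f,h})$ from three pieces with pairwise non-interacting contributions. First, invoke the Stokes inf-sup on the subspace $\V_{f,h}\cap H_0^1(\Omega_f)^d$ to produce $\v_{f,h}^{(1)}$ with $\v_{f,h}^{(1)}\big|_{\Gamma_{fp}}={\bf 0}$ satisfying $b_f(\v_{f,h}^{(1)},w_{f,h})\ge C_{f,1}\|w_{f,h}\|_{W_f}\|\v_{f,h}^{(1)}\|_{\V_f}$. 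Second, use \eqref{inf-sup-1-h} to produce a Darcy test $\v_{p,h}^{(2)}\in \V_{p,h,\Gamma_{fp}}^0$ handling $w_{p,h}$; by construction its normal trace on $\Gamma_{fp}$ vanishes. Third, use \eqref{inf-sup-2-h} to produce a divergence-free interface corrector $\v_{p,h}^{(3)}\in \V_{p,h}^0$ handling $\mu_h$. Setting $\v_{f,h}=\alpha_1\v_{f,h}^{(1)}$ and $\v_{p,h}=\alpha_2\v_{p,h}^{(2)}+\alpha_3\v_{p,h}^{(3)}$, the total pairing splits additively: $b_\Gamma$ picks up only the third piece (the first two have zero normal trace on $\Gamma_{fp}$), $b_p$ picks up only the second piece (the third is divergence-free), and $b_f$ picks up only the first. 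Normalizing each piece to unit $\Q$-norm and choosing the scaling constants $\alpha_i>0$ independently of $h$ yields the combined bound $\ge \beta_1(\|w_{p,h}\|_{W_p}+\|w_{f,h}\|_{W_f}+\|\mu_h\|_{\Lambda_h})$ against $\|(\v_{p,h},{\bf 0},\v_{f,h})\|_\Q\le \alpha_1+\alpha_2+\alpha_3$, as required.

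The main obstacle is the first step: making the Stokes piece vanish on $\Gamma_{fp}$ while still satisfying a uniform discrete Stokes inf-sup. This reduces to the standard but non-trivial fact that for inf-sup stable pairs such as Taylor--Hood and MINI, the discrete Stokes inf-sup persists on $\V_{f,h}\cap H_0^1(\Omega_f)^d$; it is verified via a Fortin operator construction on the subdomain $\Omega_f$ treating $\Gamma_{fp}$ as ordinary Dirichlet boundary. A secondary technical point is that the setting is Banach rather than Hilbert when $r\ne 2$, but since the three prerequisite inf-sup bounds are already in the correct $L^{r'}$-type norms with $h$-independent constants and the bilinear forms are positively homogeneous, the scaling of $\v_{f,h}^{(1)},\v_{p,h}^{(2)},\v_{p,h}^{(3)}$ to unit size in $\V_f$ and $\V_p$ goes through unchanged, and $\beta_1,\beta_2$ inherit their $h$-independence directly.
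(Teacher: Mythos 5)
Your treatment of \eqref{inf-sup-elasticity-h} is correct and identical to the paper's: take $\btau_{e,h}=\D(\v_{s,h})$, note this lands in $\bS_{e,h}$ by the degree count, and apply Korn's inequality.

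Your construction for \eqref{inf-sup-stokes-darcy-h} has a genuine gap, however, and it is exactly in the step you flag as "non-trivial." You want to draw $\v_{f,h}^{(1)}$ from $\V_{f,h}\cap H_0^1(\O_f)^d$ so that it has no trace on $\Gamma_{fp}$ and therefore contributes nothing to $b_\Gamma$. But a velocity vanishing on all of $\partial\O_f$ satisfies $\int_{\O_f}\div\v_{f,h}^{(1)}=0$, so $b_f(\v_{f,h}^{(1)},w_{f,h})=0$ whenever $w_{f,h}$ is constant on $\O_f$. Since $W_{f,h}\subset L^{r'}(\O_f)$ carries no mean-zero constraint in this paper (this is precisely what the hypothesis $|\Gamma_p^D|>0$ and the statement of \eqref{inf-sup-1-h} are designed to accommodate), the Stokes inf-sup on the subspace $\V_{f,h}\cap H_0^1(\O_f)^d$ fails at constant pressures, and no Fortin operator can rescue it. The paper sidesteps this by keeping $\v_{f,h}^{(1)}$ in the full $\V_{f,h}$ (so it may leak through $\Gamma_{fp}$, which is what makes \eqref{inf-sup-1-h} true for unrestricted $W_{f,h}$), accepting the resulting nonzero cross-term $\langle\v_{f,h}^{(1)}\cdot\n_f,\mu_h\rangle_{\Gamma_{fp}}$, bounding it above by $C_\Gamma\|\v_{f,h}^{(1)}\|_{\V_f}\|\mu_h\|_{\Lambda_h}$ via the trace inequality, and then amplifying the divergence-free corrector $\q_h^2$ by the factor $1+2C_\Gamma C_2^{-1}$ so that its contribution through $b_\Gamma$ dominates. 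In short: your Darcy pieces and their decoupling (via vanishing normal trace and zero divergence) are exactly the paper's, but the cross-term with the Stokes piece must be absorbed rather than annihilated; replacing your first piece with an unconstrained Stokes test function and adding the domination step recovers the correct proof.
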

\begin{proof}
Let $s_h = (w_{p,h},{\bf 0},w_{f,h},\mu_h)\in S_h$ be given. It follows from
\eqref{inf-sup-1-h} and \eqref{inf-sup-2-h}, respectively, that there exist
$\q^1_h=(\v^1_{p,h},{\bf 0},\v^1_{f,h}) \in \Q_h$ with $\|\v^1_{p,h}\|_{\V_p} = 1$,
$\|\v^1_{f,h}\|_{\V_f} = 1$, as well as
$\q^2_h=(\v^2_{p,h},{\bf 0},{\bf 0}) \in \Q_h$ with $\|\v^2_{p,h}\|_{\V_p} = 1$
such that 
$$
b_p(\v^1_{p,h},w_{p,h}) \ge \frac{C_{p,1}}{2}\|w_{p,h}\|_{W_p}, \quad
b_f(\v^1_{f,h},w_{f,h}) \ge \frac{C_{f,1}}{2}\|w_{f,h}\|_{W_f}, \quad
b_{\Gamma}(\v^2_{p,h},{\bf 0},{\bf 0};\mu_h) \ge \frac{C_2}{2}\|\mu_h\|_{\Lambda_h}.
$$
Since $\v^1_{p,h}\cdot \n_p\big|_{\Gamma_{fp}}=0$, we have
	\begin{align*}
	b_{\Gam}(\q^1_h;s_h) &= \langle \v^1_{f,h} \cdot \n_f + \v^1_{p,h} \cdot \n_p,\mu_h\rangle_{\Gamma_{fp}} =\langle \v^1_{f,h} \cdot \n_f ,\mu_h\rangle_{\Gamma_{fp}} 
\leq C \|\v^1_{f,h}\|_{L^r(\Gamma_{fp})}\|\mu_h\|_{L^{r'}(\Gamma_{fp})} \\
	& \leq C\|\v^1_{f,h}\|_{W^{1-1/r,r}(\partial \O_f)}\|\mu_h\|_{L^{r'}(\Gamma_{fp})} 
\leq C_{\Gam}\|\v^1_{f,h}\|_{W^{1,r}(\O_f)}\|\mu_h\|_{L^{r'}(\Gamma_{fp})}
= C_{\Gam}\|\v^1_{f,h}\|_{\V_f}\|\mu_h\|_{\Lambda_h},
	\end{align*}
where we used the trace inequality. Let 
$\r_h =\q^1_h+ (1 + 2 C_{\Gam}C_{2}^{-1})\q^2_h$. Since $\div \v^2_{p,h} = 0$, we
obtain
\begin{align*}
b(\r_h;s_{h})& = b_f(\v^1_{f,h},w_{f,h}) + b_p(\v^1_{p,h},w_{p,h}) 
+ \left(1+2C_{\Gam}C_{2}^{-1}\right) b_p(\v^2_{p,h},w_{p,h}) \\
	&= b_f(\v^1_{f,h},w_{f,h}) + b_p(\v^1_{p,h},w_{p,h}) 
\geq \frac{\min (C_{f,1},C_{p,1})}{2}(\|(w_{p,h}\|_{W_p} + \|w_{f,h}\|_{W_f}), \\
b_{\Gamma}(\r_h;s_h) &= b_{\Gamma}(\q^1_h;s_h) + \left(1+2C_{\Gam}C_{2}^{-1}
\right)b_{\Gamma}(\q^2_h;s_h) \\
& \geq - C_{\Gam}\|\mu_h\|_{\Lambda_h} + 
\frac{C_{2}}{2}\left(1+2C_{\Gam}C_{2}^{-1}\right)\|\mu_h\|_{\Lambda_h} 
= \frac{C_{2}}{2}\|\mu_h\|_{\Lambda_h}.
\end{align*}
Hence, using that $\|\r_h\|_{\Q_h} \le 3 + 2C_{\Gam}C_{2}^{-1}$, we obtain
\begin{align*}
b(\r_h;s_h) + b_{\Gamma}(\r_h;s_h) 
\ge \frac{\min (C_{f,1},C_{p,1},C_2)}{2}\|s_h\|_{S_h}
\geq \frac{\min (C_{f,1},C_{p,1},C_2)}{6 + 4C_{\Gam}C_{2}^{-1}}
\|\r_h\|_{\Q_h}\|s_h\|_{S_h}, 
\end{align*}
which completes the proof of \eqref{inf-sup-stokes-darcy-h}. To show
\eqref{inf-sup-elasticity-h}, let $({\bf 0},\v_{s,h},{\bf
  0})\in \Q_h$ be given. We choose $\btau_{e,h}=\D(\v_{s,h}) \in \bS_{e,h}$ and,
using Korn's inequality, we obtain
\begin{align*}
\frac{b_s(\v_{s,h},\btau_{e,h}) }{\|\btau_{e,h}\|_{L^2(\O_p)}} = \frac{\|\D(\v_{s,h})\|^2_{L^2(\O_p)} }{\|\D(\v_{s,h})\|_{L^2(\O_p)}} = \|\D(\v_{s,h})\|_{L^2(\O_p)} \geq 
\beta_2\|\v_{s,h}\|_{H^1(\O_p)}.
\end{align*} 
\end{proof}

\subsection{Existence and uniqueness of a solution}
In order to show well-posedness of
\eqref{h-alt-1}--\eqref{h-alt-gamma}, we proceed as in the case of
the continuous problem. We introduce $W^2_{p,h}$ and $\bS^2_{e,h}$ 
as the closures of the spaces $W_{p,h}$ and $\bS_{e,h}$ with respect to the norms
\begin{align*}
\|w_{p,h}\|^2_{W^2_{p,h}} \, := \, (s_0w_{p,h},w_{p,h})_{L^2(\O_p)},\quad \|\btau_{e,h}\|^2_{\bS^2_{e,h}} \, := \, (A\btau_{e,h},\btau_{e,h})_{L^2(\O_p)}.
\end{align*}
Define the domain 
\begin{align}
& D_h \, := \, 
\left\{ (p_{p,h},\bs_{e,h})\in W_{p,h}\times \bS_{e,h}:\, \mbox{ for given } 
(\f_f,\f_p,q_f)\in \V_f'\times \X_p'\times W_f' \right. \nonumber\\
& \qquad \exists \,(\u_{p,h},\u_{s,h},\u_{f,h}), p_{f,h},\lam_h)\in \Q_h\times W_{f,h}\times \Lambda_h \mbox{ such that} \nonumber \\
& \qquad \quad 
\forall ( (\v_{p,h},\v_{s,h},\v_{f,h}), (w_{p,h}, \btau_{e,h}, w_{f,h}, \mu_h)) 
\in \Q_h\times S_h \mbox{:}
\nonumber\\
&  
a_{f}(\u_{f,h},\v_{f,h}) + a^d_{p}(\u_{p,h},\v_{p,h}) 
+ a_{BJS}(\u_{f,h}, \u_{s,h};\v_{f,h},\v_{s,h}) + b_f(\v_{f,h},p_{f,h})+ b_p(\v_{p,h},p_{p,h})\nonumber
\\
& \qquad   + 
\alpha_p b_p(\v_{s,h},p_{p,h}) +b_{s}(\v_{s,h},\bs_{e,h})+ b_{\Gamma}(\v_{f,h},\v_{p,h},\v_{s,h};\lam_h)  = (\f_{f,h},\v_{f,h})_{\O_f} + (\f_{p,h},\v_{s,h})_{\O_p}, 
\nonumber \\
& \left( s_0 p_{p,h},w_{p,h}\right)_{\O_p} + a^s_{p}(\bs_{e,h},\btau_{e,h})
- \alpha_p b_p\left(\u_{s,h},w_{p,h}\right) - b_p(\u_{p,h},w_{p,h}) \nonumber \\
& \qquad -b_{s}(\u_{s,h},\btau_{e,h})- b_f(\u_{f,h},w_{f,h}) 
= (q_{f},w_{f,h})_{\O_f} + (s_0\bar{g}_{p},w_{p,h})_{\O_p}+ (A\bar{g}_{e},\btau_{e,h})_{\O_p}, \nonumber \\
& b_{\Gamma}\left(\u_{f,h},\u_{p,h},\u_{s,h};\mu_h\right) = 0. 
\nonumber \\
& \left. \quad \quad  \quad \mbox{for some } (\bar{g_p},\bar{g}_e)\in \left(W^2_{p,h}\right)^{\prime}\times \left(\bS^2_{e,h}\right)^{\prime} \,  \right\} 
\subset W^2_{p,h}\times \bS^2_{e,h}  \, . \label{h-deffD}
\end{align}
Analogous to the continuous formulation, we introduce the multivalued operator $\mathcal{M}_{h}$ with domain $D_h$,
and its associated \textit{relation} $\mathcal{M}_{h} \subset \left( W_{p,h}\times \bS_{e,h} \right) \times 
 \left( W_{p,h}^{2} \times \bS_{e,h}^{2}\right)^{\prime}$, where
\begin{equation}
\mathcal{M}( (p_{p,h},\bs_{e,h}) ) :=  
\left\{ (\bar{g}_p - p_{p,h} , \bar{g}_e - \bs_{e,h})  :  (p_p,\bs_e)
\mbox{ satisfies (4.9)--(4.11) for }
 (\bar{g}_p,\bar{g}_e)\in W_{p,2}^{\prime}\times \bS_{e,2}^{\prime} \right\},
\label{h-deffL}
\end{equation}
and consider the problem 
\begin{align}
\frac{d}{dt}\begin{pmatrix}
p_{p,h}(t) \\
\bs_{e,h}(t)
\end{pmatrix} + \mathcal{M} \begin{pmatrix}
p_p(t) \\
\bs_e(t)
\end{pmatrix} \ni \begin{pmatrix}
s_{o}^{-1}q_{p} \\
0
\end{pmatrix}. \label{h-ode-problem}
\end{align}

We can establish the following well-posedness result.
\begin{theorem}\label{well-pos-alt-h}
For each $\f_{f} \in
	W^{1,1}(0, T; \V_{f}')$, $\f_{p} \in W^{1,1}(0, T; \X_{p}')$, $q_f
	\in W^{1,1}(0, T; W_{f}')$, \linebreak $q_p \in W^{1,1}(0, T; W_{p}')$, and
	$p_{p,0}  \in W_p$, $\bs_{e,0} = A^{-1}\D(\bbeta_{p,0}) \in \bS_{e}$,
	there exists a solution of
	\eqref{h-alt-1}--\eqref{h-alt-gamma} with 
	$(\u_{f,h}$,
	$p_{f,h}$, $\u_{p,h}$, $p_{p,h}$, $\u_{s,h}$, $\bs_{e,h}$, $\lam_h) \in
	L^{\infty}(0,T;\V_{f,h})\times L^{\infty}(0,T;W_{f,h})\times
	L^{\infty}(0,T;\V_{p,h}) \times W^{1,\infty}(0,T;W_{p,h}) \times
	L^{\infty}(0,T;\X_{p,h}) \times W^{1,\infty}(0,T;\bS_{e,h}) \times
	L^{\infty}(0,T;\Lambda_h)$.
\end{theorem}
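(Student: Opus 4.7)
The plan is to mirror the three-step proof of Theorem~\ref{well-pos-alt} in the discrete setting, replacing the continuous inf-sup conditions of Lemma~\ref{lmapropAE} with the discrete counterparts in Theorem~\ref{inf-sup}, and exploiting the fact that $\Q_h \times S_h$ is finite-dimensional to simplify several limit arguments. Time regularity comes, as before, directly from the abstract Showalter Theorem~\ref{thmsho61b}.

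First I would prove that $D_h$ defined in \eqref{h-deffD} is nonempty, by introducing discrete analogues $\mathcal{R}_h : \Q_h \to \Q_h'$, $\mathcal{L}_h : S_h \to S_h'$ of the regularizing operators $R_s, R_p, L_f, L_p, L_\Gamma$ of Lemma~\ref{lmapropRL}--\ref{lmapropLg}. For $\epsilon>0$ the regularized stationary system
\begin{align*}
(\epsilon \mathcal{R}_h + \Ac)\,\q_{\epsilon,h} \,+\, \Bc' s_{\epsilon,h} &= \f \quad \text{in } \Q_h', \\
-\Bc\,\q_{\epsilon,h} \,+\, (\epsilon \mathcal{L}_h + \Ec_2)\,s_{\epsilon,h} &= \bar g \quad \text{in } S_h',
\end{align*}
defines a bounded, continuous, monotone operator $\mathcal{O}_h$ on $\Q_h \times S_h$ that is coercive by the coercivity bounds in Lemma~\ref{lmapropBBa} together with the coercivity of the regularizers. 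The Browder--Minty theorem produces $(\q_{\epsilon,h}, s_{\epsilon,h}) \in \Q_h \times S_h$. Repeating the energy estimates \eqref{regularized-estimate-1}--\eqref{regularized-estimate-4}, but now invoking the discrete inf-sup conditions \eqref{inf-sup-elasticity-h} and \eqref{inf-sup-stokes-darcy-h} in place of their continuous counterparts, furnishes $\epsilon$-independent bounds on every component of $(\q_{\epsilon,h},s_{\epsilon,h})$ except the regularization contribution. Since $\Q_h \times S_h$ is finite-dimensional, a subsequence converges strongly, monotonicity and continuity of $\Ac$ pass to the limit as in the continuous case, and the limit solves \eqref{h-cts-alt-dom-1}--\eqref{h-cts-alt-dom-gamma} in the discrete sense required by \eqref{h-deffD}.

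Next, I would apply Theorem~\ref{thmsho61b} to the discrete parabolic inclusion \eqref{h-ode-problem} with $\mathcal{N} = I$, $E = W_{p,h}^2 \times \bS_{e,h}^2$, and $\mathcal{M} = \mathcal{M}_h$. Monotonicity of $\mathcal{M}_h$ is obtained exactly as in Lemma~\ref{lmaLmte}: given two elements of $\mathcal{M}_h$ with their associated discrete Stokes--Biot tuples $(\u_{f,h}, p_{f,h}, \u_{p,h}, \u_{s,h}, \lambda_h)$ and the corresponding tilded quantities, testing the discrete equations against the admissible discrete differences reduces $(\f-\tilde{\f}, \v-\tilde{\v})_{S_2}$ to the sum $(a_f(\u_{f,h},\cdot)-a_f(\tilde\u_{f,h},\cdot))(\u_{f,h}-\tilde\u_{f,h}) + (\mbox{analogous } a_p^d \mbox{ term}) + (\mbox{analogous } a_{BJS} \mbox{ term})$, each term nonnegative by Lemma~\ref{lmapropBBa}. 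Surjectivity $Rg(I+\mathcal{M}_h) = (W_{p,h}^2)' \times (\bS_{e,h}^2)'$, the analogue of Corollary~\ref{corONTO}, is immediate from $D_h \neq \emptyset$ established above. Theorem~\ref{thmsho61b} then yields $(p_{p,h}, \bs_{e,h}) \in W^{1,\infty}(0,T; W_{p,h}) \times W^{1,\infty}(0,T; \bS_{e,h})$ solving \eqref{h-ode-problem} with the prescribed discrete initial data. The argument of Lemma~\ref{lem:equiv} transfers verbatim (with discrete test functions) to conclude that this solution satisfies \eqref{h-alt-1}--\eqref{h-alt-gamma}, and the remaining time regularity of $(\u_{f,h}, p_{f,h}, \u_{p,h}, \u_{s,h}, \lambda_h)$ follows by using \eqref{h-alt-1}--\eqref{h-alt-2} together with Theorem~\ref{inf-sup}, as in the proof of Theorem~\ref{well-pos-alt}.

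The main obstacle is the construction of a suitable discrete regularizer $L_{\Gamma,h}$ on $\Lambda_h$. The continuous $L_\Gamma$ of \eqref{defLg} relies on the auxiliary $r'$-Laplace lifting \eqref{aux-prob-1}--\eqref{aux-prob-3} and the norm equivalence of Lemma~\ref{lmaequiv1}, which does not discretize cleanly; however, since $\Lambda_h$ is finite-dimensional and equipped with the $L^{r'}(\Gamma_{fp})$ norm, the simpler duality-type map $L_{\Gamma,h}(\lambda_h)(\mu_h) := (|\lambda_h|^{r'-2}\lambda_h, \mu_h)_{\Gamma_{fp}}$ is bounded, continuous, monotone, and coercive on $\Lambda_h$, and is adequate for the regularization argument above. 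A minor bookkeeping point is that in the case $c>0$ an extra constant term $-c(\bar c_f + \bar c_p + \bar c_I)$ appears on the right-hand side of the coercivity estimate for $\mathcal{O}_h$, which does not alter the $\epsilon$-independent bounds and is handled exactly as in the remark following \eqref{eqwws1}.
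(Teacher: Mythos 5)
Your proposal is correct and follows essentially the same three-step strategy as the paper, including the key adaptation the paper singles out: replacing the continuous $L_\Gamma$ of \eqref{defLg} (built from the $r'$-Laplace lifting and Lemma~\ref{lmaequiv1}) by the duality map $L_{\Gamma,h}(\mu_{h,1})(\mu_{h,2}) = \langle|\mu_{h,1}|^{r'-2}\mu_{h,1},\mu_{h,2}\rangle_{\Gamma_{fp}}$ on $\Lambda_h$ with its $L^{r'}(\Gamma_{fp})$ norm. The observation that finite-dimensionality of $\Q_h\times S_h$ upgrades weak to strong subsequential convergence is a correct but inessential simplification not made in the paper.
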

The proof of Theorem \ref{well-pos-alt-h} uses the following steps: 

\noindent
\textbf{Step 1}. Establish that the domain $D_h$ given by \eqref{h-deffD} 
is nonempty. \\
\textbf{Step 2}. Show solvability of the parabolic problem \eqref{h-ode-problem}. \\
\textbf{Step 3}. Show that the solution to \eqref{h-ode-problem} satisfies
\eqref{h-alt-1}--\eqref{h-alt-gamma}.	

With the established discrete inf-sup conditions \eqref{inf-sup-stokes-darcy-h}
and \eqref{inf-sup-elasticity-h}, the proof follows closely the proof
of Theorem~\ref{well-pos-alt}. In particular,
the proofs of Step 2 and Step 3 in the discrete setting are identical
to the continuous case. The proof of Step
1 is also very similar. The only difference is that the operator
$L_{\Gamma}$ from Lemma~\ref{lmapropLg} is now defined as
$L_{\Gam} : \, \Lambda_h \rightarrow \Lambda_h'$, $L_{\Gam}(\mu_{h,1})
(\mu_{h,2}): = \<|\mu_{h, 1}|^{r'-2} \mu_{h, 1}, \mu_{h,2}
\>_{\Gam_{fp}}$. One needs to establish that $L_{\Gam}$ is a bounded,
continuous, coercive and monotone operator, which follows immediately
from its definition, since $\left(L_{\Gam}(\mu_h)(\mu_h)\right)^{1/r'}
= \|\mu_h\|_{\Lambda_h}$.

As a corollary of Theorem \ref{well-pos-alt-h}, we obtain
the following well-posedness result for the original semi-discrete problem
\eqref{h-weak-1}--\eqref{h-b-gamma}. The proof is identical to the proof
of Theorem~\ref{th-exist-uniq-weak-2}.
\begin{theorem}\label{well-pos}
	For each $\f_{f} \in
	W^{1,1}(0, T; \V_{f}')$, $\f_{p} \in W^{1,1}(0, T; \X_{p}')$, $q_f
	\in W^{1,1}(0, T; W_{f}')$, \linebreak $q_p \in W^{1,1}(0, T; W_{p}')$, and
	$p_{p,0}\in W_{p,h}$, $\bbeta_{p,0} \in \X_{p,h}$,
	there exists a unique solution $(\u_{f,h}(t), p_{f,h}(t),$ $\u_{p,h}(t), p_{p,h}(t),\bbeta_{p,h}(t),\lam_h(t))$
	$\in L^{\infty}(0,T;\V_{f,h})\times L^{\infty}(0,T;W_{f,h})\times
	L^{\infty}(0,T;\V_{p,h}) \times W^{1,\infty}(0,T;W_{p,h}) \times
	W^{1,\infty}(0,T;\X_{p,h}) \times L^{\infty}(0,T;\Lambda_h)$ of
	\eqref{h-weak-1}--\eqref{h-b-gamma}. 
\end{theorem}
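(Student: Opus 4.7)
The plan is to mirror exactly the argument used in the proof of Theorem~\ref{th-exist-uniq-weak-2}, invoking the well-posedness of the alternative semidiscrete formulation (Theorem~\ref{well-pos-alt-h}) together with the discrete inf-sup conditions established in Theorem~\ref{inf-sup}. Since the alternative discrete system \eqref{h-alt-1}--\eqref{h-alt-gamma} was set up precisely so that $\D(\X_{p,h}) \subset \bS_{e,h}$, the same elimination of the elastic stress in favor of the displacement that worked in the continuous case will go through verbatim at the discrete level.

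First I would take the solution $(\u_{f,h},p_{f,h},\u_{p,h},p_{p,h},\u_{s,h},\bs_{e,h},\lam_h)$ of \eqref{h-alt-1}--\eqref{h-alt-gamma} produced by Theorem~\ref{well-pos-alt-h} and define
\[
\bbeta_{p,h}(t) := \bbeta_{p,h}(0) + \int_0^t \u_{s,h}(\tau)\,d\tau,
\]
so that $\d_t\bbeta_{p,h}=\u_{s,h}$ and $\bbeta_{p,h}\in W^{1,\infty}(0,T;\X_{p,h})$. Testing \eqref{h-alt-2} with $(w_{p,h},w_{f,h},\mu_h)={\bf 0}$ and $\btau_{e,h}\in\bS_{e,h}$ arbitrary yields $(\d_t(A\bs_{e,h}-\D(\bbeta_{p,h})),\btau_{e,h})_{\O_p}=0$. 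Because $\D(\X_{p,h})\subset \bS_{e,h}$, this forces $\d_t(A\bs_{e,h}-\D(\bbeta_{p,h}))={\bf 0}$, and combined with the initial condition $\bs_{e,h}(0)=A^{-1}\D(\bbeta_{p,h}(0))$ we obtain $\bs_{e,h}(t)=A^{-1}\D(\bbeta_{p,h}(t))$ for all $t$. Then $b_s(\v_{s,h},\bs_{e,h}) = (A^{-1}\D(\bbeta_{p,h}),\D(\v_{s,h}))_{\O_p} = a^e_p(\bbeta_{p,h},\v_{s,h})$, which turns \eqref{h-alt-1} into \eqref{h-weak-1}; \eqref{h-alt-2} with $\btau_{e,h}={\bf 0}$ becomes \eqref{h-weak-2}, and \eqref{h-alt-gamma} is \eqref{h-b-gamma}.

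For uniqueness I would subtract two solutions and proceed exactly as in the continuous case. Using the monotonicity property \eqref{monotonicity} applied to $\G(\x)=\nu(\x)\x$, $\G(\x)=\nu_{eff}(\x)\x$, and $\G(\x)=\nu_I(\x)\x$ on the Stokes, Darcy and BJS terms, and testing the difference of \eqref{h-weak-1}--\eqref{h-b-gamma} with the differences of the respective solutions, we obtain an identity of the form
\[
\tfrac12 \d_t\!\left(a^e_p(\bbeta^1_{p,h}-\bbeta^2_{p,h},\bbeta^1_{p,h}-\bbeta^2_{p,h}) + s_0\|p^1_{p,h}-p^2_{p,h}\|^2_{L^2(\O_p)}\right) + I_1+I_2+I_3 = 0,
\]
where $I_1,I_2,I_3$ are nonnegative by the monotonicity estimates \eqref{stokes-dif-1}--\eqref{bjs-dif-1}. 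Integrating in time and using coercivity \eqref{ae-bound} of $a^e_p$ yields $\u^1_{f,h}=\u^2_{f,h}$, $\u^1_{p,h}=\u^2_{p,h}$, $\bbeta^1_{p,h}=\bbeta^2_{p,h}$. Uniqueness of $p_{f,h}, p_{p,h},\lam_h$ then follows by applying the discrete inf-sup condition \eqref{inf-sup-stokes-darcy-h} to the difference of \eqref{h-weak-1} exactly as was done in the continuous case.

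The only step that requires any real care is the discrete analogue of the identification $\bs_{e,h}=A^{-1}\D(\bbeta_{p,h})$: this hinges on the inclusion $\D(\X_{p,h})\subset\bS_{e,h}$, which holds by the chosen pair (degree $k_s$ Lagrange displacements and discontinuous degree $k_s-1$ symmetric stresses), so no additional obstruction appears. Every remaining step is a direct transcription of the continuous argument with $\Q,S,\Lambda$ replaced by $\Q_h,S_h,\Lambda_h$ and with the discrete inf-sup condition \eqref{inf-sup-stokes-darcy-h} of Theorem~\ref{inf-sup} playing the role of \eqref{inf-sup-stokes-darcy}. The asserted regularity in $t$ is inherited from Theorem~\ref{well-pos-alt-h} through the relation $\bbeta_{p,h}=\bbeta_{p,h}(0)+\int_0^t\u_{s,h}$.
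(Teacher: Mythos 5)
Your proposal is correct and is precisely what the paper intends: the paper's proof of Theorem~\ref{well-pos} consists of a single line (``The proof is identical to the proof of Theorem~\ref{th-exist-uniq-weak-2}.''), and you have spelled out the discrete transcription faithfully, correctly flagging the inclusion $\D(\X_{p,h})\subset\bS_{e,h}$ as the one point that must be checked at the discrete level (one could also note that $A$ preserves $\bS_{e,h}$, which follows from the pointwise formula \eqref{defAten} and is needed so that $\d_t(A\bs_{e,h}-\D(\bbeta_{p,h}))$ is itself an admissible test function).
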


The proof of the following stability result is identical to the proof
of Theorem~\ref{stability}.
\begin{theorem} \label{stability-discrete}
For the solution of \eqref{h-weak-1}--\eqref{h-b-gamma}, 
assuming sufficient regularity of the data,
there exists $C>0$ such that
\begin{align*}
& \| \u_{f,h} \|_{L^r(0,T; W^{1,r}(\O_f))}^r + \| \u_{p,h} \|_{L^r(0,T; L^{r}(\Omega_p))}^r +  |\u_{f,h} - \d_t \bbeta_{p,h} |^r_{L^r(0,T;BJS)} + \| p_{f,h} \|^{r'}_{L^{r'} (0,T; L^{r'}(\O_f))} \\
&\qquad
+  \| p_{p,h} \|^{r'}_{L^{r'} (0,T; L^{r'}(\O_p))} + \| \lam_h \|^{r'}_{L^{r'} (0,T; \Lambda_h)}
	 + \| \bbeta_{p,h} \|^2_{L^\infty (0,T; H^1({\Omega_p}))} + s_0\| p_{p,h} \|^2_{L^\infty (0,T; L^2(\O_p))}\\
& \quad
\leq 
C \exp(T) \Big( \| \f_{p}\|^{2}_{L^\infty (0,T; H^{-1}(\Omega_p))}  
+\| \bbeta_{p,h} (0)\|^2_{H^1(\Omega_p)} 
+ s_0 \| p_{p,h}(0) \|_{L_2(\Omega_p)}^2	
+ \| \d_t \f_{p} \|_{L^2(0,T;H^{-1}(\Omega_p))}^2 \\
& \qquad 
+ \| \f_{f} \|^{r'}_{L^{r'}(0,T;W^{-1,r'}(\Omega_f))} 
+ \| q_{f} \|^{r}_{L^{r}(0,T;L^{r}(\Omega_f))}  
+ \| q_{p} \|^{r}_{L^{r}(0,T;L^{r}(\Omega_f))} + c( \bar c_f + \bar c_p + \bar c_I)\Big).
\end{align*}
\end{theorem}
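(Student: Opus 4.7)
The plan is to mirror the proof of Theorem~\ref{stability} step by step, with the continuous inf-sup condition \eqref{inf-sup-stokes-darcy} replaced by its discrete counterpart \eqref{inf-sup-stokes-darcy-h}. All the coercivity and continuity estimates for $a_f$, $a_p^d$, $a_{BJS}$ and $a_p^e$ from \eqref{a-f-bounds}--\eqref{a-bjs-bounds} and \eqref{ae-bound} carry over unchanged since the discrete velocity and displacement spaces are conforming and Korn's inequality applies to $\V_{f,h} \subset \V_f$ and $\X_{p,h} \subset \X_p$. For simplicity I present the outline for the case $c=0$; the term $c(\bar c_f + \bar c_p + \bar c_I)$ appears exactly as in the continuous proof when $c > 0$.

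First I would test \eqref{h-weak-1}--\eqref{h-b-gamma} with $(\v_{f,h}, w_{f,h}, \v_{p,h}, w_{p,h}, \bxi_{p,h}, \mu_h) = (\u_{f,h}, p_{f,h}, \u_{p,h}, p_{p,h}, \d_t \bbeta_{p,h}, \lambda_h)$ and add the three equations. The bilinear forms $b_f(\u_{f,h},p_{f,h})$, $b_p(\u_{p,h},p_{p,h})$, $\alpha_p b_p(\d_t\bbeta_{p,h},p_{p,h})$ and $b_\Gamma(\u_{f,h},\u_{p,h},\d_t\bbeta_{p,h};\lambda_h)$ cancel in pairs, yielding the discrete analogue of \eqref{energy-eq-1}:
\begin{align*}
\tfrac{1}{2}\d_t\!\left[(s_0 p_{p,h},p_{p,h})_{\O_p}
 + a_p^e(\bbeta_{p,h},\bbeta_{p,h})\right]
 & + a_f(\u_{f,h},\u_{f,h}) + a_p^d(\u_{p,h},\u_{p,h})
 + a_{BJS}(\u_{f,h},\d_t\bbeta_{p,h};\u_{f,h},\d_t\bbeta_{p,h}) \\
 & = (\f_f,\u_{f,h})_{\O_f} + (\f_p,\d_t\bbeta_{p,h})_{\O_p}
 + (q_f,p_{f,h})_{\O_f} + (q_p,p_{p,h})_{\O_p}.
\end{align*}
Integrate in time from $0$ to $t\in(0,T]$, use the coercivity bounds \eqref{a-f-bounds}--\eqref{a-bjs-bounds} and \eqref{ae-bound}, and handle the non-coercive right-hand side term $(\f_p,\d_t\bbeta_{p,h})_{\O_p}$ by integration by parts in time, exactly as in \eqref{energy-ineq-1}. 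After applying Young's inequality \eqref{young} with small parameters $\epsilon_1$ on the terms $\|\bbeta_{p,h}(t)\|^2_{H^1}$, $\|\u_{f,h}\|^r_{W^{1,r}}$, $\|p_{f,h}\|^{r'}_{L^{r'}}$, $\|p_{p,h}\|^{r'}_{L^{r'}}$ that appear on the right, we get the discrete analogue of \eqref{energy-ineq-1}.

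Next I would recover control of $\|p_{f,h}\|_{W_f}$, $\|p_{p,h}\|_{W_p}$ and $\|\lambda_h\|_{\Lambda_h}$ through the discrete inf-sup condition \eqref{inf-sup-stokes-darcy-h}. With $s_h = (p_{p,h},\mathbf{0},p_{f,h},\lambda_h) \in S_h$, \eqref{inf-sup-stokes-darcy-h} gives
\begin{equation*}
\|s_h\|_{S_h} \le C \sup_{(\v_{p,h},\mathbf{0},\v_{f,h})\in \Q_h}
\frac{b_f(\v_{f,h},p_{f,h}) + b_p(\v_{p,h},p_{p,h})
 + b_\Gamma(\v_{f,h},\v_{p,h},\mathbf{0};\lambda_h)}{\|(\v_{p,h},\mathbf{0},\v_{f,h})\|_{\Q_h}},
\end{equation*}
and equation \eqref{h-weak-1} tested with $(\v_{f,h},\v_{p,h},\mathbf{0})$ lets us rewrite the numerator in terms of $-a_f(\u_{f,h},\v_{f,h}) - a_p^d(\u_{p,h},\v_{p,h}) - a_{BJS}(\u_{f,h},\d_t\bbeta_{p,h};\v_{f,h},\mathbf{0}) + (\f_f,\v_{f,h})_{\O_f}$. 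The continuity estimates from \eqref{a-f-bounds}--\eqref{a-bjs-bounds} then yield the discrete analogue of \eqref{pressure-stab-2}. Adding this to the previously integrated energy inequality and selecting $\epsilon_2$ small enough, then $\epsilon_1$ small enough, absorbs all velocity, pressure and multiplier norms into the left-hand side.

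Finally, Gronwall's inequality applied to the term $\int_0^t \|\bbeta_{p,h}\|^2_{H^1(\O_p)}\,ds$ gives the stated bound, with the factor $\exp(T)$ arising in the usual way. The main thing to be careful about is that every instance of the continuous inf-sup condition \eqref{inf-sup-stokes-darcy} in the original argument is replaced by its discrete version \eqref{inf-sup-stokes-darcy-h}, which has already been established in Theorem~\ref{inf-sup}; since the spaces $\V_{f,h}, \V_{p,h}, \X_{p,h}$ are conforming subspaces of their continuous counterparts, no further modification is needed. No genuinely new obstacle arises beyond the work already invested in proving \eqref{inf-sup-stokes-darcy-h}.
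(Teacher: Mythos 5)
Your proposal is correct and takes essentially the same approach as the paper: the paper itself states that the proof of Theorem~\ref{stability-discrete} is identical to that of Theorem~\ref{stability}, with the continuous inf-sup condition \eqref{inf-sup-stokes-darcy} replaced by its discrete counterpart \eqref{inf-sup-stokes-darcy-h}, which is exactly what you carry out. Your energy identity, the integration by parts in time on $(\f_p,\d_t\bbeta_{p,h})_{\O_p}$, the use of the discrete inf-sup to control $(p_{f,h},p_{p,h},\lambda_h)$, and the final Gronwall step all match the intended argument.
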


\section{Error analysis}

In this section we analyze the spatial discretization error. Let 
$k_f$ and $s_f$ be the degrees of polynomials in
$\V_{f,h}$ and $W_{f,h}$, let $k_p$ and $s_p$ be the
degrees of polynomials in $\V_{p,h}$ and $W_{p,h}$
respectively, and let $k_s$ be the polynomial degree in $\X_{p,h}$.

\subsection{Preliminaries}
We introduce $Q_{f,h}$, $Q_{p,h}$, and $Q_{\lambda,h}$ as the $L^2$ projection 
operators onto $W_{f,h}$, $W_{p,h}$, and $\Lambda_h$, respectively, satisfying:
\begin{align}
& (p_{f}-Q_{f,h}p_{f},w_{f,h})_{\O_f}=0,&&\forall w_{f,h} \in W_{f,h}, \label{fluid-pressure-int}\\
& (p_{p}-Q_{p,h}p_{p},w_{p,h})_{\O_p}=0,&&\forall w_{p,h} \in W_{p,h}, \label{darcy-pressure-int}\\
& \langle\lambda-Q_{\lambda,h}\lambda,\mu_h\rangle_{\Gamma_{fp}}=0,&&\forall \mu_h \in \Lambda_h, \label{l-multuplier-int}
\end{align}
with approximation properties \cite{di2017hybrid},
	\begin{eqnarray}
	&& \|p_f - Q_{f,h}p_f\|_{L^{r'}(\O_f )} \leq Ch^{s_f+1}\|p_f\|_{W^{s_f+1,r'}(\O_f)}, \label{stokesPresProj}\\
	&& \|p_p - Q_{p,h}p_p\|_{L^{r'}(\O_p )} \leq Ch^{s_p+1}\|p_p\|_{W^{s_p+1,r'}(\O_p)}, \label{darcyPresProj}\\
	&&  \|\lambda - Q_{\lambda,h} \lambda\|_{L^{r'}(\Gamma_{fp} )} \leq Ch^{k_p+1} \|\lambda\|_{W^{k_p+1,r'}(\Gamma_{fp})}. \label{LMProj}
	\end{eqnarray}	
In the error analysis we will use an interpolant 
$I_h=(I_{f,h},I_{p,h},I_{s,h}):\, \U \rightarrow \U_{h}$, where
$$
\U = \left\{ (\v_f,\v_p,\bxi_p) \in \V_f \times \V_p \times \X_p : 
b_{\Gamma}\left(\v_f,\v_p,\bxi_p;\mu\right)=0, \forall \mu \in \Lambda \right\},
$$
$$
\U_h = \left\{ (\v_{f,h},\v_{p,h},\bxi_{p,h}) \in \V_{f,h} \times \V_{p,h} 
\times \X_{p,h} : b_{\Gamma}\left(\v_{f,h},\v_{p,h},\bxi_{p,h};\mu_h\right)=0, 
\forall \mu_h \in \Lambda_h \right\}.
$$
We construct the interpolant by combining sub-problem interpolants with correction
on the interface for the flux continuity. 
We recall the mixed finite element interpolant $\Pi_{p,h}$ onto $\V_{p,h}$ introduced in 
\eqref{mfe-operator}. It satisfies the approximation property 
\cite{acosta2011error, duran1988error},
\begin{align}
\|\v_p-\Pi_{p,h}\v_p\|_{L^r(\O_p)} \leq Ch^{k_p+1}\|\v_p\|_{W^{k_p+1,r}(\O_p)}. 
\label{eq:mfe-approx-prop1}
\end{align}
Let $S_{f,h},\, S_{s,h}$ be the Scott-Zhang interpolation 
operators onto $\V_{f,h}$ and $\X_{p,h}$, respectively, satisfying 
\cite{Scott_Zhang_1990}
\begin{align}
\|\v_f-S_{f,h}\v_f\|_{L^r(\O_f)} + h\|\nabla(\v_f-S_{f,h}\v_f)\|_{L^r(\O_f)}
&\leq Ch^{k_f+1}\|\v_f\|_{W^{k_f+1,r}(\O_f)}, \label{eq:scott-zhang-approx-prop1} \\
\|\bxi_p-S_{s,h}\bxi_p\|_{L^2(\O_p)} + h\|\nabla(\bxi_p-S_{s,h}\bxi_p)\|_{L^2(\O_p)}&\leq Ch^{k_s+1}\|\bxi_p\|_{H^{k_s+1}(\O_p)}. \label{eq:scott-zhang-approx-prop2} 
\end{align}
We set $I_{f,h}= S_{f,h}$ and $I_{s,h}= S_{s,h}$. We next
construct $I_{p,h}\v_p$. Consider the auxiliary problem:
for $\v_f$ and $\bxi_p$ given, find $\phi \in W^{1+1/r,r}(\O_p)$ satisfying
\begin{align}
\nabla \cdot \nabla \phi  & = 0 , &&\mbox{in } \O_p, \label{aux-prob-1-h}\\
\phi & = 0 && \mbox{on } \Gam_{p}^D,  \\
\nabla \phi\cdot \n_p& = 
(\v_f-I_{f,h}\v_f)\cdot\n_f + (\bxi_p-I_{s,h}\bxi_p)\cdot \n_p, &&\mbox{on } \Gam_{fp} , \label{aux-prob-2-h} \\
\nabla \phi\cdot \n_p & = 0 , && \mbox{on } \Gam^N_{p}.   \label{eq:axil-problem-h}
\end{align}
Let $\z=\nabla \phi$ and define $\w=\z+\v_p$. 
Using \eqref{aux-prob-1-h}--\eqref{eq:axil-problem-h}, we obtain
\begin{align}
\nabla \cdot \w &= \nabla \cdot \z + \nabla \cdot \v_p = \nabla \cdot \v_p, &&\textrm{ in }\O_p, \label{div-w}\\
\w \cdot \n_p &= \z \cdot \n_p +   \v_p \cdot \n_p  = \v_f\cdot\n_f -I_{f,h}\v_f\cdot\n_f 
+ \bxi_p\cdot \n_p- I_{s,h}\bxi_p\cdot \n_p + \v_p \cdot \n_p \nonumber \\
& = -I_{f,h}\v_f\cdot\n_f - I_{s,h}\bxi_p\cdot \n_p,
&&\textrm{ on }\Gamma_{fp}. \label{propW}
\end{align}
We now set $I_{p,h}\v_p = \Pi_{p,h}\w$. Using \eqref{mfe-operator} and 
\eqref{div-w}, we have 
\begin{align}
(\nabla \cdot I_{p,h}\v_p, w_{p,h})_{\O_p}& =(\nabla \cdot \Pi_{p,h}\w, w_{p,h})_{\O_p}=(\nabla \cdot \w, w_{p,h})_{\O_p}=(\nabla \cdot \v_p, w_{p,h})_{\O_p}, && \forall w_{p,h} \in W_{p,h}. \label{darcy-velocity-int2}
\end{align}
Using \eqref{mfe-operator-bis} and \eqref{propW}, we have for all 
$\mu_h \in \Lambda_h$,
\begin{align*}
\langle I_{p,h}\v_p \cdot \n_p,\mu_h \rangle_{\Gamma_{fp}} = 
\langle \Pi_{p,h}\w \cdot \n_p,\mu_h \rangle_{\Gamma_{fp}} = 
\langle \w \cdot \n_p,\mu_h \rangle_{\Gamma_{fp}} =  
\langle -I_{f,h}\v_f\cdot\n_f - I_{s,h}\bxi_p\cdot \n_p,\mu_h \rangle_{\Gamma_{fp}},
\end{align*}
which implies that $I_h: \U \mapsto \U_h$ satisfies
\begin{align}
\left\langle I_{f,h}\v_f\cdot \n_f + I_{p,h}\v_p \cdot \n_p 
+ I_{s,h}\bxi_p\cdot \n_p,\mu_h \right\rangle_{\Gamma_{fp}} =0, && 
\forall \mu_h \in \Lambda_h. \label{new-operator}
\end{align}

We next present the approximation properties of $I_h$.

\begin{lemma}\label{l:interpolation}
  For $\v_f \in W^{k_f+1,r}(\O_f)$, $\v_p \in W^{k_p+1, r}(\O_p) $, and
  $\bxi_p \in H^{k_s+1}(\O_p)$, there exists $C > 0$ independent of $h$ such that
	\begin{align}
	&\|\v_f - I_{f,h}\v_f\|_{L^r(\O_f )} + h\|\nabla(\v_f - I_{f,h}\v_f)\|_{L^r(\O_f )} \leq Ch^{k_f+1}\|\v_f\|_{W^{k_f+1,r}(\O_f)}, \label{stokesVel}\\
	&\|\bxi_p - I_{s,h}\bxi_p\|_{L^2(\O_p )}+ h\|\nabla(\bxi_p - I_{s,h}\bxi_p)\|_{L^2(\O_p )} \leq Ch^{k_s+1}\|\bxi_p\|_{H^{k_s+1}(\O_p)},  \label{dispOperator}\\
	&\|\v_p - I_{p,h}\v_p\|_{L^r(\O_p )} \leq C(h^{k_p+1}\|\v_p\|_{W^{k_p+1, r}(\O_p)}+ h^{k_f}\|\v_f\|_{W^{k_f+1,r}(\O_f )} + h^{k_s}\|\bxi_p\|_{H^{k_s+1}(\O_p)}).
	\label{darcyVel} 
	\end{align}
\end{lemma}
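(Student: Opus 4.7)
The plan is to treat the three bounds separately. The first two, \eqref{stokesVel} and \eqref{dispOperator}, are immediate consequences of the Scott--Zhang approximation properties \eqref{eq:scott-zhang-approx-prop1}--\eqref{eq:scott-zhang-approx-prop2}, since by construction $I_{f,h} = S_{f,h}$ and $I_{s,h} = S_{s,h}$. So the real content is in \eqref{darcyVel}, where the correction term $\z = \nabla \phi$ must be controlled.

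For \eqref{darcyVel}, I would start from the decomposition
\begin{equation*}
\v_p - I_{p,h}\v_p \;=\; \v_p - \Pi_{p,h}(\z + \v_p) \;=\; (\v_p - \Pi_{p,h}\v_p) \;-\; \Pi_{p,h}\z.
\end{equation*}
The first term is controlled directly by the mixed finite element approximation property \eqref{eq:mfe-approx-prop1}, giving the $h^{k_p+1}\|\v_p\|_{W^{k_p+1,r}(\O_p)}$ contribution. For the second term I would invoke the continuity bound \eqref{eq:mfe-approx-prop2} with $s = 1/r$ and use that $\nabla \cdot \z = 0$ by \eqref{aux-prob-1-h}, yielding
\begin{equation*}
\|\Pi_{p,h}\z\|_{L^r(\O_p)} \;\leq\; C\,\|\z\|_{W^{1/r,r}(\O_p)} \;=\; C\,\|\nabla\phi\|_{W^{1/r,r}(\O_p)} \;\leq\; C\,\|\phi\|_{W^{1+1/r,r}(\O_p)}.
\end{equation*}

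Next, I would apply the assumed elliptic regularity for the Laplace problem on $\O_p$ to the auxiliary problem \eqref{aux-prob-1-h}--\eqref{eq:axil-problem-h}, giving
\begin{equation*}
\|\phi\|_{W^{1+1/r,r}(\O_p)} \;\leq\; C\,\bigl\|(\v_f-I_{f,h}\v_f)\cdot\n_f + (\bxi_p-I_{s,h}\bxi_p)\cdot \n_p\bigr\|_{L^r(\Gamma_{fp})}.
\end{equation*}
The boundary data is then bounded by trace inequalities. For the fluid term I would use the trace embedding $W^{1,r}(\O_f) \hookrightarrow L^r(\partial\O_f)$ combined with \eqref{eq:scott-zhang-approx-prop1} to obtain $C h^{k_f}\|\v_f\|_{W^{k_f+1,r}(\O_f)}$. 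For the structure term, the trace embedding $H^1(\O_p) \hookrightarrow L^2(\partial\O_p)$ together with $L^2(\Gamma_{fp}) \hookrightarrow L^r(\Gamma_{fp})$ (since $r < 2$ and $\Gamma_{fp}$ is bounded) and \eqref{eq:scott-zhang-approx-prop2} produce $C h^{k_s}\|\bxi_p\|_{H^{k_s+1}(\O_p)}$. Combining these yields \eqref{darcyVel}.

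The main technical obstacle is the passage $\|\nabla\phi\|_{W^{1/r,r}(\O_p)} \le C\|\phi\|_{W^{1+1/r,r}(\O_p)}$ together with the correct scaling of the boundary data: the fractional regularity shift is exactly $1/r$, which is precisely the regularity assumption made at the start of Section~5 on the Laplace problem in $\O_p$. A secondary delicacy is the mixed boundary condition (Dirichlet on $\Gamma_p^D$, Neumann on $\Gamma_{fp} \cup \Gamma_p^N$) and the fact that Neumann data is nonzero only on $\Gamma_{fp}$; the hypothesis $\mathrm{dist}(\Gamma_p^D,\Gamma_{fp}) \ge s > 0$ made in Section~2 is what avoids corner singularities at the interface between the two boundary types and makes the regularity estimate applicable.
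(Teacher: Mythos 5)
Your proposal is correct and follows essentially the same route as the paper: reduce to the Scott--Zhang bounds for the first two estimates, split $\v_p - I_{p,h}\v_p = (\v_p - \Pi_{p,h}\v_p) - \Pi_{p,h}\z$, bound the correction term via the $W^{1/r,r}$ continuity of $\Pi_{p,h}$ (using $\div \z = 0$), elliptic regularity for the auxiliary Neumann problem, and trace inequalities together with the Scott--Zhang $W^{1,r}$ and $H^1$ bounds. Your intermediate step via $\|\phi\|_{W^{1+1/r,r}(\O_p)}$ and the explicit trace embeddings are just more detailed versions of what the paper states compactly, and your closing remark on the role of $\mathrm{dist}(\Gamma_p^D,\Gamma_{fp}) \ge s > 0$ correctly identifies the geometric hypothesis that makes the regularity estimate valid.
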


\begin{proof}
The first two estimates \eqref{stokesVel}--\eqref{dispOperator} follow
immediately from
\eqref{eq:scott-zhang-approx-prop1}--\eqref{eq:scott-zhang-approx-prop2}.
Next,
\begin{align}\label{v-Ipv}
\|\v_p - I_{p,h}\v_p\|_{L^r(\O_p )} 
= \|\v_p - \Pi_{p,h} \v_p - \Pi_{p,h} \z\|_{L^r(\O_p )} 
\leq  \|\v_p - \Pi_{p,h} \v_p \|_{L^r(\O_p )} + \| \Pi_{p,h} \z\|_{L^r(\O_p )}.
\end{align}
Using \eqref{eq:mfe-approx-prop2}, 
elliptic regularity \eqref{ell-reg} for
\eqref{aux-prob-1-h}--\eqref{eq:axil-problem-h},
\eqref{stokesVel}, and \eqref{dispOperator}, we obtain
\begin{align}
\| \Pi_{p,h} \z\|_{L^r(\O_p )} &\leq C\|\z\|_{W^{1/r,r}(\O_p)} 
\leq C (\|(\v_f-I_{f,h}\v_f)\cdot\n_f \|_{L^{r}(\Gam_{fp})} 
+ \|(\bxi_p-I_{s,h}\bxi_p)\cdot \n_p\|_{L^{r}(\Gam_{fp})}) \nonumber \\
&\leq C (\|\v_f-I_{f,h}\v_f \|_{W^{1,r}(\O_{f})} 
+ \|\bxi_p-I_{s,h}\bxi_p\|_{H^{1}(\O_{p})}) \nonumber \\
& \leq C( h^{k_f}\|\v_f\|_{W^{k_f+1,r}(\O_f )} + h^{k_s}\|\bxi_p\|_{H^{k_s+1}(\O_p)}).
\label{Piz}
\end{align}
Bound \eqref{darcyVel} follows by combining \eqref{v-Ipv}, 
\eqref{eq:mfe-approx-prop1}, and \eqref{Piz}.
\end{proof}

\subsection{Error estimates}
For $\u = (\u_f, \u_p, \bbeta_p)$ and $\u_h = (\u_{f,h}, \u_{p,h}, \bbeta_{p,h})$, 
define
\begin{align}
\mathcal{E} (\u, \u_h) &= \norm{\frac{| \D (\u_f) - \D (\u_{f,h}) | }{c + | \D (\u_f) | + | \D (\u_{f,h}) | }}_{L^\infty(\O_f)}^ {\frac{2 - r}{r}} +  \norm{\frac{|\u_p - \u_{p,h} | }{c + | \u_p | + | \u_{p,h} | }}_{L^\infty(\O_p)}^ {\frac{2 - r}{r}} \nonumber\\
 & + \sum_{j=1}^{d-1}\norm{\frac{ | (\u_f - \d_t \bbeta_p)\cdot \t_{f,j} - (\u_{f,h} - \d_t \bbeta_{p,h})\cdot \t_{f,j} | }{c + | (\u_f - \d_t \bbeta_p)\cdot \t_{f,j} |  +  | (\u_{f,h} - \d_t \bbeta_{p,h})\cdot \t_{f,j}  | } }^{\frac{2-r}{r}}_{L^\infty(\Gamma_{fp})} \text{ and}\nonumber\\
\mathcal{G} (\u, \u_h) &= (| \nu (\D (\u_f)) \D (\u_f) - \nu (\D (\u_{f,h})) \D (\u_{f,h}) |, | \D (\u_f) - \D (\u_{f,h} ) )_{\O_f} \nonumber \\
 \quad & + (| \nu_{eff} (\u_p) \u_p - \nu_{eff} (\u_{p,h}) \u_{p,h} |, | \u_p - \u_{p,h} |)_{\O_p} \nonumber \\
 &+ \sum_{j=1}^{d-1} \alpha_{BJS}\<| \nu_{I} (((\u_f - \d_t \bbeta_p)\cdot \t_{f,j})\t_{f,j}) ((\u_f - \d_t \bbeta_p)\cdot \t_{f,j})\t_{f,j} \nonumber \\
&\qquad - \nu_{I} (((\u_{f,h} - \d_t \bbeta_{p,h})\cdot \t_{f,j})\t_{f,j}) ((\u_{f,h} - \d_t \bbeta_{p,h})\cdot \t_{f,j})\t_{f,j} |, \nonumber \\
& \qquad |   ((\u_f - \d_t \bbeta_p)\cdot \t_{f,j})\t_{f,j} - ((\u_{f,h} - \d_t \bbeta_{p,h})\cdot \t_{f,j})\t_{f,j} | \>_{\Gamma_{fp}}.
 \end{align}
The above quantities appear in the error analysis when applying the
continuity bound \eqref{continuity} to the difference of the true and
approximate velocities. Note that as each term in $\mathcal{E} (\u, \u_h)$
is less than 1, $\mathcal{E} (\u, \u_h) \le (d + 1)$.

\begin{theorem}\label{conv-thm}
Let $(\u_f, \u_p, \bbeta_p, p_f, p_p, \lambda)$ be the solution of 
\eqref{h-cts-1}--\eqref{h-cts-gamma} and $(\u_{f,h}, \u_{p,h}, \bbeta_{p,h}, p_{f,h}, p_{p,h}, \lambda_h)$ be the solution of \eqref{h-weak-1}--\eqref{h-b-gamma}. There exists a constant $C>0$ independent of $h$ such that
\begin{align*} 
& \| \u_f - \u_{f,h} \|^2_{L^2(0,T;W^{1,r}(\O_f))} 
+ \| \u_p - \u_{p,h} \|^2_{L^2(0,T;L^r(\O_p))}
+|(\u_f - \d_t \bbeta_p) - (\u_{f,h} - \d_t \bbeta_{p,h})|^2_{L^2(0,T;BJS)} \\ 
& \qquad
+\|p_f - p_{f,h}\|^{r'}_{L^{r'}(0,T;L^{r'}(\O_f))}+\|p_p - p_{p,h}\|^{r'}_{L^{r'}(0,T;L^{r'}(\O_p))} +\|Q_{\lam,h}\lam - \lam_{h}\|^{r'}_{L^{r'}(0,T;L^{r'}(\Gam_{fp}))}\\
& \qquad
 +\|\bbeta_p - \bbeta_{p,h}\|^2_{L^{\infty}(0,T;H^1(\O_p))} +s_0\|p_{p}-  p_{p,h}\|^2_{L^{\infty}(0,T;L^2(\O_p))}+\|\mathcal{G}(\u,\u_h)\|_{L^1(0,T)}\\
& \quad
\leq C\exp(T)\Big(h^{2k_f}\|\u_f\|^2_{L^2(0,T; W^{k_f+1,r}(\O_f))}+ h^{rk_f}\|\u_f\|^r_{L^r(0,T; W^{k_f+1,r}(\O_f))}  \\
& \qquad
+ h^{2(s_f+1)}\|p_f\|^2_{L^2(0,T; W^{s_f+1,r'}(\O_f))}  + h^{r'(s_f+1)}\|p_f\|^{r'}_{L^{r'}(0,T; W^{s_f+1,r'}(\O_f))} \\
& \qquad
+ h^{r(k_p+1)}\|\u_p\|^r_{L^r(0,T; W^{k_p+1,r}(\O_p))}+h^{r'(s_p+1)}\| p_p\|^{r'}_{L^{r'}(0,T; W^{s_p+1,r'}(\O_p))} \\
& \qquad
+h^{2(s_p+1)}(\|\d_t p_p\|^2_{L^2(0,T; W^{s_p+1,r'}(\O_p))}+\| p_p\|^2_{L^{\infty}(0,T; W^{s_p+1,r'}(\O_p))})  \\
& \qquad
+h^{2k_s} \left(\| \bbeta_p\|^{2}_{L^{2}(0,T; H^{k_s+1}(\O_p))} + \|\d_t \bbeta_p\|^{2}_{L^{2}(0,T; H^{k_s+1}(\O_p))}+  \| \bbeta_p\|^{2}_{L^{\infty}(0,T; H^{k_s+1}(\O_p))}\right) \\
& \qquad
+h^{rk_s}\| \d_t \bbeta_p\|^{r}_{L^{r}(0,T; H^{k_s+1}(\O_p))} + h^{r'(k_p+1)}\|\lam\|^{r'}_{L^{r'}(0,T; W^{k_p+1,r'}(\Gam_{fp}))}	\\
& \qquad
+ h^{2(k_p+1)}(\|\lam\|^2_{L^2(0,T; W^{k_p+1,r'}(\Gam_{fp}))} + \|\d_t\lam\|^2_{L^2(0,T; W^{k_p+1,r'}(\Gam_{fp}))}+ \|\lam\|^2_{L^{\infty}(0,T; W^{k_p+1,r'}(\Gam_{fp}))}) \Big).
\end{align*}
\end{theorem}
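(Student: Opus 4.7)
The plan is to derive error equations by subtracting the semidiscrete formulation \eqref{h-weak-1}--\eqref{h-b-gamma} from the continuous formulation \eqref{h-cts-1}--\eqref{h-cts-gamma}, then split each error as $X-X_h = (X - I_h X) - (X_h - I_h X) =: \rho_X - \phi_{X,h}$, using the composite interpolant $I_h$ of Lemma~\ref{l:interpolation} for the velocity/displacement variables and the $L^2$-projections $Q_{f,h}, Q_{p,h}, Q_{\lambda,h}$ defined in \eqref{fluid-pressure-int}--\eqref{l-multuplier-int} for the pressures and Lagrange multiplier. The key structural feature is that, by construction \eqref{new-operator}, the interpolant $I_h$ satisfies the discrete constraint $b_\Gamma(I_{f,h}\v_f, I_{p,h}\v_p, I_{s,h}\bxi_p;\mu_h)=b_\Gamma(\v_f,\v_p,\bxi_p;\mu_h)$ for all $\mu_h\in\Lambda_h$, so the interface term involving the approximation error of $\lambda$ will reduce to integrals of $\lambda - Q_{\lambda,h}\lambda$ against discrete test functions, which can be absorbed using the orthogonality \eqref{l-multuplier-int}.

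First I would test the error equations with the discrete part of the error: $(\v_f, \v_p, \bxi_p) = (\phi_{\u_f,h}, \phi_{\u_p,h}, \d_t \phi_{\bbeta_p,h})$ in the momentum/Darcy equation and $(w_f, w_p, \mu) = (\phi_{p_f,h}, \phi_{p_p,h}, \phi_{\lambda,h})$ in the mass conservation and interface equations. Adding the resulting identities eliminates all coupling and pressure-type terms between $\phi$ variables, producing on the left-hand side the monotone bilinear combination
\begin{equation*}
\tfrac12\d_t\bigl(a_p^e(\phi_{\bbeta_p,h},\phi_{\bbeta_p,h}) + s_0\|\phi_{p_p,h}\|_{L^2(\O_p)}^2\bigr) + \bigl[a_f(\u_f,\cdot)-a_f(\u_{f,h},\cdot)\bigr] + \bigl[a_p^d(\u_p,\cdot)-a_p^d(\u_{p,h},\cdot)\bigr] + \bigl[a_{BJS}-a_{BJS}\bigr],
\end{equation*}
evaluated on $\phi_{\u_f,h}$, $\phi_{\u_p,h}$, $\phi_{\u_f,h}-\d_t\phi_{\bbeta_p,h}$. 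Splitting each nonlinear difference as $(\cdot)(\u)-(\cdot)(\u_h) = \bigl[(\cdot)(\u)-(\cdot)(I_h\u)\bigr] + \bigl[(\cdot)(I_h\u)-(\cdot)(\u_h)\bigr]$ and applying the monotonicity estimate \eqref{monotonicity} to the second bracket yields, up to $L^r$ differences of $I_h\u$ and $\u_h$, the quantity $\mathcal{G}(\u,\u_h)$ modulo lower-order approximation terms. The right-hand side, carrying the approximation errors $\rho_{\u_f}, \rho_{\u_p}, \rho_{\bbeta_p}, \rho_{p_f}, \rho_{p_p}, \rho_\lambda$, is controlled using the continuity bound \eqref{continuity}, which is where the factor $\mathcal{E}(\u,\u_h)$ enters; since $\mathcal{E}(\u,\u_h)\leq d+1$, it can be absorbed into the constant.

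Next, to recover control of the pressure and Lagrange multiplier norms, I would apply the discrete inf-sup condition \eqref{inf-sup-stokes-darcy-h} to the vector $(\phi_{p_f,h},\phi_{p_p,h},\phi_{\lambda,h})$ using the momentum error equation; this gives an $L^{r'}$ bound for the discrete pressure and multiplier errors in terms of the previously bounded velocity-type norms and data approximation errors. Using Young's inequality \eqref{young} with exponents $r, r'$, these terms will either be absorbed into the left-hand side coercive quantities or integrated against Gronwall-compatible bounds. I would then integrate in time from $0$ to $t\in(0,T]$, use the approximation bounds \eqref{stokesPresProj}--\eqref{LMProj} and Lemma~\ref{l:interpolation}, together with the coercivity bounds in \eqref{a-f-bounds}--\eqref{a-bjs-bounds} and \eqref{ae-bound}, to arrive at the claimed estimate after Gronwall's inequality. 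The initial errors $\phi_{p_p,h}(0)$, $\phi_{\bbeta_p,h}(0)$ are handled by choosing $p_{p,h}(0) = Q_{p,h}p_{p,0}$ and $\bbeta_{p,h}(0) = I_{s,h}\bbeta_{p,0}$.

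The main obstacle will be the Sobolev (non-Hilbert) setting with $1<r<2$: one must carefully match exponents in the continuity bound \eqref{continuity} so that after Young's inequality the $\mathcal{G}(\u,\u_h)$ contribution from the right-hand side (which appears with exponent $1/r'$) can be absorbed into the same quantity on the left, while the remaining $L^r/L^{r'}$ approximation terms produce the asymmetric convergence rates $h^{r k_f}$ vs.\ $h^{2k_f}$ and $h^{r'(s_f+1)}$ vs.\ $h^{2(s_f+1)}$ that appear in the statement. A secondary technical issue is the handling of $\d_t \phi_{\bbeta_p,h}$ against $\lambda$-type interface terms: I would integrate by parts in time so that $\d_t$ falls on the approximation error $\rho_\lambda$, which explains the appearance of $\|\d_t \lambda\|$ and $\|\d_t \bbeta_p\|$ norms in the final bound. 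Identical manipulations with time-integration-by-parts will produce the $\|\d_t p_p\|$ contribution on the right-hand side.
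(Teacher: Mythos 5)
Your proposal captures the paper's overall strategy: deriving error equations by subtraction, using the composite interpolant $I_h$ of Lemma~\ref{l:interpolation} (in particular the interface property \eqref{new-operator}) together with the $L^2$ projections \eqref{fluid-pressure-int}--\eqref{l-multuplier-int}, applying monotonicity/continuity to the nonlinear forms, recovering the pressure and multiplier bounds via the discrete inf-sup condition \eqref{inf-sup-stokes-darcy-h}, integrating by parts in time to move $\d_t$ off the discrete displacement error and onto the approximation errors of $p_p$ and $\lambda$, and closing with Gronwall. The role of the exponents $r$, $r'$ and Young's inequality in producing the asymmetric rates is also correctly identified, as is the choice of $p_{p,h}(0)$, $\bbeta_{p,h}(0)$.

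There is one subtle but substantive deviation. You propose to split the \emph{first argument} of the nonlinear form, $\bigl[a_f(\u_f,\cdot)-a_f(I_h\u_f,\cdot)\bigr]+\bigl[a_f(I_h\u_f,\cdot)-a_f(\u_{f,h},\cdot)\bigr]$, and apply monotonicity \eqref{monotonicity} to the second bracket paired against the discrete error $I_h\u_f-\u_{f,h}$. This yields the coercive quantity with arguments $I_h\u$ and $\u_h$, i.e.\ something like $\mathcal{G}(I_h\u,\u_h)$, \emph{not} $\mathcal{G}(\u,\u_h)$. The theorem statement controls $\mathcal{G}(\u,\u_h)$, and this quantity (rather than $\mathcal{G}(I_h\u,\u_h)$) is also exactly what the continuity bound \eqref{continuity} produces when the inf-sup condition is invoked in the pressure step, so the two must match for the absorption to close. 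There is no triangle inequality readily available for the nonstandard functional $\mathcal{G}$, so ``modulo lower-order approximation terms'' is not an adequate bridge. The paper instead applies monotonicity to the \emph{full} error first, then splits the \emph{second} (test) slot: from $(\G(\D\u_f)-\G(\D\u_{f,h}),\D\u_f-\D\u_{f,h})$ it writes $\D\u_f-\D\u_{f,h}=(\D\u_f-\D\v_{f,h})+(\D\v_{f,h}-\D\u_{f,h})$ with $\v_{f,h}=I_{f,h}\u_f$; the first piece ($J_1$) is bounded by continuity and Young and the second ($J_2$) is what the error equations supply. This ordering delivers $\mathcal{G}(\u,\u_h)$ directly. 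Your argument would work after this correction; everything else is sound.
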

\begin{proof}
  The proof is comprised of four main steps. In \textbf{Step 1}, bounds for
  $\| \u_f - \u_{f,h} \|_{W^{1,r}(\O_f)}$
and  $\| \u_p - \u_{p,h} \|_{L^r(\O_p)}$ are obtained using the the monotonicity
\eqref{monotonicity} and continuity \eqref{continuity} assumptions. 
Bounds for $\|\bbeta_p(t) - \bbeta_{p,h}(t)\|_{H^1(\O_p)}$ and  
$\|p_{p}(t)-  p_{p,h}(t)\|_{L^2(\O_p)}$ are obtained in \textbf{Step 2}.
Using the discrete inf-sup condition \eqref{inf-sup-stokes-darcy-h},
bounds for $\|p_f - p_{f,h}\|_{L^{r'}(\O_f)}$,
$\|p_p - p_{p,h}\|_{L^{r'}(\O_p)}$, and  $\|\lam - \lam_{h}\|_{L^{r'}(\Gam_{fp})}$
are obtained in \textbf{Step 3}. In \textbf{Step 4} we combine the bounds,
apply Gronwall's inequality
and the approximation properties \eqref{stokesPresProj}--\eqref{LMProj} and
\eqref{stokesVel}--\eqref{darcyVel}, to complete the proof.

We note that the discretization error
is bounded in the same spatial norms as in the stability
bound of Theorem~\ref{stability-discrete}. The temporal norms for the
pressures and the Lagrange multiplier are also as in
Theorem~\ref{stability-discrete}. However, due to the use of the
monotonicity \eqref{monotonicity}, the temporal norm for the velocity and
displacement error is $L^2(0,T)$. This is in contrast to the
$L^r(0,T)$ norm in the stability estimate, which used the coercivity
bounds in \eqref{a-f-bounds}--\eqref{a-bjs-bounds}. \\

\textbf{Step 1}. Bounds for $\| \u_f - \u_{f,h} \|_{W^{1,r}(\O_f)}$
and  $\| \u_p - \u_{p,h} \|_{L^r(\O_p)}$. \\

Using \eqref{monotonicity} with $\G(\x)= \nu(\x)\x$, $\s = \D(\u_f)$ 
and  $\t = \D(\u_{f,h})$:
\begin{align}
& C\Big(\frac{\| \D (\u_f) - \D (\u_{f,h}) \|^2_{L^r(\O_f)}}{c + \| \D ( \u_f ) \|^{2-r}_{L^r(\O_f)} + \| \D (\u_{f,h}) \|^{2-r}_{L^r(\O_f)}} \nonumber \\
& \qquad\qquad + (|\nu (\D (\u_f)) \D (\u_f) - \nu (\D (\u_{f,h}))\D(\u_{f,h})|, 
| \D(\u_f) - \D(\u_{f,h})|_{\O_f} \Big) \nonumber\\
& \qquad \leq \left( 2\nu (\D(\u_f)) \D(\u_f) - 2\nu (\D(\u_{f,h})) \D(\u_{f,h}), 
\D(\u_f) - \D(\u_{f,h})\right)_{\O_f}  \label{2nu}\\
& \qquad = \left( 2\nu (\D(\u_f)) \D(\u_f) - 2\nu (\D(\u_{f,h})) \D(\u_{f,h}), 
\D(\u_f) - \D(\v_{f,h})\right)_{\O_f}  \nonumber\\
& \qquad\qquad +\left( 2\nu (\D(\u_f)) \D(\u_f) - 2\nu (\D(\u_{f,h})) \D(\u_{f,h}),
\D(\v_{f,h}) - \D(\u_{f,h})\right)_{\O_f}  \nonumber\\
& \qquad =: J_1 + J_2 ,\qquad \forall \v_{f,h}\in \V_{f,h}, \label{stokes-error-1}
\end{align}
where we used the factor $2\nu$ in \eqref{2nu} in order that the term $J_2$
may be expressed in terms of $a_f(\cdot,\cdot)$.
The term $J_1$ can be bounded using \eqref{continuity} with 
$\s=\D(\u_f),\, \t= \D(\u_{f,h})$ and $\w=\D(\u_f)-\D(\v_{f,h})$: 
\begin{align}
J_1 & \leq C\left(| \nu (\D(\u_f)) \D(\u_f ) - \nu (\D(u_{f,h})) \D (\u_{f,h}) |, 
|\D(\u_f) - \D(\u_{f,h}) |\right)_{\O_f}^{1/r'} \nonumber \\
& \qquad
\times\norm{ \frac{| \D(\u_f) - \D (\u_{f,h}) |}{c + | \D(\u_f) | + | \D (\u_{f,h})| }}^{\frac{2-r}{r}}_{L^\infty(\O_f)} 
\left\|\D (\u_f) - \D (\v_{f,h})\right\|_{L^r(\O_f)} \nonumber \\
& \leq \epsilon (\left| \nu (\D(\u_f)) \D(\u_f) - \nu (\D (\u_{f,h})) \D (\u_{f,h}) \right|, \left| \D (\u_f ) - \D(\u_{f,h} ) \right|)_{\O_f} \nonumber \\
& \qquad + C \left\|  \frac{| \D(\u_f) - \D (\u_{f,h} ) |}{c + | \D (\u_f ) | + | \D (\u_{f,h}) | }\right\|^{2-r}_{L^\infty(\O_f)} \| \D(\u_f ) - \D(\v_{f,h} ) \|^r_{L^r(\O_f)}, \label{stokes-error-2}
\end{align}
where we used Young's inequality \eqref{young}.
We choose $\epsilon$ small enough and combine 
\eqref{stokes-error-1}--\eqref{stokes-error-2} to obtain
\begin{align} 
& \frac{\| \D (\u_f) - \D (\u_{f,h}) \|^2_{L^r(\O_f)}}{c + \| \D ( \u_f ) \|^{2-r}_{L^r(\O_f)} + \| \D (\u_{f,h}) \|^{2-r}_{L^r(\O_f)}} 
+ (| \nu (\D (\u_f)) \D (\u_f) - \nu (\D (\u_{f,h}))\D (\u_{f,h})|, | \D(\u_f) - \D(\u_{f,h}) |)_{\O_f} \nonumber \\
& \qquad\qquad \leq C \left(\left\| \frac{| \D(\u_f ) - \D (\u_{f,h} ) |}{c + | \D (\u_f ) | + | \D (\u_{f,h} ) |}\right\|^{2-r}_{L^\infty(\O_f)} \| \D(\u_f) - \D(\v_{f,h} ) \|^r_{L^r(\O_f)} + J_2\right). \label{stokes-error-3}
\end{align}
Similarly, to bound the error in the Darcy velocity we use
\eqref{monotonicity} and \eqref{continuity} with $\G(\x)=
\nu_{eff}(\x)\x$, $\s=\u_p,\, \t=\u_{p,h}$, and $ \w = \u_p-\v_{p,h},\,
\v_{p,h}\in \V_{p,h}$, to obtain
\begin{align} 
& \frac{\| \u_p - \u_{p,h} \|^2_{L^r(\O_p)}}{c + \| \u_p \|^{2-r}_{L^r(\O_p)} + \| \u_{p,h} \|^{2-r}_{L^r(\O_p)}} + 
(|\nu_{eff} (\u_p ) \u_p - \nu_{eff} (\u_{p,h} ) \u_{p,h}|, |\u_p - \u_{p,h}|)_{\O_p} 
\nonumber \\
& \qquad\qquad \leq C\left( \norm {\frac{| \u_p - \u_{p,h} | }{c + | \u_p | 
+ | \u_{p,h} | }}_{L^\infty(\O_p)}^{2-r}  \norm{ \u_p - \v_{p,h}}^r_{L^r(\O_p)} + J_4\right), \label{darcy-error-1}
\end{align}
where 
$$
J_4 := (\nu_{eff} (\u_p ) \kappa^{-1}\u_p - \nu_{eff} (\u_{p,h}) \kappa^{-1}\u_{p,h},
\v_{p,h} - \u_{p,h})_{\O_p}.
$$
The factor $\kappa^{-1}$ is introduced in the definition of $J_4$ in 
order that it
may be expressed in terms of $a_p^d(\cdot,\cdot)$.
Similarly, to bound the terms coming from the BJS condition, we set in 
\eqref{monotonicity} and \eqref{continuity},
$\G(\x)=\nu_{I}(\x)\x$,
$\s=((\u_f-\d_t\bbeta_p)\cdot \t_{f,j})\t_{f,j},\,
\t=((\u_{f,h}-\d_t\bbeta_{p,h})\cdot \t_{f,j})\t_{f,j}$ and $ \w =
((\u_f-\d_t\bbeta_p)\cdot
\t_{f,j})\t_{f,j}-((\v_{f,h}-\bxi_{p,h})\cdot \t_{f,j})\t_{f,j},\,
\v_{f,h}\in \V_{f,h},\, \bxi_{p,h}\in \X_{p,h}$, to obtain
\begin{align} 
& C\sum_{j=1}^{d-1}\frac{\| (\u_f - \d_t \bbeta_p)\cdot \t_{f,j} - 
(\u_{f,h} - \d_t \bbeta_{p,h})\cdot \t_{f,j} \|^2_{L^r(\Gamma_{fp})}}
{c + \| (\u_f - \d_t \bbeta_p)\cdot \t_{f,j} \|^{2-r}_{L^r(\Gamma_{fp)} } 
+ \| (\u_{f,h} - \d_t \bbeta_{p,h})\cdot \t_{f,j} \|_{L^r(\Gamma_{fp})}^{2-r}} 
\nonumber \\
& \qquad + C\sum_{j=1}^{d-1}\alpha_{BJS}
\<| \nu_{I} (((\u_f - \d_t \bbeta_p)\cdot\t_{f,j})\t_{f,j}) ((\u_f - \d_t \bbeta_p)\cdot\t_{f,j})\t_{f,j} \nonumber \\
& \qquad\qquad\qquad - \nu_{I} (((\u_{f,h} - \d_t \bbeta_{p,h})\cdot\t_{f,j})\t_{f,j}) ((\u_{f,h} - \d_t \bbeta_{p,h})\cdot\t_{f,j})\t_{f,j} |, \nonumber \\
& \qquad\qquad\qquad |   ((\u_f - \d_t \bbeta_p)\cdot\t_{f,j})\t_{f,j} - ((\u_{f,h} - \d_t \bbeta_{p,h})\cdot\t_{f,j})\t_{f,j} | \>_{\Gamma_{fp}} \nonumber \\
& \quad \leq\sum_{j=1}^{d-1}\norm{\frac{ | (\u_f - \d_t \bbeta_p)\cdot\t_{f,j} - (\u_{f,h} - \d_t \bbeta_{p,h})\cdot \t_{f,j} | }{c + | (\u_f - \d_t \bbeta_p)\cdot \t_{f,j} |  +  | (\u_{f,h} - \d_t \bbeta_{p,h})\cdot \t_{f,j}  | } }^{2-r}_{L^\infty(\Gamma_{fp})} \nonumber \\
& \qquad\qquad \times \| (\u_f - \d_t \bbeta_p)\cdot \t_{f,j} - (\v_{f,h} -  \bxi_{p,h})\cdot \t_{f,j} \|^r_{L^r(\Gamma_{fp})} + J_6, \label{bjs-error-1}
\end{align}
where
\begin{align*}
 J_6 &:= \sum_{j=1}^{d-1}\alpha_{BJS}\<\sqrt{\kappa^{-1}}(\nu_{I} (((\u_f-\d_t \bbeta_p )\cdot \t_{f,j})\t_{f,j}) (\u_f-\d_t \bbeta_p )\cdot \t_{f,j}  \\
&- \nu_{I} (((\u_{f,h}-\d_t \bbeta_{p,h} )\cdot{\t_{f,j}})\t_{f,j}) (\u_{f,h}-\d_t \bbeta_{p,h} )\cdot \t_{f,j}) ,  
(\v_{f,h}-\bxi_{p,h})\cdot \t_{f,j} - (\u_{f,h}-\d_t \bbeta_{p,h} )\cdot \t_{f,j}\>_{\Gamma_{fp}}.
\end{align*}
Combining \eqref{stokes-error-3}--\eqref{bjs-error-1} together with
the regularity of the solution from Theorems \ref{th-exist-uniq-weak-2}
and \ref{well-pos}, we obtain
\begin{align}
& \| \u_f - \u_{f,h} \|^2_{W^{1,r}(\O_f)} + \| \u_p - \u_{p,h} \|^2_{L^r(\O_p)}
+ | (\u_f - \d_t \bbeta_p)
- (\u_{f,h} - \d_t \bbeta_{p,h})|_{BJS}^2 + \mathcal{G} (\u, \u_h)
\label{J246} \\
& \quad 
\le C \left(\mathcal{E} (\u, \u_h)^r(\| \u_f - \v_{f,h} \|^r_{W^{1,r}(\O_f)}
+ \|\u_p - \v_{p,h}\|^r_{L^r(\O_p)} 
+ \|\d_t \bbeta_p - \bxi_{p,h}\|_{H^1(\O_p)}^r) + J_2+J_4+J_6\right), \nonumber
\end{align}
where we used the trace inequality. To bound the last three terms above,
note that 
\begin{align*}
J_2 = a_f(\u_f, \v_{f,h} - \u_{f,h}) - a_f (\u_{f,h}, \v_{f,h} - \u_{f,h} ), \qquad J_4 = a_p^d (\u_p, \v_{p,h} - \u_{p,h}) - a_p^d (\u_{p,h}, \v_{p,h} - \u_{p,h}), \\
J_6 = a_{BJS} (\u_f, \d_t \bbeta_p; \v_{f,h} - \u_{f,h},  \bxi_{p,h} -\d_t \bbeta_{p,h}) -  a_{BJS} (\u_{f,h}, \d_t \bbeta_{p,h}; \v_{f,h} - \u_{f,h}, \bxi_{p,h} -\d_t  \bbeta_{p,h}).
\end{align*}

\textbf{Step 2}. Bounds for $\|\bbeta_p(t) - \bbeta_{p,h}(t)\|_{H^1(\O_p)}$ and  
$\|p_{p}(t)-  p_{p,h}(t)\|_{L^2(\O_p)}$.  \\

We subtract \eqref{h-weak-1} from \eqref{h-cts-1} and test with 
$ (\v_{f,h} - \u_{f,h}, \v_{p,h} - \u_{p,h}, \bxi_{p,h} -  \d_t \bbeta_{p,h})$, to
obtain
\begin{align}
& J_2 + J_4 + J_6 = a_p^e(\bbeta_{p,h} - \bbeta_p,  \bxi_{p,h} - \d_t\bbeta_{p,h})+ b_f (\v_{f,h} - \u_{f,h}, p_{f,h} - p_f)+ \alpha b_p ( \bxi_{p,h}  - \d_t\bbeta_{p,h}, p_{p,h} - p_p)  \nonumber \\
& \qquad    + b_p (\v_{p,h} - \u_{p,h}, p_{p,h} - p_p)
   + b_\Gamma ( \v_{f,h} - \u_{f,h}, \v_{p,h} - \u_{p,h}, \bxi_{p,h}  -\d_t \bbeta_{p,h}; \lambda_h  - \lambda )  \nonumber \\
& \quad  = a_p^e(\bbeta_{p,h} - \bbeta_p,  \bxi_{p,h} - \d_t\bbeta_{p})+a_p^e(\bbeta_{p,h} - \bbeta_p,  \d_t\bbeta_{p} - \d_t\bbeta_{p,h})  + b_f (\v_{f,h} - \u_{f,h}, p_{f,h} - Q_{f,h}p_f) \nonumber \\
& \qquad  + b_f (\v_{f,h} - \u_{f,h}, Q_{f,h}p_{f} - p_{f})+ \alpha b_p ( \bxi_{p,h}  - \d_t\bbeta_{p,h}, p_{p,h} - Q_{p,h}p_p)+ \alpha b_p ( \bxi_{p,h}  - \d_t\bbeta_{p,h}, Q_{p,h}p_p - p_{p}) \nonumber \\
& \qquad + b_p (\v_{p,h} - \u_{p,h}, p_{p,h} - Q_{p,h}p_p)  + b_p (\v_{p,h} - \u_{p,h}, Q_{p,h}p_p - p_p) \nonumber \\
& \qquad + b_\Gamma ( \v_{f,h} - \u_{f,h}, \v_{p,h} - \u_{p,h}, \bxi_{p,h}  -\d_t \bbeta_{p,h}; \lambda_h  - Q_{\lam,h}\lambda ) \nonumber \\
& \qquad + b_\Gamma ( \v_{f,h} - \u_{f,h}, \v_{p,h} - \u_{p,h}, \bxi_{p,h}  -\d_t \bbeta_{p,h}; Q_{\lam,h}\lambda  - \lambda ).\label{error-1}
 \end{align}
Since $\nabla \cdot \V_{p,h}=W_{p,h}$ and $\V_{p,h}\cdot
\n_p|_{\Gam_{fp}}=\Lambda_h$, \eqref{darcy-pressure-int} and 
\eqref{l-multuplier-int} imply that 
\begin{align*}
b_p (\v_{p,h} - \u_{p,h}, Q_{p,h}p_p - p_p)= 0, \quad 
b_{\Gam}(0,\v_{p,h}-\u_{p,h},0;Q_{\lam,h}\lam-\lam) =0.
\end{align*}
Now we take 
$
 (\v_{f,h}, \v_{p,h}, \bxi_{p,h}) = (I_{f,h}\u_{f}, I_{p,h}\u_{p}, I_{s,h}\d_t\bbeta_{p})$.
Then \eqref{error-1} can be written as follows:
\begin{multline}
J_2 + J_4 + J_6 + a_p^e(\bbeta_p - \bbeta_{p,h},  \d_t\bbeta_{p} - \d_t\bbeta_{p,h})
= a_p^e(\bbeta_{p,h} - \bbeta_{p},  I_{s,h}\d_t\bbeta_{p} - \d_t\bbeta_{p})   \\
+ b_f (I_{f,h}\u_{f} - \u_{f,h}, p_{f,h} - Q_{f,h}p_f)+ b_f (I_{f,h}\u_{f} - \u_{f,h}, Q_{f,h}p_{f} - p_{f})+ \alpha b_p ( I_{s,h}\d_t\bbeta_{p}  - \d_t\bbeta_{p,h}, p_{p,h} - Q_{p,h}p_p)  \\
+ \alpha b_p (I_{s,h}\d_t\bbeta_{p}  - \d_t\bbeta_{p,h}, Q_{p,h}p_p - p_{p})
+ b_\Gamma ( I_{f,h}\u_{f} - \u_{f,h}, I_{p,h}\u_{p} - \u_{p,h}, I_{s,h}\d_t\bbeta_{p}  -\d_t \bbeta_{p,h}; \lambda_h  - Q_{\lam,h}\lambda ) \\
+ b_\Gamma ( I_{f,h}\u_{f} - \u_{f,h}, 0, I_{s,h}\d_t\bbeta_{p}  -\d_t \bbeta_{p,h}; Q_{\lam,h}\lambda  - \lambda )+ b_p (I_{p,h}\u_{p} - \u_{p,h}, p_{p,h} - Q_{p,h}p_p).\label{error-2}
\end{multline}
Note that due to \eqref{h-b-gamma} and \eqref{new-operator}, we have
\begin{align}
b_\Gamma ( I_{f,h}\u_{f} - \u_{f,h}, I_{p,h}\u_{p} - \u_{p,h}, I_{s,h}\d_t\bbeta_{p}  -\d_t \bbeta_{p,h}; \lambda_h  - Q_{\lam,h}\lambda ) = 0.
\label{error-3}
\end{align}
We next subtract \eqref{h-weak-2} from \eqref{h-cts-2} with the choice
$(w_{f,h},w_{p,h}) = (Q_{f,h}p_{f}-p_{f,h},Q_{p,h}p_{p}-p_{p,h})$:
\begin{align}
& s_0 (\d_t p_{p}-Q_{p,h}\d_tp_p,Q_{p,h}p_{p}-p_{p,h})_{\O_p} +s_0 (Q_{p,h}\d_tp_p -\d_tp_{p,h},Q_{p,h}p_{p}-p_{p,h})_{\O_p} \nonumber \\ 
& \qquad - \alpha b_p(\d_t\bbeta_{p}-I_{s,h}\d_t \bbeta_{p} ,Q_{p,h}p_{p}-p_{p,h}) -\alpha b_p(I_{s,h}\d_t \bbeta_{p} -\d_t\bbeta_{p,h} ,Q_{p,h}p_{p}-p_{p,h}) \nonumber \\
& \qquad - b_p(\u_{p}-I_{p,h}\u_p, Q_{p,h}p_{p}-p_{p,h})-  b_p(I_{p,h}\u_p-\u_{p,h}, Q_{p,h}p_{p}-p_{p,h}) \nonumber \\
& \qquad - b_f(\u_{f}-I_{f,h}\u_f,Q_{f,h}p_{f}-p_{f,h})- b_f(I_{f,h}\u_f-\u_{f,h},Q_{f,h}p_{f}-p_{f,h} )=0. \label{error-4}
\end{align}
By \eqref{darcy-pressure-int} and \eqref{darcy-velocity-int2}, we have
\begin{align*}
s_0 (\d_t p_{p}-Q_{p,h}\d_tp_p,Q_{p,h}p_{p}-p_{p,h})_{\O_p} = 0, \quad 
b_p(\u_{p}-I_{p,h}\u_p, Q_{p,h}p_{p}-p_{p,h})=0.
\end{align*}
Then \eqref{error-4} becomes
\begin{align}
& s_0 (Q_{p,h}\d_tp_p -\d_tp_{p,h},Q_{p,h}p_{p}-p_{p,h})_{\O_p}
= \alpha b_p(\d_t\bbeta_{p}-I_{s,h}\d_t \bbeta_{p},Q_{p,h}p_{p}-p_{p,h}) 
\nonumber \\
& \qquad 
+ \alpha b_p(I_{s,h}\d_t \bbeta_{p} -\d_t\bbeta_{p,h} ,Q_{p,h}p_{p}-p_{p,h}) 
+ b_p(I_{p,h}\u_p-\u_{p,h}, Q_{p,h}p_{p}-p_{p,h})
\nonumber \\
& \qquad + b_f(\u_{f}-I_{f,h}\u_f,Q_{f,h}p_{f}-p_{f,h}) 
+ b_f(I_{f,h}\u_f-\u_{f,h},Q_{f,h}p_{f}-p_{f,h} ).\label{error-5}
\end{align}
We now combine \eqref{error-2}, \eqref{error-3}, and \eqref{error-5}, to obtain
\begin{align}
& J_2 + J_4 + J_6 + a_p^e(\bbeta_{p,h} - \bbeta_{p},  \d_t\bbeta_{p,h} - \d_t\bbeta_{p})+s_0 (Q_{p,h}\d_tp_p -\d_tp_{p,h},Q_{p,h}p_{p}-p_{p,h})_{\O_p}\nonumber \\
& \quad = a_p^e(\bbeta_{p,h} - \bbeta_{p},  I_{s,h}\d_t\bbeta_{p} - \d_t\bbeta_{p}) + b_f(\u_{f}-I_{f,h}\u_f,Q_{f,h}p_{f}-p_{f,h})
+ b_f (I_{f,h}\u_{f} - \u_{f,h}, Q_{f,h}p_{f} - p_{f})  \nonumber \\
& \quad \qquad + \alpha b_p (I_{s,h}\d_t\bbeta_{p}  - \d_t\bbeta_{p}, Q_{p,h}p_p - p_{p,h})+ \alpha b_p ( I_{s,h}\d_t\bbeta_{p}  - \d_t\bbeta_{p,h}, Q_{p,h} p_{p} - p_p) \nonumber \\
& \quad \qquad 
+ \<(I_{f,h}\u_{f}- \u_{f,h})\cdot\n_f,Q_{\lam,h}\lambda  - \lambda\>_{\Gamma_{fp}}
+ \<(I_{s,h}\d_t\bbeta_{p}  -\d_t \bbeta_{p,h})\cdot\n_p,Q_{\lam,h}\lambda  - \lambda\>_{\Gamma_{fp}}. \label{error-6}
\end{align}
We next bound the first four and the sixth terms of the right using
Young's inequality \eqref{young}. We note that the velocity and
displacement errors are controlled in $L^2(0,T)$, so the terms
involving such errors are bounded using \eqref{young} with $p = q = 2$.
The pressure and Lagrange multiplier errors are controlled in $L^{r'}(0,T)$,
so for such terms we use \eqref{young} with $p = r'$ and $q = r$. We have
 \begin{align}
& a_p^e(\bbeta_{p,h} - \bbeta_{p},  I_{s,h}\d_t\bbeta_{p} - \d_t\bbeta_{p}) 
\le C(\| \bbeta_{p,h} - \bbeta_{p} \|^2_{H^1(\O_p)} 
   + \| I_{s,h}\d_t\bbeta_{p}-\d_t \bbeta_p \|^2_{H^1(\O_p)}), \nonumber \\
& b_f(\u_{f}-I_{f,h}\u_f,Q_{f,h}p_{f}-p_{f,h}) 
\le \epsilon_1 \|p_{f,h}-Q_{f,h}p_f\|^{r'}_{L^{r'}(\O_f)}
+ C \|I_{f,h}\u_f-\u_f\|^r_{W^{1,r}(\O_f)}, \nonumber \\
& b_f (I_{f,h}\u_{f} - \u_{f,h}, Q_{f,h}p_{f} - p_{f}) \le
\epsilon_2 \| \u_{f} - \u_{f,h} \|^2_{W^{1,r}(\O_f)} \nonumber \\
& \qquad\qquad\qquad\qquad\qquad\qquad\qquad\quad
+ C(\|I_{f,h}\u_f-\u_f\|^2_{W^{1,r}(\O_f)} + \| Q_{f,h}p_f  - p_f \|^{2}_{L^{r'}(\O_f)}),
\nonumber \\
& \alpha b_p (I_{s,h}\d_t\bbeta_{p}  - \d_t\bbeta_{p}, Q_{p,h}p_p - p_{p,h})  \le
\epsilon_1 \|p_{p,h} - Q_{p,h}p_p\|^{r'}_{L^{r'}(\O_p)} 
+ C \|I_{s,h}\d_t\bbeta_{p}  - \d_t\bbeta_{p}\|^{r}_{H^1(\O_p)}, \nonumber \\
& \<(I_{f,h}\u_{f}- \u_{f,h})\cdot\n_f,Q_{\lam,h}\lambda  - \lambda\>_{\Gamma_{fp}} \le
\epsilon_2 \| \u_{f} - \u_{f,h} \|^2_{W^{1,r}(\O_f)} \nonumber \\
& \qquad\qquad\qquad\qquad\qquad\qquad\qquad\qquad\quad
+ C(\|I_{f,h}\u_f-\u_f\|^2_{W^{1,r}(\O_f)} + \|Q_{\lam,h}\lambda  - \lambda\|^2_{L^{r'}(\Gamma_{fp})}).
\label{error-7}
\end{align}
We combine \eqref{error-6} and \eqref{error-7} and integrate in time from 
$0$ to $t\in (0,T]$:
\begin{align}
 & \frac{1}{2}\left( a_p^e(\bbeta_p(t) - \bbeta_{p,h}(t), \bbeta_{p}(t) - \bbeta_{p,h}(t)) +s_0\|Q_{p,h}p_{p}(t)-  p_{p,h}(t)\|^2_{L^2(\O_p)} \right) 
 +\int_0^t\left(J_2 + J_4 + J_6\right)\, ds \nonumber \\
 & \quad \leq \int_0^t\left(\epsilon_1 \|p_{f,h}-Q_{f,h}p_f\|^{r'}_{L^{r'}(\O_f)}
 + \epsilon_1 \|p_{p,h} - Q_{p,h}p_p\|^{r'}_{L^{r'}(\O_p)}
+\epsilon_2\| \u_f - \u_{f,h} \|^2_{W^{1,r}(\O_f)}\right)\,ds \nonumber \\
& \qquad 
+ \frac{1}{2}\left( a_p^e(\bbeta_p(0) - \bbeta_{p,h}(0), \bbeta_{p}(0) - \bbeta_{p,h}(0)) +s_0\|Q_{p,h}p_{p}(0)-  p_{p,h}(0)\|^2_{L^2(\O_p)} \right) \nonumber \\
& \qquad
 +C \int_0^t \Big( \| \bbeta_{p,h} - \bbeta_p \|^2_{H^1(\O_p)} + \| I_{s,h}\d_t\bbeta_{p} -\d_t \bbeta_p \|^2_{H^1(\O_p)}+ \| I_{s,h}\d_t\bbeta_{p} -\d_t \bbeta_p \|^r_{H^1(\O_p)} \nonumber \\
& \qquad
 +  \| Q_{f,h}p_f  - p_f \|^{2}_{L^{r'}(\O_f)} 
+ \|Q_{\lam,h}\lambda  - \lambda\|^2_{L^{r'}(\Gamma_{fp})}
+\|I_{f,h}\u_{f}-\u_f\|^2_{W^{1,r}(\O_f)} +\|I_{f,h}\u_{f}-\u_f\|^r_{W^{1,r}(\O_f)} 
\Big) ds \nonumber \\
& \qquad
 +\int_0^t \left(\alpha b_p ( I_{s,h}\d_t\bbeta_{p} - \d_t\bbeta_{p,h}, Q_{p,h}p_p - p_{p}) 
+ \<(I_{s,h}\d_t\bbeta_{p}  -\d_t \bbeta_{p,h})\cdot\n_p,Q_{\lam,h}\lambda  - \lambda\>_{\Gamma_{fp}} \right) ds. \label{error-9}
 \end{align}
For the last two terms on the right hand side we use integration by parts:
 \begin{align}
& \int_0^t \left(\alpha b_p ( I_{s,h}\d_t\bbeta_{p}  - \d_t\bbeta_{p,h}, Q_{p,h}p_p - p_{p}) 
+ \<(I_{s,h}\d_t\bbeta_{p}  -\d_t \bbeta_{p,h})\cdot\n_p,Q_{\lam,h}\lambda  - \lambda\>_{\Gamma_{fp}} \right) ds \nonumber \\
& \quad =  \alpha b_p ( I_{s,h}\bbeta_{p}  - \bbeta_{p,h}, Q_{p,h}p_p - p_{p})\Big|_{0}^{t} 
 + \<(I_{s,h}\bbeta_{p}  - \bbeta_{p,h})\cdot\n_p,Q_{\lam,h}\lambda  - \lambda\>_{\Gamma_{fp}} \Big|_{0}^{t}\nonumber \\
& \qquad
  -\int_0^t \left(\alpha b_p (I_{s,h}\bbeta_{p}  - \bbeta_{p,h}, Q_{p,h}\d_tp_p - \d_tp_{p}) 
 + \<(I_{s,h}\bbeta_{p} - \bbeta_{p,h})\cdot\n_p,Q_{\lam,h}\d_t\lambda  - \d_t\lambda\>_{\Gamma_{fp}} \right) ds \label{error-9-10}
\end{align}
and bound the terms on the right hand side above as follows:
\begin{align}
& \alpha b_p ( I_{s,h}\bbeta_{p}  - \bbeta_{p,h}, Q_{p,h}p_p - p_{p})\Big|_{0}^{t} 
 + \<(I_{s,h}\bbeta_{p}  - \bbeta_{p,h})\cdot\n_p,Q_{\lam,h}\lambda  - \lambda\>_{\Gamma_{fp}} \Big|_{0}^{t} 
\leq \epsilon_2 \| \bbeta_{p}(t)  - \bbeta_{p,h}(t)\|^2_{H^1(\O_p)}
\nonumber \\
& \qquad +C\left(\|I_{s,h}\bbeta_{p}(t) - \bbeta_p(t)\|^2_{H^1(\O_p)}
+ \|Q_{p,h}p_p(t) - p_{p}(t)\|^2_{L^{r'}(\O_p)}
+ \|Q_{\lam,h}\lam(t) -\lam(t)\|^2_{L^{r'}(\Gam_{fp})}\right. \nonumber \\
 & \qquad +\left.
  \| I_{s,h}\bbeta_{p}(0)  - \bbeta_{p,h}(0)\|^2_{H^1(\O_p)}
+ \| Q_{p,h}p_p(0) - p_{p}(0)\|^2_{L^{r'}(\O_p)} 
+ \|Q_{\lam,h}\lam(0) -\lam(0)\|^2_{L^{r'}(\Gam_{fp})}\right),
\label{error-10-1}
\end{align}
\begin{align}
& \int_0^t \left(\alpha b_p (I_{s,h}\bbeta_{p}  - \bbeta_{p,h}, Q_{p,h}\d_tp_p - \d_tp_{p}) 
 + \<(I_{s,h}\bbeta_{p} - \bbeta_{p,h})\cdot\n_p,Q_{\lam,h}\d_t\lambda  - \d_t\lambda\>_{\Gamma_{fp}} \right) ds \nonumber \\
& \qquad \le 
C \int_0^t \left( \| \bbeta_{p}  - \bbeta_{p,h}\|^2_{H^1(\O_p)} 
+ \|I_{s,h}\bbeta_{p} - \bbeta_p\|^2_{H^1(\O_p)} \right. \nonumber \\
& \qquad\qquad\qquad \left.
+ \|Q_{p,h}\d_tp_p - \d_t p_{p}\|^2_{L^{r'}(\O_p)}
+ \|Q_{\lam,h}\d_t \lam -\d_t \lam\|^2_{L^{r'}(\Gam_{fp})}   \right) ds.
\label{error-10-2}
 \end{align}
We choose $p_{p,h}(0) = Q_{p,h}p_p(0),\, \bbeta_{p,h}(0) =I_{s,h}\bbeta_{p}(0)$.
Combining \eqref{error-9}--\eqref{error-10-2}, we obtain
\begin{align}
& \|\bbeta_p(t) - \bbeta_{p,h}(t)\|^2_{H^1(\O_p)} 
+  s_0\|Q_{p,h}p_{p}(t)-  p_{p,h}(t)\|^2_{L^2(\O_p)} 
 +\int_0^t\left(J_2 + J_4 + J_6\right) ds \nonumber \\
& \quad \leq \epsilon_2\left( \| \bbeta_{p}(t)  - \bbeta_{p,h}(t)\|^2_{H^1(\O_p)} 
+\int_0^t\  \| \u_f - \u_{f,h} \|^2_{W^{1,r}(\O_f)} \right) 
+ C\int_0^t \| \bbeta_p - \bbeta_{p,h} \|^2_{H^1(\O_p)} ds
\nonumber \\
& \qquad +\epsilon_1\int_0^t\left(\|p_{f,h}-Q_{f,h}p_f\|^{r'}_{L^{r'}(\O_f)} +\|p_{p,h} - Q_{p,h}p_p\|^{r'}_{L^{r'}(\O_p)}\right) ds \nonumber \\ 
& \qquad  + C\int_0^t \Big( 
\|I_{s,h}\bbeta_{p} - \bbeta_p\|^2_{H^1(\O_p)}  
+ \| I_{s,h}\d_t\bbeta_{p} -\d_t \bbeta_p \|^2_{H^1(\O_p)}
+ \| I_{s,h}\d_t\bbeta_{p} -\d_t \bbeta_p \|^r_{H^1(\O_p)} \nonumber \\
& \qquad  
+ \|Q_{\lam,h} \lam - \lam\|^2_{L^{r'}(\Gam_{fp})} 
 +\|Q_{p,h}\d_tp_p - \d_t p_{p}\|^2_{L^{r'}(\O_p)}+ \|Q_{\lam,h}\d_t \lam -\d_t \lam\|^2_{L^{r'}(\Gam_{fp})}  \nonumber \\
& \qquad  +  \| Q_{f,h}p_f  - p_f \|^{2}_{L^{r'}(\O_f)} +\|I_{f,h}\u_{f}-\u_f\|^2_{W^{1,r}(\O_f)} +\|I_{f,h}\u_{f}-\u_f\|^r_{W^{1,r}(\O_f)} \Big)\, ds \nonumber \\
& \qquad  + C \left(\|I_{s,h}\bbeta_{p}(t) - \bbeta_p(t)\|^2_{H^1(\O_p)}
+ \|Q_{p,h}p_p(t) - p_{p}(t)\|^2_{L^{r'}(\O_p)}
+ \|Q_{\lam,h}\lam(t) -\lam(t)\|^2_{L^{r'}(\Gam_{fp})}\right. \nonumber \\
& \qquad + \left. 
\|I_{s,h}\bbeta_{p}(0) - \bbeta_p(0)\|^2_{H^1(\O_p)} 
+\|Q_{p,h}p_p(0)-p(0) \|^2_{L^{r'}(\O_p)} 
+\|Q_{\lam,h}\lam(0) -\lam(0)\|^2_{L^{r'}(\Gam_{fp})} \right).\label{error-11}
\end{align}

\textbf{Step 3}. Bounds for $\|p_f - p_{f,h}\|_{L^{r'}(\O_f)}$,
$\|p_p - p_{p,h}\|_{L^{r'}(\O_p)}$ and  $\|\lam - \lam_{h}\|_{L^{r'}(\Gam_{fp})}$. \\

Next, using the inf-sup condition \eqref{inf-sup-stokes-darcy-h}, we obtain 
\begin{align*}
&\| (p_{f,h} - Q_{f,h}p_f,p_{p,h} - Q_{p,h}p_p,\lambda_h - Q_{\lam,h}\lambda)\|_{W_f\times W_p \times \Lambda_h} 
\nonumber \\
& \ \leq C \sup_{(\v_{f,h},\v_{p,h}) \in \V_{f,h}\times \V_{p,h}} \frac{b_f(\v_{f,h}, p_{f,h} -Q_{f,h}p_f) + b_p (\v_{p,h}, p_{p,h} - Q_{p,h}p_p ) + b_{\Gamma} (\v_{f,h}, \v_{p,h}, \textbf{0}; \lambda_h - Q_{\lam,h}\lam)}{\|(\v_{f,h},\v_{p,h})\|_{\V_f\times \V_p}} 
\nonumber \\
& \ =C \sup_{(\v_{f,h},\v_{p,h}) \in \V_{f,h}\times \V_{p,h}} - \Big[ \frac{a_f(\u_{f,h} \v_{f,h}) - a_f(\u_f, \v_{f,h})}{\|(\v_{f,h},\v_{p,h})\|_{\V_f\times \V_p}} + \frac{a_p^d (\u_{p,h}, \v_{p,h} ) - a_p^d(\u_p, \v_{p,h})}{\|(\v_{f,h},\v_{p,h})\|_{\V_f\times \V_p}} 
\nonumber \\
& \quad \quad + \frac{a_{BJS}(\u_{f,h}, \d_t \bbeta_{p,h}; \v_{f,h}, \textbf{0}) - a_{BJS} (\u_f, \d_t \bbeta_p; \v_{f,h}, \textbf{0})}{\|(\v_{f,h},\v_{p,h})\|_{\V_f\times \V_p}} 
\nonumber \\
& \quad \quad + \frac{b_f(\v_{f,h}, Q_{f,h}p_f-p_f) + b_p (\v_{p,h},Q_{p,h}p_p-p_p ) + b_{\Gamma} (\v_{f,h}, \v_{p,h}, \textbf{0};  Q_{\lam,h}\lam -\lam)}{\|(\v_{f,h},\v_{p,h})\|_{\V_f\times \V_p}}\Big]
\nonumber \\
& \  \leq C \left(\mathcal{E}(\u, \u_h) \mathcal{G}(\u, \u_h)^{1/r'} + \|Q_{f,h}p_f-p_f\|_{L^{r'}(\O_f)}+ \|Q_{p,h}p_p-p_p\|_{L^{r'}(\O_p)}+\|Q_{\lam,h}\lam-\lam\|_{L^{r'}(\Gam_{fp})}\right),  
\end{align*}
using \eqref{continuity} for the last inequality. 
Hence, as $\mathcal{E}(\u, \u_h) \le (d + 1)$,
\begin{align}
& \epsilon_1\int_0^t\left(
\|p_{f,h}-Q_{f,h}p_f\|^{r'}_{L^{r'}(\O_f)}
+ \|p_{p,h} - Q_{p,h}p_p\|^{r'}_{L^{r'}(\O_p)} 
+ \|\lambda_h - Q_{\lam,h}\lambda\|^{r'}_{L^{r'}(\Gam_{fp})}
\right)\, \nonumber \\
& \leq \epsilon_1C \int_0^t \left( \mathcal{G}(\u, \u_h) + \|Q_{f,h}p_f-p_f\|^{r'}_{L^{r'}(\O_f)}+ \|Q_{p,h}p_p-p_p\|^{r'}_{L^{r'}(\O_p)}+\|Q_{\lam,h}\lam-\lam\|^{r'}_{L^{r'}(\Gam_{fp})}\right) ds .\label{error-15}
\end{align}

\textbf{Step 4}. Combination of the bounds. \\

We now integrate \eqref{J246} in time, combine it with
\eqref{error-11} and \eqref{error-15}, take $\epsilon_1$ small
enough, then $\epsilon_2$ small enough, and apply Gronwall's
inequality, to obtain
\begin{align*} 
& \| \u_f - \u_{f,h} \|^2_{L^2(0,T;W^{1,r}(\O_f))}
+
\| \u_p - \u_{p,h} \|^2_{L^2(0,T;L^r(\O_p))}+|(\u_f - \d_t \bbeta_p) - (\u_{f,h} - \d_t \bbeta_{p,h}) |^2_{L^2(0,T;BJS)} \\ 
& \qquad +\|Q_{f,h}p_f - p_{f,h}\|^{r'}_{L^{r'}(0,T;L^{r'}(\O_f))}+\|Q_{p,h}p_p - p_{p,h}\|^{r'}_{L^{r'}(0,T;L^{r'}(\O_p))} +\|Q_{\lam,h}\lam - \lam_{h}\|^{r'}_{L^{r'}(0,T;L^{r'}(\Gam_{fp}))} \\
& \qquad +\|\bbeta_p - \bbeta_{p,h}\|^2_{L^{\infty}(0,T;H^1(\O_p))} +s_0\|Q_{p,h}p_{p}-  p_{p,h}\|^2_{L^{\infty}(0,T;L^2(\O_p))}+\|\mathcal{G}(\u,\u_h)\|_{L^1(0,T)}\\
& \quad \leq C\exp(T)\Big(
\|\u_f-I_{f,h}\u_{f}\|^2_{L^2(0,T;W^{1,r}(\O_f))}+\|\u_f-I_{f,h}\u_{f}\|^r_{L^r(0,T;W^{1,r}(\O_f))} \\
& \qquad
+ \| \bbeta_{p} -I_{s,h}\bbeta_{p} \|^2_{L^2(0,T;H^1(\O_p))}
+\| \u_p - I_{p,h}\u_{p}\|^r_{L^r(0,T;L^r(\O_p))}+ \|  \d_t \bbeta_p -I_{s,h}\d_t\bbeta_{p} \|^r_{L^r(0,T;H^1(\O_p))}\\
& \qquad  + \|  \d_t \bbeta_p -I_{s,h}\d_t\bbeta_{p} \|^2_{L^2(0,T;H^1(\O_p))}
+\| Q_{f,h}p_f  - p_f \|^2_{L^2(0,T;L^{r'}(\O_f))} + \|Q_{\lam,h}\lambda  - \lambda\|^2_{L^2(0,T;L^{r'}(\Gam_{fp}))}   \\
& \qquad + \| Q_{p,h}\d_tp_p  - \d_t p_p \|^2_{L^2(0,T;L^{r'}(\O_p))}
+\|Q_{\lam,h}\d_t\lambda  - \d_t \lambda\|^2_{L^2(0,T;L^{r'}(\Gam_{fp}))}+  \| \bbeta_{p}  - I_{s,h}\bbeta_{p}\|^2_{L^{\infty}(0,T;H^1(\O_p))}\\
& \qquad + \|Q_{p,h}p_p - p_{p}\|^2_{L^{\infty}(0,T;L^{r'}(\O_p))}+  \|Q_{\lam,h}\lam -\lam\|^2_{L^{\infty}(0,T;L^{r'}(\Gam_{fp}))}
 + \|Q_{f,h}p_f-p_f\|^{r'}_{L^{r'}(0,T;L^{r'}(\O_f))}\\
 & \qquad + \|Q_{p,h}p_p-p_p\|^{r'}_{L^{r'}(0,T;L^{r'}(\O_p))}+\|Q_{\lam,h}\lam-\lam\|^{r'}_{L^{r'}(0,T;L^{r'}(\Gam_{fp}))} \Big).
\end{align*}
The assertion of the theorem follows from the approximation bounds
\eqref{stokesPresProj}--\eqref{LMProj} and
\eqref{stokesVel}--\eqref{darcyVel} and the use of the triangle
inequality for the pressure error terms. 
\end{proof}

\section{Numerical results}
In this section we present numerical results that illustrate the
behavior of the method. We discretize the problem
\eqref{h-weak-1}--\eqref{h-b-gamma} in time using the Backward Euler
scheme with a time step $\tau$. For spatial discretization we use the
$\mathcal{P}_1b-\mathcal{P}_1b$ MINI elements for Stokes, the lowest
order Raviart-Thomas spaces $\mathcal{RT}_0-\mathcal{P}_0$ for Darcy
\cite{BoffiBrezziFortin}, continuous piecewise linears $\mathcal{P}_1$
for the displacement, and piecewise constants $\mathcal{P}_0$ for the
Lagrange multiplier.  We neglect the nonlinearity in the BJS condition
\eqref{Gamma-fp-1} and handle the nonlinearity in Stokes and Darcy
using the Picard iteration. The computations are performed on
triangular grids using the finite element package FreeFem++ \cite{freefem}. 

\subsection{Example 1: application to industrial filters}
Our first example is motivated by an application to industrial
filters, see \cite{ervin2011coupling}.  The units in this example are
dimensionless. We consider a computational domain
$\Omega=(0,2)\times(0,1)$, where $\O_f=(0,1)\times (0,1)$ is the fluid
region and $\O_p=(1,2)\times (0,1)$ is the poroelastic region, which
models the filter. The flow is driven by a pressure drop: on the left
boundary of $\O_f$ we set $p_{in}=1$ and on the right boundary of
$\O_p$, $p_{out}=0$, which is also chosen as the initial condition for
the Darcy pressure. Along the top and bottom boundaries, we impose a
no-slip boundary condition for the Stokes flow and a no-flow boundary
condition for the Darcy flow. We also set zero displacement initial
and boundary conditions. We set
$\lam_p=\mu_p=s_0=\alpha=\alpha_{BJS}=1.0$ and $\kappa=\bI$. We
consider the Cross model for the viscosity in Stokes and Darcy:
\begin{equation}\label{cross}
\nu_f(|\D(\textbf{u}_f)|) = \nu_{f,\infty}+\frac{\nu_{f,0}-\nu_{f,\infty}}{1+K_f|\D(\textbf{u}_f)|^{2-r_f}}, \quad
\nu_p(|\textbf{u}_p|) = \nu_{p,\infty}+\frac{\nu_{p,0}-\nu_{p,\infty}}{1+K_p|\textbf{u}_p|^{2-r_p}},
\end{equation}
where the parameters are chosen as $K_f=K_p=1,\,
\nu_{f,\infty}=\nu_{p,\infty}=1,\, \nu_{f,0}=\nu_{p,0}=10$,
$r_f=r_p=1.35$.
The simulation time is $T = 1.0$ and the time step $\tau = 0.01$. To
verify the convergence estimate from Theorem~\ref{conv-thm}, we
compute a reference solution, obtained on a mesh with characteristic
size $h = 1/320$. Table \ref{table:convergence} shows the relative
errors and rates of convergence for the solutions computed with mesh
sizes $h = 1/20, 1/40, 1/80$ and $1/160$ . Since we use bounded
functions to model the viscosity in both regions, we compute the error
norms using $r=r'=2$. As seen from the table, the results agree with
theory, i.e. we observe at least first convergence rate for all
variables. We note that the time step is sufficiently small, so that
the time discretization error does not have an effect on the convergence.

We also investigate the non-Newtonian effect by comparing to the
linear analogue of the method \eqref{h-weak-1}--\eqref{h-b-gamma}.
For visualization we use the solutions computed with mesh size
$h=1/40$. We set the viscosity in the linear case to be $\nu_f^{lin} =
\nu_{f,\infty} = 1$ and $\nu_p^{lin} = \nu_{p,\infty} = 1$.  In
Figure~\ref{fig:1} we plot the non-Newtonian pressure and velocity at
the final time. We observe channel-like flow in the fluid region,
which slows down and diffuses as the fluid enters the poroelastic
region. The pressure drop occurs mostly in the fluid region. In
Figure~\ref{fig:2} we plot the nonlinear viscosity at the first and
last time steps. We note that the viscosity is highest in the middle
of the fluid domain and it decreases towards the boundary, which is
due to the fact that the strain rate increases towards the
boundary. On the other hand, the viscosity does not vary as much in
the poroelastic domain due to the small changes in velocity. These
observations agree with the conclusions in
\cite{ervin2011coupling}. In Figures \ref{fig:3} and \ref{fig:4} we
plot the difference {\em nonlinear} -- {\em linear} solution, where
colors represent the magnitude of the corresponding difference and
arrows represent the direction. We observe that the higher viscosity
in the non-Newtonian model results in lower Stokes velocity, as shown
on Figure \ref{fig:3}(b), which in turn leads to lower displacement,
see Figure \ref{fig:4}(b).

\begin{table}
	\begin{center}
		\begin{tabular}{|c|cc|cc|cc|}
			\hline
			& \multicolumn{2}{c|}{$\frac{\|\u^{ref}_{f,h} -\u_{f,h}\|_{l^2(0,T;H^1(\O_f))}}{\|\u^{ref}_{f,h}\|_{l^2(0,T;H^1(\O_f))}}$} & \multicolumn{2}{c|}{$\frac{\|\u^{ref}_{p,h} -\u_{p,h}\|_{l^2(0,T;L^2(\O_p))}}{\|\u^{ref}_{p,h}\|_{l^2(0,T;L^2(\O_p))}}$} & \multicolumn{2}{c|}{$\frac{\|p^{ref}_{f,h} -p_{f,h}\|_{l^2(0,T;L^2(\O_f))}}{\|p^{ref}_{f,h}\|_{l^2(0,T;L^2(\O_f))}}$} \\ 
\hline
			$h$ & error & order & error & order & error & order \\ 
			\hline
			1/20  & 4.83E-03 & $-$ & 1.55E-01  & $-$  & 2.75E-02 & $-$ \\ 
			1/40  & 2.31E-03 & 1.06 & 8.63E-02 & 0.85 & 1.03E-02 & 1.41 \\ 
			1/80  & 1.04E-03 & 1.16 & 4.08E-02 & 1.08 & 4.62E-03 & 1.16 \\ 
			1/160 & 3.94E-04 & 1.40 & 2.07E-02 & 0.98 & 2.14E-04 & 1.11 \\ 
			\hline
			&\multicolumn{2}{c|}{$\frac{\|p^{ref}_{p,h} -p_{p,h}\|_{l^{2}(0,T;L^2(\O_p))}}{\|p^{ref}_{p,h}\|_{l^{2}(0,T;L^2(\O_p))}}$} & \multicolumn{2}{c|}{$\frac{\|p^{ref}_{p,h} -p_{p,h}\|_{l^{\infty}(0,T;L^2(\O_p))}}{\|p^{ref}_{p,h}\|_{l^{\infty}(0,T;L^2(\O_p))}}$} & \multicolumn{2}{c|}
{$\frac{\|\bbeta^{ref}_{p,h} -\bbeta_{p,h}\|_{l^{\infty}(0,T;H^1(\O_p))}}{\|\bbeta^{ref}_{p,h}\|_{l^{\infty}(0,T;H^1(\O_p))}}$}  \\ 
\hline
			$h$ & error & order & error & order & error &order \\ 
			\hline 
			1/20  & 4.10E-02 & $-$  & 1.15E-01 & $-$  & 4.98E-02 & $-$     \\ 
			1/40  & 1.92E-02 & 1.10 & 5.28E-02 & 1.12 & 2.88E-02 & 0.79     \\ 
			1/80  & 8.24E-03 & 1.22 & 2.25E-02 & 1.23 & 1.61E-02 & 0.84     \\ 
			1/160 & 2.75E-03 & 1.58 & 7.48E-03 & 1.59 & 6.59E-03 & 1.29   \\  
			\hline
		\end{tabular}
	\end{center}
	\caption{Convergence for ($\mathcal{P}_1b\times\mathcal{P}_1b)
\times(\mathcal{RT}_0\times\mathcal{P}_0)\times\mathcal{P}_1\times\mathcal{P}_0$ elements.} \label{table:convergence}
\end{table}

\begin{figure}
	\centering
	\begin{subfigure}{0.45\textwidth}
		\includegraphics[width=\textwidth]{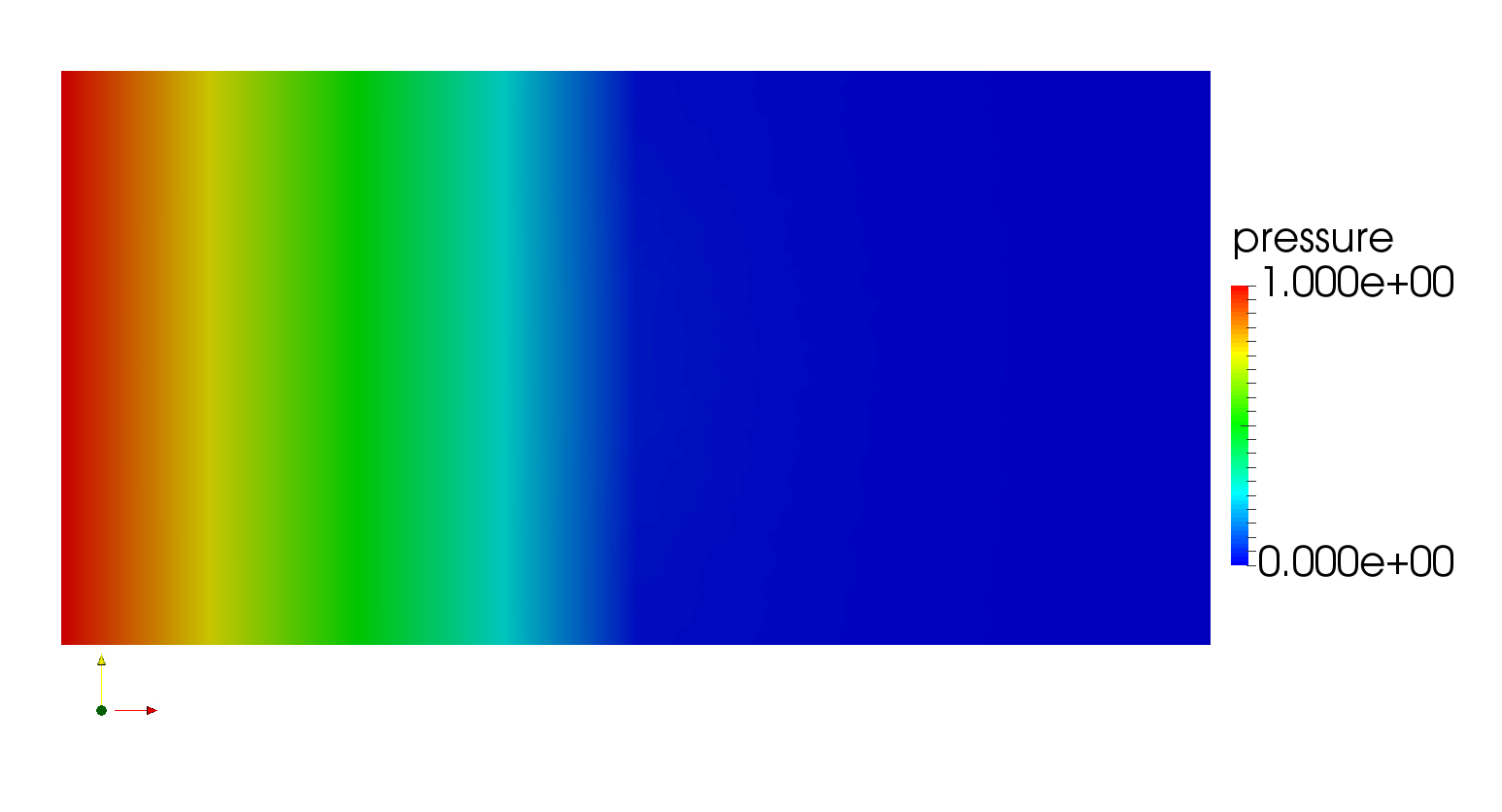}
		\caption{pressure}
	\end{subfigure}
	~ 
	\begin{subfigure}{0.45\textwidth}
		\includegraphics[width=\textwidth]{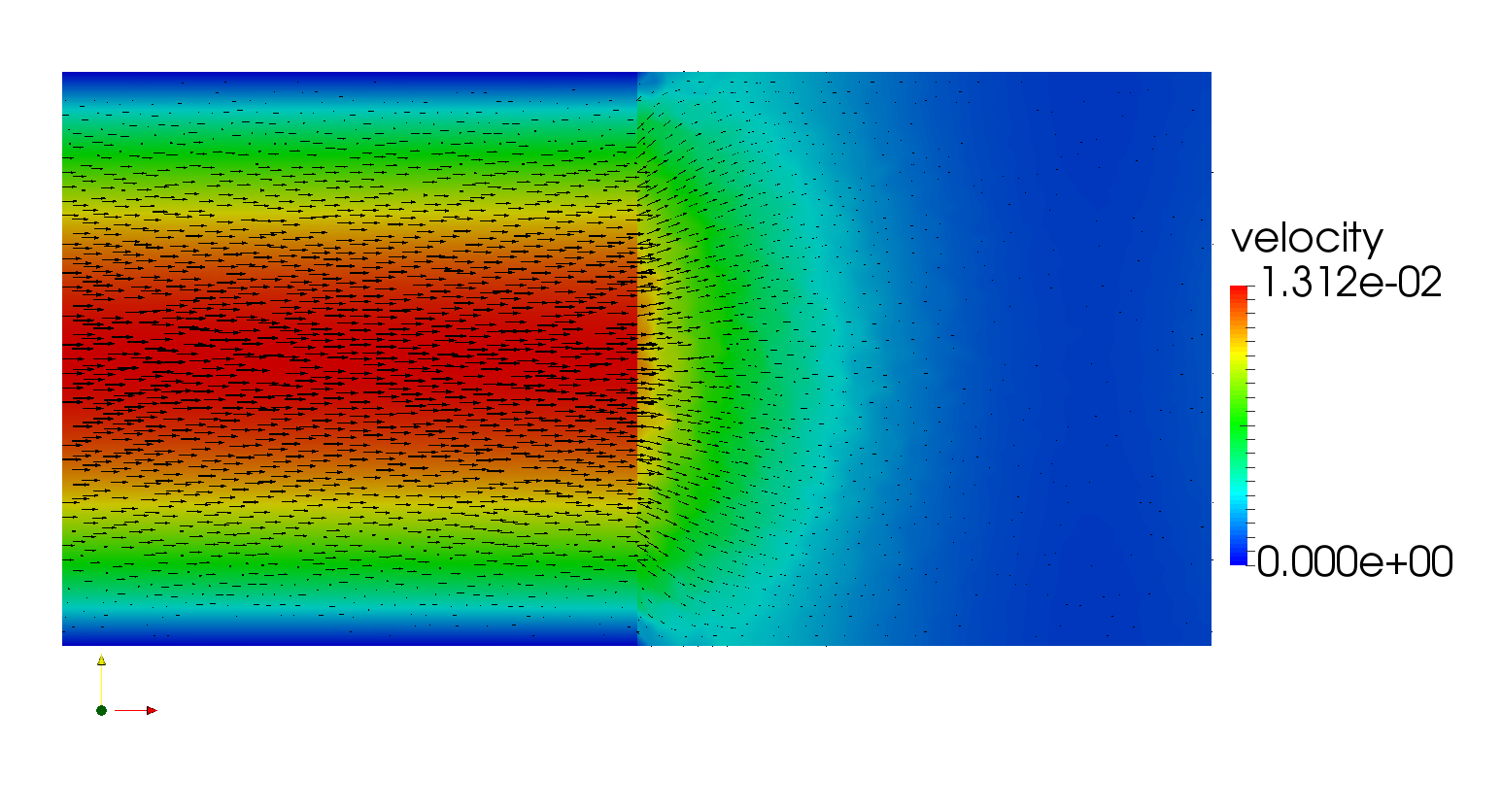}
		\caption{velocity vector (arrows) and magnitude (color)}
	\end{subfigure}
	\caption{Example 1, non-Newtonian pressure and velocity solutions at time $t=1$.}\label{fig:1}
\end{figure}

\begin{figure}
	\centering
	\begin{subfigure}{0.45\textwidth}
		\includegraphics[width=\textwidth]{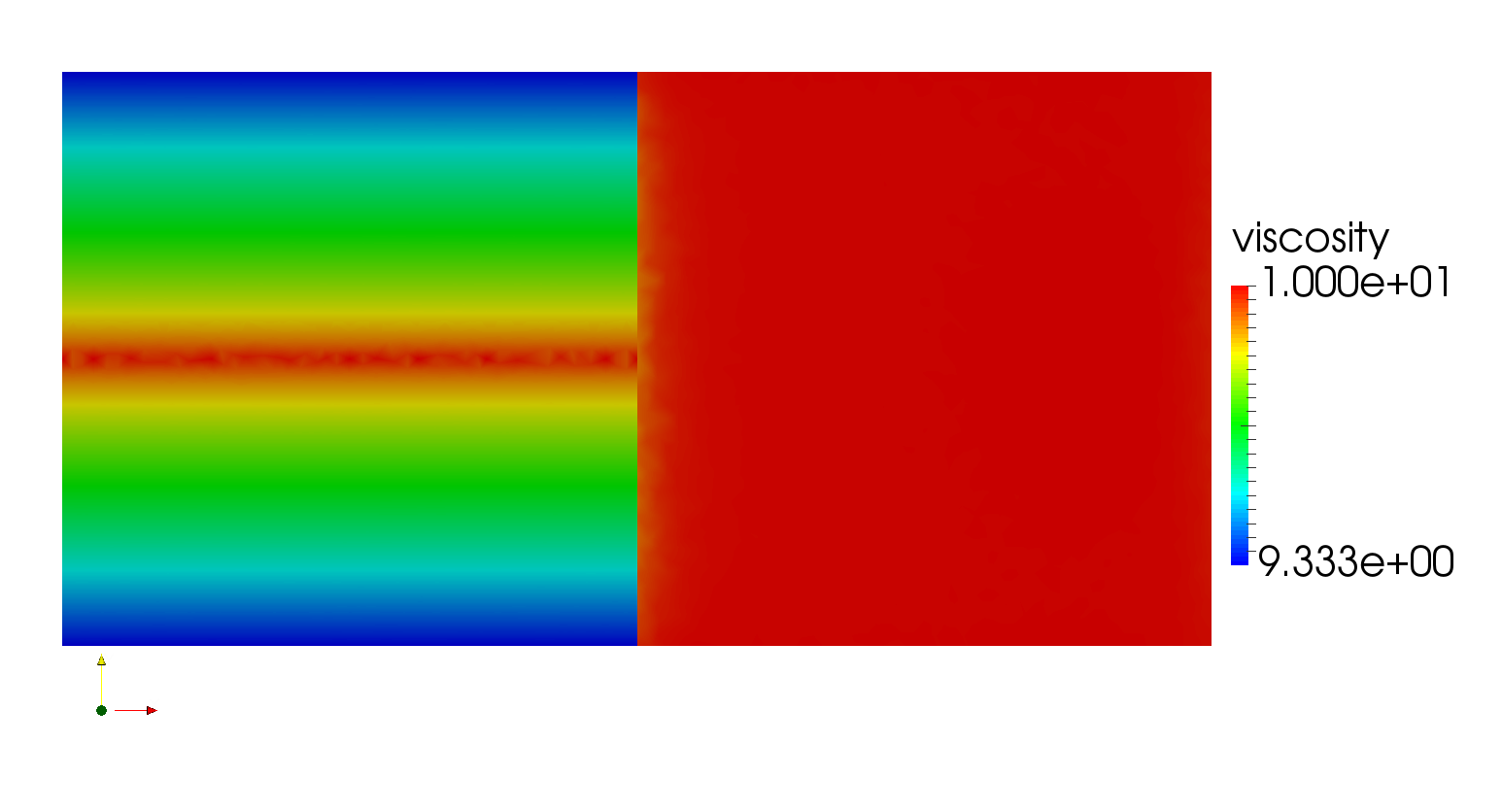}
		\caption{$t=0.01$}
	\end{subfigure}%
	~ 
	\begin{subfigure}{0.45\textwidth}
		\includegraphics[width=\textwidth]{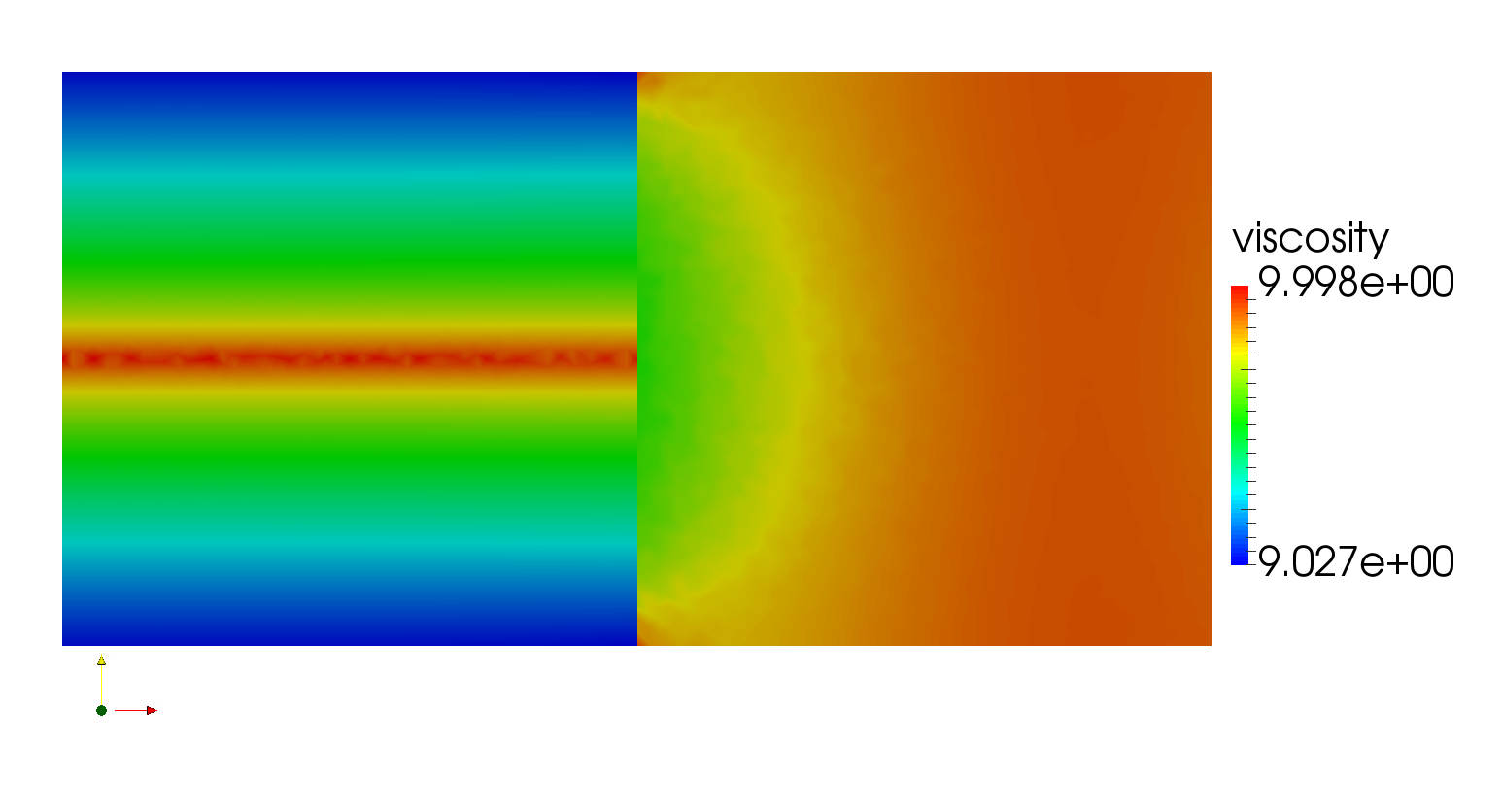}
		\caption{$t=1$}
	\end{subfigure}
	\caption{Example 1, nonlinear viscosity.}\label{fig:2}
\end{figure}

\begin{figure}
	\centering
	\begin{subfigure}{0.45\textwidth}
		\includegraphics[width=\textwidth]{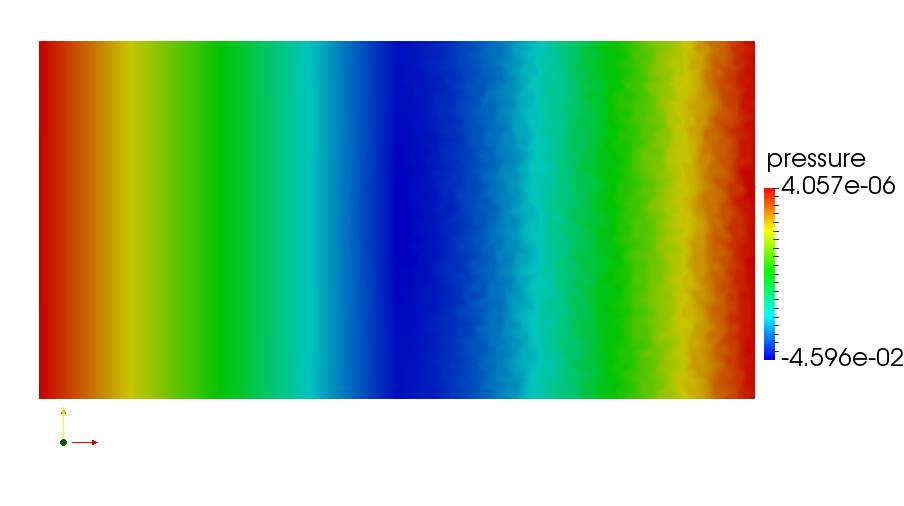}
		\caption{pressure}
	\end{subfigure}
	~ 
	\begin{subfigure}{0.45\textwidth}
		\includegraphics[width=\textwidth]{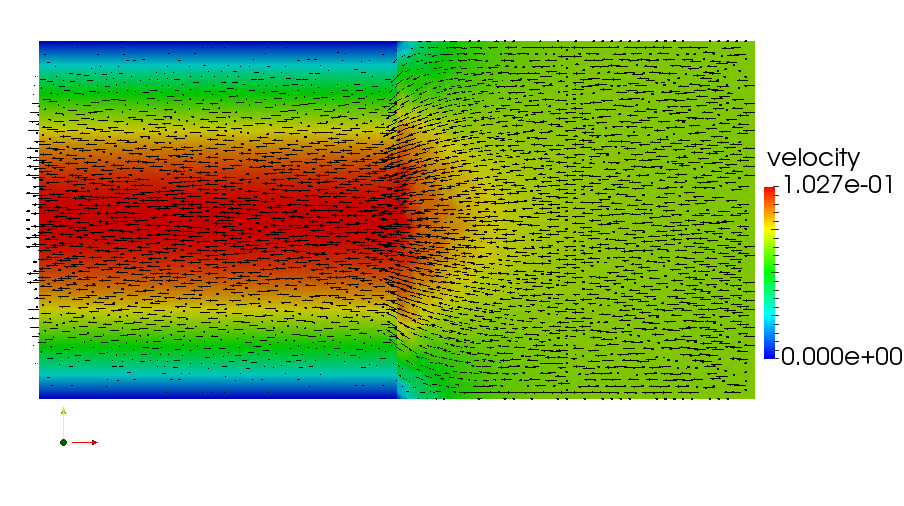}
		\caption{velocity vector (arrows) and magnitude (color)}
	\end{subfigure}
	\caption{Example 1, difference between non-Newtonian and Newtonian solutions at time $t=1$.}\label{fig:3}
\end{figure}

\begin{figure}
	\centering
	\begin{subfigure}{0.45\textwidth}
		\includegraphics[width=\textwidth]{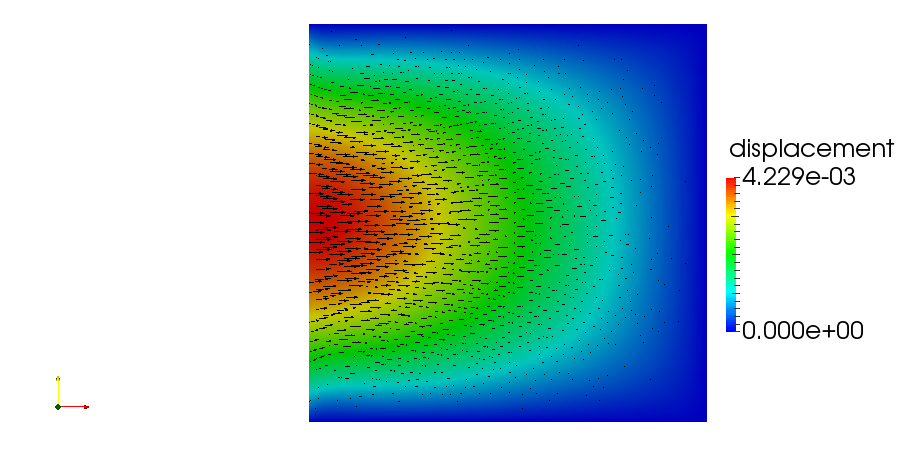}
		\caption{nonlinear displacement}
	\end{subfigure}%
	~ 
	\begin{subfigure}{0.45\textwidth}
		\includegraphics[width=\textwidth]{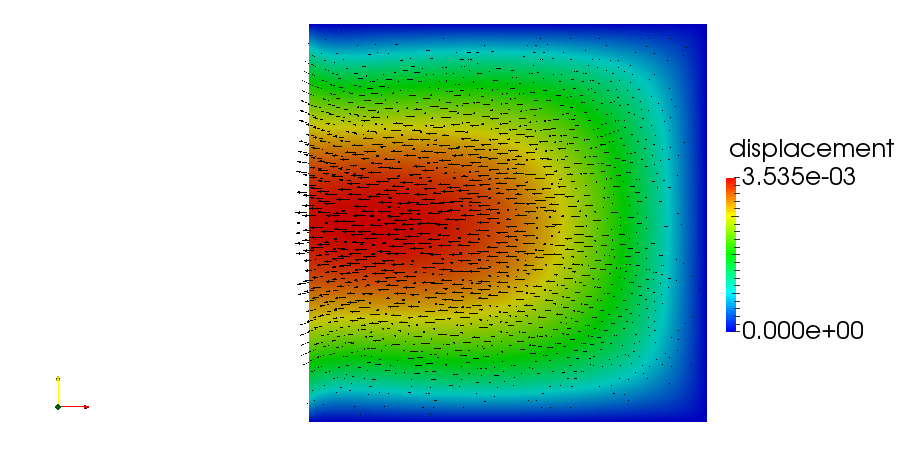}
		\caption{difference}
	\end{subfigure}
	\caption{Example 1, non-Newtonian displacement solution and difference at time $t=1$. }\label{fig:4}
\end{figure}

\subsection{Example 2: application to hydraulic fracturing}

We next present an example motivated by hydraulic fracturing. We study
the interaction between a stationary fracture filled with fluid and
the surrounding reservoir. The units in this example are meters for
length, seconds for time, and kPa for pressure. We consider a reference domain 
$\hat{\Omega} = [0, 1] \times [-1,1]$ and a fracture domain $\hat{\Omega}_f$, 
which is located in the middle with a boundary 
$$
\hat{x}^2 = 200(0.05 - \hat{y})(0.05 + \hat{y}), \quad \hat{y} \in [-0.05,0.05].
$$
The reference poroelastic domain is 
$\hat{\Omega}_p = \hat{\Omega}\setminus \hat{\Omega}_f$. The computational 
domain, shown in Figure~\ref{frac-figure} (left), is obtained from the reference 
domain via the mapping
\begin{equation*}
\begin{bmatrix} x \\ y \end{bmatrix} (\hat{x},\hat{y}) = \begin{bmatrix} x \\ (5\cos(\frac{\hat{x}+\hat{y}}{100})\cos(\frac{\pi \hat{x}+\hat{y}}{100})^2+\hat{y}/2-\hat{x}/10) \end{bmatrix}.
\end{equation*}
We enforce an inflow rate $\u_f\cdot \n_f=10$ m/s, $\u_f\cdot\btau_f=0$ m/s
on the left part of $\partial\O_f$ and no flow $\u_p\cdot\n_p = 0$ m/s and
no stress $\bs_p\n_p={\bf 0}$ kPa on the left part of $\partial\O_p$. On
the top, bottom, and right boundaries we set $p_p=1000$ kPa,
$\bbeta_p\cdot\n_p = 0$ m/s, and $\bs_p\n_p\cdot\btau_p=0$ kPa. The initial
conditions are $p_p=1000$ kPa and $\bbeta = {\bf 0}$ m/s. 
The poroelastic parameters, which are typical for hydraulic fracturing and
are similar to the ones used in \cite{Girault-2015}, are given in
Figure~\ref{frac-figure} (right). The nonlinear viscosity model in Stokes and Darcy is
from \cite{lopez2003predictive}
for a polymer used in hydraulic fracturing, see Figure~\ref{fig:viscosity} (left)
for the viscosity dependence on the shear rate. We match the curve using the Cross model
\eqref{cross} 
with parameters $K_f=K_p=7$, $\nu_{f,\infty}=\nu_{p,\infty}=3.0\times 10^{-6}$ kPa s,
$\nu_{f,0}=\nu_{p,0}=1.0\times 10^{-2}$ kPa s, and $r_f=r_p=1.35$.

We run the simulation for 300 s with time step $\tau = 1$ s and
compare the results of the linear and nonlinear models. For the linear
model we use the viscosity for water, $\nu_f^{lin} = \nu_p^{lin} =
1.0\times 10^{-6}$ kPa s, which is slightly lower than the minimum
value of the nonlinear viscosity. We present the simulation results at
the final time for both models in
Figures~\ref{fig:viscosity}--\ref{fig:darcy_pres_disp}. We note that
the scales in the plots are different for the two models, due to
significant differences in the solution values. The computed Stokes
and Darcy velocities are shown in
Figure~\ref{fig:stokes-darcy-vel}. We observe channel-like flow in the
fracture with both models. However, the higher nonlinear viscosity
results in smaller velocity, especially near the fracture tip. The
nonlinear viscosity in the fracture is shown in
Figure~\ref{fig:viscosity} (middle). We note the significant
shear-thinning effect, especially along the wall of the fracture,
where the viscosity is reduced to values in the order of
$\nu_{f,\infty}$. Comparing the Darcy velocity fields in
Figure~\ref{fig:stokes-darcy-vel}, we observe that the combination of
very small permeability and high fluid viscosity in the nonlinear case
results in very little fluid penetration into the reservoir. This is
an expected behavior in hydraulic fracturing. Correspondingly, the
nonlinear viscosity in the poroelastic region, as shown in
Figure~\ref{fig:viscosity} (right), is significantly reduced in a
close vicinity of the fracture, but is equal to the maximum value
$\nu_{p,0}$ away from the fracture.  In the linear case, the Darcy
velocity is larger and the fluid penetrates further into the
reservoir. The behavior for both models is consistent with the
computed pressure fields shown in
Figure~\ref{fig:darcy_pres_disp}. For both models we observe increase
in pressure near the fracture. In the linear case the pressure
gradient extends away from the fracture. In the nonlinear case, since
the fluid cannot penetrate further into the reservoir, we observe a
significant pressure buildup along the fracture, about 100 times
larger than in the linear case. This in turn results in about 100
times larger displacement in the nonlinear case. This includes larger
opening of the fracture, all the way to the tip. We note that our
models are for stationary fractures, but the large displacement and
corresponding stress near the fracture tip in the nonlinear case may
result in practice in fracture propagation, as would be expected in
hydraulic fracturing. To summarize, this is a numerically very
challenging test case, due to the large stiffness and small
permeability of the rock. The numerical difficulty for the
non-Newtonian fluid is further increased due to the model nonlinearity
and the larger viscosity. We observe that the model is capable of
handling parameters in this challenging range and produce results that
are qualitatively similar to practical hydraulic fracturing
applications.

\begin{figure}
\centering
\begin{minipage}{0.3\textwidth}
\includegraphics[width=0.9\textwidth]{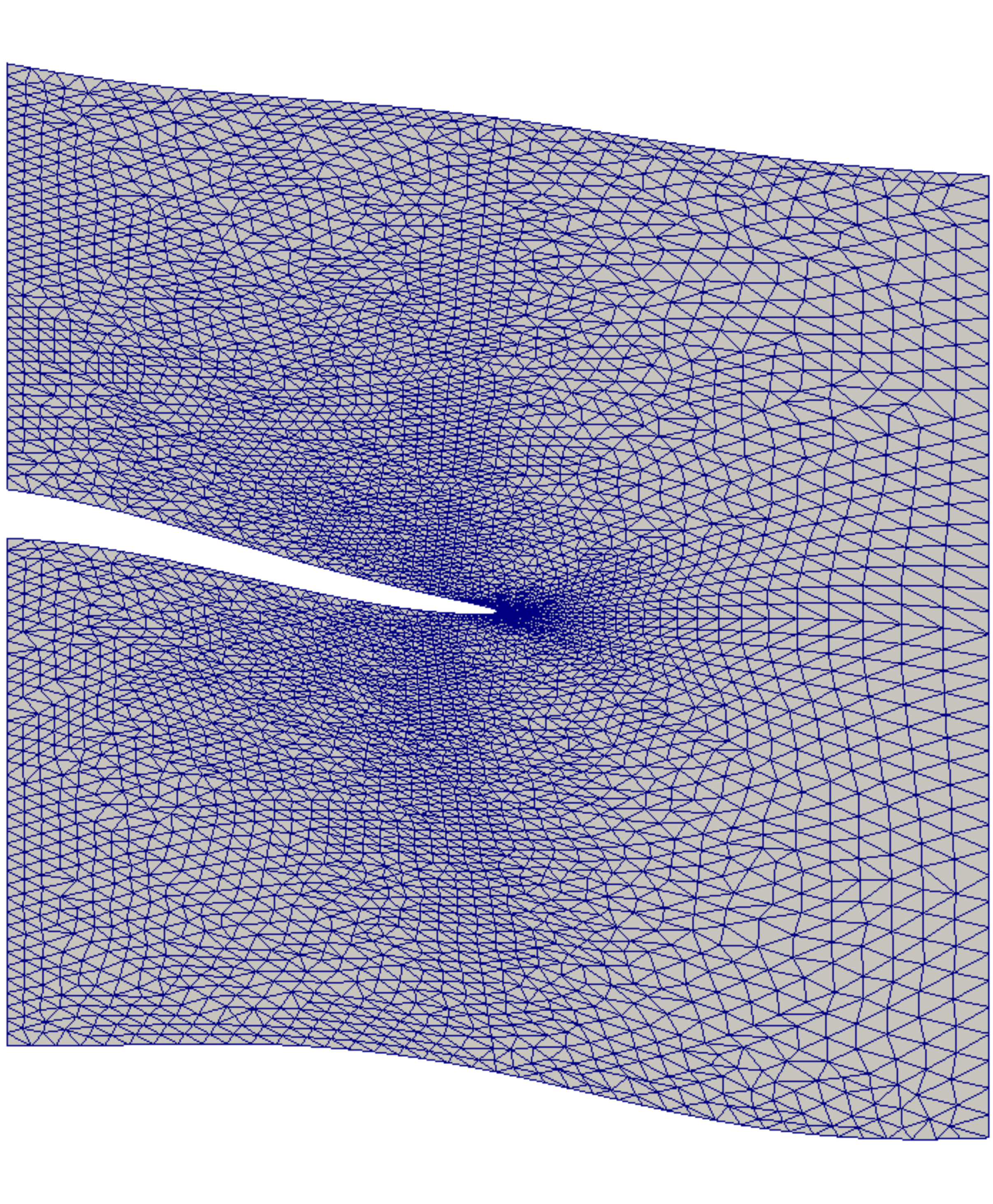}
\end{minipage}
\begin{minipage}{0.6\textwidth}
\begingroup
		\begin{tabular}{|l l l l |}
			\hline
			Parameter                          &      & Units         & Values                        \\ \hline\hline
			Young's modulus                    & $E$         & (kPa)         & $10^7$                        \\
			Poisson's ratio                    & $\sigma$         &          & $0.2$                        \\
			Lame coefficient                   & $\mu_p$     & (kPa)         & $5/12\times 10^{7}$ \\
			Lame coefficient                   & $\lambda_p$ & (kPa)         & $5/18\times 10^{7}$ \\
			Permeability                       & $K$         & (m$^2$)       & $(200,50) \times 10^{-12}$ \hspace*{-2mm}\\
			Mass storativity                   & $s_0$       & (kPa$^{-1}$)  & $6.89\times 10^{-2}$          \\
			Biot-Willis const.               & $\alpha$    &               & 1.0                             \\
			BJS coeff. & $\alpha_{BJS}$     &        & 1.0 \\
			Total time                         & T           & (s)           & 300                           \\ \hline
		\end{tabular}
\endgroup
\end{minipage}
\caption{Computational domain (left) and parameters (right) for Example 2.}
\label{frac-figure}
\end{figure}

\begin{figure}
	\centering
	\begin{subfigure}[H]{0.30\textwidth}
		\includegraphics[width=\textwidth]{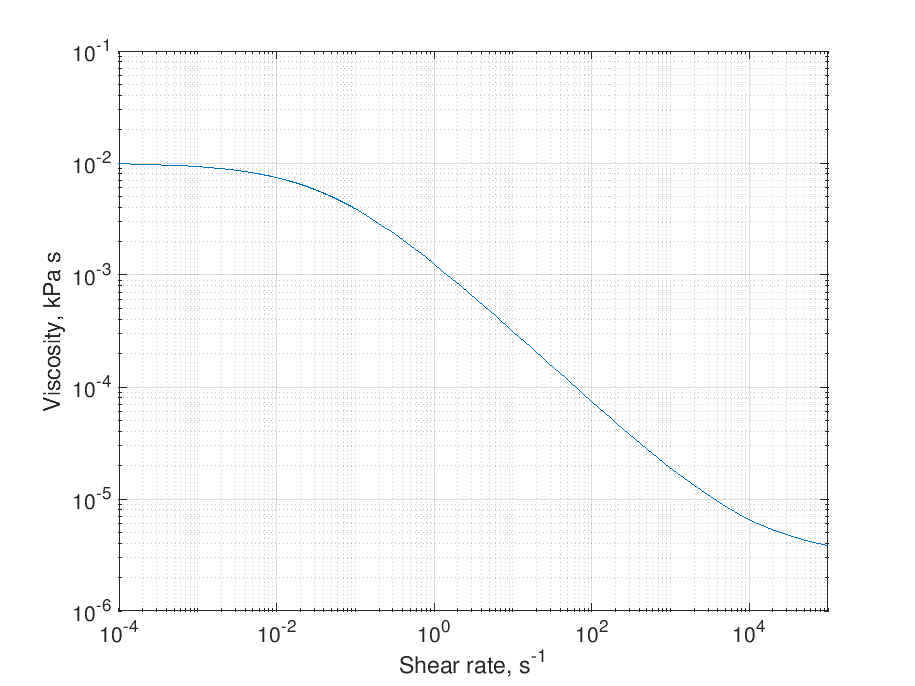}
		\caption{Viscosity model}
	\end{subfigure}
        \begin{subfigure}[H]{0.30\textwidth}
		\includegraphics[width=\textwidth]{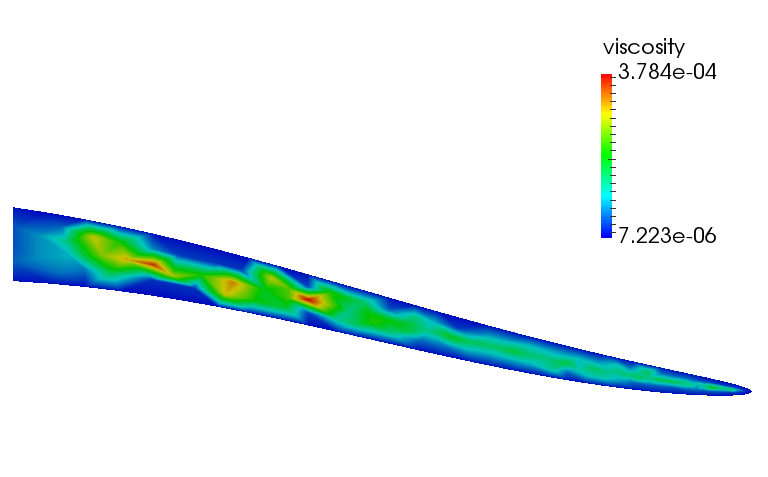}
		\caption{Stokes viscosity}
	\end{subfigure}
        \begin{subfigure}[H]{0.30\textwidth}
		\includegraphics[width=\textwidth]{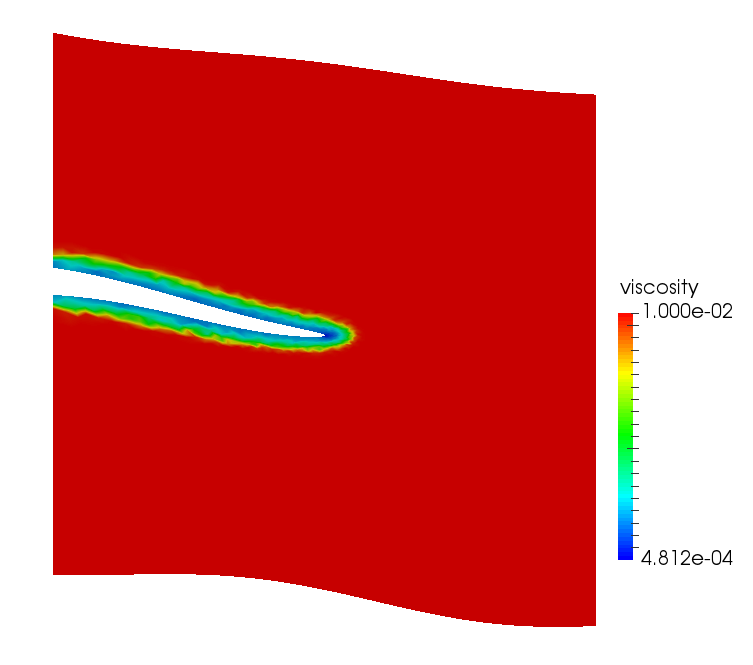}
		\caption{Darcy viscosity}
	\end{subfigure}
        \caption{Example 2, nonlinear viscosity model and computed
          Stokes and Darcy viscosity at $t=300$s.}
        \label{fig:viscosity}
\end{figure}

\begin{figure}
	\centering
	\begin{subfigure}[H]{0.24\textwidth}
		\includegraphics[width=\textwidth]{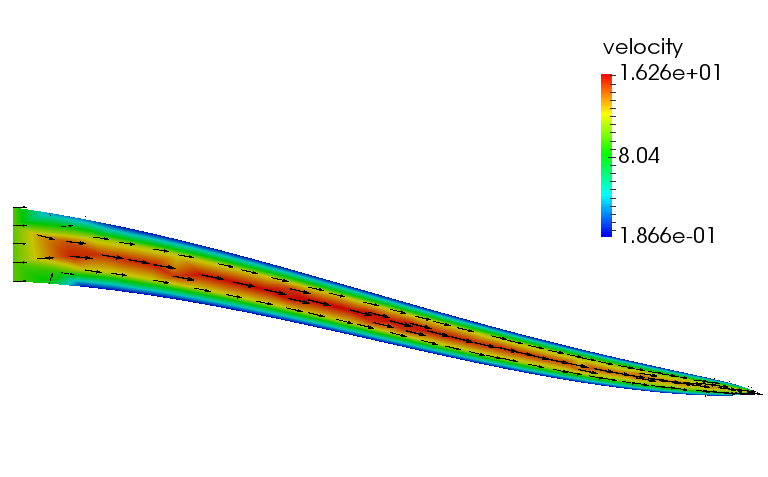}
		\caption{Stokes, linear}
	\end{subfigure}%
	~ 
	\begin{subfigure}[H]{0.24\textwidth}
		\includegraphics[width=\textwidth]{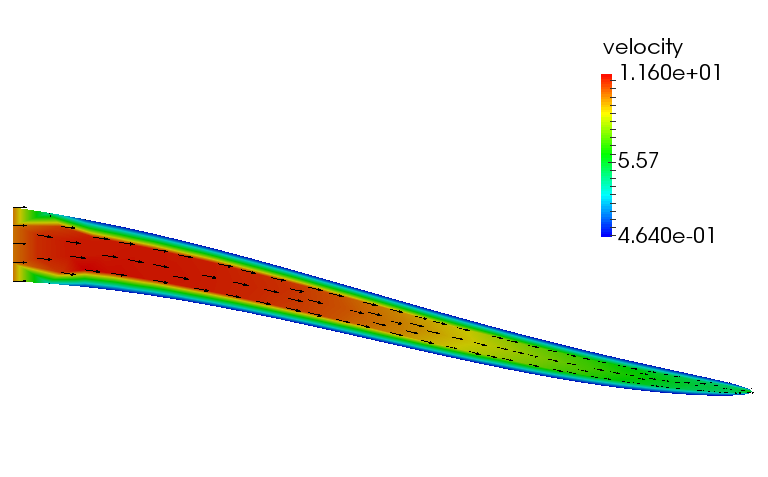}
		\caption{Stokes, nonlinear}
	\end{subfigure}
        	\centering
	\begin{subfigure}[H]{0.24\textwidth}
		\includegraphics[width=\textwidth]{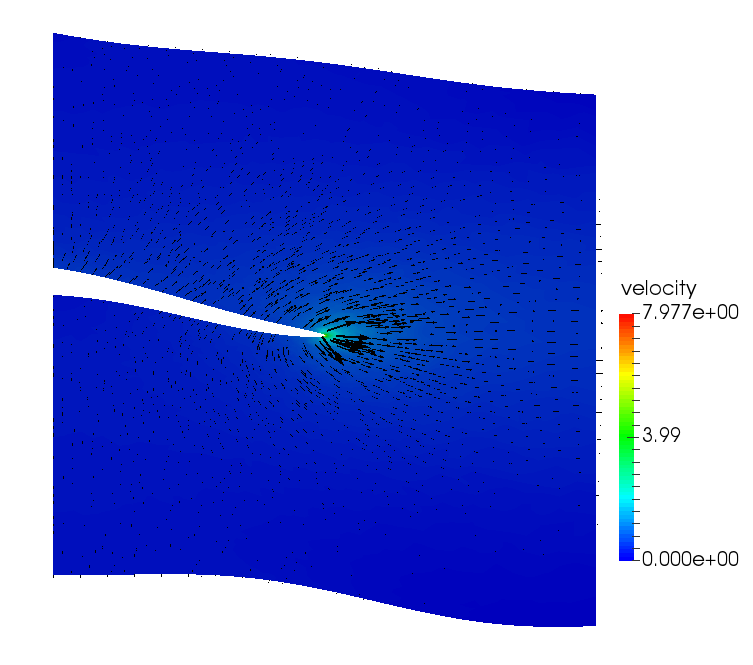}
		\caption{Darcy, linear}
	\end{subfigure}%
	~ 
	\begin{subfigure}[H]{0.24\textwidth}
		\includegraphics[width=\textwidth]{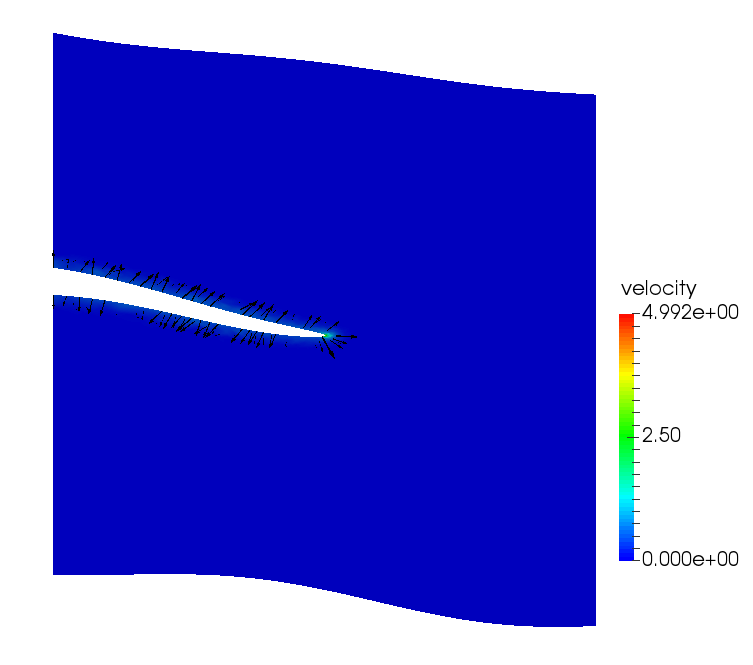}
		\caption{Darcy, nonlinear}
	\end{subfigure}
	\caption{Example 2, Stokes and Darcy velocity at time $t=300$s.}
\label{fig:stokes-darcy-vel}
\end{figure}

\begin{figure}
	\begin{subfigure}[H]{0.24\textwidth}
		\includegraphics[width=\textwidth]{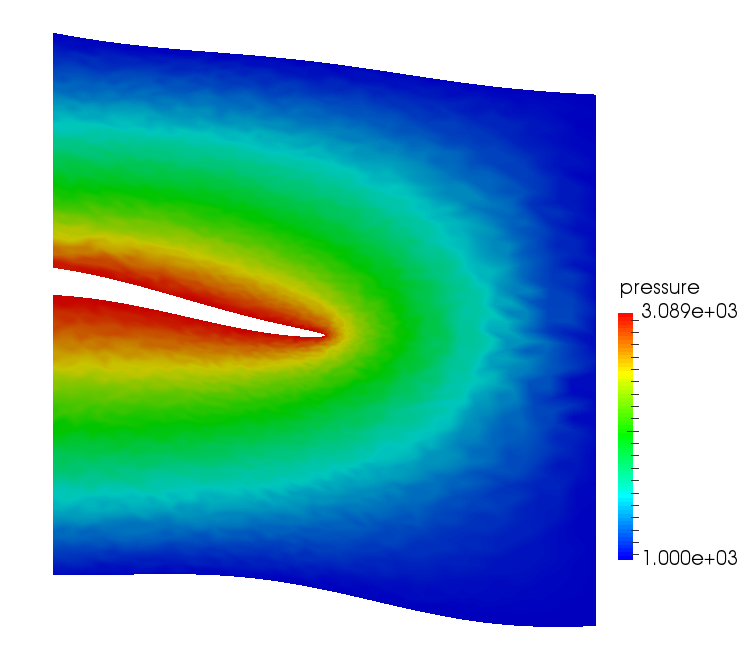}
		\caption{pressure, linear}
		\label{fig:darcy_pres_3}
	\end{subfigure}%
	~ 
	\begin{subfigure}[H]{0.24\textwidth}
		\includegraphics[width=\textwidth]{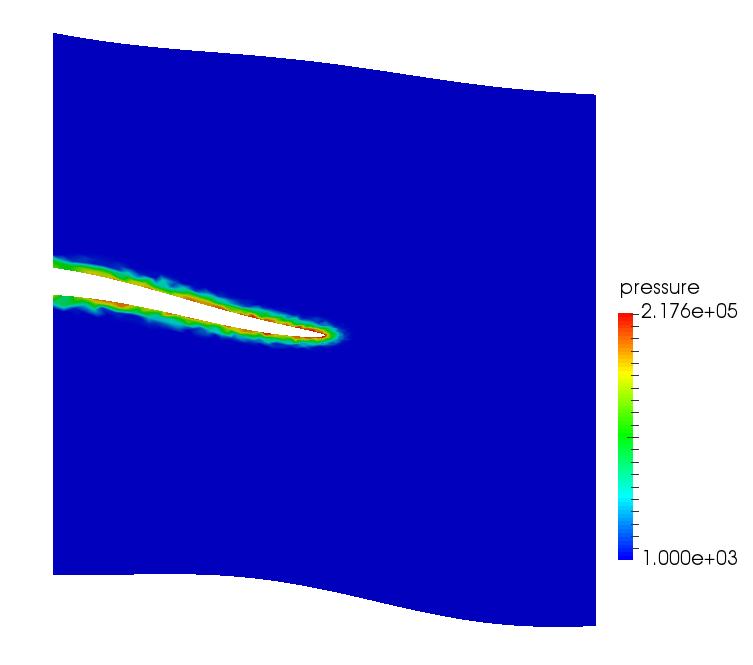}
		\caption{pressure, nonlinear}
		\label{fig:darcy_pres_4}
	\end{subfigure}
        \begin{subfigure}[H]{0.24\textwidth}
		\includegraphics[width=\textwidth]{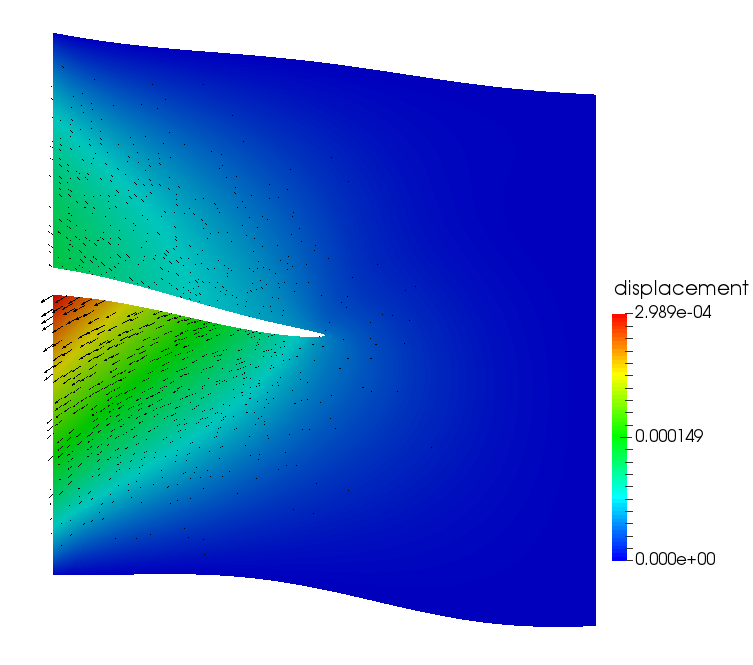}
		\caption{displacement, linear}
		\label{fig:disp_3}
	\end{subfigure}%
	~ 
	\begin{subfigure}[H]{0.24\textwidth}
		\includegraphics[width=\textwidth]{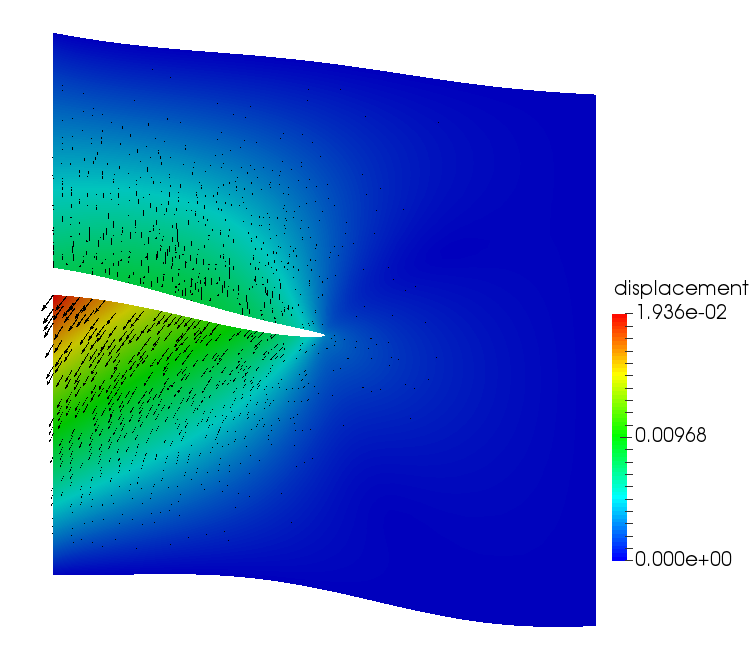}
		\caption{displacement, nonlinear}
		\label{fig:disp_4}
	\end{subfigure}        
	\caption{Example 2, Poroelastic pressure and displacement at time $t=300$s.}
        \label{fig:darcy_pres_disp}
\end{figure}

\bibliographystyle{abbrv}
\bibliography{nonlinear-stokes-biot}

\end{document}